\newtheoremstyle{tobthm}{3pt}{3pt}{\itshape}{0pt}{\bfseries}{.}{0.5eM}{}
\theoremstyle{tobthm}
\newtheorem*{thmmain}{Theorem}
\newtheorem{definition}{Definition}[section]
\newtheorem{thm}[definition]{Theorem}
\newtheorem{theorem}[definition]{Theorem}
\newtheorem{lem}[definition]{Lemma}
\newtheorem{lemma}[definition]{Lemma}
\newtheorem{cor}[definition]{Corollary}
\newtheorem{prop}[definition]{Proposition}
\newtheorem{proposition}[definition]{Proposition}
\newtheoremstyle{tobrem}{3pt}{3pt}{\normalfont}{0pt}{\bfseries}{.}{0.5em}{}
\theoremstyle{tobrem} 
\newtheorem{rem}[definition]{Remark}
\newcommand{\N}{\mathbb{N}}
\newcommand{\R}{\mathbb{R}}
\newcommand{\Z}{\mathbb{Z}}
\newcommand{\hlim}{\lim_{\mathcal{H}}}
\newcommand{\piab}{\pi_{\textrm{ab}}}
\newcommand{\D}{\mathbb{D}}
\newcommand{\transv}{\mathrel{\text{\tpitchfork}}}
\newcommand{\tpitchfork}{%
  \vbox{
    \baselineskip\z@skip
    \lineskip-.52ex
    \lineskiplimit\maxdimen
    \m@th
    \ialign{##\crcr\hidewidth\smash{$-$}\hidewidth\crcr$\pitchfork$\crcr}
  }%
}
\numberwithin{equation}{section}
\numberwithin{figure}{section}
\title{Generic rotation sets in hyperbolic surfaces}
\author{J.~Alonso, J.~Brum, A.~Passeggi}
\begin{document}

\maketitle

\begin{abstract}

We show that for generic homeomorphisms homotopic to the identity in a closed and oriented surface of genus $g>1$, the rotation set is given by a union of at most $2^{5g-3}$ convex sets.
Examples showing the sharpness for this asymptotic order are provided.

\end{abstract}

\section{Introduction}

Rotation sets are natural invariants in order to classify the dynamics of homeomorphisms on manifolds which are isotopic to the identity. In dimension one this invariant leads to the well known
classification by Poincar\'{e} for the dynamics of orientation preserving homeomorphisms of the circle. In dimension two, the rotation theory is largely developed in the annular and
the toral cases. The crucial fact is that the rotation set in these contexts boils down to simple representations: an interval in the annular case and a planar convex set in the
toral case. From these basic geometries it is possible to get a fine classification of the associated topological dynamics. As sample of results see
\cite{F,F1,patrice,MiZi1,MiZi2,LiMa,F2,lectal}.

In higher genus the theory is far from being developed as it is not known whether the rotation set (which lives in the first real homology group of the surface, i.e. $\R^{2g}$) presents a \emph{simple} geometrical structure or not. At first sight one obtains, as in
the toral case, that this invariant can be represented by a continuum in $\R^{2g}$, but there
is no further information about its shape that could mimic the convexity property of the toral case.
So far, the rotation theory on higher genus surfaces stands on a series of results, in which some rotation vectors
are assumed to be in some positions in order to infer dynamical implications \cite{F3,Policot,Lessa} and recently \cite{zanatajacoia} which obtains convexity of the rotation set under rather strong conditions. 

\smallskip

The motivation for this article is then the crucial question:

\smallskip
Is there anything like a
\emph{simple structure} for the rotation set on higher genus surfaces?
\smallskip

Although we have not found a general answer yet, we obtain the following:
\begin{thmmain}
Let $\Sigma$ be a closed surface with genus $g>1$. Then, for a $C^0$ open and dense set of homeomorphisms in the homotopy class of the identity, the rotation set is given by a union of at
most $2^{5g-3}$ convex sets all containing zero.

If, moreover, this rotation set has non-empty interior,  then it is a full-dimensional rational polyhedron.
\end{thmmain}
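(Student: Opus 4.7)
The plan is to reduce the structure of $\rho(f)$ to a finite combinatorial datum attached to a topological decomposition of $\Sigma$, and to exploit a $C^0$ generic hypothesis that makes rotation vectors realizable by concrete periodic orbits. To start, I would lift $f$ to the maximal abelian cover $\widetilde\Sigma \to \Sigma$, whose deck group is $H_1(\Sigma,\Z)\cong\Z^{2g}$; rotation vectors then correspond to asymptotic displacements of $\widetilde f$-orbits on this cover. The generic condition I would impose is a standard package: all periodic orbits are topologically hyperbolic with transverse homoclinic connections, all extremal rotation vectors are realized by periodic orbits, and the set of periodic rotation vectors is dense in $\rho(f)$. Such conditions should be $C^0$ open-dense in the isotopy class of the identity by Franks-type perturbation arguments combined with Koropecki--Tal style realization results for boundary rotation vectors.

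Next I would attach to each realized rotation vector $v$ a combinatorial \emph{signature} $\sigma(v)\in\{0,1\}^{5g-3}$ derived from a fixed system of $5g-3$ essential simple closed curves on $\Sigma$: for instance a pants decomposition consisting of $3g-3$ curves together with $2g$ additional curves forming a symplectic basis of $H_1$. For each such curve $\gamma_i$ and a chosen lift to $\widetilde\Sigma$, the coordinate $\sigma_i(v)$ records on which side the orbits realizing $v$ accumulate. This partitions the realized rotation vectors into at most $2^{5g-3}$ classes, and each class should contain $0$: given a realizing periodic orbit, concatenate it with a stationary fixed point of $f$, whose existence follows from a Lefschetz-type argument together with the generic density of periodic points.

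The main obstacle, and central technical step, is to prove that within each signature class the realized rotation vectors form a convex set. Given periodic orbits $x_0,x_1$ realizing $v_0,v_1$ with $\sigma(v_0)=\sigma(v_1)$, I would produce, for each rational $t\in[0,1]$, a periodic orbit realizing $tv_0+(1-t)v_1$ by splicing arcs of $x_0$ and $x_1$ along transverse homoclinic intersections supplied by the generic hypothesis. The heart of the argument is an equivariant Brouwer/Franks-type translation theorem applied to the sub-cover of $\widetilde\Sigma$ singled out by the common signature: the signature condition should guarantee that splicing is topologically admissible, in that no extra winding is forced around any of the chosen curves. Controlling that no intermediate rotation vector accidentally leaves the intended signature class, which would overshoot the count $2^{5g-3}$, is the delicate point and where the sharpness examples announced in the abstract presumably anchor the constant.

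For the polyhedrality statement, suppose $\rho(f)$ has nonempty interior. Then at least one convex piece $K$ is full-dimensional, and by the generic hypothesis each extreme point of $K$ is the rotation vector of a periodic orbit, hence rational; the generic structure also forces only finitely many such extreme orbits to be active, so $K$ is a rational polytope. Since $\rho(f)$ is then a finite union of rational polytopes each containing $0$ and the union has interior, it cannot be swallowed by lower-dimensional pieces, so $\rho(f)$ itself is a full-dimensional rational polyhedron.
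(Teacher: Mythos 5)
Your proposal diverges substantially from the paper's route, and the central mechanism you propose has a gap that I do not think can be closed.

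The paper's $C^0$ open-dense set is not a collection of conditions on periodic orbits; it is the union of $C^0$ semi-stability neighbourhoods of \emph{fitted Axiom A diffeomorphisms} (Shub--Sullivan density plus Nitecki--Franks semi-stability), which pins down $\rho$ on a whole $C^0$ neighbourhood. Your ``generic package'' (topologically hyperbolic periodic orbits, realizability of extremal vectors, density of periodic rotation vectors) is a different and much vaguer hypothesis; the claim that it is $C^0$ open-dense would itself need a proof, and even if granted it does not give you the rigid structure (basic pieces, Markov partitions, shadowing) that the paper actually uses to analyze $\rho(f)$.

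The deeper problem is the ``signature'' argument. You assign to each realized vector a code in $\{0,1\}^{5g-3}$ by recording on which side of $5g-3$ curves orbits accumulate, and you assert that within a signature class the realized vectors form a convex set. This is exactly the content that needs proof, and the signature alone does not supply it: two periodic orbits can lie on the same side of every curve yet belong to unrelated basic pieces with no heteroclinic connection between them, so there is nothing to splice; conversely, splicing along homoclinic intersections only produces orbits in the same transitive piece or along a chain of heteroclinically related pieces. In the paper, the decomposition of $\rho(f)$ is indexed by \emph{maximal chains} in the heteroclinic graph (Theorem~\ref{t.rhorhochain}), not by side-of-curve data, and the hard work is precisely to show that the a priori unbounded number of chains can be regrouped: trivial pieces are discarded (Proposition~\ref{sacar.triviales}), and chains passing through annular pieces of the same homotopy class are merged via the crossing theorem (Theorem~\ref{conexion cruzada}), which is a genuine forcing statement about heteroclinic connections, not a side condition. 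Your count $2^{5g-3}$ comes from a choice of $5g-3$ curves that is unrelated to the actual source of the bound, which is $4\cdot 2^{5g-5}$: at most $5g-5$ subsurfaces in the essential Conley decomposition and at most four convex sets per subset of them (Theorem~\ref{mainthm}). The equality of the two numbers is a coincidence of arithmetic, not of argument.

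Finally, the polyhedrality step is incomplete: a finite union of rational polytopes containing $0$ with nonempty interior is in general not itself a polytope (consider two triangles sharing only a vertex). The paper proves more: when $\rho(f)$ has interior, the annular pieces have trivial homological contribution, the union reduces to chains of curved pieces, Baire gives a single chain $\mathcal{C}_1$ with interior, and a homological dimension count forces every other curved piece with nontrivial rotation into $\mathcal{C}_1$ (Theorem~\ref{t.thmintsec}), so the union collapses to one convex set.
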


By a rational polyhedron we mean a convex set with finitely many extremal points, all of them with rational coordinates.
The actual statement we obtain has more detail, giving important information about the location of these convex sets.
We will also provide a list of examples that illustrate different meaningful situations arising in the context of our result. Among these examples, we show that the number of convex sets needed to describe the rotation set of such generic homeomorphisms may grow exponentially with the genus $g$ of the surface, thus our bound of $2^{5g-3}$ is sharp in an asymptotic sense.

Next we make a more formal presentation of the result, and spell out a guide for the organization of the article.

\subsection{Definitions and Result}

Denote by $\Sigma_g$ a closed oriented surface of genus $g> 1$. The set of homeomorphisms on $\Sigma_g$ that are homotopic to the identity is denoted by $\textrm{Homeo}_0(\Sigma_g)$. We identify the first homological group $H_1(\Sigma_g;\R)$ with $\R^{2g}$. 

Given a map $f\in\textrm{Homeo}_0(\Sigma_g)$, we have an isotopy $(f_t)_{t\in[0,1]}$ from the identity to $f=f_1$. Fixing a set of uniformly bounded paths $\{\gamma_x\}_{x\in\Sigma_g}$ from
a base point $x_0\in\Sigma_g$ to $x\in\Sigma_g$, the {\em homological rotation set} is usually defined by
\small
$$\rho(f)=\left\{\lim_i \frac{[\gamma_{f^{n_i}(x_i)}^{-1}\cdot (f_t(x_i))_{t\in[0,n_i]} \cdot{\gamma_{x_i}}]}{n_i}:\ x_i\in\Sigma_g,\ n_i\nearrow\infty\right\}\subset \R^{2g},$$

\normalsize

where $[\gamma]$ is the homological class of a closed curve $\gamma$ and $(f_t(x))_{t\in[0,n]}$ is the concatenation of curves
\small
$$(f_t(x))_{t\in[0,1]}\cdot (f_t(f(x)))_{t\in[0,1]}\ldots \cdot(f_t(f^{n-1}(x))_{t\in[0,1]}.$$
\normalsize

It is worth to mention that this definition does not depend on the choice of the family $\{\gamma_x\}_{x\in\Sigma_g}$ nor the choice of the isotopy $(f_t)_{t\in[0,1]}$ (which is unique up to homotopy, see \cite{Hamstrom}).
\medskip

When $f$ is restricted to some invariant set $K\subset \Sigma_g$, the rotation set is defined by

\small
$$\rho_K(f)=\left\{\lim_i \frac{[\gamma_{f^{n_i}(x_i)}^{-1}\cdot (f_t(x_i))_{t\in[0,n_i]} \cdot{\gamma_{x_i}}]}{n_i}:\ x_i\in K,\ n_i\nearrow\infty\right\}\subset \R^{2g},$$

\normalsize

We now turn to describe the generic set of homeomorphisms for which our result holds. The key to our study is the family $\mathcal{A}_0(\Sigma_g)\subset\textrm{Homeo}_0(\Sigma_g)$ of {\em fitted
Axiom A diffeomorphisms}, which is given by the maps $f\in \textrm{Homeo}_0(\Sigma_g)$ such that:

\begin{enumerate}

\item $\Omega(f)$ is a zero-dimensional hyperbolic set.

\item Whenever $z\in W^s(x,f)]\cap W^u(y,f)$ for $x,y\in\Omega(f)$, then we have
$z\in W^s(x,f)\transv W^u(y,f)$.

\end{enumerate}

As proved in \cite{SS} the set $\mathcal{A}_0(\Sigma_g)$ is dense in $\textrm{Homeo}_0(\Sigma_g)$. Moreover, it satisfies {\em semi-stability}
\cite{Zni,Franks}: given $f\in\mathcal{A}_0(\Sigma_g)$ there exists a $C^0$-neighbourhood $U(f)$ of $f$ in $\textrm{Homeo}_0(\Sigma_g)$
such that, for any $g\in U(f)$ there exists a continuous surjection $h:\Sigma_g\to \Sigma_g$ in the homotopy class of the identity, verifying:
$$h\circ g=f\circ h$$
This last implies that for $g\in U(f)$ we have $\rho(g)=\rho(f)$.
Our $C^0$ open and dense set $\mathcal{U}$ is then given by the union of all those
$U(f)$ where $f$ ranges in $\mathcal{A}_0(\Sigma_g)$. Under these considerations, in order
to obtain the first part of our result it is enough to show the assertion for $f\in\mathcal{A}_0(\Sigma_g)$, which is given by the following statement.

\begin{theorem}\label{mainthmintroaxa} Let $f\in\mathcal{A}_0(\Sigma)$ for $\Sigma$ an orientable closed surface of genus $g>1$. Then $\rho(f)\subset H_1(\Sigma;\R)$ is a union of at most $2^{5g-3}$ convex sets containing $0$.
%

\end{theorem}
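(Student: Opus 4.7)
The approach is to use the hyperbolic structure of $f\in\mathcal{A}_0(\Sigma)$ to replace $\rho(f)$ by a finite union of convex pieces indexed by configurations of basic sets of $f$, and then to bound the number of such pieces via the topology of $\Sigma$. First, I would apply Smale's spectral decomposition to write $\Omega(f)=\Lambda_1\sqcup\cdots\sqcup\Lambda_N$ as a disjoint union of basic sets. For each $\Lambda_i$, the restricted rotation set $\rho_{\Lambda_i}(f)$ is convex: specification/shadowing in a hyperbolic basic set makes periodic orbits equidistributed among ergodic invariant measures, the rotation vector is the integral of a continuous homological displacement cocycle on $\Lambda_i$ (the reference paths $\gamma_x$ can be chosen continuously on the zero-dimensional set $\Lambda_i$), so $\rho_{\Lambda_i}(f)$ is the affine image of the simplex of $f|_{\Lambda_i}$-invariant measures.

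Next, I would globalize. Every orbit of $f$ has its $\alpha$- and $\omega$-limits in $\Omega(f)$, so empirical measures of $f$-orbits accumulate on invariant measures supported on unions of basic sets. Writing $\Lambda_i\preceq\Lambda_j$ when $W^u(\Lambda_i)\pitchfork W^s(\Lambda_j)\neq\emptyset$ (using transversality (2) of $\mathcal{A}_0$) and calling a \emph{chain} a finite admissible sequence in this quasi-order, the $\lambda$-lemma together with shadowing lets one realize any convex combination of rotation vectors from basic sets of a chain $\mathcal{C}$ by genuine orbits of $f$, up to a correction determined by the finitely many homology classes of transverse heteroclinic arcs appearing in $\mathcal{C}$. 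This yields a convex piece $K_{\mathcal{C}}\subset\rho(f)$ for each chain $\mathcal{C}$, and
\[
\rho(f)=\bigcup_{\mathcal{C}} K_{\mathcal{C}}.
\]

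To arrange $0\in K_{\mathcal{C}}$ for every $\mathcal{C}$, I would use that the Lefschetz number of $f\simeq\operatorname{id}_{\Sigma}$ equals $\chi(\Sigma_g)=2-2g\neq 0$, so $f$ has a fixed point; by the homotopy-uniqueness of the isotopy, a suitable such fixed point has rotation vector $0$, and attaching it as a common base for every admissible chain forces $0\in K_{\mathcal{C}}$. The main obstacle is the sharp count $2^{5g-3}$. This must be a purely topological estimate on the number of chains $\mathcal{C}$ producing distinct pieces, and cannot depend on the combinatorial complexity of the individual $\Lambda_i$ (which can be arbitrary shifts of finite type). I expect this to follow from a careful enumeration of isotopy classes of the stable and unstable laminations of non-trivial basic sets on $\Sigma_g$ together with the finitely many essential homology classes of heteroclinic arcs they can support, jointly constrained by $\chi(\Sigma_g)=2-2g$. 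Extracting the sharp exponent $5g-3$, and matching it against the genus-dependent constructions promised by the examples of the main theorem, is the delicate geometric core of the argument.
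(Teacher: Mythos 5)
Your first two steps are correct and match the paper: the convexity of $\rho_{\Lambda_i}(f)$ for each basic piece, and the decomposition $\rho(f)=\bigcup_{\mathcal{C}}\rho_{\mathcal{C}}(f)$ over chains via the generalized shadowing lemma. However, the remaining two steps contain genuine gaps that conceal essentially all the difficulty of the theorem.

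First, the claim that one can force $0\in K_{\mathcal{C}}$ for every chain $\mathcal{C}$ simply by \emph{attaching a zero-rotation fixed point as a common base} does not work: a chain $\mathcal{C}$ whose basic pieces are all contained in a single essential annulus mapped off itself may have $\rho_{\mathcal{C}}(f)$ a segment bounded away from $0$, with no zero-rotation piece heteroclinically comparable to it. What the paper actually proves (Theorem~\ref{t.starshape}) is that $\rho(f)$ is \emph{star-shaped} about $0$; this requires showing that rotation sets of such chains are radial segments whose continuation to $0$ is filled in by other chains, and the proof relies on the machinery of quasi-invariant and Conley surfaces (Section~\ref{s.conley}) together with Lefschetz--Nielsen index theory on covers (Lemma~\ref{l.filling}).

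Second, and more seriously, your plan to derive the bound $2^{5g-3}$ from \emph{a careful enumeration of isotopy classes of the stable and unstable laminations} and from $\chi(\Sigma_g)$ misreads where the obstruction lies. The number of maximal chains is \textbf{not} bounded by the topology of $\Sigma$: as the paper notes explicitly, it is generically arbitrarily large (for instance by stacking many annular pieces in parallel isotopic annuli). The theorem is not proved by counting chains; it is proved by showing that the convex sets $\rho_{\mathcal{C}}(f)$ associated to different chains sharing the same \emph{support} (the set of subsurfaces of an essential Conley decomposition that they touch) can be \emph{merged} into a single convex set. This requires showing that trivial pieces contribute nothing (Proposition~\ref{sacar.triviales}) and, crucially, a forcing theorem (Theorem~\ref{conexion cruzada}): heteroclinic connections between annular pieces in different packages are forced once certain oriented connections exist. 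It is this forcing, combined with the direct-sum structure of $H_1$ of disjoint annular packages (Lemmas~\ref{join1}, \ref{join2}), that makes the union over all such chains convex. Only then does the count reduce to subsets of an essential decomposition of size at most $5g-5$, each contributing at most $4$ convex sets according to the orientation markings of the end pieces of the chain. None of this is visible from the laminations-count heuristic you propose, and no amount of Euler-characteristic bookkeeping on laminations by itself can bound the number of convex pieces.
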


This result is in Section \ref{s.prin}, Theorem \ref{mainthm}. The version presented there
contains further information, locating these convex sets in certain homology subspaces and showing  they have finitely many vertices, all of them rational. We avoid these details here for the sake of simplicity, and only give an idea of where these homology subspaces come from. They are
 related to a decomposition of $\Sigma$ into essential subsurfaces, that we call \emph{filled Conley
surfaces} for $f\in \mathcal{A}_0(\Sigma)$, and we obtain from Conley's fundamental theorem \cite{conleyfundthm} after some modifications. We then build certain collections of these subsurfaces, which will be given in terms of the {\em heteroclinical relations} among the {\em basic pieces} of the map $f$ (this makes strong use of the properties of maps in $\mathcal{A}_0(\Sigma)$). Finally, it is shown that each convex set in the decomposition of $\rho(f)$ lives in the sum of the homology spaces of the subsurfaces in one of these collections.

The bound $2^{5g-3}$ comes from this construction, by showing that each of these homology subspaces contains at most $4$ convex sets of the decomposition. (The essential decomposition will allow annuli, but no repetition of their homotopy types, thus has a maximum of $5g-5$ subsurfaces). There are some coarse considerations involved in this bound, as we explain right after the statement of Theorem \ref{mainthm}. However, in section \label{s.ejemploexp} we provide a family of examples that realize Theorem \ref{mainthmintroaxa} with $2^{[g/2]}$ convex sets, showing that a sharp bound must be exponential.



In Section \ref{s.ej} we present a list of examples showing
remarkable situations about the positions these convex sets may take. The statement about rotation sets with non-empty interior will also be proved for $f\in \mathcal{A}_0(\Sigma)$ in Section \ref{s.prinint}, and thus deduced for the same $C^0$ open dense set $\mathcal{U}$.

\subsection{Structure of the article}\label{ss.structure}

Section \ref{s.pre} covers the pre-requisites: alternative definitions of the homology rotation set for higher genus closed oriented surfaces, and an overview of the theory of Axiom A dynamics. 
In Section
\ref{s.rotsetbp} there is a first approximation to Theorem \ref{mainthmintroaxa}: supported on
previous results \cite{Zi,Pass1} we first observe that the rotation sets of a fitted axiom A map $f$ restricted to the \emph{basic pieces} of $f$ are given by rational polyhedra. This already established result
only takes care of the rotation vectors of non-wandering points, but that is not sufficient, since the wandering dynamics also play a role in the rotation set. Thus in the second part of Section \ref{s.rotsetbp}
we study the wandering dynamics, and conclude that $\rho(f)$ is given
by a union of convex sets indexed on maximal chains of the directed graph given by
heteroclinical relations in-between basic pieces of the map $f$. This gives $\rho(f)$ as union of finitely many rational polyhedra, but notice the important fact
that at this point we are still far from Theorem \ref{mainthmintroaxa}, as generically the number of such chains is arbitrarily large. The following sections represent the core of the article
and are devoted to prove that the convex sets just obtained can be encapsulated into at most $2^{5g-3}$ convex sets.

\smallskip

To achieve this objective we start by associating topological information to each basic piece, though the concepts of
\emph{Quasi-invariant surfaces} and \emph{Conley surfaces} that are introduced in Section
\ref{s.conley} and will be the main tools for this work. Basically, we classify basic pieces into three cases:
\emph{trivial}, \emph{annular} and \emph{curved}, according to the topology of their associated Conley surfaces. From the definition one easily obtains
that the number of \emph{curved basic pieces} is bounded by the topology of $\Sigma$,
thus the problem we had is now reduced to control those chains where \emph{trivial}
and \emph{annular} basic pieces participate. Before that, in Section \ref{s.agujas}, we apply the tools of Section \ref{s.conley} to show that the rotation set is star shaped about the origin. In Section \ref{s.triviales} we prove that \emph{trivial} basic pieces can be completely neglected in order to compute $\rho(f)$.

\smallskip

Section \ref{s.annular} is devoted to deal with chains
containing \emph{annular} basic pieces. In this case we can not neglect them, rather we
have to develop a method to encapsulate the rotation sets of different chains containing \emph{annular} pieces whose Conley annuli have the same homotopy types. This is achieved by a serie of techincal results that culminate in Theorem \ref{conexion cruzada}. Then Section \ref{s.prin}  gives the extended version of Theorem \ref{mainthmintroaxa}, and completes its proof.
Section \ref{s.prinint} covers the case of a non-empty interior rotation set, which gives the second part of our main result,
and  Section \ref{s.ej} contains a list of substantial examples.

\medskip

\textbf{Acknowledgments:} \emph{We are grateful to Pierre-Antoine Guih\'eneuf for his inputs on the
example 10.4. We also thank Mart\'in Sambarino and Rafel Potrie for useful discussions concerning hyperbolic dynamics.}

\section{Preliminaries}\label{s.pre}

In this section we introduce the main objects of this article, as well as the foundational results we will need to use. We will focus in two objects: \emph{homological rotation set} and \emph{fitted axiom A diffeomorfisms}. A third important tool will be the \emph{Fundamental Theorem of Dynamical Sistems} due to Conley, which will be deferred until section \ref{s.conley}, as we need to develop a series of lemmas concerning this seminal result.

The reader familiar with these subjects can skip the section.

\subsection{Homological rotation set}

We shall give four different definitions for homological rotation sets which coincide, each of them useful in different contexts. Let $f\in\textrm{Homeo}_0(\Sigma_g)$, and $(f_t)_{t\in[0,1]}$ some isotopy from the identity to $f$. First recall the classical definition from the introduction: 
\begin{equation}\label{def1} \rho(f)=\left\{\lim_i \frac{[\gamma_{f^{n_i}(x_i)}^{-1}\cdot (f_t(x_i))_{t\in[0,n_i]} \cdot{\gamma_{x_i}}]}{n_i}:\ x_i\in\Sigma_g,\ n_i\nearrow\infty\right\}\subset \R^{2g} \end{equation}
where $\{\gamma_x\}_{x\in\Sigma_g}$ is a bounded family of paths from a base point $x_0\in\Sigma_g$ to $x\in\Sigma_g$, and $(f_t)_{t\in[0,n]}$ is the natural self-concatenation of the isotopy $(f_t)_{t\in[0,1]}$. We remark that the sequences $x_i$ and $n_i$ in the definition range over those for which the limit exists and this convention is used throughout the article.

Next we assume that the base point $x_0$ is fixed through the isotopy, i.e. $f_t(x_0)=x_0$ for all $t\in[0,1]$. Then we can rewrite the homological rotation set as

 \begin{equation}\label{def2} \rho(f)=\left\{\lim_i \frac{[\gamma_{f^{n_i}(x_i)}^{-1}\cdot f^{n_i}(\gamma_{x_i})]}{n_i}:\ x_i\in\Sigma_g,\ n_i\nearrow\infty\right\}\subset \R^{2g} \end{equation}

For $g>1$, it is a consequence of Lefschetz index theory that $f$ has an {\em irrotational fixed point}, i.e. the isotopy $f_t$ can be chosen to fix this point for all $t$. Thus we can always find a base point $x_0$ for this second definition.

We shall consider the {\em abelian cover} $\pi_{ab}:\Sigma_g^{ab}\to\Sigma_g$, which is the covering space that corresponds to the commutator subgroup of $\pi_1(\Sigma_g)$. Thus the group of deck transformations of $\Sigma_g^{ab}$ is $H_1(\Sigma_g;\Z)$. We will identify $\Sigma_g^{ab}$ with a subsurface of $H_1(\Sigma_g,\R)=\R^{2g}$, invariant under the group of integer translations, so that the deck transformations of $\Sigma_g^{ab}$ are exactly (the restrictions to $\Sigma_g^{ab}$ of) these translations.

This can be done by the following embedding: Let $\omega_1,\ldots,\omega_{2g}$ be {\em generic} $1$-forms on $\Sigma_g$ that represent a basis of the cohomology group $H^1(\Sigma_g;\Z)$. Let $x_0$ be a base point in $\Sigma_g^{ab}$ and $\{\beta_x\}_{x\in\Sigma_g^{ab}}$ a family of curves in $\Sigma_g^{ab}$ from $x_0$ to $x\in\Sigma_g^{ab}$. Then the embedding can be written as

$$\Phi:\Sigma_g^{ab}\to \R^{2g} \ :\ \Phi(x)=\left(\int_{\pi_{ab}(\beta_x)}\omega_1,\ldots, \int_{\pi_{ab}(\beta_x)}\omega_{2g}\right) $$
which does not depend on the choice of curves $\beta_x$.

Let $\hat{f}_t$ be the lift of the homotopy $f_t$ to $\Sigma^{ab}_g$ with $\hat{f}_0=Id$. We obtain $\hat{f}=\hat{f}_1$, which is a lift of $f$. The homological rotation set of $f$ can then be expressed as follows,

\begin{equation}\label{def3} \rho(f)=\left\{\lim_i \frac{\hat{f}^{n_i}(x_i)-x_i }{n_i}:\ x_i\in\Sigma_g^{ab},\ n_i\nearrow\infty\right\}\subset \R^{2g} \end{equation}

If $D\subset \Sigma_g^{ab}$ is a fundamental domain, we can give yet another expression as above by taking limits of the form $\hat{f}^{n_i}(x_i)/n_i$ where $x_i\in D$. The last equivalent definition involves Hausdorff limits of compact subsets of $\R^{2g}$. The metric on $\R^{2g}$ is the standard Euclidean metric. Let $D\subset \Sigma_g^{ab}$ be a compact fundamental domain. Then we have that

\begin{equation}\label{def4} \rho(f)=\bigcup \left\{\mathcal{L} : \mathcal{L}=\hlim\frac{\hat{f}^{n_i}(D)}{n_i},\ n_i\nearrow\infty \right\} \end{equation}

In the case of the torus ($g=1$), we have that $\Sigma_1^{ab} = \R^2$ and that the Hausdorff limit $\hlim\frac{\hat{f}^{n}(D)}{n}$ exists \cite{MiZi1}. It is still open whether this is also true for $g>1$.

The equivalence between all definitions of $\rho(f)$ given above is straightforward:

\begin{prop}\label{p.defrotsetprop}

Definitions \ref{def1}, \ref{def2}, \ref{def3} and \ref{def4} are equivalent.

\end{prop}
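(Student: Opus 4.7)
The plan is to prove the cycle \ref{def1} $\Leftrightarrow$ \ref{def2} $\Leftrightarrow$ \ref{def3} $\Leftrightarrow$ \ref{def4}; the common strategy is that consecutive definitions differ only by terms bounded uniformly in $x_i,n_i$ and hence vanishing upon division by $n_i\to\infty$.

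For \ref{def1} $\Leftrightarrow$ \ref{def2}: since the value of \ref{def1} is known to be independent of both the family $\{\gamma_x\}$ and the isotopy $(f_t)$, I would fix an irrotational fixed point $x_0$ (available by Lefschetz for $g>1$) and pick an isotopy satisfying $f_t(x_0)=x_0$. It then suffices to show that the loops
\begin{equation*}
\gamma_{f^{n_i}(x_i)}^{-1}\cdot (f_t(x_i))_{t\in[0,n_i]}\cdot \gamma_{x_i}\quad\text{and}\quad \gamma_{f^{n_i}(x_i)}^{-1}\cdot f^{n_i}(\gamma_{x_i})
\end{equation*}
have the \emph{same} homology class for every $n_i$. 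Equivalently, the loop $L_n:=\gamma_{x_i}\cdot (f_t(x_i))_{t\in[0,n]}\cdot f^n(\gamma_{x_i})^{-1}$ based at $x_0$ is null-homotopic, which I would prove by induction on $n$. The base case uses the square $H(s,t)=f_t(\gamma_{x_i}(s))$, whose $s=0$ edge is constant at $x_0$ by the choice of isotopy, witnessing a homotopy from $\gamma_{x_i}$ to $f(\gamma_{x_i})$ whose boundary circuit is exactly $L_1$; the inductive step inserts $f^j(\gamma_{x_i})\cdot f^j(\gamma_{x_i})^{-1}$ between consecutive segments of the isotopy concatenation and applies the base case to the starting point $f^j(x_i)$ with path $f^j(\gamma_{x_i})$.

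For \ref{def2} $\Leftrightarrow$ \ref{def3}: I would lift everything to the abelian cover. Let $\tilde x_i$ be the endpoint of the lift of $\gamma_{x_i}$ from a fixed $\tilde x_0$ and let $\widetilde{f^{n_i}(x_i)}$ denote the analogous endpoint for $\gamma_{f^{n_i}(x_i)}$. Using that $\hat f_t(\tilde x_0)=\tilde x_0$ (the projected path $f_t(x_0)$ is constant) together with the uniqueness of lifts, the homology class in \ref{def2} corresponds under $\Phi$ to the integer translation $\hat f^{n_i}(\tilde x_i)-\widetilde{f^{n_i}(x_i)}\in\R^{2g}$ exactly. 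Since $\tilde x_i-\tilde x_0$ and $\widetilde{f^{n_i}(x_i)}-\tilde x_0$ are uniformly bounded by the hypothesis on the family $\{\gamma_x\}$, division by $n_i$ kills both corrections, so the accumulation points of this quantity coincide with those of $(\hat f^{n_i}(\tilde x_i)-\tilde x_i)/n_i$, matching \ref{def3}.

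Finally, \ref{def3} $\Leftrightarrow$ \ref{def4}: if $v\in\hlim \hat f^{n_i}(D)/n_i$, pick $y_i\in D$ with $\hat f^{n_i}(y_i)/n_i\to v$; since $D$ is bounded we get $v=\lim(\hat f^{n_i}(y_i)-y_i)/n_i$, which lies in \ref{def3}. Conversely, starting with $v\in$ \ref{def3} along a sequence $x_i$, I would translate each $x_i$ into $D$ by a deck transformation; by equivariance of $\hat f$ under such translations this leaves the displacement invariant, so one may assume $x_i\in D$. The sets $\hat f^{n_i}(D)/n_i$ then all lie in a common bounded subset of $\R^{2g}$, using that $y\mapsto \hat f(y)-y$ descends to a continuous function on the compact $\Sigma_g$ and is therefore bounded, so a standard Hausdorff-compactness argument extracts a convergent subsequence whose limit necessarily contains $v$. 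The only genuinely non-trivial ingredient is the null-homotopy induction in the first step; the remaining equivalences are bookkeeping of uniformly bounded error terms.
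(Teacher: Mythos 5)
The paper does not actually supply a proof of Proposition~\ref{p.defrotsetprop}: it declares the equivalences ``straightforward'' and moves on. Your write-up is a correct and complete execution of what the authors evidently have in mind. The only piece that requires more than ``bookkeeping of uniformly bounded error terms'' is exactly the one you flag: the null-homotopy of $L_n$ linking \ref{def1} to \ref{def2}. Your square $H(s,t)=f_t(\gamma_{x_i}(s))$ is the right device, and it is legitimate to invoke the independence of \ref{def1} from the choice of $\{\gamma_x\}$ and of the isotopy (which the paper justifies via Hamstrom's theorem before stating \ref{def2}) so that you may assume $f_t(x_0)=x_0$. For \ref{def2}$\Leftrightarrow$\ref{def3} the key identity is that the homology class of the loop in \ref{def2} is precisely the deck transformation $\hat f^{n_i}(\tilde x_i)-\widetilde{f^{n_i}(x_i)}$, and you correctly reduce the discrepancy with $\hat f^{n_i}(\tilde x_i)-\tilde x_i$ to two uniformly bounded corrections. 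For \ref{def3}$\Leftrightarrow$\ref{def4} your use of $\hat f$-equivariance to push $x_i$ into $D$, together with the uniform bound $|\hat f(y)-y|\leq\delta_f$ giving precompactness of $\{\hat f^{n}(D)/n\}$ in the Hausdorff metric, is exactly what is needed. One small presentational remark: the paper concatenates paths right-to-left (so $\gamma_{f^{n}(x)}^{-1}\cdot(f_t(x))\cdot\gamma_x$ starts with $\gamma_x$), whereas your $L_n$ is written left-to-right; since you only need $L_n$ null-homotopic, and a loop is null-homotopic iff its reverse is, this has no mathematical effect, but it is worth keeping the conventions aligned with the text.
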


\begin{cor}\label{c.proprotset1}

For any $f\in\textrm{Homeo}_0(\Sigma_g)$, $\rho(f)$ is a compact and connected set containing $0$.

\end{cor}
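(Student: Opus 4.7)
The plan is to exploit Proposition \ref{p.defrotsetprop} and pick whichever of the four definitions makes each property transparent: definition \ref{def3} for the arithmetic properties (containing $0$, boundedness, closedness) and definition \ref{def4} for connectedness. To set things up, I would first invoke the Lefschetz index argument (already mentioned in the text for $g>1$) to get an irrotational fixed point $x_0$ of $f$, choose the isotopy $(f_t)$ so that $f_t(x_0)=x_0$ for all $t$, and let $\hat f$ be the lift to $\Sigma_g^{ab}$ fixing a prescribed lift $\hat x_0$ of $x_0$. Then I would fix a compact fundamental domain $D\subset\Sigma_g^{ab}$ containing $\hat x_0$.

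For $0\in\rho(f)$ and boundedness I would use \ref{def3}. Since $\hat f^n(\hat x_0)=\hat x_0$, the sequence $(\hat f^n(\hat x_0)-\hat x_0)/n$ is identically zero, so $0\in\rho(f)$. For boundedness, observe that the displacement $\Psi(x):=\hat f(x)-x$ is invariant under the deck group, which acts by integer translations in the identification of $\Sigma_g^{ab}$ with a subsurface of $\R^{2g}$: indeed, since $\hat f$ commutes with the deck action, $\hat f(x+v)-(x+v)=\hat f(x)-x$ for every $v\in H_1(\Sigma_g;\Z)$. Hence $\Psi$ descends to a continuous map on the compact surface $\Sigma_g$, so it is uniformly bounded by some $C$. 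Iterating and telescoping yield $\|(\hat f^n(x)-x)/n\|\le C$, placing $\rho(f)$ inside $\overline{B(0,C)}$.

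Closedness is a standard diagonal extraction on \ref{def3}: if $v_k\in\rho(f)$ with $v_k\to v$, pick for each $k$ a pair $(x_k,n_k)$ with $n_k\ge k$ such that $(\hat f^{n_k}(x_k)-x_k)/n_k$ is within $1/k$ of $v_k$; then the latter sequence converges to $v$, so $v\in\rho(f)$. Combined with boundedness, this gives compactness.

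For connectedness I would switch to \ref{def4}. Each $\hat f^n(D)/n$ is the continuous image of the connected set $D$, hence connected, and by the previous step all these sets live in a common bounded region of $\R^{2g}$. Since the Hausdorff limit of a convergent sequence of connected compacta in a bounded region of Euclidean space is connected, every $\mathcal L$ appearing in \ref{def4} is connected. Moreover $\hat x_0/n\in\hat f^n(D)/n$ for every $n$ (because $\hat f^n(\hat x_0)=\hat x_0\in D$), and $\hat x_0/n\to 0$, so $0$ belongs to every such $\mathcal L$. A union of connected sets sharing the common point $0$ is connected, completing the proof. There is no real obstacle here; the only thing to be careful about is choosing the isotopy to fix an irrotational point so that the trivial orbit $\{\hat x_0\}$ can serve simultaneously as the witness for $0\in\rho(f)$ and as the common basepoint forcing each Hausdorff limit to meet at the origin.
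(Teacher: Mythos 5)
Your proof is correct. The paper states this corollary without an argument, implicitly as a "straightforward" consequence of Proposition \ref{p.defrotsetprop}; your write-up supplies exactly the expected details — definition (\ref{def3}) plus boundedness of the displacement $\hat f-\mathrm{Id}$ for $0\in\rho(f)$, boundedness, and closedness via diagonal extraction, and definition (\ref{def4}) plus the irrotational (Lefschetz) fixed point to anchor every Hausdorff limit at the origin for connectedness — so it follows the intended line of reasoning.
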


When restricted to an $f$-invariant set $K\subset\Sigma_g$, the rotation set $\rho_K(f)$ can be defined equivalently by any of the following options:

\begin{enumerate}

\item $\rho_K(f)=\left\{\lim_i \frac{[\gamma_{f^{n_i}(x_i)}^{-1}\cdot (f_t(x_i))_{t\in[0,n_i]} \cdot{\gamma_{x_i}}]}{n_i}:\ x_i\in K,\ n_i\nearrow\infty\right\}\subset \R^{2g},$

\item $\rho_K(f)=\left\{\lim_i \frac{[\gamma_{f^{n_i}(x_i)}^{-1}\cdot f^{n_i}(\gamma_{x_i})]}{n_i}:\ x_i\in K,\ n_i\nearrow\infty\right\}\subset \R^{2g},$

\item $\rho_K(f)=\left\{\lim_i \frac{\hat{f}^{n_i}(x_i)-x_i }{n_i}:\ x_i\in\Sigma_g^{ab}\cap\piab^{-1}(K),\ n_i\nearrow\infty\right\}\subset \R^{2g},$

\item $\rho_K(f)=\bigcup\left\{ \mathcal{L}:\mathcal{L}=\hlim\frac{\hat{f}^{n_i}(\hat{K})}{n_i},\ n_i\nearrow\infty\right\}$, where $\hat{K} = D\cap \piab^{-1}(K)$ and $D$ is a compact fundamental domain of $\Sigma_g^{ab}$.

\end{enumerate}

That is exactly analogous to the discussion for the global rotation set.

\subsection{Fitted axiom A diffeomorphisms}

In this section we review the definition of fitted axiom A diffeomorphisms, and state the results about them that are relevant for this work. Throughout this article, we define diffeomorphisms as bijective maps $f:M\to M$ so that both $f$ and $f^{-1}$ are $C^1$ maps with invertible differentials at every point of $M$.

Recall that an invariant compact set $\Lambda$ of a diffeomorphism $f:M\to M$ defined in a Riemannian manifold,
is \emph{hyperbolic} if there exists a $df$-invariant splitting $TM=E^s\oplus E^u$ defined on $\Lambda$, a positive constant $C$ and a positive constant $\lambda\in(0,1)$ so that:
\begin{itemize}

\item[(i)] $d_xf(E^s_x)=E_{f(x)}^s$ and $\|d_x f^n|_{E_s^x}\|<C\lambda^n$,

\item[(ii)] $d_xf(E^u_x)=E_{f(x)}^u$ and $\|d_x f^{-n}|_{E_u^x}\|<C\lambda^n$.

\end{itemize}
\medskip
For this kind of invariant set we have the celebrated Stable manifold Theorem. Let us state the definitions involved in that result.
For a homeomorphism $f:M\to M$
\begin{itemize}

\item the $\varepsilon$-\emph{stable} set of a point $x\in M$ is defined by
\small
$$W^s_{\varepsilon}(x,f)=\{y\in M:\ d(f^n{x},f^n{y})<\varepsilon\ n\in\N\},$$
\normalsize

\item the $\varepsilon$-\emph{unstable} set of a point $x\in M$ is defined by
\small
$$W^u_{\varepsilon}(x,f)=\{y\in M:\ d(f^{-n}{x},f^{-n}{y})<\varepsilon\ n\in\N\},$$
\normalsize

\item the \emph{stable} set of a point $x\in M$ is defined by
\small
$$W^s(x,f)=\{y\in M:\ d(f^n(x),f^n(y))\to_{n\to+\infty}0\},$$
\normalsize

\item the \emph{unstable} set of a point $x\in M$ is defined by
\small
$$W^u(x,f)=\{y\in M:\ d(f^n(x),f^n(y))\to_{n\to-\infty}0\}.$$
\normalsize

\end{itemize}

\begin{thm}[Stable manifold Theorem, \cite{Katok}, Section 6]\label{t.stamani}

Let $\Lambda$ be \textcolor{red}{a} hyperbolic compact invariant set of a diffeomorphism $f:M\to M$. Then
there exists $\varepsilon>0$ so that for any $x\in\Lambda$
\begin{enumerate}

\item $W^s_{\varepsilon}(x,f)$ is a $C^1$ embedded manifold tangent to $E^s_x$, which varies $C^1$
continuously with the point $x\in\Lambda$.

\item $W^s_{\varepsilon}(x,f)\subset W^s(x,f)$.

\item $W^s(x,f)=\bigcup_{n\in\N}f^{-n}(W^s_{\varepsilon}(f^n(x),f))$ which is $C^1$ immersed manifold.

\end{enumerate}
The analogous statement holds for the unstable set.
\end{thm}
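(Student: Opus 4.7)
The plan is to prove the Stable Manifold Theorem via the Hadamard graph transform method, working in adapted local coordinates at each point of $\Lambda$. First I would fix an adapted Riemannian metric on a neighborhood of $\Lambda$ so that the contraction/expansion constants in the splitting $TM = E^s \oplus E^u$ become uniform one-step estimates (rather than the asymptotic estimates with the constant $C$). Using the exponential map at each $x \in \Lambda$, I obtain charts $\varphi_x : B(0,r) \subset T_xM \to M$ in which $f$ is represented, via $\tilde f_x = \varphi_{f(x)}^{-1} \circ f \circ \varphi_x$, as a map between tangent spaces that to first order agrees with the linear map $d_xf : E^s_x \oplus E^u_x \to E^s_{f(x)} \oplus E^u_{f(x)}$. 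The key point is that, writing $\tilde f_x(u,v) = (A_x u + \alpha_x(u,v), B_x v + \beta_x(u,v))$ with $\|A_x\| < \lambda$, $\|B_x^{-1}\| < \lambda$, the nonlinear terms $\alpha_x, \beta_x$ have arbitrarily small Lipschitz constant on a small enough ball.

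Next I introduce the stable and unstable cone fields $C^s_x = \{(u,v) : \|v\| \le \|u\|\}$ and $C^u_x = \{(u,v): \|u\| \le \|v\|\}$. The uniform hyperbolicity and smallness of the nonlinear terms give strict invariance $d_x\tilde f(C^u_x) \subset \operatorname{int} C^u_{f(x)}$ with strict expansion, and dually for $C^s$ under $\tilde f^{-1}$. I then consider the space $\mathcal{G}_x$ of Lipschitz graphs $\{(u, \sigma(u)) : u \in E^s_x, \|u\| \le \varepsilon\}$ with $\sigma(0)=0$ and $\operatorname{Lip}(\sigma) \le 1$, and define the graph transform $\mathcal{T}_x$ by: the image $\tilde f_x(\operatorname{graph}(\sigma))$, intersected with the $\varepsilon$-ball at $f(x)$, is again a graph (by cone invariance and an implicit function argument), which defines $\mathcal{T}_x(\sigma) \in \mathcal{G}_{f(x)}$.

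The operator $\mathcal{T} = (\mathcal{T}_x)_{x \in \Lambda}$ acts on the fiber bundle $\bigsqcup_x \mathcal{G}_x$, and I would show it is a contraction in the sup-norm with rate $\lambda$, after applying the inverse graph transform along the orbit of $x$ (i.e.\ iterating $\mathcal{T}_{f^{-1}(x)}$'s inverses). The unique fixed section assigns to each $x$ a local stable graph $W^s_\varepsilon(x,f)$; its defining property immediately implies items (1)--(3) of the theorem at the topological level, since points on the graph have orbits that stay $\varepsilon$-close and in fact contract at rate $\lambda$. Continuity in $x$ follows from the uniform contraction, and a standard $C^1$ section argument (e.g.\ a fiber contraction on the bundle of tangent planes to such graphs, with fiber contraction rate $\lambda$ versus base rate $1$) upgrades the Lipschitz graph to a $C^1$ graph tangent to $E^s_x$ and depending $C^1$-continuously on $x$. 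Finally, (3) is obtained by taking $W^s(x,f) = \bigcup_n f^{-n}(W^s_\varepsilon(f^n(x),f))$: forward orbits of points in $W^s(x,f)$ eventually enter $W^s_\varepsilon(f^N(x),f)$, so the formula gives every point whose orbit converges to that of $x$, and the union is an increasing union of $C^1$ embedded manifolds, hence $C^1$ immersed.

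The main obstacle is the $C^1$ regularity of the fixed graph and its continuous dependence on $x$: the Lipschitz/contraction argument gives existence and $C^0$ dependence easily, but showing that the tangent plane at a point of the graph converges under the graph transform (so that the graph is $C^1$ with tangent equal to $E^s$) requires the fiber contraction principle applied to the bundle of candidate tangent planes, which in turn relies crucially on the spectral gap between $\lambda$ and $1$. Once this is in hand, higher regularity (if $f$ is $C^r$) follows by an analogous fiber contraction on $r$-jets, but this goes beyond what the stated theorem requires.
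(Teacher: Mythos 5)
The paper does not prove this statement: Theorem \ref{t.stamani} is quoted as a standard result from Katok--Hasselblatt \cite{Katok}, Section 6, so there is no in-paper argument to compare against. Your sketch is the classical Hadamard graph-transform proof, which is essentially the route taken in that reference, and the overall plan (adapted metric, exponential charts, cone invariance, graph transform as a fiber-wise contraction, fiber contraction on tangent planes for $C^1$ regularity, and $W^s(x,f)$ as the increasing union of preimages of local graphs) is the right one.

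Two points in the sketch are glossed over and deserve flagging. First, the graph transform produces a fixed family of Lipschitz (then $C^1$) graphs; one must still \emph{identify} this fixed graph with the dynamically defined set $W^s_\varepsilon(x,f)=\{y: d(f^n x,f^n y)<\varepsilon,\ n\in\N\}$. The inclusion ``graph $\subset W^s_\varepsilon$'' is immediate from the contraction estimate along the graph, but the reverse inclusion requires the cone argument: a point off the graph, under iteration, has its displacement from the graph pushed into the unstable cone and expanded, so it must leave the $\varepsilon$-neighborhood; this is what makes $W^s_\varepsilon$ exactly the graph and not something larger. Second, in item (3) you assert that every $y\in W^s(x,f)$ eventually lands in $W^s_\varepsilon(f^N(x),f)$; this is true but only after the identification above is in place, since $d(f^n x,f^n y)\to 0$ gives $d(f^n x,f^n y)<\varepsilon$ for $n\ge N$, and one then invokes the characterization of $W^s_\varepsilon$ as \emph{all} points staying $\varepsilon$-close. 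With those two steps made explicit, the argument is sound and matches the standard treatment.
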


A \emph{basic piece} $\Lambda\subset M$ of a diffeomorphism $f:M\to M$ is a hyperbolic set having the following two properties:

\begin{enumerate}

\item $\Lambda$ is transitive,

\item $\Lambda$ is a local maximal invariant set: there exists a neighbourhood $U$ of $\Lambda$ so that
\small
$$\Lambda=\bigcap_{n\in\Z}f^n(U).$$
\normalsize

\end{enumerate}

Whenever $\Lambda=\bigcap_{n\in\N}f^{n}(U)$ we say that $\Lambda$ is an \emph{attractor}. If $\Lambda$ is an attractor
for $f^{-1}$ we say that $\Lambda$ is a \emph{repellor}.  The dimension of the unstable bundle is constant on $\Lambda$, due to the continuity of the stable and unstable bundles and the existence of a transitive point. We call this integer \emph{index} of $\Lambda$, which in surfaces can be $0, 1,$ or $2$:
$0$ for an attractor, $2$ for a repellor. A basic piece of index $1$ is called a \emph{saddle} piece.

\smallskip

A \emph{wandering point} $x$ of $f:M\to M$ is a point contained in an open set $U$ so that $U\cap f^n(U)=\emptyset$
for every positive integer. The \emph{non-wandering} set is given by the complement of the set of wandering points, and is usually denoted by $\Omega(f)$. A diffeomorphism $f:M\to M$ is called \emph{axiom A} if
\begin{itemize}

\item[(i)] $\Omega(f)=\textrm{cl}[\textrm{Per}(f)]$,

\item[(ii)] $\Omega(f)$ is hyperbolic.

\end{itemize}

We have the following renowned result by S. Newhouse.

\begin{thm}[Spectral decomposition theorem, \cite{new}]\label{t.spedect}

Let $f:M\rightarrow M$ be an axiom A diffeomorphism. Then there exist a finite number of pairwise disjoint basic pieces $\Lambda_1,...,\Lambda_n\subset M$ for $f$ such that $\Omega(f)=\Lambda_1\cup...\cup\Lambda_n$. Further, for each $i=1,...,n$ there exists a finite number of compact and pairwise disjoint sets $\Lambda_{i,0},...,\Lambda_{i,k_i-1}$ such that:

\begin{itemize}

\item for every $j=0,...,k_i-1$ we have $f(\Lambda_{i,j})=\Lambda_{i,j+1\mbox{ }mod(k_i)}$;

\item for every $j=0,...,k_i-1$ and $x\in \Lambda_{i,j}$ the sets $W^u(x,f),W^s(x,f)$ are dense in $\Lambda_{i,j}$;

\item for every $j=0,...,k_i-1$ we have that $f^{k_i}\mid_{\Lambda_{i,j}}$ is topologically mixing.

\end{itemize}

\end{thm}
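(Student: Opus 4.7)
The plan is to follow the classical Smale/Newhouse approach, building on the Stable Manifold Theorem (Theorem~\ref{t.stamani}) and two standard tools from hyperbolic dynamics: the Shadowing Lemma and the Inclination ($\lambda$-) Lemma, both of which apply to any hyperbolic invariant set and hence to $\Omega(f)$.

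First I would establish the local product structure on $\Omega(f)$. Taking $\varepsilon>0$ as in Theorem~\ref{t.stamani} and shrinking if necessary, for any $x,y\in\Omega(f)$ with $d(x,y)<\delta$ sufficiently small the manifolds $W^s_\varepsilon(x,f)$ and $W^u_\varepsilon(y,f)$ meet transversely in a single point $[x,y]$, and a shadowing argument shows $[x,y]\in\Omega(f)$. Using axiom A, periodic points are dense in $\Omega(f)$. Define an equivalence relation on $\textrm{Per}(f)$ by $p\sim q$ iff $W^u(p,f)\transv W^s(q,f)\neq\emptyset$ and $W^s(p,f)\transv W^u(q,f)\neq\emptyset$. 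Transitivity of $\sim$ is where the $\lambda$-lemma enters: an intersection of $W^u(p)$ with $W^s(q)$, combined with one of $W^u(q)$ with $W^s(r)$, produces by the inclination lemma an intersection of $W^u(p)$ with $W^s(r)$. Let $\Lambda_i$ be the closure of an equivalence class. Local product structure shows that each $\Lambda_i$ is a clopen subset of $\Omega(f)$, so compactness of $\Omega(f)$ forces finitely many of them, and they give the desired disjoint decomposition $\Omega(f)=\Lambda_1\cup\cdots\cup\Lambda_n$.

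Next I would check that each $\Lambda_i$ is a basic piece. Hyperbolicity is inherited from $\Omega(f)$. Transitivity follows from the construction together with the Birkhoff transitivity argument: the density of heteroclinic orbits among pairs $p\sim q$ yields a dense orbit via shadowing. For local maximality, take a small neighbourhood $U_i$ of $\Lambda_i$ disjoint from the other $\Lambda_j$; any point in $\bigcap_{n\in\Z}f^n(U_i)$ is non-wandering, hence belongs to $\Omega(f)\cap U_i=\Lambda_i$.

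For the second part, fix $i$ and work with $g=f|_{\Lambda_i}$. Since $g$ is a transitive hyperbolic homeomorphism, one constructs a Markov partition and obtains, via the associated subshift of finite type, a cyclic decomposition: there is an integer $k_i\ge 1$ and compact pairwise disjoint sets $\Lambda_{i,0},\dots,\Lambda_{i,k_i-1}$ cyclically permuted by $g$ such that $g^{k_i}|_{\Lambda_{i,j}}$ is topologically mixing. Equivalently, this $k_i$ arises as the g.c.d.\ of the periods of periodic points in $\Lambda_i$, and the $\Lambda_{i,j}$ are the equivalence classes under the refinement of $\sim$ that additionally tracks the transit time modulo $k_i$ of a heteroclinic intersection. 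Density of stable and unstable manifolds in each $\Lambda_{i,j}$ is then a direct consequence of topological mixing of $g^{k_i}|_{\Lambda_{i,j}}$ together with the density of $W^u(x,g)$--orbits implied by the $\lambda$-lemma.

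The main obstacle is the bookkeeping in the cyclic decomposition: producing the integer $k_i$ and the associated partition intrinsically (without invoking a Markov partition, if one wants a purely topological argument) requires controlling how heteroclinic orbits between periodic points of different periods distribute modulo $k_i$, and verifying that this yields a well-defined $f$-invariant partition rather than a dependence on the choice of reference periodic point. Once this cyclic structure is in hand, mixing of $f^{k_i}$ on each component and density of invariant manifolds are routine consequences of transitivity plus the inclination lemma.
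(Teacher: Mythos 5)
The paper does not prove this statement; it is quoted verbatim as a background result and cited from Newhouse \cite{new}, so there is no proof in the paper to compare against. What you have written is the classical Smale--Newhouse argument, which is indeed the argument Newhouse gives, so in that sense your route is ``the same.''

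A few remarks on precision, since there are spots in your outline that would need shoring up if one were to actually write it down. Your local maximality argument asserts that any point of $\bigcap_{n\in\Z}f^n(U_i)$ is non-wandering; that is not automatic, and in fact this is the least trivial of the three basic-piece axioms to verify. The correct statement is that a compact hyperbolic set with local product structure is locally maximal, which one proves via expansiveness together with the shadowing lemma (any orbit trapped in a small neighbourhood of $\Lambda_i$ is shadowed by a genuine orbit in $\Lambda_i$, and by expansiveness they coincide). Your definition of $\sim$ using transverse intersections is workable---the $\lambda$-lemma does propagate transversality, so transitivity goes through---but the cleaner classical definition drops the transversality requirement from the start and only recovers it \emph{a posteriori} once local product structure on $\Lambda_i$ is established; requiring transversality up front forces you to argue that you have not accidentally refined the decomposition. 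For the cyclic piece, your identification of $k_i$ with the greatest common divisor of periods of periodic points in $\Lambda_i$ is correct, and you are right to flag this as the delicate step: the cleanest proof does pass through a Markov partition and reads off $k_i$ as the period of the irreducible transition matrix, while the ``intrinsic'' argument requires controlling transit times via Bowen's specification property or an equivalent device. Overall the outline is sound; these are matters of rigor rather than gaps in the idea.
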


\medskip

In the case where $\Omega(f)$ has topological dimension zero, the dynamics on each basic piece can be related to a sub-shift of finite type. In what follows we recall some aspects of this identification. 










In this article we call a {\em partition} of a set $K\subset\Sigma_g$ to a covering $\mathcal{P}$ of $K$ by finite disjoint sets. We say that $\mathcal{P}$ is {\em regular} if its elements are topological squares, intersecting $K$ in relative open sets of $K$.
Given a partition $\mathcal{P}=\{P_1,\ldots,P_N\}$ of some $f$-invariant set $K$, we may consider the {\em itinerary function} $\xi:K\to\{1,\ldots,N\}^{\Z}$, defined by
$$\xi_x(i)=j\mbox{ iff }f^i(x)\in P_j.$$

We say that a regular partition $\mathcal{P}$ of an $f$-invariant set $K$ is a \emph{Markov Partition}
whenever $\xi$ is a homeomorphism defining a conjugacy between $f:K\to K$ and a
sub-shift of finite type $\sigma:\Sigma_A\to\Sigma_A$, $\Sigma_A\subset\{1,\ldots,N\}^{\Z}$. Recall that a sub-shift of finite type is the restriction of the full-shift to a family of sequences associated to the an incidence $n\times n$
matrix $A$.

For a basic piece, we say that a Markov partition is \emph{adapted} when its elements are rectangles with sides contained in some local stable or unstable manifolds. If the piece is an atractor/repellor we expect all the sides of the rectangles to be in the local stable/unstable manifold of that piece. 
If it is a saddle, the rectangles would have a pair of opposite sides in some stable manifolds, and the other two sides in unstable manifolds.












We are now ready to state the theorem about Markov partitions associated to totally disconnected basic pieces. The result that associates Markov partitions to basic pieces is due to R. Bowen \cite{bow}, and though it works in any dimension, we are going to use a more refined statement for surfaces. Denote by $d_{\mathcal{P}}$ the maximal diameter of sets in $\mathcal{P}$.

\begin{thm}\cite{Be}\label{t.partitions}

Let $\Lambda$ be a totally disconnected basic piece of an axiom A surface diffeomorphism $f:\Sigma\to \Sigma$. Then for every $\varepsilon>0$ there exists an adapted Markov partition $\mathcal{P}$ of $\Lambda$ with $d_{\mathcal{P}}<\varepsilon$. 

\end{thm}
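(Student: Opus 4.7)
The plan is to follow Bowen's classical construction of Markov partitions for Axiom~A basic sets, and then exploit the total disconnectedness of $\Lambda$ in the two-dimensional ambient to promote the construction to an \emph{adapted} partition. We may assume $\Lambda$ is a saddle piece, since if $\Lambda$ is an attractor (resp.\ repellor) in a surface and totally disconnected, the unstable (resp.\ stable) bundle fills the tangent space, so $\Lambda$ is a finite union of periodic orbits and the statement is immediate (each point can be enclosed in a tiny topological disk which is vacuously an adapted Markov rectangle).

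First, fix $\delta>0$ smaller than the constant given by the Stable Manifold Theorem~\ref{t.stamani}, and small enough that for every pair $x,y\in\Lambda$ at distance at most $\delta$ the bracket $[x,y]:=W^s_\delta(x)\cap W^u_\delta(y)$ consists of a single point, varying continuously in $(x,y)$; this is the local product structure on $\Lambda$. Choose an $\eta$-dense set $\{p_1,\ldots,p_N\}\subset\Lambda$ with $\eta\ll\delta$, and set
\[
T_i=\{[x,p_i]\,:\,x\in\Lambda,\ d(x,p_i)\le\eta\}.
\]
Each $T_i$ is a closed subset of $\Lambda$ with local product structure and diameter $\lesssim\eta$. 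Bowen's iterative refinement procedure, applied to the covering $\{T_i\}$, returns finitely many pairwise disjoint subsets $R_1,\ldots,R_M$ of $\Lambda$ partitioning $\Lambda$ and satisfying the Markov property in the symbolic sense: the itinerary map with respect to $\{R_i\}$ conjugates $f|_\Lambda$ to a sub-shift of finite type, and for every $x\in R_i\cap f^{-1}(R_j)$ one has $f(W^s_{R_i}(x))\subset W^s_{R_j}(f(x))$ and $f^{-1}(W^u_{R_j}(f(x)))\subset W^u_{R_i}(x)$, where $W^{s/u}_{R_i}(x)$ denotes the stable/unstable ``slice'' of $R_i$ through $x$.

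The final step is to realize each $R_i$ as a topological square in $\Sigma$ with two opposite sides contained in local stable manifolds of points of $\Lambda$ and the other two in local unstable manifolds, in a way that these squares remain pairwise disjoint. Here total disconnectedness of $\Lambda$ is crucial: around each $R_i$ we can find thin topological rectangles $\widetilde R_i\subset\Sigma$ whose boundary arcs lie on local stable/unstable manifolds of points $x^\pm_i,y^\pm_i\in\Lambda$ flanking $R_i$, and whose interiors are pairwise disjoint. Concretely, given any two distinct clopen sub-pieces of $\Lambda$, total disconnectedness plus the local product structure lets us separate them by a finite pair of short stable and unstable arcs of chosen points of $\Lambda$. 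Doing this consistently along the combinatorial adjacencies dictated by the Markov relation $\{R_i\cap f^{-1}(R_j)\ne\emptyset\}$ yields a family $\mathcal{P}=\{\widetilde R_1,\ldots,\widetilde R_M\}$ of topological squares with adapted sides. Choosing $\eta$ (and hence $\delta$) sufficiently small at the outset ensures $d_{\mathcal P}<\varepsilon$.

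The main obstacle is the last step: one must ensure that the thickenings $\widetilde R_i$ are mutually disjoint and still fit together so that the partition of $\Lambda$ they induce coincides with $\{R_i\}$; equivalently, that the flanking stable and unstable arcs chosen for neighbouring $R_i$'s are globally compatible. This compatibility is forced by insisting that where two pieces $R_i,R_j$ are Markov-adjacent, the shared adapted side be a common arc of a single local stable or unstable manifold of a point in $\Lambda$, and by appealing to total disconnectedness to always slide any conflicting boundary off of $\Lambda$ while remaining within a fixed local stable/unstable leaf.
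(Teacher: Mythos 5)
The paper imports this theorem from Beguin \cite{Be} without supplying its own proof, so there is no in-paper argument to compare against; I evaluate your sketch on its own terms. The overall strategy --- reduce to saddle pieces, run Bowen's refinement on a covering by small product-structure rectangles, and then promote the resulting clopen subsets $R_i\subset\Lambda$ to topological squares in $\Sigma$ bounded by arcs of stable and unstable leaves --- is sound, and the reduction itself is correct: a zero-dimensional attractor or repellor in a surface has trivial $E^u$ or $E^s$ respectively, so it is a single periodic orbit (transitivity forces a single orbit, not merely a finite union) and any small disk around each point is vacuously an adapted Markov rectangle. The key geometric mechanism is also correctly identified: total disconnectedness makes the $R_i$ clopen and hence at mutually positive distance, while density of stable/unstable manifolds in a transitive saddle piece lets you thread arcs between $R_i$ and $\Lambda\setminus R_i$.

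Your final paragraph, however, contains a genuine confusion. An adapted Markov partition here is a family of \emph{pairwise disjoint} topological squares (``partition'' in this paper means finitely many disjoint sets covering $\Lambda$), so Markov-adjacent rectangles $\widetilde{R}_i,\widetilde{R}_j$ must \emph{not} share a common side, and the ``compatibility along shared adapted sides'' you worry about is a non-issue. The checks that actually remain are (a) that the four bounding arcs of $\widetilde{R}_i$ can be chosen with $\widetilde{R}_i\cap\Lambda=R_i$ exactly, and (b) that the squares can be shrunk so that $d_{\mathcal{P}}<\varepsilon$; both follow from total disconnectedness as you indicate. But once $\widetilde{R}_i\cap\Lambda=R_i$ holds, there is nothing further to verify on the Markov side: the itinerary function for $\{\widetilde{R}_i\}$ is literally the same map as for Bowen's $\{R_i\}$, hence already a conjugacy with a sub-shift of finite type. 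Streamlining the last step around this observation and dropping the shared-side discussion would close the argument.
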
















\smallskip

Hence Theorem \ref{t.spedect} and Theorem \ref{t.partitions} give us a precise description of the dynamics on the non-wandering set of an axiom A diffeomorphism for which $\Omega(f)$ has dimension zero.

\smallskip

We say that an axiom A diffeomorphism $f:M\to M$ is \emph{fitted} if:
\begin{itemize}

\item[(i)] $\Omega(f)$ is zero-dimensional,

\item[(ii)] whenever $z\in W^s(x,f)\cap W^u(y,f)$ then $z$ is a point of transversal intersection between
the two manifolds $W^s(x,f)$ and $W^u(y,f)$ (we denote this by $z\in W^s(x,f)\transv W^u(y,f)$).

\end{itemize}

Denote by $\mathcal{A}(M)$ the set of fitted axiom A diffeomorphisms of $M$, and $\mathcal{A}_0(M)$ the subset of those that are isotopic to the identity. The following is an interesting result by M. Shub and D. Sullivan, showing that fitted axiom A diffeomorphisms are dense in the $C^0$ topology.

\begin{thm}\cite{SS}\label{t.dense}

Let $M$ be a compact manifold. Then $\mathcal{A}(M)$ is dense in $\textrm{Homeo}(M)$ with the $C^0$ topology.

\end{thm}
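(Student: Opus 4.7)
The plan is to show that for any homeomorphism $f:M \to M$ and any $\varepsilon>0$ there is a fitted Axiom A diffeomorphism $g$ with $d_{C^0}(f,g)<\varepsilon$. The conceptual heart of the result is that the $C^0$ topology is far more flexible than the $C^1$ topology (where Axiom A density is known to fail), so one has room to \emph{replace} $f$ wholesale by a diffeomorphism whose dynamics are modelled, at a fine scale, on standard hyperbolic pieces.

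The first step is a classical smoothing reduction: approximate $f$ in $C^0$ by a genuine diffeomorphism $f_1$ using Whitehead/Munkres-type smoothing, so it suffices to prove $C^0$-density of $\mathcal{A}(M)$ in $\textrm{Diff}(M)$. The second step is to fix a very fine smooth triangulation (or handle decomposition) $\mathcal{T}$ of $M$ of mesh much smaller than $\varepsilon$, and, after a small further perturbation, assume that $f_1$ respects a tubular neighbourhood of each skeleton of $\mathcal{T}$ up to error controlled by the mesh. On each $k$-cell I would then insert a standard local hyperbolic model: a hyperbolic sink on each $0$-cell, a hyperbolic source on each top-dimensional cell, and a Smale-type horseshoe of index $k$ inside each intermediate $k$-cell. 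Since each model sits inside a box comparable in size to the mesh, replacing $f_1$ by the resulting piecewise-model diffeomorphism $g_0$ costs less than $\varepsilon/2$ in $C^0$ distance.

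The key obstacle is the global gluing: one must interpolate between the local models along shared faces so that the final map $g$ actually satisfies Axiom A, namely $\Omega(g)=\overline{\mathrm{Per}(g)}$ and $\Omega(g)$ is hyperbolic. Hyperbolicity of the union requires that the stable/unstable bundles of adjacent models match up compatibly along cell boundaries; this is arranged by choosing the local models with stable and unstable directions dictated by the dual handle decomposition, so that the invariant bundles extend continuously across heteroclinic connections. The zero-dimensionality of $\Omega(g)$ (the first fitted condition) is automatic, since each local piece is either a hyperbolic fixed point or a Cantor-like horseshoe, and the Spectral Decomposition Theorem \ref{t.spedect} then presents $\Omega(g)$ as a disjoint union of such totally disconnected basic pieces, to which Theorem \ref{t.partitions} applies.

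Finally, to upgrade $g$ to a \emph{fitted} Axiom A diffeomorphism, I would perform a small $C^1$-perturbation making every intersection $W^s(x,g)\cap W^u(y,g)$ transverse, via a Kupka--Smale-type genericity argument applied within the Axiom A setting (transverse intersections are $C^1$-generic among Axiom A maps, and $C^1$-small perturbations are $C^0$-small and preserve hyperbolicity). Summing the errors from smoothing, mesh replacement, and final transversality perturbation keeps the total $C^0$ distance from $f$ below $\varepsilon$. I expect the delicate part to be precisely the compatibility step along the $(k-1)$-skeleton: the local index-$k$ models must be glued in such a way that no unintended recurrence appears across boundaries and so that the stable/unstable foliations of neighbouring pieces align into the globally continuous invariant splitting required for hyperbolicity.
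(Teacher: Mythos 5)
The paper does not supply a proof of Theorem~\ref{t.dense}; it simply cites Shub--Sullivan \cite{SS} (and points to the exposition in \cite{Franks}), so there is no in-text argument to compare against. Your sketch is, however, recognisably the classical Shub argument: choose a fine handle decomposition, isotope the map to respect the filtration by handle index, install linear hyperbolic local models, and observe that the resulting map is Axiom~A with zero-dimensional nonwandering set and strong transversality. The conceptual heart you identify --- that $C^0$ perturbations can replace the dynamics wholesale at a small scale, unlike $C^1$ perturbations --- is exactly the right one.

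Two corrections. First, the insertion of ``Smale-type horseshoes of index $k$'' on intermediate cells is more than is needed and actually muddies the Axiom~A verification: the standard fitted model places a single hyperbolic periodic point of the appropriate index on each handle, so the resulting diffeomorphism is \emph{Morse--Smale}; $\Omega(g)$ is then a finite set of hyperbolic periodic orbits, which is of course zero-dimensional, and strong transversality is easier to arrange. Horseshoes appear in the Shub--Sullivan story only when one wants to realize prescribed homology data, which is not required for bare $C^0$-density. Second, the initial smoothing reduction (``$C^0$-approximate $f$ by a diffeomorphism, then proceed inside $\mathrm{Diff}(M)$'') is unproblematic for surfaces, which is all this paper needs, but in higher dimensions $C^0$-approximability of homeomorphisms by diffeomorphisms is delicate (Kirby--Siebenmann and related obstructions); Shub's original statement is density in $\mathrm{Diff}^r(M)$ rather than in $\mathrm{Homeo}(M)$, and the strengthening to $\mathrm{Homeo}(M)$ requires that extra smoothing input.

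The one place where you gesture at the crux but do not quite nail it is the gluing: the reason no recurrence crosses cell boundaries is not a matching of invariant foliations per se but the \emph{filtration}. After the isotopy, the $k$-th handle body $M_k$ satisfies $g(M_k)\subset\mathrm{int}(M_k)$, so every point's forward orbit descends through handle indices and eventually stabilises inside a single handle. That monotonicity is what forces $\Omega(g)$ to be the union of the local model periodic orbits and immediately yields the spectral decomposition. Once that is in place, your final $C^1$-small Kupka--Smale perturbation to achieve transversality is standard and $C^0$-negligible. So the architecture is right; the missing mechanism is the filtration monotonicity $g(M_k)\subset\mathrm{int}(M_k)$.
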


A nice exposition of this result in surfaces can be found in \cite{Franks}. A complementary interesting result is given by Z. Nitecki, showing the semi-stability of these axiom A diffeomorphisms in the $C^0$ topology. Recall that a map $g$ is semi-conjugated to $f$ if there exists a continuous and onto map $h$ so that $h\circ g=f\circ h$.

\begin{thm}\cite{Zni}

Let $f:M\to M$ be an axiom A diffeomorphism with the transversality condition (ii). Then, there exists a $C^0$ neighbourhood $\mathcal{U}(f)\subset \textrm{Homeo}(M)$ of $f$ such that any $g\in \mathcal{U}(f)$ is semi-conjugated to $f$ by a semiconjugacy $h$ homotopic to the identity.

\end{thm}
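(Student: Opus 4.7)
The plan is to construct $h$ by a shadowing argument: a $g$-orbit of a point $x$ close to $f$ will be a pseudo-orbit for $f$, which can be shadowed by a unique true $f$-orbit, and $h$ will send $x$ to the time-zero position of that shadowing orbit.

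More precisely, the no-cycle property forced by condition (ii) together with Smale's theorem yields a filtration $\emptyset = M_0 \subsetneq M_1 \subsetneq \cdots \subsetneq M_k = M$ of $M$ by nested compact sets with $f(M_i)\subset\mathrm{int}(M_i)$ and adapted to the spectral decomposition of $\Omega(f)$. A classical theorem of Robbin--Robinson then upgrades Bowen's local shadowing lemma to the \emph{global} shadowing property on $M$: for every sufficiently small $\varepsilon>0$ (below an expansiveness constant coming from the filtration) there is $\delta>0$ such that every bi-infinite $\delta$-pseudo-orbit of $f$ in $M$ is $\varepsilon$-shadowed by a unique true $f$-orbit. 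Define $\mathcal{U}(f)$ to consist of the $g\in\textrm{Homeo}(M)$ with $d_{C^0}(g,f)<\delta$, so that the orbit $\{g^n(x)\}_{n\in\Z}$ is a $\delta$-pseudo-orbit of $f$ for every $x\in M$.

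Then I set $h(x)$ to be the unique point whose $f$-orbit $\varepsilon$-shadows the $g$-orbit of $x$. Continuity of $h$ follows from the continuous dependence of the shadowing orbit on the underlying pseudo-orbit (a byproduct of the graph-transform/implicit-function argument in Robbin--Robinson together with expansiveness), and the intertwining identity $h\circ g=f\circ h$ is immediate from the fact that shifting a pseudo-orbit by one step translates into shifting the shadowing orbit by one step. The shadowing estimate at time zero gives $d(h(x),x)\le\varepsilon$ for every $x\in M$; choosing $\varepsilon$ smaller than the injectivity radius of $M$, the geodesic homotopy $H_t(x)=\exp_x\bigl(t\exp_x^{-1}(h(x))\bigr)$ is a continuous homotopy from $\mathrm{id}_M$ to $h$. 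In particular $h$ has degree one, and since $M$ is a closed manifold this forces $h$ to be surjective.

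The main obstacle is establishing the global shadowing property on the whole of $M$. Bowen's shadowing lemma is purely local, valid only in a uniform neighborhood of the hyperbolic set $\Omega(f)$, and extending it to all of $M$ requires a layer-by-layer argument on the filtration, concatenating local shadowing segments at each transition of the pseudo-orbit between basic pieces. It is precisely at this concatenation step that the transversality condition (ii) enters essentially: cycles between basic pieces would both destroy the filtration and allow a single pseudo-orbit to revisit the same hyperbolic block arbitrarily often, wrecking the uniform constants on which the global $\varepsilon$--$\delta$ statement depends.
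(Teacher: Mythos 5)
The paper does not supply a proof here; the statement is quoted from Nitecki \cite{Zni} and used as a black box to produce the $C^0$-open sets on which the rotation set is locally constant, so there is nothing in the paper to compare your outline against line by line.

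On its own terms your argument has a genuine gap at the claimed \emph{uniqueness} of the $\varepsilon$-shadowing orbit. Axiom A together with the strong transversality condition does not make $f$ expansive on all of $M$ unless every basic piece is of saddle type. Whenever there is an attracting or repelling periodic orbit, expansiveness fails: for instance, if $f$ is the time-one map of a gradient-like flow $\phi$ with vector field $X$, and $y=\phi_t(x)$ is a small flow-time $t$ ahead of $x$ on the same trajectory, then $d(f^n y,f^n x)=d(\phi_t(f^n x),f^n x)\le |t|\sup\|X\|$ for every $n\in\Z$, so the two $f$-orbits remain uniformly close forever and both $\varepsilon$-shadow the same pseudo-orbit once $|t|\sup\|X\|<\varepsilon$. (More categorically, by Hiraide--Lewowicz an expansive surface homeomorphism is pseudo-Anosov, which is incompatible with the zero-dimensional non-wandering set that this paper works with.) Hence "the unique point whose $f$-orbit $\varepsilon$-shadows the $g$-orbit of $x$" does not define a map, and the continuity, the intertwining identity, and the short geodesic homotopy are all built on that undefined object. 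The genuine technical content of Nitecki's proof is precisely a continuous, $f$-equivariant \emph{selection} of a shadowing orbit through the wandering region, obtained from the filtration, fundamental domains in the wandering set, and local product structure near the basic pieces --- not from a global expansiveness constant. Your surrounding scaffolding (global pseudo-orbit tracing from the no-cycle filtration, $h\circ g=f\circ h$ from shifting the pseudo-orbit, degree one and surjectivity from a short homotopy) has the right shape, but the step you treat as automatic is the crux, and the uniqueness claim as stated is false.
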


\smallskip

Following, we state a list of well known properties for this kind of maps.

\begin{prop}\label{p.propfaxa}

Let $f:M\to M$ be a fitted axiom A diffeomorphism. Then the following properties hold:

\begin{enumerate}

\item Whenever we have three basic pieces $\Lambda_1,\ \Lambda_2,\ \Lambda_3$ with $W^u(x,f)\cap W^s(y,f)\neq\emptyset$ and $W^u(x',f)\cap W^s(y',f)\neq\emptyset$ for $x\in\Lambda_1,\  y\in\Lambda_2, x'\in\Lambda_2$ and $y'\in\Lambda_3$, then we have that $W^u(x,f)\cap W^s(z,f)\neq\emptyset$ for some
    $z\in\Lambda_3$.

\item For any basic piece $\Lambda$ there exists a neighbourhood $V$ of $\Lambda$ so that
if $W^u(x',f)\cap V\neq\emptyset$ for $x'\in\Omega(f)$ then
$W^u(x',f)\cap W^s(x,f)\neq\emptyset$ for some $x\in\Lambda$. In particular these two manifolds have
a transverse intersection.

\item Given any basic piece $\Lambda$ we have that
$\Lambda^u=\textrm{cl}\left[\bigcup_{x\in\Lambda}W^u(x,f)\right]$ is an attractor. If we take the stable manifold
instead the unstable manifold we obtain a repellor.

\end{enumerate}

\end{prop}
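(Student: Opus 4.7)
The plan is to prove the three properties in turn, using the $\lambda$-(inclination) lemma, a Markov-rectangle neighborhood based on Theorem \ref{t.partitions}, and standard Axiom A filtration arguments together with the Spectral Decomposition Theorem.

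For (1), by the fitness condition the heteroclinic points $z_1\in W^u(x,f)\cap W^s(y,f)$ and $z_2\in W^u(x',f)\cap W^s(y',f)$ are transverse intersections, with $y,x'\in\Lambda_2$ and $y'\in\Lambda_3$. Take a short unstable arc $\alpha\subset W^u(x,f)$ through $z_1$. Since $z_1\in W^s(y,f)$, the $\lambda$-lemma implies that the forward iterates $f^n(\alpha)\subset W^u(f^n(x),f)$ accumulate $C^1$-closely on local unstable leaves at points of the orbit of $y$. Transitivity of $\Lambda_2$ makes this orbit dense, so for a suitable $n$, $f^n(\alpha)$ $C^1$-approximates a neighborhood of $z_2$ in $W^u(x',f)$. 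Transverse intersections being $C^1$-open, $f^n(\alpha)$ meets $W^s(y',f)$ transversely near $z_2$; pulling back via $f^{-n}(W^u(f^nx,f))=W^u(x,f)$ and $f^{-n}(W^s(y',f))=W^s(f^{-n}y',f)$ yields a transverse intersection point in $W^u(x,f)\cap W^s(f^{-n}y',f)$, so one may take $z=f^{-n}y'\in\Lambda_3$.

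For (2), I would use Theorem \ref{t.partitions} to choose a Markov partition of $\Lambda$ of small diameter and set $V$ to be the interior of a finite disjoint union of the associated rectangles, small enough that $V\cap\Omega(f)=V\cap\Lambda$, with sides lying on local stable or unstable manifolds of $\Lambda$; extend the hyperbolic splitting on $\Lambda$ to invariant cone fields on $V$. Given $x'\in\Omega(f)$ with $W^u(x',f)\cap V\neq\emptyset$: if $x'\in\Lambda$, take $x=x'$ and the intersection at $x'$ is transverse by hyperbolicity. Otherwise $x'\notin V$, and a connected component of $W^u(x',f)\cap V$ is a proper arc $\alpha$ with endpoints on $\partial V$, tangent to the unstable cone at every interior point. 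The unstable boundary arcs of $V$ are themselves tangent to the unstable cone, so $\alpha$ cannot transversely cross them; both endpoints must therefore lie on stable boundary arcs, each contained in $W^s(x,f)$ for some $x\in\Lambda$. This gives $W^u(x',f)\cap W^s(x,f)\neq\emptyset$, and fitness promotes the intersection to a transverse one.

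For (3), $\bigcup_{x\in\Lambda}W^u(x,f)$ is $f$-invariant because $f(\Lambda)=\Lambda$ and $f(W^u(x,f))=W^u(f(x),f)$, so its closure $\Lambda^u$ is a compact $f$-invariant set containing $\Lambda$. To exhibit $\Lambda^u$ as an attractor, take the neighborhood $V$ from (2) and set $U=\bigcup_{n\geq 0}f^n(V)$, an open forward-invariant neighborhood of $\Lambda^u$. The equality $\Lambda^u=\bigcap_{n\geq 0}f^n(\bar U)$ then follows: any $p$ in this intersection has $\alpha$-limit set inside a single basic piece $\Lambda''\subset\bar U$ by the Spectral Decomposition Theorem \ref{t.spedect}, and iterating (1) along the heteroclinic chain from $\Lambda$ to $\Lambda''$ (which must exist because $\Lambda''\subset\bar U$ arises from backward iteration starting near $\Lambda$) forces $\Lambda''\subset\Lambda^u$. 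The repellor statement follows by applying the same argument to $f^{-1}$. The main obstacle I anticipate is in (2): carefully showing that the relevant component of $W^u(x',f)\cap V$ is a proper arc and that the cone-field argument rigorously excludes exits through the unstable boundary arcs; (3) further requires verifying that $U$ is a genuine trapping neighborhood, which typically invokes filtration-type results in Axiom A theory.
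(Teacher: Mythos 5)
The paper does not actually prove this proposition: after the statement it remarks that (1) follows from the $\lambda$-lemma and that (2) and (3) are folklore consequences of the $C^1$-stability theory for fitted Axiom A diffeomorphisms, referring the reader to Katok--Hasselblatt. Your sketch attempts to supply the arguments, so the relevant question is whether it is correct.

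Parts (1) and (2) are essentially sound, with one imprecision in (1): ``transitivity of $\Lambda_2$ makes this orbit dense'' is not right, since transitivity only gives the existence of \emph{some} dense orbit, not that $\textrm{orb}(y)$ is dense. What you actually need is the density of $W^u(y,f)$ in the topologically mixing component of $\Lambda_2$ containing $y$ (Theorem~\ref{t.spedect}), together with the basic-set form of the $\lambda$-lemma (so that $\bigcup_n f^n(\alpha)$ accumulates $C^1$ on all of $W^u(y,f)$, hence on local unstable leaves at a dense subset of that component) and, if necessary, a replacement of $x'$ by $f^j(x')$ so as to land in the same mixing component. The Markov-rectangle and cone-field argument you give for (2) is the standard one and is fine modulo the routine ``full crossing'' verification you flag yourself.

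Part (3), however, contains a genuine error, not merely the acknowledged need to verify details. The set $U=\bigcup_{n\geq 0}f^n(V)$ is \emph{not} a neighbourhood of $\Lambda^u$ whenever $\Lambda$ has a proper successor. Indeed, suppose $\Lambda\prec\Lambda'$ with $\Lambda'\neq\Lambda$. Since $V$ is a small neighbourhood of $\Lambda$ it may, and should, be chosen disjoint from $\Lambda'$. Then, because $\Lambda'$ is $f$-invariant, $f^n(V)\cap\Lambda'=f^n(V\cap\Lambda')=\emptyset$ for every $n\geq 0$, so $U\cap\Lambda'=\emptyset$. But the $\lambda$-lemma gives $\Lambda'\subset\Lambda^u=\textrm{cl}[W^u(\Lambda,f)]$, so $\Lambda^u\not\subset U$ and $U$ cannot serve as a trapping neighbourhood. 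Consequently the claim $\Lambda^u=\bigcap_{n\geq 0}f^n(\bar U)$, as well as the assertion that a heteroclinic chain from $\Lambda$ to every basic piece $\Lambda''\subset\bar U$ ``must exist'', is unjustified. The standard argument is genuinely different: one uses the fact that $\prec$ has no cycles to extend it to a total order in which $S=\{\Lambda':\Lambda\preceq\Lambda'\}$ is an initial segment (possible because $S$ is closed under taking $\prec$-successors), and then builds a filtration of $M$ by compact submanifolds $M_1\subset\cdots\subset M_k$, adapted to that order, with $f(M_i)\subset\textrm{int}(M_i)$. The manifold $M_{|S|}$ is a genuine trapping neighbourhood whose maximal invariant set is $\bigcup_{\Lambda\preceq\Lambda'}W^u(\Lambda',f)=\Lambda^u$. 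Without the filtration construction your argument for (3) does not close.
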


The first point is based in the so called $\lambda$-Lemma. The second and third result
are folklore, and can be obtained from the $C^1$-\emph{stability} of the fitted axiom A diffeomorphisms (see \cite{Katok} for these results). 

One of the key objects we will consider is the {\em heteroclinic relation} on the set of basic pieces of a fitted axiom A diffeomorphism $f$. If $\mathcal{G}_f=\{\Lambda_1,\ldots\Lambda_N\}$ is the set of basic pieces of $f\in\mathcal{A}(M)$, we write that $\Lambda_i\prec\Lambda_j$ if $W^u(x,f)\cap W^s(y,f)\neq\emptyset$ for some $x\in\Lambda_i$ and $y\in\Lambda_j$. The relation $\prec$ is a partial order on $\mathcal{G}_f$ by Proposition \ref{p.propfaxa} (point (1) for transitivity, and point (2) and the definition of basic piece for antisymmetry). When $\Lambda_i\prec\Lambda_j$ we say that $\Lambda_i$ {\em precedes} $\Lambda_j$ or that $\Lambda_j$ {\em follows} $\Lambda_i$. It is a standard fact that the equivalence relation generated by $\prec$ has a single class, i.e. the directed graph structure on $\mathcal{G}_f$ spanned by $\prec$ is connected.

We call a \emph{chain} to any totally ordered set of basic pieces. A maximal chain is a chain which is not properly contained in any other chain.

\begin{rem}\label{r.piecespower} For $f\in \mathcal{A}(M)$ there exists $k\in\N$ so that every basic piece of $f^k$ is topologically mixing. Moreover, given $\Lambda,\Lambda'$ basic pieces of $f^k$, we have that $\Lambda\prec\Lambda'$ if and only if there are $\Lambda_\ast,\Lambda_\ast'$ basic pieces of $f$ satisfying $\Lambda\subseteq \Lambda_\ast$, $\Lambda'\subseteq \Lambda_\ast'$ and $\Lambda_\ast\prec\Lambda_\ast'$\end{rem}



\subsection{Sub-shifts and shadowing}

In what follows we consider sub-shifts of finite type $\sigma:\Sigma_A\subset\Sigma_m\to\Sigma_A$.
We call a \emph{word} of $\Sigma_A$ to the restriction of
an element of $\Sigma_A$ to a subset of the form $\{i,i+1,\ldots,i+n\}\subset\Z$, and its \emph{length} to the number $n+1$. If $\tau$ is such a word, its length will be denoted by $|\tau|$. Given two
words $\tau_1$ and $\tau_2$ with domains $\{i,\ldots,i+n\}$ and $\{i+n+1,\ldots,i+n+k\}$ respectively, the \emph{concatenation} is given by $\tau_1\cdot\tau_2:\{i,\ldots,i+n+k\}\to \{0,\dots,m-1\}$ defined as
$\tau_1(j)$ whenever $j\in\{i,\ldots,i+n\}$ and as $\tau_2(j)$ whenever $j\in\{i+n+1,\ldots,i+n+k\}$. Note that $\tau_1\cdot\tau_2$ may or may not be a word of $\Sigma_A$, depending on the values $\tau_1(i+n)$ and $\tau_2(i+n+1)$.
The concatenation of finitely many and even infinitely many words is naturally defined. The next lemma is very simple so we omit the proof.

\begin{lemma}\label{l.preconvca1}

Consider a transitive sub-shift of finite type $\sigma:\Sigma_A\to\Sigma_A$. Then there exists a positive integer $E$ so that given any two words $\tau_1$ and $\tau_2$ defined in $\{a-n_1,\ldots,a\}$ and $\{b,\ldots,b+n_2\}$ respectively, with $b-a>E$, we have a word $\nu$ defined on $\{a+1,\ldots,b-1\}$, such that
$$\tau_1\cdot \nu\cdot \tau_2$$
is a word of $\Sigma_A$.
\end{lemma}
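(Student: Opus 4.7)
The plan is to reduce the statement to the standard combinatorial fact that a primitive non-negative matrix has a uniform power beyond which all entries are positive.

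First I would unpack the admissibility condition. Since $\tau_1$ and $\tau_2$ are already words of $\Sigma_A$, the concatenation $\tau_1\cdot\nu\cdot\tau_2$ is admissible if and only if $\nu$ is an admissible word of length $b-a-1$ for which $(\tau_1(a),\nu(a+1))$ and $(\nu(b-1),\tau_2(b))$ are both allowed transitions. Setting $i:=\tau_1(a)$ and $j:=\tau_2(b)$, this amounts to exhibiting an admissible path of length $b-a$ in the transition graph of $A$ that starts at vertex $i$ and ends at vertex $j$; equivalently, one asks that $(A^{b-a})_{ij}>0$.

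Next I would invoke the combinatorial input. Transitivity of $\sigma$ makes $A$ irreducible, and in the topologically mixing regime it is primitive, so there exists $N_0\in\mathbb{N}$ with $(A^n)_{ij}>0$ for all symbols $i,j$ and every $n\geq N_0$. Taking $E:=N_0$, the inequality $b-a>E$ then forces $(A^{b-a})_{ij}>0$, the desired path exists, and removing its endpoints yields the required word $\nu$. No step here is a serious obstacle: as the authors remark, the lemma is essentially a restatement of the Perron--Frobenius primitivity property of the transition matrix. The only delicate point is that a merely transitive SFT may be periodic, in which case $A$ is irreducible but not primitive; this is harmless for the paper's applications because Remark \ref{r.piecespower} lets one pass to a power of $f$ under which every basic piece is topologically mixing, recovering primitivity and the argument above.
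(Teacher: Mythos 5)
The paper declares this lemma ``very simple'' and omits the proof, so there is no argument in the text to compare against; yours is the expected one, translating the concatenation problem into the positivity of $(A^{b-a})_{ij}$, with $i$ the last symbol of $\tau_1$ and $j$ the first symbol of $\tau_2$, and then invoking the uniform positivity of high powers of a primitive matrix. Your closing caveat, however, deserves more weight than a side remark: as literally stated for a merely transitive sub-shift the lemma is false, and no choice of $E$ can work. The two-state sub-shift allowing only the transitions $0\to 1$ and $1\to 0$ is transitive (irreducible) but has period $2$; if $\tau_1$ ends in $0$ and $\tau_2$ begins with $0$, a filling word exists exactly when $b-a$ is even. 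Transitivity yields irreducibility of $A$ but not aperiodicity, and the uniform threshold $N_0$ requires primitivity, i.e.\ topological mixing. The hypothesis should therefore read ``topologically mixing'' in place of ``transitive''; as you correctly point out, this costs the paper nothing because Remark~\ref{r.piecespower} reduces everything to a power of $f$ whose basic pieces are topologically mixing, for which the associated sub-shifts are primitive.
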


If $\tau$ is a word of $\Sigma_A$ defined on $\{i,\ldots,i+n\}$ and $m\in\Z$, we define $\tau_m$ on $\{i+m,\ldots,i+n+m\}$ by $\tau_m(j)=\tau(j-m)$. Notice that $\tau_m$ is also a word of $\Sigma_A$. It consists of the same sequence of symbols as $\tau$, but with shifted domain.
We say that a word $\tau$ defined on $\{i,\ldots,i+p-1\}$ is \emph{periodic} if $\tau\cdot\tau_p$ is a word, where $\tau_p$ is defined in $\{i+p,\ldots,i+2p-1\}$ as above. From a periodic word $\tau$ we can concatenate in a natural way and define powers $\tau^n:=\tau\cdot\tau_p\cdot\ldots\cdot \tau_{(n-1)p}$ of $\tau$ for $n\in\N$, which are also words. Taking the natural infinite concatenation of periodic words $\tau$ we recover the periodic points of $\sigma:\Sigma_A\to\Sigma_A$.

\medskip

In what follows we want to introduce a generalization of the \emph{Shadowing Lemma} for chains of basic pieces of a fitted axiom A diffeomorphism $f$.
For a small enough partition $\mathcal{P}=\{P_1,\ldots,P_n\}$ of a basic piece $\Lambda$, the
classic \emph{Shadowing Lemma} states that any pseudo-orbit whose jumps are contained in the boxes
$P_1,\ldots,P_n$ is shadowed by a real orbit from a distance bounded above by $d_{\mathcal{P}}$, and it is a crucial step towards proving the existence of Markov partitions. In order to study the rotation set generated by chains, we want to shadow certain pseudo-orbits going from one basic piece to another one, which is larger with respect to the heteroclinical relation.
We need to introduce some notation.

\medskip

Let $\mathcal{S}=(\Lambda_1\preceq\cdots\preceq\Lambda_N)$ be a sequence of basic pieces of $f$, possibly with repetition, ordered by the heteroclinical relation.  Let $\mathcal{P}_i$ be a Markov partition of $\Lambda_i$ that induces a conjugation of $f$ with a sub-shift $\Sigma_{A_i}$ on $\Lambda_i$. A {\em word sequence} supported on $\mathcal{S}$ is $(w_1,\ldots,w_N)$ where $w_i$ is a word of $\Sigma_{A_i}$ defined on $I_i\subset\Z$, and $\max I_i<\min I_{i+1}$ for $i=1,\ldots,N-1$.

We also say that the word sequence $(w_1,\ldots,w_N)$ is supported on $\mathcal{C}$, the underlying chain formed by the elements of $\mathcal{S}$. For $n\in\N$, we say that $(w_1,\ldots,w_N)$ is $n$-{\em spaced} if $\min I_{i+1}-\max I_{i}>n$ for every $i=1,\ldots,N-1$. It will be useful to notice that any set of words $w_1,\ldots,w_N$ can be arranged in a $n$-spaced sequence by shifting domains, i.e. there exist $p_1,\ldots,p_N\in\Z$ so that $((w_1)_{p_1},\ldots,(w_N)_{p_N})$ is $n$-spaced.

Finally, the word sequence $(w_1,\ldots,w_N)$ is {\em shadowed} if there exists $x\in M$ so that
$$f^j(x)\mbox{ belongs to the element }P_{w_{i}(j)}\mbox{ of }\mathcal{P}_{i}\mbox{ whenever }j\in I_{i}.$$


\medskip

The result we need is a generalization of the \emph{shadowing lemma} (see \cite{Katok}) that works for a union of basic pieces that belong to some chain. We omit its proof, which runs exactly as the one of the shadowing lemma for basic pieces, making use of the transversality property of the fitted axiom A map.

\begin{thm}\label{t.shadchain}

Let $f:M\to M$ be a fitted axiom A diffeomorphism and $\mathcal{S}=(\Lambda_1\preceq\dots\preceq\Lambda_N)$ a sequence of basic pieces of $f$. Then there exists $\varepsilon_0$ so that for any family of adapted Markov partitions $\mathcal{P}_i$ of $\Lambda_i$, $i=1,\dots,N$, with $d_{\mathcal{P}_i}<\varepsilon_0$, there is a positive integer $n_0$ for which any $n_0$-spaced word sequence $(w_1,\dots,w_N)$ is shadowed.

\end{thm}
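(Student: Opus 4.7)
The plan is to mimic the classical proof of the shadowing lemma for a single basic piece, but now allowing the pseudo-orbit to jump from one piece $\Lambda_i$ to the next piece $\Lambda_{i+1}$ along a \emph{transverse heteroclinic orbit} that is guaranteed to exist because $\Lambda_i \prec \Lambda_{i+1}$ and $f$ is fitted. The $n_0$-spacing is precisely what buys the time needed for this transition to take place inside the gap between the domains $I_i$ and $I_{i+1}$.

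First I would fix, for each consecutive pair $\Lambda_i \preceq \Lambda_{i+1}$ in $\mathcal{S}$ (the case $\Lambda_i=\Lambda_{i+1}$ being trivial since the standard shadowing lemma already handles concatenated words in the same piece, see Lemma \ref{l.preconvca1}), a transverse heteroclinic point $z_i \in W^u(p_i,f)\transv W^s(q_{i+1},f)$ with $p_i\in\Lambda_i$ and $q_{i+1}\in\Lambda_{i+1}$. Such a point exists by the definition of $\prec$ and by property (ii) of fitted axiom A. Using topological mixing on the basic pieces (Theorem \ref{t.spedect} plus Remark \ref{r.piecespower}, replacing $f$ by a suitable power if needed) and the density of $W^u(x,f)$, $W^s(x,f)$ in the respective basic piece, I may further choose $p_i$ and $q_{i+1}$ inside prescribed elements of $\mathcal{P}_i$ and $\mathcal{P}_{i+1}$.

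Next I would choose $\varepsilon_0>0$ uniformly so that on an $\varepsilon_0$-neighbourhood of $\Omega(f)$ the local product structure holds: any two points within distance $\varepsilon_0$ of the same basic piece determine a unique bracket $[\cdot,\cdot]\in W^s_{\mathrm{loc}}\cap W^u_{\mathrm{loc}}$, and so that the classical shadowing lemma applies inside each $\Lambda_i$ for any adapted Markov partition $\mathcal{P}_i$ with $d_{\mathcal{P}_i}<\varepsilon_0$. Given such partitions, I then choose $n_0$ so large that for every $i$, the forward $f$-orbit of $z_i$ enters the $\mathcal{P}_{i+1}$-rectangle of $q_{i+1}$ within $n_0/2$ steps, and its backward orbit enters the $\mathcal{P}_i$-rectangle of $p_i$ within $n_0/2$ steps — this is possible because $z_i\in W^s(q_{i+1},f)$ and $z_i\in W^u(p_i,f)$.

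Then, given an $n_0$-spaced word sequence $(w_1,\dots,w_N)$, I would assemble a true pseudo-orbit of $f$ as follows: on each interval $I_i$, use a point of $\Lambda_i$ whose itinerary extends $w_i$ (produced by the internal shadowing/Markov property of $\Lambda_i$); on each gap between $\max I_i$ and $\min I_{i+1}$, use a suitably time-shifted segment of the orbit of $z_i$, which by construction connects the rectangle containing $p_i\in\Lambda_i$ to the rectangle containing $q_{i+1}\in\Lambda_{i+1}$ in fewer than $n_0$ steps. The spacing hypothesis guarantees enough room; the jumps of the pseudo-orbit are all controlled by $\varepsilon_0$. Finally I would show that this pseudo-orbit is shadowed by a single genuine $f$-orbit, by running the standard bracket/intersection argument: define the cylinders $C_i=\{x:f^j(x)\in P_{w_i(j)}\text{ for }j\in I_i\}$ and the transition cylinders along the orbit of $z_i$; each is a nonempty rectangle in local stable/unstable coordinates, and the \emph{transversality} provided by the fitted condition ensures that consecutive rectangles bracket to give a nonempty intersection. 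A nested-intersection argument then produces the shadowing point $x\in M$.

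The main obstacle is the last step: transporting the classical Bowen-style shadowing construction, which is internal to one compact hyperbolic invariant set, across heteroclinic bridges. The fitted (transversality) hypothesis is essential here because without it the stable and unstable manifolds meeting along the heteroclinic connection could fail to intersect transversally and the bracket operation needed to fuse consecutive cylinders would be ill-defined; with it, the $\lambda$-lemma together with local product structure lets the argument proceed essentially verbatim, which is why the authors can legitimately omit the proof.
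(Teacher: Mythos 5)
The paper does not actually give a proof of this theorem; it states only that the argument ``runs exactly as the one of the shadowing lemma for basic pieces, making use of the transversality property of the fitted axiom A map.'' Your sketch is a reasonable expansion of exactly that: transverse heteroclinics $z_i\in W^u(p_i,f)\transv W^s(q_{i+1},f)$ as bridges, $n_0$-spacing to allow the transit, and a cylinder-intersection argument giving the shadowing point. The role you assign to the fitted hypothesis is the correct and key insight: via the $\lambda$-lemma, the past unstable rectangle near $\Lambda_i$ must transversally meet the pull-back of the future stable rectangle from $\Lambda_{i+1}$, and transversality at $z_i$ is what makes this bracket across the bridge nonempty.

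A few steps are imprecise enough to need tightening. First, the choice of $z_i$, $p_i$, $q_{i+1}$ and hence $n_0$ depends a priori on which rectangles carry the last symbol of $w_i$ and the first symbol of $w_{i+1}$; since these are arbitrary, one must fix a finite family of heteroclinics $z_i^{a,b}$ indexed by pairs of rectangles, taking $n_0$ as the maximum, or first use Lemma \ref{l.preconvca1} to concatenate bounded-length connector words inside $\Lambda_i$ and $\Lambda_{i+1}$ with $n_0$ enlarged accordingly. Second, ``the backward orbit of $z_i$ enters the $\mathcal{P}_i$-rectangle of $p_i$'' is only accurate when $p_i$ is periodic and one tracks rectangles along its finite orbit, since $f^{-m}(z_i)$ approaches $f^{-m}(p_i)$, not $p_i$. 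Third — and this is the substantive point — the pseudo-orbit intermediate is slightly misleading: the gap segments pass through the wandering region where no local product structure is available, so the classical ``$\varepsilon$-pseudo-orbits near a hyperbolic set are shadowed'' statement does not apply to your pseudo-orbit as built. In particular, the jump you place at $\max I_i$ is not automatically $\varepsilon_0$-small, since $y_i\in\Lambda_i$ is chosen only by its itinerary and $z_i$ is unrelated to it at that time; one needs $y_i$ to have forward itinerary eventually equal to that of $p_i$ (so $y_i\in W^s(\mathcal{O}(p_i),f)$) and the jump to be taken deep in the gap, where both $f^t(y_i)$ and the time-shifted $f^t(z_i)$ are uniformly close to the orbit of $p_i$. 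Cleanest is to drop the pseudo-orbit altogether and argue directly with cylinders: show that $f^{\max I_i}(C_1\cap\cdots\cap C_i)$ and $f^{\max I_i - \min I_{i+1}}(C_{i+1})$, both viewed at the mid-gap time, are respectively a long unstable plaque near $W^u(p_i,f)$ and a long stable plaque near $W^s(q_{i+1},f)$, and invoke transversality at $z_i$. None of these is a fatal gap; they are standard bookkeeping, and the essential mechanism is correctly identified.
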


\section{Rotation set for fitted axiom A diffeomorphisms}\label{s.rotsetbp}

In the first part of this section we will show that rotation sets of basic pieces are \emph{rational polyhedra}, namely, convex sets of $H^1(\Sigma_g,\R)\subseteq \R^{2g}$ with finitely many extremal points that lie in $\mathbb{Q}^{2g}$. The proof of this fact is based on ideas of \cite{Zi} and \cite{Pass1}. Then we will turn to study the rotation set generated by chains of basic pieces in fitted axiom A diffeomorphisms.

For a chain $\mathcal{C}=\{\Lambda_1\prec\ldots\prec\Lambda_N\}$ of basic pieces of $f\in\mathcal{A}_0(\Sigma_g)$, define

$$\rho_{\mathcal{C}}(f) = \textrm{conv}\left(\bigcup_{i=1}^N\rho_{\Lambda_i}(f)\right) $$

The principal theorem of the section, whose proof relies on Theorem \ref{t.shadchain}, is the following.
\begin{thm}\label{t.rhorhochain}

Let $f\in\mathcal{A}_0(\Sigma_g)$ and $\mathcal{G}$ its graph of basic pieces. Then


$$\rho(f)=\bigcup_{\mathcal{C}}\rho_{\mathcal{C}}(f) $$ where $\mathcal{C}$ ranges over all maximal chains of $\mathcal{G}_f$.

\end{thm}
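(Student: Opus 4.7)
My plan is to prove both inclusions of the equality separately. For $\supseteq$, I build explicit orbits realizing convex combinations of basic-piece rotation vectors via the chain shadowing Theorem \ref{t.shadchain}. For $\subseteq$, I decompose an arbitrary orbit segment into sojourns inside small neighborhoods of the basic pieces and show that its averaged rotation vector is a convex combination of genuine rotation vectors of a chain of visited pieces.

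For the $\supseteq$ inclusion, fix a (maximal) chain $\mathcal{C}=\{\Lambda_1\prec\cdots\prec\Lambda_N\}$ and a vector $v\in\rho_{\mathcal{C}}(f)$. I write $v=\sum_{i=1}^N t_i v_i$ with $v_i\in\rho_{\Lambda_i}(f)$, $t_i\geq 0$, $\sum t_i=1$. Density of periodic orbits inside each basic piece (Theorem \ref{t.spedect}) together with the Markov conjugacy of Theorem \ref{t.partitions} yields density of periodic-orbit rotation vectors in each $\rho_{\Lambda_i}(f)$, so I can approximate $v_i$ by the rotation vector of a periodic point $p_i\in\Lambda_i$ encoded as a periodic word $\omega_i\in\Sigma_{A_i}$ of period $q_i$. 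For integers $m_i$ with $m_iq_i/\sum_j m_jq_j$ close to $t_i$, I arrange the word sequence $(\omega_1^{m_1},\ldots,\omega_N^{m_N})$ in the $n_0$-spaced fashion required by Theorem \ref{t.shadchain}, which produces a shadowing orbit. The rotation vector of this orbit differs from $\sum t_iv_i$ only by errors coming from the $N-1$ bounded transition gaps and from the periodic approximation, and both vanish as $m_i\to\infty$ proportionally and the approximation is refined. Since $\rho(f)$ is closed, $v\in\rho(f)$.

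For the $\subseteq$ inclusion, take $v=\lim_i(\hat f^{n_i}(x_i)-x_i)/n_i$. I choose small pairwise disjoint filtration neighborhoods $U_k$ of the basic pieces $\Lambda_k$, so that by Proposition \ref{p.propfaxa}(2) an orbit that leaves $U_j$ and later enters $U_k$ forces $\Lambda_j\prec\Lambda_k$. The complement $\Sigma_g\setminus\bigcup U_k$ is compact and wandering, giving a uniform bound $T_0$ on consecutive time spent outside $\bigcup U_k$. Each segment $\{x_i,\ldots,f^{n_i}(x_i)\}$ therefore decomposes into at most $N$ sojourns in distinct neighborhoods $U_{k_j}$ (distinct by antisymmetry of $\prec$) linked by at most $N-1$ transitions of length $\leq T_0$. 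After passing to a subsequence I may assume the visited chain $\mathcal{C}$ is constant and that the normalized sojourn lengths $s_j^{(i)}/n_i$ converge to weights $t_j\geq 0$ with $\sum t_j=1$. Summing the lifted displacements, and using that every sojourn in $U_{k_j}$ is shadowed by an orbit of $\Lambda_{k_j}$ with error controlled by $\mathrm{diam}(U_{k_j})$, a diagonal extraction yields $v=\sum t_j w_j$ with $w_j\in\rho_{\Lambda_{k_j}}(f)$. Hence $v\in\rho_{\mathcal{C}}(f)$, and $\mathcal{C}$ is contained in a maximal chain.

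The main obstacle is the quantitative control of the sojourn decomposition for $\subseteq$: constructing the filtration neighborhoods so that orbits genuinely obey the partial order $\prec$ (combining parts (1) and (2) of Proposition \ref{p.propfaxa} in Smale-filtration style), and ensuring that the normalized sojourn displacements converge to bona fide elements of each $\rho_{\Lambda_{k_j}}(f)$ under the diagonal subsequence extraction as the neighborhoods shrink. The $\supseteq$ direction is more routine given Theorem \ref{t.shadchain}, but still requires a careful estimate showing that the $N-1$ bounded transition contributions to the total displacement are negligible after division by the diverging total orbit length.
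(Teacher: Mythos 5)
Your two-step strategy matches the paper's exactly: for $\supseteq$ you shadow spaced periodic words via Theorem~\ref{t.shadchain}, as in Proposition~\ref{p.convchain} (your periodic-orbit approximation of each $v_i$ is a cosmetic substitute for the paper's direct use of the extremal points of $\rho_{\Lambda_i}(f)$, which are already realized by periodic orbits); for $\subseteq$ you decompose an orbit segment into sojourns in isolating neighborhoods of the basic pieces, exactly as in Lemmas~\ref{l.entornos1}--\ref{l.entornos3}. The $\supseteq$ direction is correct.

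There is, however, a genuine gap in your $\subseteq$ argument. You assert that the segment decomposes into ``at most $N$ sojourns in distinct neighborhoods $U_{k_j}$ (distinct by antisymmetry of $\prec$) linked by at most $N-1$ transitions of length $\leq T_0$.'' This is false: an orbit can leave $U_k$, spend a few iterates in the wandering complement $\Sigma_g\setminus\bigcup_j U_j$, and then re-enter the \emph{same} $U_k$. Antisymmetry only rules out visiting a different $U_l$ and returning; it does not rule out a purely wandering excursion followed by re-entry, so the visited sequence of neighborhoods is not strictly increasing but only non-decreasing, with repetitions possible. Consequently your bound of $N-1$ on the number of transitions is unjustified, and a uniform bound $T_0$ on \emph{consecutive} time outside $\bigcup U_k$ does not by itself control either the number of excursions or their total duration, which is what is needed to get $\sum_j t_j = 1$ and to make the transition displacements negligible after dividing by $n_i$. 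The correct ingredient (used by the paper in Lemma~\ref{l.saltos2}) is the stronger standard fact that the \emph{total} number of iterates any orbit spends outside a given neighborhood of $\Omega(f)$ is uniformly bounded by some $\kappa$. This bounds the number of sojourns by $\kappa$ plus the maximal chain length (allowing repeated pieces, as in the paper's sequence $\mathcal{S}(x,n)$) and makes the total transition contribution $O(1)$, hence $o(n_i)$. With this fix your subsequence extraction, sojourn-by-sojourn shadowing into each $\Lambda_{k_j}$, and telescoping estimate all go through as in the paper.
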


Together with the results about the rotation sets of basic pieces, this implies that $\rho(f)$ is a finite union of convex sets, which are rational polyhedra. We start with some definitions.

\smallskip

Let $K\subset \Sigma_g$ be an invariant compact set for $f\in\mathcal{A}_0(\Sigma_g)$, admitting a Markov partition $\mathcal{P}$. This implies that $K$ is contained in a finite union of pairwise disjoint topological disks, namely the elements of $\mathcal{P}$.
This allows us to construct a fundamental domain $D\subset\Sigma^{ab}_g$ for the abelian cover, so that every element of $\mathcal{P}$ is contained in the projection of the interior of $D$.
Thus for every $P\in\mathcal{P}$ we have that $\hat{P}:=\pi_{ab}^{-1}(P)\cap D$ is homeomorphic to $P$ through the projection $\pi_{ab}$. In this configuration we say that the fundamental domain $D$ is \emph{adapted} to $\mathcal{P}$.
We say that $\mathcal{P}$ is a \emph{rotational} Markov partition if there is a fundamental domain $D\subset\Sigma^{ab}_g$ that is adapted for both  $\mathcal{P}$ and $f^*(\mathcal{P})=\{f(P):\ P\in\mathcal{P}\}$. In other words, so that both $P$ and $f(P)$ are contained in $\pi_{ab}(\textrm{int}(D))$ for every $P\in \mathcal{P}$.


\smallskip


Consider $\mathcal{P}=\{P_1,\ldots,P_N\}$ be a rotational Markov partition for $K$ with an adapted fundamental domain $D$.  Define $\hat{P}_i:=\pi_{ab}^{-1}(P_i)\cap D$ for $i=1,\ldots,N$, and notice that there are unique
deck transformations $\mathcal{T}_i$ of $\Sigma_g^{ab}$ so that $\hat{f}(\hat{P}_i)\subset \mathcal{T}_i(D)$. Recall that the deck transformations
for this covering are given by translations by vectors in $\Z^{2g}$, so we can write $\mathcal{T}_i(x)=x+T_i,\ T_i\in\Z^{2g}$.
Hence to every $P_i\in\mathcal{P}$ (for $i=1,\ldots,N$) we can associate $T_i\in\Z^{2g}$ with $\hat{f}(\hat{P}_i)\subset D+T_i$.
From this simple fact we can obtain the following important lemma.

\begin{lemma}\label{l.moving}

In the situation above, for every $x\in K$ and every positive integer $n\geq 1$ we have that
$$ \left| \hat{f}^n(\hat{x}) - \sum_{j=0}^{n-1}T_{\xi_x(j)} \right|<\textrm{diam}(D),$$
where $\hat{x}$ is the unique point in $\pi^{-1}_{ab}(x)\cap D$ and $\xi_x$ is the itinerary function for the rotational Markov partition applied to $x$.

\end{lemma}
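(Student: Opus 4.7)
The plan is to proceed by induction on $n$, leveraging two facts inherent to the setup: $\hat{f}$ commutes with the deck transformations of $\pi_{\textrm{ab}}$ (since these are lifts of the identity), and the definition of a \emph{rotational} Markov partition is tailor-made so that each $\hat f(\hat P_i)$ lands inside a \emph{single} translate $D+T_i$ of the fundamental domain. The output I expect is the slightly sharper statement that the vector $\hat f^n(\hat x)-\sum_{j=0}^{n-1}T_{\xi_x(j)}$ is actually itself a point of $D$; the claimed norm bound then follows because $\hat x\in D$ too.

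For the base case $n=1$, the hypothesis $x\in P_{\xi_x(0)}$ forces $\hat x\in \hat P_{\xi_x(0)}$ by the definition of $\hat P_i=\pi_{\textrm{ab}}^{-1}(P_i)\cap D$, and then $\hat f(\hat x)\in \hat f(\hat P_{\xi_x(0)})\subset D+T_{\xi_x(0)}$, so $\hat f(\hat x)-T_{\xi_x(0)}\in D$. For the inductive step, assume the statement for $n$ and set
\[
\hat y\;:=\;\hat f^n(\hat x)\;-\;\sum_{j=0}^{n-1}T_{\xi_x(j)}\;\in\;D.
\]
Since $\pi_{\textrm{ab}}(\hat y)=f^n(x)\in P_{\xi_x(n)}$ and $\hat y\in D$, we obtain $\hat y\in \hat P_{\xi_x(n)}$, whence $\hat f(\hat y)\in D+T_{\xi_x(n)}$. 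Finally, because translation by the integer vector $\sum_{j=0}^{n-1}T_{\xi_x(j)}\in\mathbb{Z}^{2g}$ is a deck transformation, it commutes with $\hat f$, giving
\[
\hat f(\hat y)\;=\;\hat f^{\,n+1}(\hat x)\;-\;\sum_{j=0}^{n-1}T_{\xi_x(j)},
\]
so that $\hat f^{\,n+1}(\hat x)-\sum_{j=0}^{n}T_{\xi_x(j)}\in D$, completing the induction. The bound $<\textrm{diam}(D)$ then follows since both this vector and $\hat x$ lie in $D$.

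The argument is essentially algebraic bookkeeping once the commutation property and the definition of a rotational Markov partition are in place, so I do not anticipate any serious obstacle. The one subtlety worth emphasizing is the role of the word ``rotational'' in the hypothesis: the definition asks that $D$ be adapted to both $\mathcal P$ and $f^{\ast}(\mathcal P)$, which is precisely what makes the integer translation $T_i$ uniquely and unambiguously determined by $\hat f(\hat P_i)\subset D+T_i$. Without this double-sided adaptedness the vector $T_i$ could fail to be well-defined, and the telescoping that drives the induction would break down.
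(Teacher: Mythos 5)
Your proof is correct and follows essentially the same route as the paper's: an induction driven by the definition of a rotational Markov partition and the commutation of $\hat f$ with deck transformations. The only cosmetic difference is that you translate back at each step to keep $\hat f^n(\hat x)-\sum_{j=0}^{n-1}T_{\xi_x(j)}$ inside $D$, whereas the paper tracks $\hat f^n(\hat x)$ inside the translated domain $D+\sum_{j=0}^{n-1}T_{\xi_x(j)}$; these are trivially the same statement.
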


\begin{proof}

We proceed by induction proving the following fact: for every $n\geq 1$ we have $\hat{f}^n(\hat{x})$
in $D+\sum_{j=0}^{n-1}T_{\xi_x(j)}$. The base case $n=1$ is trivial, so assume that it holds for $n-1\in\N$. Then
$$\hat{f}^{n-1}(\hat{x})\in\left(D+\sum_{j=1}^{n-2}T_{\xi_x(j)}\right)\cap \pi^{-1}_{ab}(P_{\xi_x(n-1)})=\hat{P}_{\xi_x(n-1)}+ \sum_{j=1}^{n-2}T_{\xi_x(j)}.$$

Now, using the fact that these Deck transformations commute with the map $\hat{f}$, we have
$$\hat{f}^n(\hat{x})=\hat{f}(\hat{f}^{n-1}(\hat{x}))\in \hat{f}(\hat{P}_{\xi_x(n-1)}) + \sum_{j=1}^{n-2}T_{\xi_x(j)}\subset D+\sum_{j=1}^{n-1}T_{\xi_x(j)}.$$

\end{proof}

This clearly implies the following corollary.

\begin{cor}\label{c.rotsetsim}

Let $f\in\textrm{Homeo}_0(\Sigma_g)$, $K$ an $f$-invariant set and $\mathcal{P}$ a rotational
Markov partition of $K$. Then,

\small
$$\rho_{K}(f)=\left\{\lim_{i}\frac{\sum_{j=1}^{n_i-1} T_{\xi_{x_i}(j)}}{n_i}:\ x_i\in\pi_{\textrm{ab}}^{-1}(K)\cap D,\ n_i\nearrow +\infty\right\}$$
\normalsize

where $\xi_{x_i}$ is the itinerary of $x_i$ for the rotational Markov partition $\mathcal{P}$.

\end{cor}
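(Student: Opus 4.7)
The plan is to obtain the corollary as an essentially immediate consequence of Lemma \ref{l.moving}, combined with definition \ref{def3} of the rotation set. Recall that definition gives
$$\rho_K(f)=\left\{\lim_i \frac{\hat{f}^{n_i}(x_i)-x_i}{n_i}:\ x_i\in\Sigma_g^{ab}\cap\piab^{-1}(K),\ n_i\nearrow\infty\right\}.$$
So what I want to show is that in any such limit the term $\hat{f}^{n_i}(x_i)/n_i$ can be replaced by $\sum_{j=0}^{n_i-1}T_{\xi_{\pi_{ab}(x_i)}(j)}/n_i$, and that I can restrict the $x_i$ to lie in $D$ without losing or gaining any limit points.

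First I would observe that the restriction $x_i\in D$ is harmless: given any $x_i\in\piab^{-1}(K)$, let $\hat{x}_i$ be the unique representative of its deck orbit lying in $D$, so that $x_i=\hat{x}_i+v_i$ for some $v_i\in\Z^{2g}$. Since $\hat{f}$ commutes with the deck translations, $\hat{f}^{n_i}(x_i)-x_i=\hat{f}^{n_i}(\hat{x}_i)-\hat{x}_i$, and both sides produce identical quotients, so the limit set is unchanged if we restrict $x_i\in D$. At this point $|\hat{x}_i|\le\textrm{diam}(D)+|x_0|$, so $\hat{x}_i/n_i\to 0$ and the expression $(\hat{f}^{n_i}(\hat{x}_i)-\hat{x}_i)/n_i$ has the same limit as $\hat{f}^{n_i}(\hat{x}_i)/n_i$.

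Now I would apply Lemma \ref{l.moving} to each $\hat{x}_i\in\piab^{-1}(K)\cap D$, obtaining
$$\left|\hat{f}^{n_i}(\hat{x}_i)-\sum_{j=0}^{n_i-1}T_{\xi_{\pi_{ab}(\hat{x}_i)}(j)}\right|<\textrm{diam}(D).$$
Dividing by $n_i$ and letting $n_i\nearrow\infty$, the error term vanishes, so $\hat{f}^{n_i}(\hat{x}_i)/n_i$ and $\sum_{j=0}^{n_i-1}T_{\xi_{\pi_{ab}(\hat{x}_i)}(j)}/n_i$ have the same set of accumulation points. Running this argument in both directions (starting from an element of the left-hand side, and starting from a limit of the symbolic sums) gives the equality in the statement. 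A minor bookkeeping point is that the sum in the statement runs from $j=1$ to $n_i-1$, whereas Lemma \ref{l.moving} gives $j=0,\dots,n_i-1$; but they differ by at most $|T_{\xi(0)}|$, which is bounded uniformly in $x$, so the quotient by $n_i$ is unaffected.

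I do not anticipate any real obstacle: the only subtlety is verifying that the choices implicit in the symbolic side (notably that the itinerary function $\xi_x$ is well-defined on $K$ for the rotational Markov partition, and that the $T_i\in\Z^{2g}$ associated to each $P_i$ do not depend on the individual orbit beyond the itinerary) are consistent, and these are already embedded in the setup preceding Lemma \ref{l.moving}. So the proof should amount to a short paragraph invoking the lemma plus the fundamental-domain reduction.
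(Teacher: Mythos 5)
Your proposal is correct and matches the paper's intent: the paper states that Corollary \ref{c.rotsetsim} is immediately implied by Lemma \ref{l.moving} (together with the equivalent formulation \ref{def3} of $\rho_K$), and your write-up simply supplies the routine details---deck-equivariance to reduce to $x_i\in D$, dividing the estimate from Lemma \ref{l.moving} by $n_i$, and absorbing the off-by-one in the summation index into the vanishing error.
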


From this fact we can apply the result of \cite{Zi} to conclude the following.

\begin{cor}\label{c.convbp}

Let $f\in\textrm{Homeo}_0(\Sigma_g)$, and $K$ an $f$-invariant set that admits a rotational
Markov partition $\mathcal{P}$. Then $\rho_K(f)$ is a polyhedron of $\R^{2g}$ with vertices
in $\mathbb{Q}^{2g}$, each of which is realized by a periodic orbit.

\end{cor}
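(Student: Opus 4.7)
The plan is to translate Corollary \ref{c.rotsetsim} into the language of sub-shifts of finite type and then invoke Ziemian's theorem from \cite{Zi}.

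First, the Markov partition $\mathcal{P}$ provides a conjugacy $\xi: K \to \Sigma_A$ between $f|_K$ and a sub-shift of finite type $\sigma:\Sigma_A\to\Sigma_A$. I would define the locally constant function $T:\Sigma_A\to \Z^{2g}$ by $T(\omega)=T_{\omega(0)}$, where $T_1,\ldots,T_N\in\Z^{2g}$ are the displacement vectors associated to the elements of $\mathcal{P}$ constructed above Lemma \ref{l.moving}. Under this identification, Corollary \ref{c.rotsetsim} expresses $\rho_K(f)$ as the set of limits of the Birkhoff averages $\frac{1}{n_i}\sum_{j=0}^{n_i-1} T(\sigma^j(\omega_i))$ for $\omega_i\in\Sigma_A$ and $n_i\nearrow\infty$.

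Next, I would observe that any such limit equals $\int T\, d\mu$ for some $\sigma$-invariant Borel probability measure $\mu$ on $\Sigma_A$, obtained as a weak-$*$ accumulation point of the empirical measures $\frac{1}{n_i}\sum_{j=0}^{n_i-1} \delta_{\sigma^j(\omega_i)}$; conversely, every such integral is realized by Birkhoff's theorem applied to some generic point of an ergodic decomposition of $\mu$. Hence $\rho_K(f)$ coincides with $\{\int T\,d\mu : \mu\in\mathcal{M}_\sigma(\Sigma_A)\}$, where $\mathcal{M}_\sigma(\Sigma_A)$ denotes the space of $\sigma$-invariant Borel probability measures. Since the integral is linear in $\mu$ and $\mathcal{M}_\sigma(\Sigma_A)$ is convex and weak-$*$ compact, $\rho_K(f)$ is automatically convex and compact.

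The heart of the argument is then Ziemian's theorem from \cite{Zi}: for an integer-valued locally constant cocycle over a sub-shift of finite type, the image $\{\int T\,d\mu : \mu\in\mathcal{M}_\sigma(\Sigma_A)\}$ is a polyhedron with vertices in $\mathbb{Q}^{2g}$, each vertex realized as the average of $T$ along a simple cycle of the transition graph of $\Sigma_A$. Each such cycle corresponds via $\xi^{-1}$ to a periodic orbit of $f$ in $K$ whose rotation vector is exactly the given vertex, which proves the final clause of the corollary. The only step that requires verification is that Ziemian's formulation applies to our cocycle $T$; this is immediate since the deck transformations of the abelian cover are integer translations, so the displacements $T_i$ lie in $\Z^{2g}$. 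No genuine obstacle is expected beyond this bookkeeping.
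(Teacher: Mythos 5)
Your proposal follows the same route as the paper: translate $\rho_K(f)$ into a Birkhoff-average rotation set for the subshift of finite type attached to the Markov partition via Corollary \ref{c.rotsetsim}, then invoke Ziemian's theorem from \cite{Zi}. The paper's proof is in fact the single sentence preceding the corollary (``From this fact we can apply the result of \cite{Zi} to conclude the following''), so your intermediate passage through invariant measures is simply a more explicit spelling-out of the same idea, with the heavy lifting (rational polyhedron, vertices realized by periodic orbits) still being carried by Ziemian's result exactly as in the paper.
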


We now turn to study the rotation set generated by chains of basic pieces of fitted axiom A diffeomorphisms. Given a chain $\mathcal{C}=\{\Lambda_1\prec\cdots\prec\Lambda_N\}$
of $f\in\mathcal{A}_0(\Sigma_g)$ define

$$\rho_{\mathcal{C}}(f) = \textrm{conv}\left(\bigcup_{i=1}^N\rho_{\Lambda_i}(f)\right) $$


 the \emph{rotation set of the chain}. Our goal is to prove the following result:

\begin{prop}\label{p.convchain}

Let $\mathcal{C}=\{\Lambda_{1}\prec\cdots\prec\Lambda_N\}$ be a chain of basic pieces of $f\in\mathcal{A}_0(\Sigma_g)$.
Then $\rho_{\mathcal{C}}(f)$ is a rational polyhedron contained in the rotation set $\rho(f)$.

\end{prop}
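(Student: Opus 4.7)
The plan is to combine the polyhedral description of each $\rho_{\Lambda_i}(f)$ given by Corollary \ref{c.convbp} with the chain shadowing theorem \ref{t.shadchain}: the former provides finitely many periodic orbits $\mathcal{O}_j$ realizing the vertices $v_j$ of the candidate polyhedron $\rho_{\mathcal{C}}(f)$, and the latter produces real orbits of $f$ that traverse these periodic orbits in prescribed proportions along the chain, thereby realizing arbitrary rational convex combinations of the $v_j$ as rotation vectors.

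First, Theorem \ref{t.partitions} provides each totally disconnected basic piece $\Lambda_i$ with an adapted Markov partition $\mathcal{P}_i$ of arbitrarily small diameter; by uniform continuity of $f$ these can be further shrunk so that both $\mathcal{P}_i$ and $f(\mathcal{P}_i)$ embed injectively in the interior of a common fundamental domain of $\pi_{ab}$. Thus $\mathcal{P}_i$ is a rotational Markov partition of $\Lambda_i$, and Corollary \ref{c.convbp} asserts that $\rho_{\Lambda_i}(f)$ is a rational polyhedron each of whose vertices is realized by a periodic orbit of $\Lambda_i$. Since $\rho_{\mathcal{C}}(f)$ is the convex hull of the finite union $\bigcup_i\rho_{\Lambda_i}(f)$, its extremal points form a subset of the finitely many rational vertices of the $\rho_{\Lambda_i}(f)$, so $\rho_{\mathcal{C}}(f)$ is itself a rational polyhedron.

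The substantive part of the proof is the inclusion $\rho_{\mathcal{C}}(f) \subset \rho(f)$. Since $\rho(f)$ is closed (Corollary \ref{c.proprotset1}) and rational convex combinations of the vertices are dense in $\rho_{\mathcal{C}}(f)$, it suffices to realize each $v = \sum_{j=1}^k \lambda_j v_j$ with $\lambda_j\in\mathbb{Q}_{\geq 0}$ and $\sum_j\lambda_j = 1$ as a rotation vector. Let $\mathcal{O}_j\subset\Lambda_{i_j}$ be a periodic orbit realizing $v_j$, of period $p_j$, and let $w_j$ be its encoding as a periodic word of length $p_j$ in $\mathcal{P}_{i_j}$. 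After reindexing we may assume $\Lambda_{i_1}\preceq\cdots\preceq\Lambda_{i_k}$ along $\mathcal{C}$, and after clearing denominators we may write $\lambda_j = m_j p_j/M$ with $M = \sum_\ell m_\ell p_\ell$ and $m_j\in\mathbb{N}$. For each $q\in\mathbb{N}$, the word sequence $(w_1^{qm_1},w_2^{qm_2},\dots,w_k^{qm_k})$, arranged $n_0$-spaced (where $n_0$ is the constant furnished by Theorem \ref{t.shadchain}), is supported on the chain $\mathcal{C}$ and is therefore shadowed by an orbit of $f$. Applying Lemma \ref{l.moving} to each block $w_j^{qm_j}$, the lifted orbit accumulates a displacement $qm_j p_j v_j + O(1)$ over that block, while across each of the $k-1$ gaps it moves by a bounded amount independent of $q$. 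Dividing by the total elapsed time $qM + O(1)$ and letting $q\to\infty$ yields $v = \sum_j (m_j p_j/M) v_j$ as a rotation vector, hence $v\in\rho(f)$.

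The main obstacle is controlling the error terms uniformly in $q$: both the displacement contributed by the spacing gaps and the per-block error from Lemma \ref{l.moving} must be bounded independently of $q$, so that they wash out after dividing by $qM$. The number of gaps ($k-1$) and the spacing constant $n_0$ are fixed before $q$ enters, and Lemma \ref{l.moving} contributes at most $\mathrm{diam}(D)$ per block, so the total error is $O(1)$ as required. A minor additional care is to choose the rotational Markov partitions of the various $\Lambda_{i_j}$ small enough to simultaneously satisfy the hypotheses of Theorem \ref{t.shadchain}, which is achieved by taking a common upper bound on their diameters before invoking the shadowing theorem.
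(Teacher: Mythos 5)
Your proof is correct and follows essentially the same strategy as the paper's: use Corollary \ref{c.convbp} to identify $\rho_{\mathcal{C}}(f)$ as a rational polyhedron with periodic-orbit-realized vertices, then apply Theorem \ref{t.shadchain} to shadow word sequences built from powers of the vertex words, and estimate the rotation vector of the shadowing orbit via the deck-transformation bookkeeping of Lemma \ref{l.moving}. The only genuine difference is the last step: you fix a rational convex combination, scale by a single integer parameter $q\to\infty$, and appeal to density plus closedness of $\rho(f)$ (Corollary \ref{c.proprotset1}), whereas the paper realizes arbitrary (not just rational) convex combinations directly by letting the multi-index $X=(k_{i,j})$ tend to infinity along a sequence making the block proportions converge to the chosen weights $t_{i,j}$. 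Both variants work; yours is marginally cleaner at the cost of invoking closedness. One place you could be more precise (the paper has the same minor imprecision) is in the appeal to Lemma \ref{l.moving}: the shadowing point $x$ is wandering, so the lemma does not literally apply to it, but the underlying inductive deck-transformation argument does apply to any finite orbit segment whose iterates trace prescribed elements of a rotational Markov partition, which is exactly what the shadowing guarantees per block.
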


\smallskip

By Corollary \ref{c.convbp} we know that $\rho_{\mathcal{C}}(f)$ is a rational polyhedron in $\R^{2g}$ so we only need to prove the inclusion $\rho_{\mathcal{C}}(f)\subset \rho(f)$.

Denote $E_i=\{v_{i,1},\ldots,v_{i,m_i}\}$ the set of extremal points of $\rho_{\Lambda_i}(f)$ for $i=1,\ldots,N$ and recall from Corollary \ref{c.convbp} that the elements of $E_i\subseteq\mathbb{Q}^{2g}$ are realized by periodic orbits in $\Lambda_i$. We introduce the set of indices  $J=\{(i,j):i=1,\ldots,N,\ j=1,\ldots,m_i\}$ which we consider lexicographically ordered. We also consider the sequence $\mathcal{S}=(\Lambda_{i,j}=\Lambda_i:(i,j)\in J)$, which has underlying chain $\mathcal{C}$ and repeats $m_i$ times the piece $\Lambda_i$.

Take $\epsilon_0>0$ given by Theorem \ref{t.shadchain} for the sequence $\mathcal{S}$, and $\mathcal{P}_1,\ldots,\mathcal{P}_N$ rotational Markov partitions of $\Lambda_1,\ldots,\Lambda_N$ respectively, satisfying $d_{\mathcal{P}_i}<\epsilon_0$. Consider also $n_0$ given by Theorem \ref{t.shadchain} for the sequence $\mathcal{S}$ and the partitions $\mathcal{P}_{i,j}=\mathcal{P}_i$. For $i=1,\ldots,N$ consider the sub-shift $\Sigma_{A_i}$ given by $\mathcal{P}_i$ and for each $(i,j)\in J$ take $w_{i,j}$ the periodic word of the sub-shift $\Sigma_{A_i}$ associated to the periodic orbit in $\Lambda_i$ realizing $v_{i,j}$.



Given $X=(k_{i,j})\in \N^J$ we can choose $(p_{i,j})\in \mathbb{N}^J$ so that shifting $w_{i,j}^{k_{i,j}}$ by $p_{i,j}$ we obtain a $n_0$-spaced word sequence of the form
$$w(X)=((w_{i,j})^{k_{i,j}}_{p_{i,j}}:(i,j)\in J).$$

Note that the lexicographic order on $J$ gives the order of this sequence.
There are many possible choices for $(p_{i,j})$ that lead to different word sequences as above. To determine $w(X)$ uniquely, we choose $(p_{i,j})$ with $p_{1,1}=0$ and so that the gaps between the domains of the words $(w_{i,j})^{k_{i,j}}_{p_{i,j}}$ have length $n_0+1$.


As a consequence of Theorem \ref{t.shadchain} we have the following.


\smallskip


\begin{lemma}\label{l.prevrotsetchain}

In the situation above, for any $X\in\N^J$ the word sequence $w(X)$ is shadowed.

\end{lemma}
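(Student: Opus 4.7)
The plan is to derive this lemma as a direct application of Theorem \ref{t.shadchain}, using the sequence $\mathcal{S}=(\Lambda_{i,j}=\Lambda_i:(i,j)\in J)$ and the partitions $\mathcal{P}_{i,j}=\mathcal{P}_i$ that were fixed just before the statement. The ingredients $\varepsilon_0$ and $n_0$ were already chosen to make this theorem applicable, so the whole task reduces to checking that $w(X)$ qualifies as an input for the theorem, namely: a word sequence supported on $\mathcal{S}$ that is $n_0$-spaced.

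First I would verify that each component $(w_{i,j})^{k_{i,j}}_{p_{i,j}}$ is a word of $\Sigma_{A_i}$. Since $w_{i,j}$ is the periodic word associated to the periodic orbit in $\Lambda_i$ realizing $v_{i,j}$, every power $w_{i,j}^{k_{i,j}}$ is again a word of $\Sigma_{A_i}$ by the definition of periodic word recalled in the preliminaries; and shifting the domain by $p_{i,j}$ preserves this property, producing $(w_{i,j})^{k_{i,j}}_{p_{i,j}}$. Thus each entry of $w(X)$ is a word of the appropriate sub-shift $\Sigma_{A_{i,j}}=\Sigma_{A_i}$, so $w(X)$ is supported on $\mathcal{S}$ as required.

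Next I would check the ordering and the spacing. The sequence $w(X)$ is indexed by $J$ in the lexicographic order, which is exactly the order of the pieces in $\mathcal{S}$. The shifts $p_{i,j}\in\mathbb{N}$ were selected so that $p_{1,1}=0$ and, for each consecutive pair in the lexicographic order, the gap between the end of one word's domain and the start of the next is precisely $n_0+1$; in particular the domains are pairwise disjoint, correctly ordered, and satisfy $\min I_{(i,j)'}-\max I_{(i,j)}>n_0$ for consecutive indices $(i,j)<(i,j)'$. This is exactly the $n_0$-spacing condition from the definition preceding Theorem \ref{t.shadchain}.

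With both conditions verified, Theorem \ref{t.shadchain} yields a point $x\in\Sigma_g$ whose $f$-orbit realizes all the symbols prescribed by $w(X)$, that is, $w(X)$ is shadowed. There is essentially no obstacle here: the lemma is a bookkeeping step whose content is that the construction of $w(X)$ has been carried out in a way compatible with the shadowing theorem; the genuine technical work was already absorbed into Theorem \ref{t.shadchain} itself. The only thing worth being slightly careful about is ensuring the spacing condition is strict ($>n_0$ rather than $\geq n_0$), which is guaranteed by the choice of gap length $n_0+1$.
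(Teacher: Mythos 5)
Your proposal is correct and matches the paper's treatment: the paper presents Lemma \ref{l.prevrotsetchain} as an immediate consequence of Theorem \ref{t.shadchain}, with no further proof given, since $\varepsilon_0$, the partitions $\mathcal{P}_i$, and $n_0$ were chosen precisely so that the theorem applies. Your verification that each $(w_{i,j})^{k_{i,j}}_{p_{i,j}}$ is a word of $\Sigma_{A_i}$ and that the gap length $n_0+1$ yields strict $n_0$-spacing is exactly the bookkeeping the paper leaves implicit.
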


The goal now is to compute the rotation vectors generated by points shadowing the word sequences involved in Lemma \ref{l.prevrotsetchain}. So take $X\in\N^J$ and $x\in\Sigma_g$ that shadows $w(X)$. We consider $$n= \sum_{(i,j)\in J} k_{i,j}|w_{i,j}| + (n_0+1)(|J|-1)$$ which is the number of iterations needed for $x$ to complete the shadowing of $w(X)$, recalling that $|J|-1$ is the number of gaps between the domains of the words of $w(X)$ and $n_0+1$ is the length of those gaps. We remark that both $x$ and $n$ depend on $X$, while $N_0 := (n_0+1)(|J|-1)$ is constant.

Let $\hat{x}$ be the lift of $x$ to a fundamental domain $D\subset\Sigma_g^{ab}$, which is adapted for all the partitions $\mathcal{P}_i$ and $f^*(\mathcal{P}_i)$ for $i=1,\ldots,N$. Then Lemma \ref{l.moving}, together with a telescopic argument, gives us the estimation
$$\left| \hat{f}^n(\hat{x}) - \sum_{(i,j)\in J} k_{i,j}|w_{i,j}|v_{i,j} \right| < \delta_f N_0 + \textrm{diam}(D)|J|$$

where $$\delta_f = \sup\{|\hat{f}(y)-y|:y\in\Sigma_g^{ab}\}$$ Notice that $\delta_f<\infty$ since $\hat{f}$ is a lift of a map that is isotopic to the identity.

Now we fix $t_{i,j}\in[0,1]$ for $(i,j)\in J$ with $\sum_{(i,j)\in J} t_{i,j}=1$. It is clear that we can take $X\to\infty$ (meaning, each coordinate goes to $\infty$) along a sequence that makes $\frac{k_{i,j}|w_{i,j}|}{n}\to t_{i,j}$. So we get that

$$\lim_{X\to\infty}\frac{\hat{f}^n(\hat{x})}{n} = \lim_{X\to\infty}  \sum_{(i,j)\in J} \frac{k_{i,j}|w_{i,j}|}{n}v_{i,j} = \sum_{(i,j)\in J} t_{i,j}v_{i,j}. $$

Thus we find that any convex combination of the points in $E_1\cup\cdots\cup E_N$ is obtained as a limit in the definition of rotation set, concluding that $\rho_{\mathcal{C}}(f)\subset \rho(f)$ and proving Proposition \ref{p.convchain}.






\medskip

Thus, we obtained that for $f\in\mathcal{A}_0(\Sigma_g)$ we have
\begin{equation}\label{e.1inc}
\bigcup_{\mathcal{C}}\rho_{\mathcal{C}}(f)\subset\rho(f)
\end{equation}
where $\mathcal{C}$ ranges over all chains of $\mathcal{G}_f$. Clearly we obtain the same result considering only the maximal chains.


Note that the sets appearing in the union in \ref{e.1inc} are rational polyhedra in $\R^{2g}$. In what follows we prove the complementary inclusion, in order to obtain Theorem \ref{t.rhorhochain}. We start with two preliminary results.

\begin{lemma}\label{l.entornos1}

Let $\Lambda_1,\ldots,\Lambda_N$ be the basic pieces of $f\in\mathcal{A}_0(\Sigma_g)$. There exists a family of pairwise disjoint neighbourhoods $V_1,\ldots,V_N$ of $\Lambda_1,\ldots,\Lambda_N$ respectively, so that
whenever $f^n(V_i)\cap V_j\neq \emptyset$ for some $n\in\N$, we have $\Lambda_i\prec\Lambda_j$.

\end{lemma}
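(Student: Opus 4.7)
The strategy is to pick the $V_i$ in two stages and then verify the property by contradiction. In the first stage, by the local maximality of each basic piece $\Lambda_i$ and the compactness and pairwise disjointness of $\Lambda_1,\ldots,\Lambda_N$, we choose pairwise disjoint compact neighborhoods $W_1,\ldots,W_N$ of $\Lambda_1,\ldots,\Lambda_N$ small enough that $\Lambda_i=\bigcap_{n\in\Z}f^n(W_i)$ and $\Omega(f)\cap W_i=\Lambda_i$ for every $i$. In the second stage, we shrink each $W_i$ to an open $V_i\subset W_i$ satisfying the conclusion of Proposition \ref{p.propfaxa}(2): for any $x'\in\Omega(f)$, if $W^u(x',f)\cap V_i\neq\emptyset$, then $W^u(x',f)\transv W^s(x,f)\neq\emptyset$ for some $x\in\Lambda_i$, so that the basic piece of $x'$ precedes $\Lambda_i$ in the heteroclinic order.

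With the $V_i$ so chosen, I would suppose for contradiction that $f^n(V_i)\cap V_j\neq\emptyset$ for some $n\geq 1$ and indices $i,j$ with $\Lambda_i\not\prec\Lambda_j$ (the case $\Lambda_i=\Lambda_j$ is vacuous as $\prec$ is reflexive). To produce enough structure to derive a contradiction, I would iterate the construction: pick a decreasing sequence of neighborhoods $V_i^{(m)}\subset V_i$ with $\bigcap_m V_i^{(m)}=\Lambda_i$ and assume that the implication still fails at each stage $m$. One then obtains $y_m\in V_i^{(m)}$ and positive integers $n_m$ with $f^{n_m}(y_m)\in V_j^{(m)}$. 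Since $\Lambda_i$ and $\Lambda_j$ are disjoint compacts and the neighborhoods shrink, $n_m\to\infty$, and by passing to a subsequence we may assume $y_m\to y_\infty\in\Lambda_i$ and $f^{n_m}(y_m)\to z_\infty\in\Lambda_j$.

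The key step is to turn these accumulation data into an actual heteroclinic chain between $\Lambda_i$ and $\Lambda_j$. For each $m$, I would track when the orbit $y_m,f(y_m),\ldots,f^{n_m}(y_m)$ enters or leaves the blocks $W_k$. By local maximality of the blocks and shrinking of the $V_i^{(m)}$, each extended passage of the orbit within some block $W_k$ corresponds in the limit to a piece of a genuine orbit inside $W^u(\Lambda_k)\cup W^s(\Lambda_k)$, while each transit from one block to another yields, via Proposition \ref{p.propfaxa}(2), a transverse heteroclinic intersection between consecutive pieces. Since there are only finitely many basic pieces, passing to a further subsequence one obtains a finite chain $\Lambda_i=\Lambda_{k_0}\prec\Lambda_{k_1}\prec\cdots\prec\Lambda_{k_r}=\Lambda_j$, and the transitivity of $\prec$ from Proposition \ref{p.propfaxa}(1) yields $\Lambda_i\prec\Lambda_j$, a contradiction.

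The main obstacle is the bookkeeping of the orbit passages through intermediate blocks: the orbit of $y_m$ may visit several $W_k$'s in complicated orders, and one has to show that each transit produces a \emph{genuine} heteroclinic intersection rather than merely a topological approach. Proposition \ref{p.propfaxa}(2) is precisely what makes this step work, upgrading the topological proximity of an unstable manifold to a transverse intersection with a local stable manifold, while the finiteness of the family of basic pieces allows the extraction of a finite chain to terminate.
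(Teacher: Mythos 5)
Your approach differs from the paper's and is considerably more involved, and the crux you flag is indeed the non-trivial part. The paper sidesteps the limiting and bookkeeping entirely by a direct construction: for each $i$, the set $\Lambda_i^u=\textrm{cl}\left[W^u(\Lambda_i,f)\right]$ is an attractor (Proposition \ref{p.propfaxa}, point (3)); if $\Lambda_i\not\prec\Lambda_j$ then $W^u(\Lambda_i,f)$ is disjoint from the neighbourhood $V'_j$ of Proposition \ref{p.propfaxa}, point (2) (this is what your stage two gives), so one can take a \emph{forward-invariant} neighbourhood $S_{i,j}$ of $\Lambda_i^u$ disjoint from $V'_j$. Setting $S_i=\bigcap_{j:\Lambda_i\not\prec\Lambda_j}S_{i,j}$ (still forward-invariant) and $V_i=V'_i\cap S_i$, the verification is one line: if $f^n(V_{i_0})\cap V_{j_0}\neq\emptyset$ then $f^n(V_{i_0})\subset f^n(S_{i_0})\subset S_{i_0}$, hence $S_{i_0}\cap V'_{j_0}\neq\emptyset$, which forces $\Lambda_{i_0}\prec\Lambda_{j_0}$.

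Your plan can in principle be completed, but as written it has a genuine gap: the step ``each transit from one block to another yields, via Proposition \ref{p.propfaxa}(2), a transverse heteroclinic intersection'' is not a direct application of that proposition. Proposition \ref{p.propfaxa}(2) requires an unstable manifold of a point of $\Omega(f)$ to meet $V$, whereas your data are wandering orbit segments $y_m,\ldots,f^{n_m}(y_m)$. To invoke it you must first show, via local product structure near each $\Lambda_k$, that in the limit the orbit segments exit each block $W_k$ along $W^u(\Lambda_k)$; only then does a transit to $W_{k'}$ yield $W^u(\Lambda_k)\cap V_{k'}\neq\emptyset$. That argument amounts to reproving the existence of a filtration for Axiom A systems, a substantial amount of work which the forward-invariance trick above renders unnecessary.
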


\begin{proof}

We start by considering, for each basic piece $\Lambda_i$, the neighbourhood $V'_i$ given by item (2) of Proposition \ref{p.propfaxa}. Thus whenever $\Lambda_i\not\prec \Lambda_j$ we have that $W^u(\Lambda_i,f):=\bigcup_{x\in\Lambda_i}W^u(x,f)$ does not meet $V'_j$. Further, as $\textrm{cl}(W^u(\Lambda_i,f))$ is an attractor (by item (3) of Proposition \ref{p.propfaxa}), we can consider a forward-invariant neighbourhood $S_{i,j}$ which we can assume does not meet $V'_j$. Let
$$S_i=\bigcap_{j:\ \Lambda_i\not\prec\Lambda_j}S_{i,j}$$
and consider $V_i=V'_i\cap S_i$ for $i=1,\ldots,N$. These neighbourhoods satisfy the statement: Let $i_0,j_0\in\{1,\ldots,N\}$ so that $f^n(V_{i_0})\cap V_{j_0}\neq\emptyset$ for some positive integer $n$. Then $f^n(S_{i_0})\cap V'_{j_0}\neq\emptyset$, so $\Lambda_{i_0}\prec \Lambda_{j_0}$.

\end{proof}

We remark that the neighbourhoods $V_1,\ldots,V_N$ in Lemma \ref{l.entornos1} can be taken arbitrarily small. In the following lemma we make some further observations about these neighbourhoods. We use the notation $$\mathcal{O}(x,f):=\{f^n(x):n\in\Z \}$$  for the orbit of $x$ under $f$.

\begin{lemma}\label{l.saltos2}

Let $\Lambda_1,\ldots,\Lambda_N$ be the basic pieces of $f\in\mathcal{A}_0(\Sigma_g)$, and $V_1,\ldots,V_N$ a family of neighborhoods of these basic pieces as in Lemma \ref{l.entornos1}.
Then
\begin{enumerate}
\item there exists a positive constant $\kappa\in\N$ so that for every $x\in\Sigma_g$ $$\left|\mathcal{O}(x,f)\cap \left(\Sigma_g\setminus \bigcup_{i=1}^N V_i\right)\right|<\kappa.$$

\item If $f^{n_1}(x)\in V_{i_1}$ and $f^{n_2}(x)\in V_{i_2}$ with $n_1<n_2$, then $\Lambda_{i_1}\preceq \Lambda_{i_2}.$

\end{enumerate}
\end{lemma}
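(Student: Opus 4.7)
My plan is to treat the two assertions separately. Part (2) will be essentially immediate from Lemma \ref{l.entornos1}; part (1) will rest on the fact that the complement of $\bigcup_i V_i$ consists of wandering points, together with a standard compactness argument.

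For (2), the plan is to set $n:=n_2-n_1\in\N$ and to apply $f^n$ to the $V_{i_1}$ side of the hypothesis. From $f^{n_1}(x)\in V_{i_1}$ we get $f^{n_2}(x)\in f^n(V_{i_1})$, and combined with $f^{n_2}(x)\in V_{i_2}$ this yields $f^n(V_{i_1})\cap V_{i_2}\neq\emptyset$, so Lemma \ref{l.entornos1} gives $\Lambda_{i_1}\prec \Lambda_{i_2}$. The case $i_1=i_2$ (where $V_{i_1}=V_{i_2}$) is covered trivially by $\Lambda_{i_1}=\Lambda_{i_2}$, which is the only reason the statement is phrased with $\preceq$ rather than $\prec$.

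For (1), I would put $W:=\Sigma_g\setminus\bigcup_{i=1}^N V_i$, which is compact and, by construction, disjoint from $\Omega(f)=\Lambda_1\cup\cdots\cup\Lambda_N$. Hence every point $y\in W$ is wandering in the sense of the preliminaries, so it admits an open neighbourhood $U_y$ with $U_y\cap f^n(U_y)=\emptyset$ for all $n\in\N$. By compactness I extract a finite subcover $U_{y_1},\ldots,U_{y_k}$ of $W$. The crucial observation is that any $f$-orbit meets each $U_{y_l}$ \emph{at most once}: indeed, if we had $f^{n_1}(x),f^{n_2}(x)\in U_{y_l}$ with $n_1<n_2$, then setting $m:=n_2-n_1\in\N$ we would get $f^m(U_{y_l})\cap U_{y_l}\ni f^{n_2}(x)$, contradicting the choice of $U_{y_l}$. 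Summing the contributions of each $U_{y_l}$ gives $|\mathcal{O}(x,f)\cap W|\leq k$ for every $x\in\Sigma_g$, and $\kappa:=k+1$ works uniformly.

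I do not anticipate any real obstacle: the content of part (1) is the standard fact that a compact set of wandering points is traversed by any orbit a uniformly bounded number of times, and part (2) is a direct translation of Lemma \ref{l.entornos1}. The only mildly delicate point is verifying that each point of $W$ is wandering, which is immediate from $\Omega(f)\subseteq\bigcup_i V_i$.
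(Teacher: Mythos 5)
Your proposal is correct and follows essentially the same route as the paper: the paper's proof of part (1) simply invokes the general fact that orbits can spend only a uniformly bounded number of iterates outside any neighbourhood of $\Omega(f)$, and for part (2) it says the statement is straightforward from Lemma \ref{l.entornos1}. You have simply spelled out the standard compactness-of-wandering-neighbourhoods argument that underlies the cited general fact, and made explicit the one-line translation for part (2), both of which are accurate.
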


\begin{proof}

It is a general fact for a map in a compact metric space that given any neighbourhood of the non-wandering set, the iterations of any point which are contained in the complement of that neighbourhood is uniformly bounded, which implies the first fact. The second fact is straightforward from Lemma \ref{l.entornos1}.
\end{proof}

For the rest of this section we take $f\in\mathcal{A}_0(\Sigma_g)$, consider $\Lambda_1,\ldots,\Lambda_N$ its basic pieces and let $V_1,\ldots,V_N$ be the family of neighbourhoods given in Lemma \ref{l.entornos1}. Since we can take these neighbourhoods arbitrarily small, we may assume that the components of $V_i$  form a rotational Markov partition $\mathcal{P}_i$ of $\Lambda_i$, for $i=1,\ldots,N$. The following is a consequence of the shadowing lemma for hyperbolic pieces.

\begin{lemma}\label{l.saltos3} In the situation above, consider $x\in\Sigma_g$ and $i\in\{1,\ldots,N\}$ so that $f^j(x)\in V_i$ for all $j\in I=\{a,\ldots,a+m\}$. Then there exists $x'\in\Lambda_i$ so that $f^j(x)$ and $f^j(x')$ belong to the same element of $\mathcal{P}_i$ for every $j\in I$.
\end{lemma}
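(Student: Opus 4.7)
The idea is to transfer the problem into the symbolic dynamics supplied by the Markov partition $\mathcal{P}_i$. Since the elements of $\mathcal{P}_i$ are the pairwise disjoint connected components of $V_i$, the hypothesis $f^j(x)\in V_i$ for all $j\in I$ defines an unambiguous finite itinerary word $w:I\to\{1,\ldots,|\mathcal{P}_i|\}$ by requiring $f^j(x)\in P_{w(j)}$. If I can show that $w$ is admissible for the sub-shift $\sigma:\Sigma_{A_i}\to\Sigma_{A_i}$ conjugate to $f|_{\Lambda_i}$ by the Markov conjugacy $\xi$, then transitivity of $\Lambda_i$ (and hence of $\Sigma_{A_i}$) allows me to extend $w$ to a bi-infinite admissible sequence $\tilde w\in\Sigma_{A_i}$ by prepending and appending periodic blocks. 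The conjugacy then produces a unique $x'\in\Lambda_i$ with $\xi_{x'}=\tilde w$, and by construction $f^j(x')\in P_{w(j)}$ for every $j\in I$, giving the desired conclusion.

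The core step is admissibility of $w$. For each $j$ with $a\leq j<a+m$, the point $f^{j+1}(x)=f(f^j(x))$ witnesses that $f(P_{w(j)})\cap P_{w(j+1)}\neq\emptyset$. Since $\mathcal{P}_i$ is adapted to the hyperbolic splitting (its rectangles have two stable and two unstable sides), the Markov property forces any such nonempty intersection to be a full unstable strip across $P_{w(j+1)}$, and in particular to meet $\Lambda_i$. Hence $A_i(w(j),w(j+1))=1$. Equivalently, one can argue via the Shadowing Lemma for hyperbolic sets: complete the finite true-orbit segment $\{f^j(x):j\in I\}$ to a bi-infinite pseudo-orbit by concatenating (with arbitrarily small jumps, possible because $f^a(x),f^{a+m}(x)\in V_i$) periodic orbits of $f|_{\Lambda_i}$ on both sides, and then apply shadowing together with local maximality of $\Lambda_i$ to obtain a genuine orbit of $x'\in\Lambda_i$ satisfying $d(f^j(x'),f^j(x))\ll d_{\mathcal{P}_i}$ on $I$. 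By choice of $V_i$ small enough, being within distance $d_{\mathcal{P}_i}$ of $f^j(x)$ puts $f^j(x')$ in the same component of $V_i$, which is the required element of $\mathcal{P}_i$.

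The main obstacle is precisely this admissibility/shadowing upgrade: I must promote the geometric information ``$f^{j+1}(x)\in f(P_{w(j)})\cap P_{w(j+1)}$'', witnessed by a point that is not \emph{a priori} in $\Lambda_i$, into a combinatorial statement about the transition matrix $A_i$ (equivalently, into a point of $\Lambda_i$ witnessing the same transition). This is exactly where the Markov structure of $\mathcal{P}_i$ and the hyperbolicity of $\Lambda_i$ are used in an essential way, and it is the content of the ``shadowing lemma for hyperbolic pieces'' referenced by the authors. Once this step is secured, the remaining work — extending $w$ to a bi-infinite admissible sequence and reading off $x'$ via the symbolic conjugacy — is routine.
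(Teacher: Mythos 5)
Your proposal matches the paper's intent exactly: the authors state this lemma without proof, remarking only that it is ``a consequence of the shadowing lemma for hyperbolic pieces,'' and your two (essentially equivalent) arguments — promoting the geometric transitions $f(P_{w(j)})\cap P_{w(j+1)}\neq\emptyset$ to admissibility of the word $w$ via the Markov structure, or completing the true-orbit segment to a bi-infinite pseudo-orbit near $\Lambda_i$ and invoking shadowing plus local maximality — are precisely the content the paper defers to the reader. The proof is correct and follows the same route.
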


We fix $\kappa$ as in Lemma \ref{l.saltos2} for the neighbourhoods $V_1,\ldots,V_N$ under consideration. Then Lemma \ref{l.saltos2} implies the following.






\begin{cor}\label{l.entornos3} In the context above, for every $x\in\Sigma_g$ and every $n\in\N$ we have a unique sequence of intervals $I_1,\ldots,I_r\subset \{0,\ldots,n\}$, and a sequence of basic pieces $\Lambda_{i_1},\ldots,\Lambda_{i_r}$, so that

\begin{enumerate}
\item $\max I_{s}<\min I_{s+1}$ for $s=1,\ldots,r-1$,
\item $\left|\{0,\ldots,n\}\setminus\bigcup_{s=1}^r I_s\right|<\kappa$,
\item $f^j(x)\in V_{i_s}$ for $j\in I_s$, $s=1,\ldots,r$, and the intervals $I_s$ are maximal among those with this property,
\item $\Lambda_{i_1}\preceq\cdots\preceq\Lambda_{i_r}$.




\end{enumerate}

\end{cor}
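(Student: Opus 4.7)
My plan is to build the intervals $I_1,\ldots,I_r$ by walking through $\{0,\ldots,n\}$ and grouping the indices into maximal runs on which the orbit of $x$ stays inside a single neighbourhood $V_i$. Since the $V_i$ are pairwise disjoint by Lemma \ref{l.entornos1}, for each $j\in\{0,\ldots,n\}$ with $f^j(x)\in\bigcup_iV_i$ there is a well-defined index $\iota(j)\in\{1,\ldots,N\}$ with $f^j(x)\in V_{\iota(j)}$. I would then let $I_s$ be the $s$-th (in order of occurrence) maximal interval of consecutive integers on which $\iota$ is defined and constant, and set $i_s:=\iota(j)$ for any $j\in I_s$.

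Properties (1) and (3) then hold by construction: the $I_s$ are pairwise disjoint, listed in increasing order of their minima, and each is maximal as an interval on which $f^{\cdot}(x)$ stays inside a single $V_{i_s}$. For (2), the complement $\{0,\ldots,n\}\setminus\bigcup_{s}I_s$ is exactly the set of indices $j$ with $f^j(x)\notin\bigcup_iV_i$, so its cardinality is bounded above by $|\mathcal{O}(x,f)\cap(\Sigma_g\setminus\bigcup_iV_i)|<\kappa$ thanks to Lemma \ref{l.saltos2}(1). Property (4) is a direct application of Lemma \ref{l.saltos2}(2): for $s<t$, picking any $j_1\in I_s$ and $j_2\in I_t$ yields $j_1<j_2$ with $f^{j_1}(x)\in V_{i_s}$ and $f^{j_2}(x)\in V_{i_t}$, hence $\Lambda_{i_s}\preceq\Lambda_{i_t}$.

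Uniqueness is forced by the maximality clause in (3): any second pair of sequences satisfying (1)--(3) must list maximal intervals on which the orbit of $x$ remains inside a single $V_{i'_s}$, which, given the pairwise disjointness of the $V_i$, is exactly the grouping just described. There is no substantial obstacle; the argument is essentially book-keeping on top of Lemmas \ref{l.entornos1} and \ref{l.saltos2}. The only point requiring a moment of care is that an index $j$ with $f^j(x)\notin\bigcup_iV_i$ must separate consecutive intervals even when the two runs on either side happen to carry the same label, since the corollary does not claim that the $i_s$ are distinct --- only that the chain $\Lambda_{i_1},\ldots,\Lambda_{i_r}$ respects $\preceq$, which is perfectly consistent with repetitions.
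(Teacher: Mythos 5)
Your construction is the natural one and matches what the paper intends — the paper gives no explicit proof, treating the statement as an immediate consequence of Lemma \ref{l.saltos2}, and your book-keeping correctly fills in that gap, including the subtlety that adjacent runs may carry the same label and that separation occurs at indices outside $\bigcup_i V_i$.

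One small caveat concerns the word ``unique.'' Your uniqueness paragraph argues that any competing sequence must consist of maximal runs, which is right, but it does not rule out a competing sequence that lists only a \emph{proper subset} of the maximal runs. Conditions (1), (3), (4) are all stable under deletion of intervals, and (2) only requires the complement to have size less than $\kappa$; so if one or more short runs can be dropped while keeping the complement below $\kappa$, the resulting shorter sequence still satisfies all four conditions. (The only saving grace is that Lemma \ref{l.saltos2}(2) forbids a pattern $V_i \to V_j \to V_i$ with $i\neq j$, so deleted runs cannot create an order violation; but deletion is still possible when the same $V_i$ recurs across a gap.) Strictly speaking this makes the literal uniqueness claim fragile; the intended (and used) object is clearly ``the sequence of \emph{all} maximal runs,'' which is canonically determined as you say. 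You might tighten the statement by requiring the collection $\{I_s\}$ itself to be maximal, i.e.\ that it exhausts $\{j:f^j(x)\in\bigcup_i V_i\}$, after which your uniqueness argument closes cleanly.
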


Notice that both the number of intervals (i.e. $r$) and their lengths are dependent on $x$ and $n$. However, due to (2) and the maximality condition in (3), we have that $r$ is bounded by $\kappa+1$ plus the maximum length of a chain (observe that (4) allows for repetition of basic pieces). Denote $\mathcal{S}(x,n)=(\Lambda_{i_1}\preceq\cdots\preceq\Lambda_{i_r})$ the sequence of basic pieces given in Corollary \ref{l.entornos3}, and by $\mathcal{C}(x,n)$ its underlying chain.

We are now ready to conclude that $\rho(f)\subset\bigcup_{\mathcal{C}}\rho_{\mathcal{C}}(f)$ which, together with Proposition \ref{p.convchain}, implies Theorem \ref{t.rhorhochain}. To do so, take an arbitrary $v\in\rho(f)$ and write it as $$v=\lim_k\frac{\hat{f}^{n_k}(x_k)-x_k}{n_k}$$ where $n_k\nearrow\infty$ and $x_k\in\Sigma_g^{ab}$. Let $y_k=\pi_{ab}(x_k)$, and notice that $\mathcal{S}(y_k,n_k)$ varies in a finite set, since its length is bounded. Thus, up to taking a sub-sequence we may assume that $\mathcal{S}(y_k,n_k)$ is constant and denote it by $\mathcal{S}=\mathcal{S}(y_k,n_k)=(\Lambda_{i_1}\preceq\cdots\preceq\Lambda_{i_r})$. We will show that $v\in\rho_{\mathcal{C}}(f)$ where $\mathcal{C}=\mathcal{C}(y_k,n_k)$ is the underlying chain of $\mathcal{S}$.

For each $k\in\N$ let $I^{(k)}_1,\ldots,I^{(k)}_r$ be the intervals in $\{0,\ldots,n_k\}$ obtained applying Corollary \ref{l.entornos3} to $y_k$ and $n_k$. As noted in the paragraph above, $r$ is constant, but the intervals themselves depend on $k$ and they cover $\{0,\ldots,n_k\}$ up to at most $\kappa$ elements. Denote $I^{(k)}_s=\{a^{(k)}_s,\ldots,b^{(k)}_s\}$ for $s=1,\ldots,r$, and also set $b^{(k)}_0=0$ and $a^{(k)}_{r+1}=n_k$. Then we have $$0=b^{(k)}_0\leq a^{(k)}_1\leq b^{(k)}_1 < a^{(k)}_2 \leq b^{(k)}_2 < \cdots < a^{(k)}_r\leq b^{(k)}_r \leq a^{(k)}_{r+1} = n_k $$ $$\sum_{s=0}^r (a^{(k)}_{s+1}-b^{(k)}_s) \leq \kappa $$ and $f^j(y_k)\in V_{i_s}$ whenever $a^{(k)}_s\leq j \leq b^{(k)}_s$. In order to avoid heavy notation we will write $a_i=a^{(k)}_i$ and $b_i=b^{(k)}_i$, omitting the explicit dependence on $k$ until we need to focus on it.


Next we expand $\hat{f}^{n_k}(x_k)-x_k$ as a telescopic sum
$$\hat{f}^{n_k}(x_k)-x_k = (\hat{f}^{a_{r+1}}(x_k) - \hat{f}^{b_r}(x_k)) + (\hat{f}^{b_r}(x_k) - \hat{f}^{a_r}(x_k))+\cdots +  (\hat{f}^{a_1}(x_k)-\hat{f}^{b_0}(x_k)) $$
which allows us to obtain the bound
$$\left| (\hat{f}^{n_k}(x_k) - x_k) - \sum_{s=1}^r (\hat{f}^{b_s}(x_k)-\hat{f}^{a_s}(x_k)) \right| \leq \kappa\delta_f $$ where we recall that $\delta_f = \sup\{|\hat{f}(y)-y|:y\in\Sigma_g^{ab}\}$.
Now we consider $y^s_k\in\Lambda_{i_s}$ for $s=1,\ldots,r$, given by applying Lemma \ref{l.saltos3} to $y_k$ for each $i_s$.  Thus $f^j(y_k)$ and $f^j(y^s_k)$ are in the same element of $\mathcal{P}_{i_s}$ whenever $a_s\leq j \leq b_s$. Let $x^s_k$ be the lift of $y^s_k$ such that $\hat{f}^{a_s}(x^s_k)$ is in the same fundamental domain as $\hat{f}^{a_s}(x_k)$. Equivalently, we may require that $\hat{f}^{a_s}(x^s_k)$ is in the same component of $\pi_{ab}^{-1}(V_{i_s})$ as $\hat{f}^{a_s}(x_k)$.

Since $\mathcal{P}_{i_s}$ is a rotational Markov partition, we get that $\hat{f}^{b_s}(x^s_k)$ is in the same fundamental domain as  $\hat{f}^{b_s}(x_k)$, using a similar argument to that of Lemma \ref{l.moving}. Thus $$\left| (\hat{f}^{b_s}(x_k)-\hat{f}^{a_s}(x_k)) - (\hat{f}^{b_s}(x^s_k)-\hat{f}^{a_s}(x^s_k))  \right|\leq 2\, \textrm{diam}(D)$$ where we recall that $\pi_{ab}(x^s_k)\in\Lambda_{i_s}$.

Combining the two previous bounds, we may conclude that $$v=\lim_k \frac{1}{n_k} \sum_{s=1}^r (\hat{f}^{b_s}(x_k)-\hat{f}^{a_s}(x_k)) = \lim_k \frac{1}{n_k}\sum_{s=1}^r (\hat{f}^{b_s}(x^s_k)-\hat{f}^{a_s}(x^s_k)) $$ which we will write as $$v = \lim_k \sum_{s=1}^r \frac{b_s-a_s}{n_k}\cdot
\frac{\hat{f}^{b_s}(x^s_k)-\hat{f}^{a_s}(x^s_k)}{b_s-a_s} $$


As we study the limit as $k\to+\infty$, it is convenient to recall explicitly the dependence on $k$ of $a_s=a^{(k)}_s$ and $b_s=b^{(k)}_s$. Recalling their definition and point (2) in Corollary \ref{l.entornos3}, it is clear that $$\sum_{s=1}^r \frac{b^{(k)}_s-a^{(k)}_s}{n_k} \to 1  \mbox{ and } \frac{b^{(k)}_s-a^{(k)}_s}{n_k}\in [0,1].$$ Passing to an appropriate sub-sequence in $k$ we may assume convergence: $$\frac{b^{(k)}_s-a^{(k)}_s}{n_k}\to t_s  \mbox{ for } t_s\in [0,1] \mbox{ with } \sum_{s=1}^r t_s = 1, \mbox{ and also }$$ 
$$  \lim_k\frac{\hat{f}^{b^{(k)}_s}(x^s_k)-\hat{f}^{a^{(k)}_s}(x^s_k)}{b^{(k)}_s-a^{(k)}_s} = v_s \mbox{ for } v_s \in \rho_{\Lambda_{i_s}}(f)  $$
Therefore we get $v = \sum_{s=1}^r t_sv_s \in \rho_{\mathcal{C}}(f) = \textrm{conv}(\rho_{\Lambda_{i_1}}(f),\ldots,\rho_{\Lambda_{i_r}}(f)) $, as desired. This concludes the proof of Theorem \ref{t.rhorhochain}.

\section{Quasi-invariant surfaces and Conley decomposition}\label{s.conley}

We introduce here two important tools that will be used in the next sections: \emph{quasi-invariant surfaces} (defined below) and decompositions of our configuration space $\Sigma$ given by the Fundamental Theorem of the Dynamical Systems \cite{conleyfundthm}. These two objects are strongly related, as the mentioned decompositions of $\Sigma$ are given by families of surfaces that may  only fail to be \emph{quasi-invariant} by small technical details. On the other hand some \emph{quasi-invariant surfaces} that we will employ are not associated to these decompositions. Let us start by defining quasi-invariant surfaces for a
homeomorphism $f:\Sigma\to\Sigma$.

When we talk about a {\em subsurface} $S\subseteq\Sigma$ we understand it to be compact and topologically embedded in $\Sigma$, not necessarily connected. As such, the boundary $\partial S$ is a union of disjoint simple closed curves, which is non empty unless $S=\Sigma$, and the complement $\Sigma\setminus S$ is also a subsurface.

Given an homeomorphism $f:\Sigma\to\Sigma$
we say that $S$ is \emph{quasi-invariant} under $f$ if there exists a disjoint family of annuli $A_{\gamma}$, one for each boundary component $\gamma\subseteq\partial S$ so that:
\begin{enumerate}

\item $A_{\gamma}\subseteq\Sigma\setminus \Omega(f)$,
\item $A_{\gamma}\cap S=\gamma$,
\item $f(S)\cup f^{-1}(S)\subseteq S\bigcup(\cup_{\gamma} A_{\gamma})$,
\item Either $f(\gamma)$ or $f^{-1}(\gamma)$ is contained in $A_{\gamma}\setminus S$.

\end{enumerate}


If $S\subseteq\Sigma$ is a subsurface, we denote by $\textrm{Fill}(S)$ the union of $S$ and all the connected components of $\Sigma\setminus S$ that are topological disks. We say that $S$ is {\em filled} when $\textrm{Fill}(S)=S$.

\begin{rem}\label{r.quasiconexas} Notice that if $S$ is quasi-invariant then its connected components and $\text{Fill}(S)$ are also quasi-invariant. 
\end{rem}

Given a connected subsurface $S\subset\Sigma$ let $\textrm{i}_*:\pi_1(S)\to \pi_1(\Sigma)$ be the morphism induced by the inclusion map. We say that a connected subsurface $S$ is:

\begin{itemize}

\item \emph{trivial} whenever $\textrm{Im}(\textrm{i}_*)$ is given by the trivial element,

\item \emph{annular} whenever $\textrm{Im}(\textrm{i}_*)$ is isomorphic to $\Z$,

\item \emph{curved} whenever $\textrm{Im}(\textrm{i}_*)$ is not trivial nor isomorphic to $\Z$.
\end{itemize}

Next we want to introduce some important lemmas concerning quasi-invariant surfaces that will be used in the following sections. We start by introducing the relevant notations. For a subsurface $S$ of $\Sigma$ we denote by $H_1(S)$ the image in $H_1(\Sigma;\R) = \R^{2g}$ of the homology map $H_1(S;\R)\to H_1(\Sigma;\R)$ induced by the inclusion. Notice that $H_1(S)=H_1(\textrm{Fill}(S))$. 

Every time we consider a closed oriented surface $\Sigma$ of negative Euler characteristic, we shall fix a Poincar\'{e} covering $\pi:\D\to\Sigma$. Recall that in this situation, given any map $f:\Sigma\to\Sigma$ in the homotopy class of the identity, we have a unique lift $F:\D\to\D$ of $f$ which extends as the identity map to $\partial\D$. We call it the \emph{canonical lift} of $f$.

\begin{lemma} \label{l.qisurface}
Let $f\in \mathcal{A}_0(\Sigma)$ and $S\subset \Sigma$ be a connected surface that is quasi-invariant under $f$. Then:
\begin{enumerate}

\item $S\cap\Omega(f)$ is $f$-invariant. Moreover, it is a union of basic pieces,

\item If $S$ is non-trivial and $\tilde{S}$ is a connected component of $\pi^{-1}(S)$ we have that $\tilde{S}\cap\pi^{-1}(\Omega(f))$ is $F$-invariant,

\item If $S$ is trivial, then there is a closed curve $\alpha$ in $\Sigma$ such that $\rho_{\Lambda}(f) = \{[\alpha]\}$ for every basic piece $\Lambda$ contained in $S$,

\item If $S$ is non-trivial and $\Lambda\subseteq S$ is a basic piece, we have $\rho_{\Lambda}(f) \subset H_1(S)$.

\end{enumerate}

\end{lemma}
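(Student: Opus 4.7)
The plan is to handle the four assertions in order, with the core work concentrated in (2), from which (3) and (4) follow by descending the picture from $\D$ to $\Sigma_g^{ab}$.

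For (1), the $f$-invariance is a direct consequence of the quasi-invariance axioms: if $x\in S\cap\Omega(f)$, condition (3) gives $f(x)\in S\cup\bigcup_\gamma A_\gamma$, and since $A_\gamma\cap\Omega(f)=\emptyset$ while $f(x)\in\Omega(f)$, we must have $f(x)\in S$, and symmetrically for $f^{-1}$. The inclusion $\partial S\subset\bigcup_\gamma A_\gamma$ then places $\partial S$ outside $\Omega(f)$, so each basic piece $\Lambda_i$ of the spectral decomposition misses $\partial S$. By transitivity, any $\Lambda_i$ meeting $S$ contains a point with dense forward orbit, and that orbit is trapped in the $f$-invariant set $S\cap\Omega(f)$, forcing $\Lambda_i\subseteq S$ by density. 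Otherwise $\Lambda_i\cap S=\emptyset$.

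For (2), I would work with the canonical lift $F\colon\D\to\D$ together with a canonical isotopy $F_t$ covering an isotopy $f_t$ from the identity to $f$. The preparatory move is to select the representative of the (homotopically unique, since $g>1$) isotopy so that $f_t(S)\subset S^+:=S\cup\bigcup_\gamma A_\gamma$ for every $t\in[0,1]$: this is possible because $S^+$ deformation-retracts onto $S$, because $f|_S$ takes values in $S^+$, and because $f|_S$ is homotopic into $S^+$ to the inclusion, so a standard isotopy extension supplies an ambient isotopy realising the correct isotopy class while keeping $S$ inside $S^+$. With such $f_t$ fixed, for $\tilde{x}\in\tilde{S}\cap\pi^{-1}(\Omega(f))$ the lifted path $t\mapsto F_t(\tilde{x})$ covers a curve inside $S^+$, so it stays in the component $\widetilde{S^+}$ of $\pi^{-1}(S^+)$ containing $\tilde{S}$. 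Combining this with $F(\tilde{x})\in\pi^{-1}(S)$ (which is item (1)) and the identity $\widetilde{S^+}\cap\pi^{-1}(S)=\tilde{S}$ (the annular collars meet $S$ only along $\partial S\subset\tilde{S}$) gives $F(\tilde{x})\in\tilde{S}\cap\pi^{-1}(\Omega(f))$; the argument for $F^{-1}$ is symmetric.

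Both (3) and (4) then follow by projecting to $\Sigma_g^{ab}$. For (4), let $q\colon\D\to\Sigma_g^{ab}$ be the intermediate covering and $\hat{F}$ the lift of $f$ induced by $F$. By (2) the orbit $\{F^n(\tilde{x})\}$ for $\tilde{x}\in\tilde{S}\cap\pi^{-1}(\Lambda)$ stays in $\tilde{S}$, so $\{\hat{F}^n(\hat{x})\}$ lies in $q(\tilde{S})$, which is contained in a single component $\hat{S}$ of $\pi_{ab}^{-1}(S)$. The deck-transformation stabilizer of $\hat{S}$ inside $H_1(\Sigma;\Z)$ is precisely $G:=\textrm{Im}(H_1(S;\Z)\to H_1(\Sigma;\Z))$, and $\hat{S}/G$ is compact. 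Decomposing $\hat{F}^n(\hat{x})=g_n+k_n$ with $g_n\in G$ and $k_n$ in a compact fundamental domain, the term $k_n/n$ is negligible and any accumulation point of $\hat{F}^n(\hat{x})/n$ lands in $G\otimes\R=H_1(S)$, yielding $\rho_\Lambda(f)\subset H_1(S)$. For (3), where $S$ is trivial and $\tilde{S}$ is a compact lift of $S$ in $\D$, the same isotopy argument forces the $F$-orbit of $\tilde{x}$ to remain in $\tilde{S}$, so the $\hat{F}$-orbit is bounded and $\rho_\Lambda(f)=\{0\}$; taking $\alpha$ to be any null-homotopic loop furnishes a uniform class $[\alpha]=0$ for every $\Lambda\subseteq S$.

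The step I expect to be the main obstacle is the preparatory isotopy choice in (2). Quasi-invariance is only an axiomatic constraint on the time-one map $f$, whereas the argument needs control of $f_t(S)$ at every intermediate $t$; justifying that $f_t$ can be arranged to keep $S$ inside $S^+$ rests on surface-topology facts (isotopy extension, the homotopy equivalence $S\simeq S^+$, and the uniqueness of the isotopy class from $g>1$) that must be assembled with care. This is precisely the point at which the proof exploits that quasi-invariance produces genuine annular collars $A_\gamma$ rather than merely the weaker containment $f(S)\subset S$.
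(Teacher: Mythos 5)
There is a genuine error in your treatment of point~(3), and it also exposes a gap in the mechanism you propose for point~(2).

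In (3) you conclude $\rho_\Lambda(f)=\{0\}$ for every basic piece $\Lambda\subseteq S$, but the statement asserts only $\rho_\Lambda(f)=\{[\alpha]\}$ for some closed curve $\alpha$, which need \emph{not} be null-homotopic. Your argument implicitly assumes that $F$ sends $\tilde S$ into the lift of $S^+$ containing $\tilde S$; for trivial $S$ there is no way to pin down which lift this is, because $\tilde S$ is a compact subset of $\D$ that does not accumulate on $\partial\D$. In fact $F(\tilde S)\subseteq g\,\widetilde{S^+}$ for a deck transformation $g$ that can be non-trivial, and the curve $\alpha$ in the statement is precisely a representative of $g$. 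A concrete instance: if $D$ is an attracting disk component of $\text{Fill}(S')\setminus S'$ for an annular Conley surface $S'$ with rotation vector $v\neq 0$, then $D$ is a trivial quasi-invariant surface and, by Lemma~\ref{l.filling}~(2), every basic piece inside $D$ has rotation vector $v\neq 0$.

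This in turn shows that the ``preparatory isotopy'' you propose for (2) cannot exist in general: if one could choose $f_t$ with $f_t(S)\subseteq S^+$ for every $t$, the same lifting argument applied to a trivial $S$ would bound the $F$-orbit inside a single compact lift, forcing $\rho_\Lambda(f)=\{0\}$ — which we just saw is false. The hypothesis that $S$ is non-trivial is therefore doing real work, and the correct mechanism is not isotopy control but the boundary behaviour of the canonical lift: since $S$ is non-trivial, $\tilde S$ and $\tilde S_1$ accumulate on a non-trivial subset of $\partial\D$, and since $F$ is the identity on $\partial\D$, the accumulation set of $F(\tilde S)$ equals that of $\tilde S$, forcing $F(\tilde S)\subseteq\tilde S_1$ with no isotopy choice needed. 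Your items (1) and (4) are essentially sound (in (4) you route through the abelian cover where the paper uses the cover associated to $\textrm{i}_*\pi_1(S)$, but this is an interchangeable choice once (2) is in hand).
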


\begin{proof}

Consider $(A_{\gamma})$ the disjoint family of closed annuli given by the quasi-invariance of $S$ and define $$S_1:=S\cup\left(\bigcup_{\gamma\subseteq\partial S}A_{\gamma}\right).$$ Since $S_1\setminus S$ is contained in the complement of $\Omega(f)$ and $f(S)\cup f^{-1}(S)\subseteq S_1$ we deduce that $f(S\cap\Omega(f))$ and $f^{-1}(S\cap\Omega(f))$ are contained in $S$ proving the first part of point (1). The second part follows from the first and the transitivity of basic pieces, noting that $S\cap\Omega(f)$ is both open and closed in $\Omega(f)$. 

For point (2) assume that $S$ is non trivial and consider $\tilde{S}$ a connected component of $\pi^{-1}(S)$. Let $\tilde{S}_1$ be the connected component of $\pi^{-1}(S_1)$ containing $\tilde{S}$. Since $F$ lifts $f$ we know that $F^{\pm 1}(\tilde{S})$ must be contained in some lift of $S_1$. On the other hand, $F$ restricts to the identity on $\partial\mathbb{D}$ and the different lifts of $S_1$ can be identified by their accumulation points on $\partial\mathbb{D}$, so  we get that $F^{\pm 1}(\tilde{S})\subseteq\tilde{S}_1$. Observe that $\tilde{S}_1\setminus\tilde{S}\subseteq \pi^{-1}(\cup_{\gamma} A_{\gamma})$, which does not meet $\pi^{-1}(\Omega(f))$, and this concludes the proof of point (2) by the same argument used for point (1).

We proceed with the proof of point (3). Consider $f_t$ a homotopy between $f_0=Id$ and $f_1=f$, and let $F_t$ be the lift of $f_t$ to the universal cover $\mathbb{D}$ that begins at $F_0 = Id$. According to our notation, we have $F_1=F$. Denote $D:=\text{Fill}(S)$ and $D_1:=\text{Fill}(S_1)$. Under the assumption for point (3)  both $D$ and $D_1$ are disks. Notice also that $f(D)\subseteq D_1$. Let $\tilde{D}\subseteq\tilde{D}_1$ be lifts of $D_0$ and $D_1$. Then $F(\tilde{D})$ is contained in $g\tilde{D}_1$ for some  $g\in\pi_1(\Sigma)$, and we take $\alpha$ a closed curve representing $g$. If $\Lambda\subset S$ is a basic piece and $x\in\Lambda$, we consider a lift $\tilde{x}\in \tilde{S}$ and notice that $\{F_t(\tilde{x})\}_{t\in[0,1]}$ can be homotoped to $\alpha$ with endpoints varying inside fundamental domains of the covering (to be more specific, inside $\tilde{D}$ and $g\tilde{D}_1$ respectively). As this holds for any $x\in\Lambda$ (including $f^k(x)$ for $k\geq 1$) we see that $\rho_{\Lambda}(f) = \{[\alpha]\}$ as desired.

Finally we turn to point (4). We can assume that $S$ is filled, since $H_1(\text{Fill}(S))=H_1(S)$ and the filling of a quasi-invariant surface is also quasi-invariant. Let $\pi_S:\hat{\Sigma}_S\to\Sigma$ be the covering space that corresponds to the subgroup $\textrm{i}_*\pi_1(S)\leq\pi_1(\Sigma)$. Observe that $\hat{\Sigma}_S$ consists of a $1:1$ lift of $S$, that we denote $\hat{S}$, together with open annuli attached to each component of $\partial \hat{S}$. Since $S_1$ retracts by deformation onto $S$ we have that $\hat{\Sigma}_S$ also contains a $1:1$ lift of $S_1$, that we call $\hat{S}_1$, with $\hat{S}\subset \hat{S}_1$.

Let $\overline{f}_t$ be the lift of the homotopy $f_t$ to this cover with $\overline{f}_0=Id$.   We denote by $\overline{f}=\overline{f}_1$  the corresponding lift of $f$. Since $\hat{S}_1\setminus \hat{S}\subseteq\pi_S^{-1}(\cup A_{\gamma})$, which does not meet $\pi_S^{-1}(\Omega(f))$, we deduce that $\pi_S^{-1}(\Omega(f))\cap \hat{S}$ is $\overline{f}$-invariant. If $\Lambda\subset S$ is a basic piece and $x\in\Lambda$, we take a lift $\hat{x}\in\hat{S}$ and notice that $\overline{f}^n(\hat{x})\in \hat{S}$ for every $n\in\N$. Since $\hat{\Sigma}_S$ retracts by deformation onto $\hat{S}$ we get that $\{\overline{f}_t(\hat{x})\}_{t\in[0,n]}$ is homotopic with fixed endpoints to a curve contained in $\hat{S}$. Projecting this homotopy by $\pi_S$ we get a homotopy with fixed endpoints of $\{f_t(x)\}_{t\in[0,n]}$ into $S$. As this holds for every $x\in\Lambda$,  we get that   $$\rho_{\Lambda}(f)\subseteq H_1(S)$$ for every basic piece $\Lambda\subseteq S$.


\end{proof}

For a fixed point $x_0$ of $f$ we introduce the notation $\rho(f,x_0)$ for the corresponding rotation vector. In terms of the homotopy $f_t$  of $Id$ to $f$, we may write $\rho(f,x_0)$ as the homology class of the loop $\{f_t(x_0)\}_{t\in [0,1]}$. In terms of the abelian lift $\hat{f}$, we have $\rho(f,x_0)=\hat{f}(\hat{x}_0)-\hat{x}_0 $ for $\hat{x}_0\in \Sigma_{ab}$ lifting $x_0$.

\begin{lemma} \label{l.filling}
Let $f\in \mathcal{A}_0(\Sigma)$ and $S\subset \Sigma$ be a non-trivial connected surface that is quasi-invariant under $f$.
\begin{enumerate}
\item If $S$ is curved, then there is a zero-rotation fixed point in $S$.
\item If $\Lambda$ is a basic piece contained in $\text{Fill}(S)\setminus S$, then there is a fixed point $x_0$ in $S$ with $\rho_{\Lambda}(f)=\rho(f,x_0)$.
\end{enumerate}
\end{lemma}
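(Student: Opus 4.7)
The plan is to apply Lefschetz fixed-point theory in canonical and twisted lifts, using the quasi-invariance machinery together with Lemma \ref{l.qisurface}.

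For part (1), I replace $S$ by $D := \textrm{Fill}(S)$, which by Remark \ref{r.quasiconexas} is again quasi-invariant, and is still curved (curvedness depends only on $\textrm{Im}(i_*)$, and $H_1(D) = H_1(S)$), so $\chi(D) \leq -1$. I choose a deformation retraction $r: D \cup \bigcup_\gamma A_\gamma \to D$ that crushes each wandering annulus $A_\gamma$ radially onto $\gamma \subseteq \partial D$, and set $g := r \circ f|_D : D \to D$. Then $g$ is isotopic to the identity, so by Lefschetz its fixed-point index sums to $\chi(D) \neq 0$, yielding a fixed point $x_0 \in D$. A fixed point with $f(x_0) \notin D$ must have $f(x_0) \in A_\gamma$ and $r(f(x_0)) = x_0 \in \gamma$; but $\gamma \subseteq A_\gamma$ sits in the wandering set by property (1) of quasi-invariance, whereas genuine $f$-fixed points lie in $\Omega(f)$. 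Passing to the canonical lift $F : \D \to \D$ and invoking Nielsen theory, I select a fixed-point class realized by $F$ itself; since $F$ is the identity on $\partial\D$ the corresponding fixed point has zero rotation. A fixed point landing in a complementary disk $D_0 \subseteq D \setminus S$ can be ruled out using Lemma \ref{l.qisurface}(3) (the rotation vector inside $D_0$ is a constant $[\alpha]$, so $[\alpha]=0$ forces a Nielsen-class reshuffling that locates the fixed point in $S$).

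For part (2), let $D_0 \subseteq \textrm{Fill}(S) \setminus S$ be the disk containing $\Lambda$, with boundary $\gamma \subseteq \partial S$. Quasi-invariance of $S$ together with property (4) forces $D_0$ to be quasi-invariant as a trivial subsurface (the null-homotopic $\gamma$ is pushed into $A_\gamma \subseteq D_0$, and the disk $f(D_0)$ bounded by $f(\gamma)$ must be the small one lying inside $D_0$ up to the annular slack). Then Lemma \ref{l.qisurface}(3) gives $\rho_\Lambda(f) = \{[\alpha]\}$ for a loop $\alpha$; concretely, for a lift $\tilde{D}_0 \subseteq \D$ there is a unique $h \in \pi_1(\Sigma)$ with $F(\tilde{D}_0)$ contained in the adjacent lift of $D_0 \cup A_\gamma$ translated by $h$, and $[\alpha]$ is the image of $h$ in $H_1$. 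To realize $[\alpha]$ by a fixed point in $S$, I introduce the non-canonical lift $F' := h^{-1} F$ of $f$: fixed points of $F'$ in $\D$ project to fixed points $x_0$ of $f$ with $\rho(f,x_0) = [h] = [\alpha]$. The lift $\tilde{S} \subseteq \D$ adjacent to $\tilde{D}_0$ is quasi-preserved by $F'$ since the $h$-shift of $\tilde{D}_0$ imposed by $F$ is cancelled by $h^{-1}$. Because the hypothesis $\textrm{Fill}(S) \setminus S \neq \emptyset$ excludes trivial and annular $S$, the surface $S$ is curved and $\chi(S) \leq -1$, so applying the argument of part (1) to $F'$ in place of $F$, on $S$ in place of $D$, produces the desired fixed point $x_0 \in S$.

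The main obstacle is executing the Lefschetz/Nielsen step rigorously under the quasi-invariance slack: one must (i) distinguish genuine $f$-fixed points from spurious ones created by the retraction $r$ at $\partial D$, handled via $A_\gamma \subseteq \Sigma \setminus \Omega(f)$, and (ii) select the Nielsen fixed-point class realizing the target lift ($F$ for part (1), $F' = h^{-1}F$ for part (2)), since the bare Lefschetz number only delivers the signed sum of indices across all classes. The subsidiary but technical point of checking that $D_0$ is quasi-invariant in the exact sense needed to invoke Lemma \ref{l.qisurface}(3), and that $F'$ is correctly aligned with $\tilde{S}$ adjacent to $\tilde{D}_0$, is where the bookkeeping is heaviest.
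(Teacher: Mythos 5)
Your overall strategy (Lefschetz/Nielsen index counting localized near $S$, exploiting the wandering annuli from quasi-invariance) is the right one and is in the same family as the paper's argument. However, there are genuine gaps.

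\textbf{On part (2), the reduction to the curved case is false.} You assert that $\textrm{Fill}(S)\setminus S\neq\emptyset$ excludes annular $S$. It does not: take $S$ a pair of pants in $\Sigma$ with two boundary circles essential and isotopic and the third bounding a disk. Then $\textrm{Im}(i_*)\cong\Z$, so $S$ is annular, and $\textrm{Fill}(S)\setminus S$ is a nonempty disk. The paper explicitly treats this: when $S$ is annular, $\chi(S_0)=0$, and the argument must instead come from the $+1$ contribution of the complementary disk in the index equation for the zero-rotation class. Your claim ``$\chi(S)\leq -1$'' and the subsequent application of the part (1) argument breaks down exactly there.

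\textbf{The Nielsen step is not executed, and the twisted-lift approach does not deliver what you want.} The paper works in the intermediate cover $\hat{\Sigma}_S$ associated to $i_*\pi_1(S)$, where $S$ lifts $1\!:\!1$ to $\hat S$ and the surface $S_0=\textrm{Fill}(\bigcup_n\overline{f}^n(\hat S))$ is genuinely $\overline{f}$-invariant with $\chi(S_0)$ equal to $\chi(\textrm{Fill}(S))$. The Lefschetz--Nielsen input is then used in a precise form: the zero-rotation class of $\overline{f}$ has total index $\chi(S_0)$, each nonzero-rotation class has total index $0$, and each complementary quasi-invariant disk contributes exactly $+1$ to the class of its constant rotation vector. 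The combination of these three facts is what forces a fixed point of the desired rotation into $\hat S$. You acknowledge this bookkeeping is the heart of the matter but do not carry it out. In particular, for part (2), passing to $F'=h^{-1}F$ relabels the reference so that the class you want becomes the ``zero'' class for $F'$, but for a surface homeomorphism isotopic to the identity the nontrivial Nielsen classes have total index $0$, not $\chi(S)$. The desired fixed point in $S$ only appears after subtracting the $+1$ contributed by the disk containing $\Lambda$ (and the nonnegative contributions from other disks); your argument never uses this cancellation, so it cannot produce the fixed point. The same issue underlies the hand-waved sentence in part (1) ruling out fixed points in complementary disks: that requires the disk index computation, not merely Lemma \ref{l.qisurface}(3).

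A smaller structural point: composing $f|_D$ with a retraction $r$ onto $D$ produces a map $g$ that is not $f$, and there is no guarantee that the Nielsen classes of $g$ on $D$ line up with those of $f$ in a way that makes the rotation-vector conclusion automatic. The paper avoids this by working with $\overline{f}$ on the genuinely invariant $S_0$, with the wandering-set hypothesis ensuring the attached semi-open annuli carry no fixed points; there is no retraction to introduce spurious ones.
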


\begin{proof} Following the notation in the proof of point (4) of Lemma \ref{l.qisurface}, consider $\pi_S:\hat{\Sigma}_S\to \Sigma$ the covering space associated to $\textrm{i}_*\pi_1(S)=\textrm{i}_*\pi_1(\text{Fill}(S))$, and $\hat{S}$ the closed lift of $S$ to this cover. In this case $S$ is not necessarily filled, but the construction is the same as the one for $\text{Fill}(S)$, and $\hat{S}$ is still homeomorphic to $S$. Also consider an homotopy $f_t$  between $f_0=Id$ and $f_1=f$, take $\overline{f}_t$ the lift of $f_t$ to $\hat{\Sigma}_S$ starting at $\overline{f}_0 = Id$, and let $\overline{f}=\overline{f}_1$, which is a lift of $f$. 


Define $$S_0:= \text{Fill}\left(\bigcup_{n\in\Z}\overline{f}^n(\hat{S})\right).$$

Notice that $S_0$ is $\overline{f}$-invariant by construction, and by the properties of quasi-invariance for $S$, it consists of $\text{Fill}(\hat{S})$ together with semi-open annuli $C_{\gamma}$  attached to each boundary component $\gamma\subset\partial\text{Fill}(\hat{S})$. Moreover, for each such $\gamma$ we have $C_{\gamma}\cap\pi_S^{-1}(\Omega(f))=\emptyset$, and in particular $C_{\gamma}$ contains no fixed points of $\overline{f}$. We shall consider $\overline{f}$ restricted to $S_0$. It is isotopic to the identity in $S_0$ (which can be seen by a standard construction), and has finitely many fixed points. 


The key of the proof is the {\em Lefschetz-Nielsen index theorem} applied to $\overline{f}$ in $S_0$. The first equation given by this theorem relates the Euler characteristic of $S_0$ with the sum of indices of contractible fixed points of $\overline{f}$:
\begin{equation}\label{ecu1}
\sum_{x\in\text{Fix}(\overline{f})\,:\,\rho(\overline{f},x)=0} \text{Ind}(\overline{f},x) = \chi(S_0)
\end{equation}
On the other hand, recall that by Lemma \ref{l.qisurface} (point (2)), if $D$ is a quasi-invariant disk, there exists $[\alpha]\in H_1(S_0)$ (possibly $0$) so that every basic piece $\Lambda\subseteq D$ satisfies $\rho_{\Lambda}(\overline{f})=[\alpha]$. In particular, all fixed points in $D$ have the same rotation vector. Therefore, by the classical Lefschetz index theorem we get:

\begin{equation}\label{ecu2}
\sum_{x\in\text{Fix}(\overline{f})\cap D\,:\,\rho(\overline{f},x)=v} \text{Ind}(\overline{f},x) =  \left\{ \begin{array}{ll}
         1 & \mbox{if $v =[\alpha]$};\\
        0 & \mbox{otherwise}.\end{array} \right.
\end{equation}

To prove point (1) notice that since $S$ is curved we have $\chi(S_0)<0$. Therefore equations \ref{ecu1} and \ref{ecu2} imply the existence of $p\in\text{Fix}(\overline{f})\cap\hat{S}$ with $\rho(\overline{f},p)=0$. Thus $\pi_S(p)$ is the desired fixed point.

The proof of point (2) is very similar. In this case we need to consider the second part of the Lefschetz-Nielsen index theorem, for fixed points in a non-zero rotation class. Namely, if $[\alpha]\neq 0$ then:

\begin{equation}\label{ecu3} \sum_{x\in\text{Fix}(\overline{f})\,:\,\rho(\overline{f},x)=[\alpha]} \text{Ind}(\overline{f},x) = 0
\end{equation}

Consider $\Lambda$ the basic piece in the hypothesis of point (2), and let $\hat{\Lambda}$ be the corresponding lift in $\text{Fill}(\hat{S})$. Notice that $\rho_{\Lambda}(f) = (\pi_S)_*\rho_{\hat{\Lambda}}(\overline{f})$, thus it suffices to work with $\overline{f}$ in $S_0$ . First assume that $\rho_{\hat{\Lambda}}(\overline{f})\neq 0$. Notice that the disk containing $\hat{\Lambda}$ contributes with a $+1$ in the left side of \ref{ecu3} while the other disks in the complement of $\hat{S}$ contribute with non-negative values. Therefore, applying equations \ref{ecu2} and \ref{ecu3} to $v=\rho_{\hat{\Lambda}}(\overline{f})$ we prove point (2) for this case.

If $\rho_{\hat{\Lambda}}(\overline{f})=0$ and $S$ is curved we are in the hypothesis of case (1). It only remains the case where $S$ is annular, in which $\chi(S_0)=0$ and the disk containing $\hat{\Lambda}$ produces a $+1$ contribution in the left side of equation \ref{ecu1}. Thus equations \ref{ecu1} and \ref{ecu2} give the desired result.

\end{proof}

\begin{rem}\label{r.filling} With similar arguments as those in the proof of Lemma \ref{l.filling}, we can show that if $S$ is a quasi-invariant surface that is not homeomorphic to an annulus, then there exists a fixed point $p\in S$. This holds even when $S$ is trivial, which is the only case not covered in Lemma \ref{l.filling}.
\end{rem}

Next we recall Conley's Fundamental Theorem for homeomorphisms in $\mathcal{A}_0(\Sigma)$.

\begin{thm}[\cite{conleyfundthm}]\label{t.conley}

Let $f\in\mathcal{A}_0(\Sigma)$. Then there exists a continuous function
$\mathcal{L}:\Sigma\to\R$ so that:

\begin{enumerate}

\item $\mathcal{L}(x)\geq \mathcal{L}(f(x))$ for all $x\in\Sigma$, and

\item for any pair $x,y\in\Omega(f)$ we have $\mathcal{L}(x)=\mathcal{L}(y)$ iff $x$ and $y$
belong to the same basic piece of $f$.

\end{enumerate}

\end{thm}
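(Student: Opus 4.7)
\medskip

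\noindent\textbf{Proof plan.} The approach is the standard one for Conley's fundamental theorem in the Axiom A setting: construct a filtration of $\Sigma$ by trapping regions, one level per basic piece, and assemble $\mathcal L$ as a weighted sum of Urysohn-type functions adapted to this filtration. Since the heteroclinic relation $\prec$ gives $\mathcal G_f$ the structure of a directed acyclic graph (Proposition \ref{p.propfaxa}, via antisymmetry), I can perform a topological sort and enumerate the basic pieces as $\Lambda_1,\dots,\Lambda_N$ so that $\Lambda_i\prec\Lambda_j$ forces $j<i$. With this convention, the ``downstream'' orbits of $\Lambda_i$ only ever visit basic pieces of smaller index.

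\medskip

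For each $k\in\{1,\dots,N\}$ set
\[
A_k=\bigcup_{i\le k}\Lambda_i^u,\qquad \text{where }\Lambda_i^u=\textrm{cl}\Bigl[\bigcup_{x\in\Lambda_i}W^u(x,f)\Bigr].
\]
By item (3) of Proposition \ref{p.propfaxa} each $\Lambda_i^u$ is an attractor, and the ordering guarantees $A_k$ is ``downward closed'' under $\prec$: any orbit starting in $A_k$ can only accumulate on basic pieces $\Lambda_j$ that already lie in $A_k$ (since such a $\Lambda_j$ would satisfy $\Lambda_i\prec\Lambda_j$ for some $i\le k$, forcing $j<i\le k$). Hence $A_k$ is a compact forward-invariant attracting set and admits a compact trapping neighbourhood $U_k$ with $f(U_k)\subset \textrm{int}(U_k)$, $\Lambda_i\subseteq U_k$ for $i\le k$, and $\Lambda_i\cap U_k=\emptyset$ for $i>k$. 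Taking the $U_k$'s small enough (and nesting them inductively, starting from $k=1$) we can arrange $U_{k-1}\subseteq \textrm{int}(U_k)$ with $U_0=\emptyset$ and $U_N=\Sigma$.

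\medskip

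For each $k$, the compact sets $f(U_k)$ and $\Sigma\setminus \textrm{int}(U_k)$ are disjoint, so Urysohn's lemma supplies a continuous $\psi_k\colon \Sigma\to[0,1]$ with $\psi_k\equiv 0$ on $f(U_k)$ and $\psi_k\equiv 1$ on $\Sigma\setminus U_k$. The key calculation is $\psi_k\circ f\le \psi_k$: if $x\in U_k$ then $f(x)\in f(U_k)$ and $\psi_k(f(x))=0\le\psi_k(x)$, while if $x\notin U_k$ then $\psi_k(x)=1$ dominates trivially. Defining
\[
\mathcal L(x)=\sum_{k=1}^{N}\psi_k(x),
\]
one obtains continuity and $\mathcal L\circ f\le \mathcal L$ immediately. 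Because each $\Lambda_i$ is $f$-invariant, it is either contained in $f(U_k)$ (when $i\le k$) or disjoint from $U_k$ (when $i>k$), so $\psi_k\big|_{\Lambda_i}$ is the constant $0$ or $1$. Consequently $\mathcal L\big|_{\Lambda_i}\equiv N-i$, and these values are pairwise distinct, proving (2). Finally, any point $x\in\Omega(f)\setminus\bigcup\Lambda_i$ does not exist, so the characterization of equality on $\Omega(f)$ is exactly the one claimed.

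\medskip

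\noindent\textbf{Expected main obstacle.} The technically delicate step is the construction of the nested trapping regions $U_k$: one has to verify that the ``downward closed'' union $A_k$ of attractors really does admit a trapping neighbourhood isolated from $\Lambda_{k+1},\dots,\Lambda_N$. This is a filtration argument standard in hyperbolic dynamics (an inductive use of item (3) and item (2) of Proposition \ref{p.propfaxa}, together with disjointness of the basic pieces from the spectral decomposition Theorem \ref{t.spedect}), but it is the only place where some care is required; once the filtration is in hand, the rest of the argument is a direct Urysohn assembly.
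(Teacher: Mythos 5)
The paper does not give a proof of this result: it is quoted directly from Conley's monograph \cite{conleyfundthm}, whose construction proceeds through chain recurrence and the associated complete Lyapunov function for an arbitrary homeomorphism of a compact metric space, and then specializes to the Axiom~A setting where chain classes coincide with basic pieces. Your proposal instead gives a direct, hands-on construction tailored to the Axiom~A situation via Smale's filtration theorem: you topologically sort $\mathcal G_f$ (antisymmetry of $\prec$ is Proposition \ref{p.propfaxa}), build the increasing family of attractors $A_k=\bigcup_{i\le k}\Lambda_i^u$, find nested trapping neighbourhoods $U_k$, and assemble $\mathcal L$ as a sum of Urysohn functions subordinate to the filtration. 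This is a legitimate and arguably more elementary route in this setting, since it avoids chain recurrence entirely and exploits the finiteness and structure of the spectral decomposition; what it does not automatically deliver is the smoothness of $\mathcal L$ on $\Sigma\setminus\Omega(f)$ that the paper later invokes (after this theorem and in Lemma \ref{l.masconley}), though that can be arranged by replacing the Urysohn bump functions with smooth ones. Two small remarks. First, your formula $\mathcal L|_{\Lambda_i}\equiv N-i$ is off: with $\psi_k=0$ on $\Lambda_i$ precisely when $i\le k$ and $\psi_k=1$ when $i>k$, one gets $\mathcal L|_{\Lambda_i}=\#\{k<i\}=i-1$; the values are still pairwise distinct, so the conclusion stands. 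Second, you rightly flag the existence of the filtration $U_0\subset U_1\subset\cdots\subset U_N$ as the delicate step; it does hold here because fitted Axiom~A implies no cycles, a finite union of attractors is again an attractor (take a union of trapping regions and shrink so that $\Omega(f)\cap U_k=\Lambda_1\cup\cdots\cup\Lambda_k$), and the ordering guarantees $\Lambda_{k+1},\dots,\Lambda_N$ are disjoint from $A_k$, so trapping neighbourhoods avoiding them exist.
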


A function $\mathcal{L}$ as in Theorem \ref{t.conley} is called a \emph{Lyapunov function}. Lyapunov functions are not unique, and neither are the partitions of $\Sigma$ induced by their
level sets. By the construction in \cite{conleyfundthm}, $\mathcal{L}$ can be considered smooth in the complement of $\Omega(f)$.

Our aim is to give a decomposition of $\Sigma$ into subsurfaces, each containing one basic piece of $f$, that comes from some partition by level sets of a Lyapunov function. That will be useful in order to decompose $\Sigma$ into surfaces with quasi-invariance properties, each containing a single basic piece. The starting idea is to take a partition of $\Sigma$ using level sets of a Lyapunov function $\mathcal{L}$, choosing the levels as to isolate the different basic pieces of $f$. There are technical issues with such a partition by level sets that force us to make some modifications. Namely, we do not want these surfaces to have components contained in the wandering set of $f$. The gist of these modifications is to attach each of these wandering components, if they exist, to a suitable neighbouring subsurface. Next we give the detail, including a precise definition for what we consider as {\em Conley surface} associated to $\mathcal{L}$.

We point out that, for ease of notation, we call a subsurface {\em wandering} if it does not meet $\Omega(f)$.


Given a surface $S$ we define $\text{WFill}(S)$ as the union of $S$ with all the connected components of $\Sigma\setminus S$ which are homeomorphic to either disks or annuli and do not meet $\Omega(f)$.  We say that $\text{WFill}(S)$ is the \emph{wandering filling}, or \emph{W-filling} of $S$, and that $S$ is {\em W-filled} when $S=\text{WFill}(S)$. Notice that disjoint subsurfaces meeting $\Omega(f)$ have disjoint W-fillings. On the other hand, the W-filling of their union may be strictly larger than the union of their respective W-fillings, as there may be wandering annuli bounded between the two subsurfaces.

As motivation for our definition of W-filling, we point out that for a surface of the form  $S=\mathcal{L}^{-1}([a,b])$, where  $\mathcal{L}$ is a Lyapunov function and $a,b\in\R\setminus\mathcal{L}(\Omega(f))$, the W-filling of $S$ agrees with the union of {\em all} the components of $\Sigma\setminus S$ that do not meet $\Omega(f)$. The proof of this fact follows that of Lemma \ref{l.filling}, and we omit it.


In what follows we consider a Lyapunov function $\mathcal{L}$ that is smooth in the complement of $\Omega(f)$. Given a basic piece $\Lambda$, we say that a subsurface $S$ is a \emph{Conley surface} of $\Lambda$ if it verifies the following conditions:


\begin{enumerate}

\item Each component of $\partial S$ is contained in some level set $\mathcal{L}^{-1}(a)$ with $a\notin \mathcal{L}(\Omega(f))$. (Where $a$ depends on the component of $\partial S$).
\item $S\cap\Omega(f)=\Lambda$,
\item every connected component of $S$ meets $\Lambda$,
\item $S$ is W-filled.
\end{enumerate}

Points (3) and (4) are technical, their purpose is to avoid wandering components in $S$ or $\Sigma\setminus S$. The existence of Conley surfaces follows from Theorem \ref{t.conley}. 
Moreover, we can decompose $\Sigma$ into Conley surfaces, as we show in the next result.

\begin{lem}\label{l.masconley} Given $f\in\mathcal{A}_0(f)$ with basic pieces $\Lambda_1,\ldots,\Lambda_N$ and a Lyapunov function $\mathcal{L}$ that is smooth in $\Sigma\setminus\Omega(f)$, we can find a family of surfaces $S_{1},\ldots,S_{N}$ associated to $\mathcal{L}$ so that:
\begin{enumerate}

\item $\textrm{int}(S_i)\cap\textrm{int}(S_j)=\emptyset$ whenever $i\neq j$.

\item $\bigcup_{i=1}^N S_i=\Sigma$,

\item For every $i=1,\ldots,N$ we have that $S_i$ is a Conley surface for $\Lambda_i$.

\end{enumerate}

\end{lem}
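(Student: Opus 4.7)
The plan is to use $\mathcal{L}$ to cut $\Sigma$ along regular level sets that separate the basic pieces, obtaining an initial rough partition $\{T_i\}$, and then to repair it by redistributing its wandering components among the candidate subsurfaces to fix conditions (3) and (4) of a Conley surface.

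First, since by property (2) of Theorem \ref{t.conley} the set $\mathcal{L}(\Omega(f))$ is finite with $|\mathcal{L}(\Omega(f))|=N$, order the basic pieces so that $c_i:=\mathcal{L}(\Lambda_i)$ satisfy $c_1<c_2<\cdots<c_N$, and pick regular values $a_0<c_1<a_1<c_2<\cdots<c_N<a_N$ of $\mathcal{L}$ outside $\mathcal{L}(\Omega(f))$, with $a_0<\inf\mathcal{L}$ and $a_N>\sup\mathcal{L}$. Set $T_i:=\mathcal{L}^{-1}([a_{i-1},a_i])$. Since $\mathcal{L}$ is smooth on $\Sigma\setminus\Omega(f)\supseteq\mathcal{L}^{-1}(\{a_0,\dots,a_N\})$ and the $a_k$ are regular values, each boundary component of $T_i$ is a smooth simple closed curve in some $\mathcal{L}^{-1}(a_k)$ with $a_k\notin\mathcal{L}(\Omega(f))$. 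Moreover $T_i\cap\Omega(f)=\Lambda_i$, the interiors of the $T_i$ are pairwise disjoint, and $\bigcup_i T_i=\Sigma$. So conditions (1) and (2) of the definition of Conley surface, and conclusions (1) and (2) of the lemma, already hold for the $T_i$; what can fail is that $T_i$ may have connected components disjoint from $\Lambda_i$, breaking (3) and potentially (4).

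To fix this, for each $i$ let $T_i^{\mathrm{g}}$ be the union of the connected components of $T_i$ meeting $\Lambda_i$, and let $T_i^{\mathrm{b}}:=T_i\setminus T_i^{\mathrm{g}}$. I then redistribute the components of the wandering set $W:=\bigsqcup_i T_i^{\mathrm{b}}$ among the $T_i^{\mathrm{g}}$ by a flood-fill on the adjacency graph $G$ whose vertices are the connected components of the $T_i$ and whose edges join two components that share a boundary curve (which must lie in some $\mathcal{L}^{-1}(a_k)$ with $1\le k\le N-1$). Since $\Sigma$ is connected and $\mathcal{L}^{-1}(a_0)=\mathcal{L}^{-1}(a_N)=\emptyset$, the graph $G$ is connected. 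Declare the components of $T_i^{\mathrm{g}}$ anchored to $\Lambda_i$, and at each stage attach every still-unassigned component of $W$ that is $G$-adjacent to a component already assigned to some $S_i$ to that $S_i$, breaking ties by smallest index $i$. Finally, set $S_i$ to be the union of $T_i^{\mathrm{g}}$ with all components of $W$ assigned to it by this process.

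It then remains to verify the Conley surface conditions for each $S_i$ together with (1) and (2) of the lemma. Since every component of $W$ is assigned to exactly one $S_i$, the $S_i$ have pairwise disjoint interiors and union $\Sigma$, giving (1) and (2). Every boundary curve of $S_i$ comes from a boundary curve of some component of $T_i^{\mathrm{g}}$ or of a reassigned component of $W$ whose adjacent neighbor ended up in a different $S_j$; in either case it lies in a level set $\mathcal{L}^{-1}(a_k)$ with $a_k\notin\mathcal{L}(\Omega(f))$, so condition (1) of the Conley surface definition holds. Condition (2) is preserved because the reassigned pieces lie in $\Sigma\setminus\Omega(f)$. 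For (3), the flood-fill guarantees that every connected component of $S_i$ contains a component of $T_i^{\mathrm{g}}$, hence meets $\Lambda_i$. For (4), since $\Sigma\setminus S_i=\bigcup_{j\neq i}S_j$ and each $S_j$ meets the non-empty $\Lambda_j\subseteq\Omega(f)$, no component of $\Sigma\setminus S_i$ is wandering, so $S_i$ is trivially W-filled.

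The only delicate point is that the flood-fill terminates with every component of $W$ assigned, i.e., that no wandering component gets orphaned. This is the step I expect to be the main obstacle to write cleanly, but it reduces to a purely combinatorial observation about the connected graph $G$: the anchored vertices are non-empty (one per $\Lambda_i\neq\emptyset$), the total number of vertices is finite because $\mathcal{L}$ is smooth and $a_0,\dots,a_N$ are regular, and in a connected graph every vertex can be joined to an anchored vertex by a finite path, so propagation along such paths exhausts $W$ in finitely many rounds.
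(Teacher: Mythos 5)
Your proof is correct and follows essentially the same strategy as the paper's: cut $\Sigma$ along regular level sets of $\mathcal{L}$ that separate the basic pieces, keep the ``good'' components meeting $\Lambda_i$, and then redistribute the remaining wandering components so that conditions (3) and (4) of the Conley surface definition hold. The point where you diverge is in the redistribution phase. The paper does this in two steps: first it applies $\textrm{WFill}$ to each $S'_i$ (absorbing wandering disks and those wandering annuli bounded on both sides by $S'_i$, and at this stage it already has Conley surfaces), and then it notes that the remaining gaps of $\Sigma\setminus\cup_i S''_i$ are wandering annuli bridging two different $S''_i$'s, each of which is attached in full to one of its two neighbours. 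You instead run a single flood-fill on the adjacency graph of level-set components. Both arguments are valid. The paper's two-step construction is slightly more structured: it hands you Conley surfaces of the special form recorded in Remark~\ref{r.masconley}(1), which is used in later arguments. Your flood-fill can split a bridging annulus between its two neighbours, so it produces Conley decompositions that still fall under Remark~\ref{r.masconley}(2) (varying curves within the same Morse class), but it is not as immediately tied to that normal form. On the other hand, your version does not require the paper's intermediate observation that the components of $\Sigma\setminus\cup_i S'_i$ are disks or annuli, so it is marginally more self-contained.

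Two small imprecisions in your write-up that are worth fixing. First, $\Sigma\setminus S_i = \bigcup_{j\neq i}S_j$ is not literally true (the left side is open, the right side is compact); what you want is that every component $C$ of $\Sigma\setminus S_i$ contains the interior of some whole connected component $K$ of some $S_j$ with $j\neq i$ (any component $K$ whose interior meets $C$ is entirely contained in $\textrm{cl}[C]$, since $\textrm{int}(K)$ is connected and disjoint from $S_i$), and then $K$ meets $\Lambda_j\subset\Omega(f)$ by the already established condition (3). Second, when you claim every component of $S_i$ contains a component of $T_i^{\mathrm{g}}$, you should spell out that the flood-fill chain from any assigned wandering piece $P$ back to an anchored piece $Q$ runs through pieces all assigned to $S_i$, and adjacent pieces in the chain share a boundary curve, so $P$ and $Q$ lie in the same connected component of $S_i$. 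Neither gap is serious; both are straightforward to repair.
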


A family of surfaces that satisfies the statement of Lemma \ref{l.masconley} will be called a \emph{Conley decomposition} of $\Sigma$ for $f$.

\tiny
\begin{figure}[h!]\begin{center}
\centerline{\includegraphics[scale=.5]{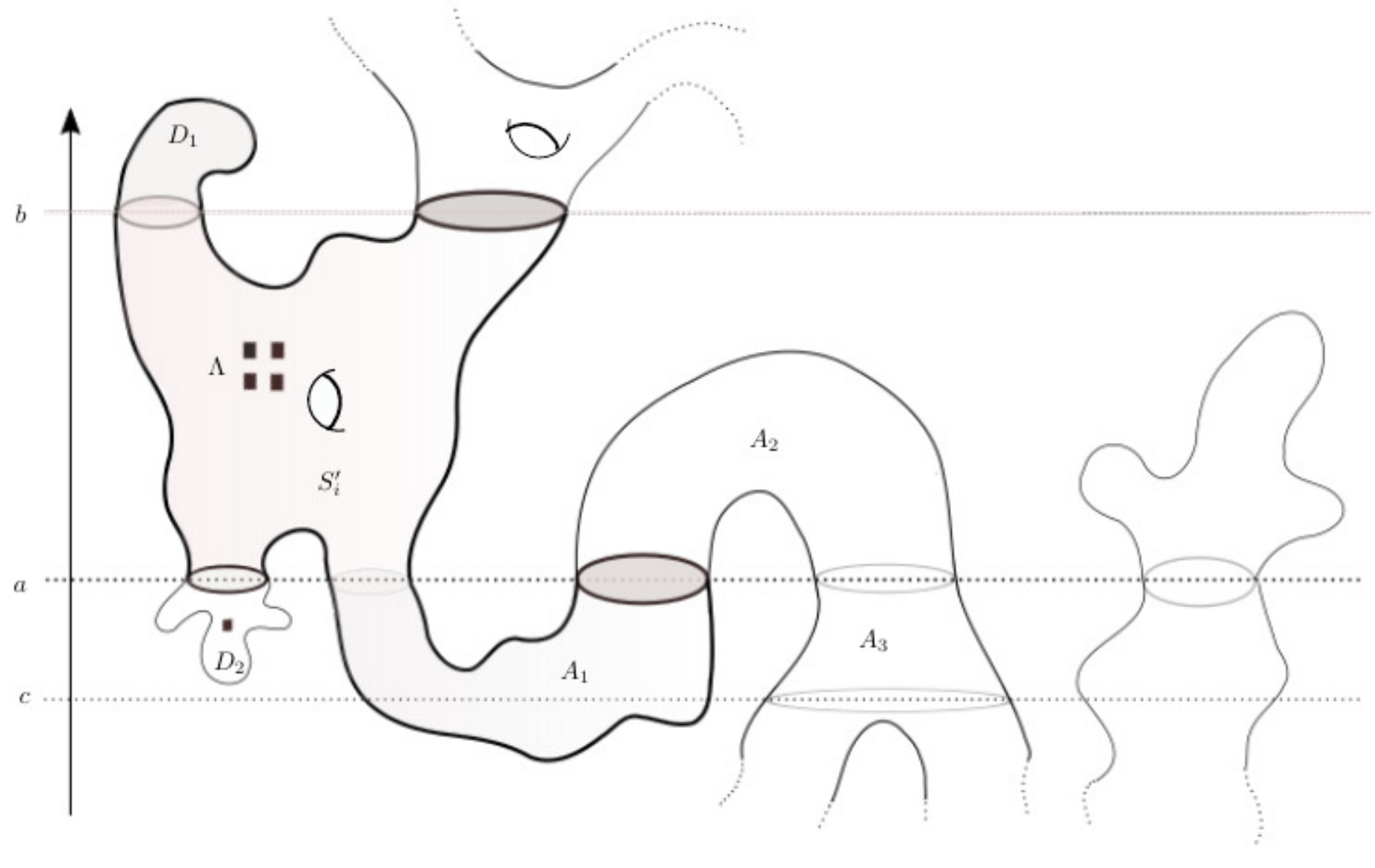}}
\caption{The picture illustrates in gray a Conley surface, which can be obtained as follows: starting with $S'_i$, then taking its \textrm{WFill} by adding up the disk $D_1$ and not $D_2$, as this last disk meets the non-wandering set, which is depicted by the black box. Then one can add adjacent
wandering annuli as is the case of $A_1$, and could further add the annuli $A_2$ and $A_3$.}\label{shift}
\end{center}\end{figure}
\normalsize

\begin{proof}

As we outlined earlier, we begin by taking $a_1,\ldots,a_{N+1}\in\R\setminus\mathcal{L}(\Omega(f)) $ as regular values of $\mathcal{L}$ so that $S^0_i=\mathcal{L}^{-1}([a_i,a_{i+1}])$ contains $\Lambda_i$. Let $S'_i$ be the union of all the connected components of $S^0_i$ that meet $\Omega(f)$. These surfaces satisfy (1)-(3) in the definition of Conley surfaces, and are pairwise disjoint except possibly at their boundaries. We also observe that if $\Sigma\setminus \cup_i S'_i$ is non empty, then its connected components are contained in the wandering set and consist on either disks or annuli, as can be proved by the same argument used for Lemma \ref{l.filling}.

We take $S''_i = \textrm{WFill}(S'_i)$. Notice that $S''_i$ is already a Conley surface for the basic piece $\Lambda_i$. Their interiors are also pairwise disjoint, by the properties of the W-filling.

On the other hand, $\cup_iS''_i$ may not yet cover $\Sigma$. Passing to $S''_i = \textrm{WFill}(S'_i)$ adds in the wandering disk components of $\Sigma\setminus \cup_i S'_i$, as well as the wandering annuli that have both boundary components in the same surface $S'_i$. (In fact, these annuli are not present, but the proof is cumbersome and we will not need it). Thus the components of $\Sigma\setminus \cup_i S''_i$ are wandering annuli, with boundary components on different surfaces $S''_i$. For each of these annuli, we choose one neighbouring surface $S''_i$ and attach the annuli to it.

This gives the construction of the surfaces $S_i$ in the statement.

\end{proof}

\begin{rem} \label{r.masconley}
\noindent
\begin{enumerate}
\item The construction in Lemma \ref{l.masconley} gives Conley surfaces $S$ that have the following form:

There are $a,b\in\R\setminus\mathcal{L}(\Omega(f))$ so that $S$ is the W-fill of the union of the components of $\mathcal{L}^{-1}([a,b])$ that meet $\Omega(f)$, together with wandering annuli, each attached to one boundary component.

\item Every Conley decomposition of $\Sigma$ associated to the Lyapunov function $\mathcal{L}$ is obtained from the construction in the proof of Lemma \ref{l.masconley}, except possibly by varying each curve on the decomposition within the same class of the Morse Theory of $\mathcal{L}$.

\item With similar arguments as in the proof of Lemma \ref{l.masconley}, we can obtain that any Conley surface for $f$ is part of a Conley decomposition. Thus Lemma \ref{l.masconley} gives a general way to construct Conley surfaces.

\end{enumerate}

\end{rem}


As we pointed out at the beginning of this section, Conley and quasi-invariant surfaces are closely related. Showing this relationship will be our next goal. We begin with a preliminary result that will be useful in general for proving quasi-invariance of subsurfaces. It is worth to point out that this is specific for hyperbolic surfaces, and is not true for the torus.

\begin{lemma}\label{l.wandering} Consider $f\in\mathcal{A}_0(\Sigma)$ and $\gamma$ an essential simple closed curve satisfying $$f^n(\gamma)\cap\gamma=\emptyset \text{ for every } n>0$$ Then the annulus bounded by $\gamma$ and $f^n(\gamma)$ does not meet $\Omega(f)$ for every $n>0$.
\end{lemma}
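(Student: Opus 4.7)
The plan is to work in an annular cover of $\Sigma$ where the dynamics become effectively one-dimensional. I would first let $M\subset\Sigma$ denote the maximal open annular subsurface whose core is freely homotopic to $\gamma$; this is uniquely defined since the union of any two such annuli that intersect nontrivially is another such annulus, and in particular $\gamma\subset M$. Applying $f^{-n}$ to the hypothesis also gives $f^{-n}(\gamma)\cap\gamma=\emptyset$ for $n>0$. Since $f$ is isotopic to the identity, $f(M)$ is an open annulus with core isotopic to $\gamma$, hence $f(M)\subseteq M$ by maximality; the symmetric argument for $f^{-1}$ yields $f(M)=M$.

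Next I pass to the universal cover $\pi_M:\tilde{M}\to M$, which is an open strip with deck group $\langle T\rangle\cong\Z$, and lift the isotopy from the identity to obtain $\tilde{f}:\tilde{M}\to\tilde{M}$ commuting with $T$. The curve $\gamma$ has a unique connected preimage $\tilde\gamma$, a $T$-invariant curve separating $\tilde{M}$ into two components $\tilde{U}_{\pm}$; lifting the hypothesis gives $\tilde{f}^n(\tilde\gamma)\cap\tilde\gamma=\emptyset$ for all $n>0$, since $\tilde{f}^n(\tilde\gamma)\subseteq\pi_M^{-1}(f^n(\gamma))$ and $\pi_M^{-1}(f^n(\gamma))\cap\pi_M^{-1}(\gamma)=\emptyset$.

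The core step is a stacking lemma. Assuming WLOG $\tilde{f}(\tilde\gamma)\subseteq\tilde{U}_+$, I claim by induction on $k$ that $\tilde{f}^{k+1}(\tilde\gamma)$ lies in the component of $\tilde{M}\setminus\tilde{f}^k(\tilde\gamma)$ containing the $+$end of $\tilde{M}$. The inductive step uses that $\tilde{f}$, being isotopic to the identity, preserves the two ends of $\tilde{M}$ and hence sends the $+$end component of $\tilde{M}\setminus\tilde{f}^{k-1}(\tilde\gamma)$ to the $+$end component of $\tilde{M}\setminus\tilde{f}^k(\tilde\gamma)$; applied to $\tilde{f}^k(\tilde\gamma)$, which lies in the former by the inductive hypothesis, this places $\tilde{f}^{k+1}(\tilde\gamma)$ in the latter. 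Disjointness of consecutive iterates follows immediately by applying $\tilde{f}^k$ to the base case $\tilde{f}(\tilde\gamma)\cap\tilde\gamma=\emptyset$.

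Once the stacking is established, the $T$-invariant strip $\tilde{A}_n\subset\tilde{M}$ bounded by $\tilde\gamma$ and $\tilde{f}^n(\tilde\gamma)$ coincides with $\pi_M^{-1}(A_n)$ and satisfies $\tilde{f}^m(\tilde{A}_n)\cap\tilde{A}_n=\emptyset$ for all $m>n$, since $\tilde{f}^m(\tilde{A}_n)$ lies entirely above $\tilde{f}^n(\tilde\gamma)$. Projecting through $\pi_M$ yields $f^m(A_n)\cap A_n=\emptyset$ in $\Sigma$ for $m>n$, and hence every interior point of $A_n$ is wandering. For points on $\partial A_n=\gamma\cup f^n(\gamma)$ I apply the symmetric argument to the annulus $B_n$ between $f^{-n}(\gamma)$ and $\gamma$, which is also wandering (by the same reasoning applied to $f^{-1}$); a small neighborhood of a point on $\gamma$ lies in $A_n\cup B_n$, and for $m\geq 2n$ the map $f^m$ carries both $A_n$ and $B_n$ entirely outside $A_n\cup B_n$. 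The main obstacle is the stacking step, which relies crucially on $\tilde{f}$ extending continuously to the ends of $\tilde{M}$ and fixing them---precisely the place where the hyperbolicity of $\Sigma$ enters, since on the torus the analogous $M$ would equal $\Sigma$ and no such ends would exist.
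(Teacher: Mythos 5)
Your approach is genuinely different from the paper's, and the overall strategy (lift to an annulus where the iterates of the core stack monotonically towards one end) is in principle viable, but there are two gaps.

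\textbf{The maximal annular subsurface $M$.} You assert that $M$ is well-defined because "the union of any two such annuli that intersect nontrivially is another such annulus." This is not clear, and in fact is delicate. Two open annuli $A_1, A_2$ with cores freely homotopic to $\gamma$ may intersect without the intersection being connected, in which case $A_1\cup A_2$ can have non-cyclic $\pi_1$. Even restricting to annuli that actually contain $\gamma$, the step requires a nontrivial argument about the annular cover $\hat{\Sigma}_\gamma$ (the cover corresponding to $\langle [\gamma]\rangle\le\pi_1(\Sigma)$) and whether the essential lifts of $A_1, A_2$ meet whenever $A_1, A_2$ do. A cleaner route to the same setup is to drop $M$ entirely and work directly in $\hat{\Sigma}_\gamma$: $f$ lifts there because it fixes the free homotopy class of $\gamma$, the essential lift $\hat\gamma$ of $\gamma$ is a compact separating core, the essential lifts $\hat{f}^n(\hat\gamma)$ of $f^n(\gamma)$ are pairwise disjoint closed curves, and your stacking argument goes through verbatim using the two ends of $\hat\Sigma_\gamma$, which are fixed since $\hat f$ is isotopic to the identity. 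That also clarifies where hyperbolicity enters: for $g\ge 2$ the cover $\hat\Sigma_\gamma$ is a proper open annulus with two distinct ends, whereas for the torus it is all of $\mathbb{T}^2$.

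\textbf{The passage from stacking to wandering.} You deduce $f^m(A_n)\cap A_n = \emptyset$ for $m>n$, and then write "hence every interior point of $A_n$ is wandering." This does not follow from the displayed disjointness as stated, since the definition of wandering requires a neighbourhood $U$ with $f^m(U)\cap U=\emptyset$ for \emph{all} $m>0$, and you have said nothing about $1\le m\le n$ (indeed $f^m(A_n)\cap A_n\ne\emptyset$ for $1\le m<n$, because $f^m(\gamma)$ lies strictly between $\gamma$ and $f^n(\gamma)$). The conclusion is true, but to justify it one must add: the displayed disjointness rules out periodic points in $A_n$; for a non-periodic $x$ one can first shrink $U$ so that $f^j(U)\cap U=\emptyset$ for $1\le j\le n$, and then the large-$m$ disjointness finishes. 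Alternatively, and more in the spirit of what you want, one should establish the disjointness for \emph{all} positive iterates of a single fundamental domain, namely the semi-open annulus $A$ between $\gamma$ (included) and $f(\gamma)$ (excluded): your stacking gives $f^m(A)\cap A=\emptyset$ for all $m>0$, so each point of $A$ is wandering, and then $A_n\subset\bigcup_{j\ge 0}f^j(A)$ yields the lemma. This is in fact the shape of the paper's own argument, though the paper obtains disjointness of $A$ from its iterates not by passing to a cover but by a first-return analysis on $\Sigma$ itself, whose decisive step is that the alternative would force $\bigcup_{i=0}^{k}f^i(A)$ to be a torus, impossible in genus $g>1$.

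In short, the key idea is sound and the cover-based route is a legitimate alternative to the paper's first-return argument, but as written the maximality claim for $M$ needs to be either proved or replaced by the annular cover, and the final implication to wandering needs the extra step above.
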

\begin{proof} Since $f$ is isotopic to the identity, $f^n(\gamma)$ is homotopic to $\gamma$  for all $n$, and the subsurface of $\Sigma$ with boundary $\gamma\cup f(\gamma)$ is an annulus $A_0$. We consider the semi-open annulus $A=A_0\setminus f(\gamma)$. 
Suppose there exists $k>0$ so that $f^k(A)\cap A\neq\emptyset$. We can consider the first such $k$, so that $f^m(A)\cap  A=\emptyset$ for $0<m< k$. Notice that $f^k(\gamma)$ cannot meet $A$, since $f^{k-1}(A)$ is adjacent to $f^k(A)$ at $f^k(\gamma)$. Since $f^{k+1}(\gamma)\cap (\gamma\cup f(\gamma))=\emptyset$, this leaves us with two possibilities: either $A\subset f^k(A)$ or $f^{k+1}(\gamma)$ is contained in the interior of $A$. The first possibility is ruled out by considering the annulus $B$ between $f^k(\gamma)$ and $\gamma$, and observing that $f$ would map $B$ to $f(B)$ reversing orientation. To exclude the second one, observe that $\cup_{i=0}^{i=k}f^i(A)$ would be homeomorphic to a torus, which contradicts the hypothesis that $\Sigma$ is a connected surface of higher genus.


Therefore we have that $f^i(A)\cap A=\emptyset$ for every $i>0$, which implies that $\cup_{i=0}^{i=\infty}f^i(A)$ is a semi-open annulus disjoint from $\Omega(f)$. In particular, the annulus between $\gamma$ and $f^n(\gamma)$ does not meet $\Omega(f)$ for every $n>0$.
\end{proof}

The following result gives the relationship we seek, going from Conley surfaces to quasi-invariant surfaces.  

\begin{lemma}\label{l.conleyesquasi} Let $f\in\mathcal{A}_0(\Sigma)$ and $S$ be a Conley surface which has a non-trivial connected component. Then the following hold.
\begin{enumerate}
\item $S$ is connected,
\item there exists $k\in\N$ so that $S$ is quasi-invariant under $f^k$,
\item $\text{Fill}(S)$ is quasi-invariant under $f$.
\end{enumerate}
Moreover, if $S'$ is another connected and non-trivial Conley surface that is adjacent to $S$ in the same Conley decomposition of $\Sigma$, then $\text{Fill}(S)\cup \text{Fill}(S')$ is also quasi-invariant.
\end{lemma}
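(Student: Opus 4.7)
The plan is to derive quasi-invariance from the drift of boundary curves of $S$ under iteration, controlled by the Lyapunov function $\mathcal{L}$ of Theorem \ref{t.conley}, and to apply Lemma \ref{l.wandering} to convert this drift into wandering annuli. Every boundary component $\gamma \subseteq \partial S$ lies on a level set $\mathcal{L}^{-1}(a)$ with $a \notin \mathcal{L}(\Omega(f))$, hence $\gamma$ sits in the wandering set. Since $\mathcal{L}$ strictly decreases along orbits of wandering points, compactness of $\gamma$ yields $\max_{x\in\gamma}\mathcal{L}(f(x))\leq a-\epsilon$ for some $\epsilon>0$, and by monotonicity this bound persists under further iteration, so $f^n(\gamma)\cap\gamma=\emptyset$ for every $n\geq 1$.

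\medskip

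For part (2), I treat the two types of boundary components separately. For essential $\gamma$, Lemma \ref{l.wandering} applied to $f^k$ directly produces the wandering annulus $A_\gamma$ bounded by $\gamma$ and $f^k(\gamma)$ on the downstream side. For null-homotopic $\gamma$, which bounds a disk $D_\gamma\subseteq\Sigma\setminus S$ containing basic pieces downstream of $\Lambda$, the $f$-invariance of those pieces forces each of them to lie inside $f^k(D_\gamma)$; hence the nested annulus $A_\gamma=D_\gamma\setminus f^k(D_\gamma)$ is automatically disjoint from $\Omega(f)$. Choosing $k$ large enough so that $f^k(D_\gamma)\subseteq D_\gamma$ (via the Lyapunov level drop at $\gamma$) and uniformly over the finitely many boundary components so that $f^{\pm k}(S)$ escapes only into these annuli, the remaining quasi-invariance conditions follow from the level-set geometry of $S$ together with monotonicity of $\mathcal{L}$.

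\medskip

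For part (1), I combine part (2) with Remark \ref{r.quasiconexas}: each connected component of $S$ is $f^k$-quasi-invariant. If $S_1$ is the non-trivial component, Lemma \ref{l.qisurface}(1) applied to $f^k\in\mathcal{A}_0(\Sigma)$ identifies $S_1\cap\Omega(f)$ as a union of $f^k$-basic pieces contained in $\Lambda$; taking $k$ also a multiple of the spectral decomposition period $k_1$ of $\Lambda$, these are precisely the pieces $\Lambda_{1,0},\ldots,\Lambda_{1,k_1-1}$. A combinatorial analysis of the cyclic $f$-permutation of these pieces, combined with the fact that $f^k$ preserves each component of $S$ for large $k$, together with the homological constraint from Lemma \ref{l.qisurface}(4) forced by the non-triviality of $S_1$, should conclude that $\Lambda\subseteq S_1$, leaving no basic piece for any other component and violating condition (3) of Conley surfaces. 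I expect this topological-combinatorial extraction to be the main obstacle of the proof.

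\medskip

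For part (3) and the moreover clause, since $\textrm{Fill}(S)$ has only essential boundary components (all disks have been absorbed), Lemma \ref{l.wandering} applies uniformly and the case-splitting in part (2) is unnecessary. Moreover, the upgrade from $f^k$ to $f$ becomes available: for essential $\gamma$, the isotopy-to-identity of $f$ together with the Lyapunov drift in a single iterate already yields $f(\gamma)\cap\gamma=\emptyset$, so Lemma \ref{l.wandering} with $n=1$ provides the wandering annulus directly. For $\textrm{Fill}(S)\cup\textrm{Fill}(S')$ with $S'$ a non-trivial Conley surface adjacent to $S$ in a common Conley decomposition, the boundary curves shared between $S$ and $S'$ are interior to the union, so the outer boundary of $\textrm{Fill}(S)\cup\textrm{Fill}(S')$ again consists of essential curves and the same argument applies.
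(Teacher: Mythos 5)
Your part (1) is not actually proved: you acknowledge this yourself when you write that the ``combinatorial analysis'' should conclude $\Lambda\subseteq S_1$ and that you expect this to be the main obstacle. The vague appeal to the spectral decomposition cycle and to Lemma \ref{l.qisurface}(4) does not close that gap, and it is precisely here that the paper's argument has a specific mechanism that you are missing. The paper's order of operations is the reverse of yours and is the point of the proof: one first shows that $\text{Fill}(S_*)$ is quasi-invariant under $f$ itself, where $S_*$ is the non-trivial component (your part (3), essentially). Because $\text{cl}[\text{Fill}(S_*)\setminus S_*]$ then decomposes as an attracting family $\mathcal{D}^+$ and a repelling family $\mathcal{D}^-$ of disks, the sets $\mathcal{D}^\pm\cap\Omega(f)$ are $f$-invariant. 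Transitivity of the basic piece $\Lambda$ together with the fact that $\Lambda$ must meet $S_*$ (condition (3) of Conley surfaces) then forces $\Lambda\cap\mathcal{D}^\pm=\emptyset$; since any other component of $S$ would have to sit in $\mathcal{D}^+\sqcup\mathcal{D}^-$ while meeting $\Lambda$, there can be no such component, giving $S=S_*$. Only afterwards, once the disks $D^\pm_i$ are understood, does one take $k$ killing the permutations $\sigma,\tau$ of these disks to get the $f^k$-quasi-invariance of $S$.

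Your attempt to establish (2) before (1) also runs into a concrete difficulty in the null-homotopic case: you assume that a null-homotopic boundary curve $\gamma\subseteq\partial S$ bounds a disk $D_\gamma\subseteq\Sigma\setminus S$, but if $\gamma$ is the outer boundary of a trivial component $T$ of $S$, the disk bounded by $\gamma$ contains $T$ and is not disjoint from $S$. That is exactly the configuration that part (1) rules out, so you cannot assume it away when proving (2). Two further omissions: the pairwise disjointness of the boundary annuli $A_\gamma$ (needed for the definition of quasi-invariance) is nowhere verified --- the paper establishes it via the W-filledness of $S$, both in Claim 1 and again in the ``moreover'' case --- and the general Conley surface of Remark \ref{r.masconley}(1) may carry wandering annuli attached at the boundary, which your level-set argument does not see but the paper handles at the end of its Claims 1--3.
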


\begin{proof} Take $S_{\ast}$ a non-trivial connected component of $S$. We shall split the proof into a series of claims, showing first that $\text{Fill}(S_{\ast})$ is quasi-invariant and that $S=S_{\ast}$. This will imply points (1) and (3) immediately, and will be useful for the rest of the statement, that will be left for the end of the proof.

In order to simplify notation we will assume first that $S$ is the W-filling of the connected components that meet $\Omega(f)$ of $\mathcal{L}^{-1}([a,b])$ for some $a,b\in\R$. That is to say that $S$ has the form given in Remark \ref{r.masconley} point (1), with no wandering annuli attached to the boundary. In other words, we are assuming $S$ is of the form $S''_i$ in the proof of Lemma \ref{l.masconley}. We shall prove Claims 1 to 3 below assuming this form for $S$, and afterwards we will generalize for $S$ as in point (1) of Remark \ref{r.masconley}, possibly with wandering annuli attached at the boundary. By points (2) and (3) of the same Remark \ref{r.masconley}, this gives the lemma for $S$ a general Conley surface.


Denote $S_0:=\text{Fill}(S_{\ast})$ and for each boundary component $\gamma\subseteq\partial S_0$ define

\begin{equation}
\gamma':=  \left\{ \begin{array}{ll}
         f(\gamma) & \mbox{if $\mathcal{L}(\gamma)=a$};\\
        f^{-1}(\gamma) & \mbox{if $\mathcal{L}(\gamma)=b$}.\end{array} \right.
\end{equation}

{\bf Claim 1:} $S_0$ is quasi-invariant.

\vspace{.1cm}

Observe that $\gamma'\subseteq\Sigma\setminus S_0$ for every $\gamma\subseteq\partial S_0$, by point (1) in Theorem \ref{t.conley}. First we see that $$\gamma_1'\cap\gamma_2'=\emptyset \text{ whenever } \gamma_1\neq\gamma_2$$
When $\gamma_1$ and $\gamma_2$ have the same image under $\mathcal{L}$ it follows from injectivity of either $f$ or $f^{-1}$. When they have different images under $\mathcal{L}$, it follows from point (1) in Theorem \ref{t.conley}.


Notice that since $S_0$ is filled and non-trivial, each component $\gamma\subseteq\partial S_0$ is essential and so $\gamma$ and $\gamma'$ bound an annulus $A_{\gamma}$, adjacent to $S_0$ by $\gamma$. By point (1) in Theorem \ref{t.conley} we see that $f^n(\gamma)\cap\gamma=\emptyset \text{ for every } n>0$, thus we may apply Lemma \ref{l.wandering} to conclude that $A_{\gamma}\cap\Omega(f)=\emptyset$. Next we see that $$A_{\gamma_1}\cap A_{\gamma_2}=\emptyset \text{ whenever } \gamma_1\neq\gamma_2$$

Suppose this is not the case. Since all the boundary curves are pairwise disjoint and both annuli are disjoint from $S_0$, we get that $\gamma'_1\subset \text{int}(A_{\gamma_2})$ and $C:=A_{\gamma_1}\cup A_{\gamma_2}$ is an annulus. Also notice that $\text{int}(C)$ is a component of $\Sigma\setminus S_0$ and $C\cap \Omega(f)=\emptyset$, but this contradicts the fact that $S_0$ is W-filled.



Define $$S_1:=S_0\cup(\cup_{\gamma} A_{\gamma})$$

We have that $f(S_0)\subseteq S_1$, since  $f(S_0)\cap S_1$ is non-empty and both open and closed in $f(S_0)$. Similarly $f^{-1}(S_0)\subseteq S_1$. Therefore we have shown that $S_0$ is quasi-invariant under $f$, using the family of annuli $A_{\gamma}$. This shows Claim 1.



\vspace{.1cm}
{\bf Claim 2:} $S=S_{\ast}$.

\vspace{.1cm}

Let $\Lambda$ be the basic piece associated to $S$. We obtained from the previous claim that $\Omega(f)\cap S_0$ is $f$-invariant, so $\Lambda\subset S_0$. Thus if there are any components of $S$ other than $S_{\ast}$, they must be contained in $S_0 = \text{Fill}(S_{\ast})$, for they have to meet $\Lambda$. In particular these components must be trivial and contained in $S_0\setminus S_{\ast}$. We shall introduce some notation for the components of $S_0\setminus S_{\ast}$, which are all disks with boundaries in either $\mathcal{L}^{-1}(a)$ or $\mathcal{L}^{-1}(b)$. Let $D^-_1,\ldots,D^-_m$ be the components of $S_0\setminus S_{\ast}$ whose boundaries are contained in $\mathcal{L}^{-1}(a)$, and let $\mathcal{D}^- = \cup_i D_i$. Analogously define $D^+_1,\ldots,D^+_l$ and $\mathcal{D}^+$ for the components with boundary in $\mathcal{L}^{-1}(b)$.

We will show that $\mathcal{D}^-\cap\Omega(f)$ is $f$-invariant. This will imply, by transitivity of the basic pieces, that $\Lambda$ does not meet $\mathcal{D}^-$ and thus $S$ has no components in $\mathcal{D}^-$. The same will hold for $\mathcal{D}^+$, and that will give this Claim.

To prove that $\mathcal{D}^-\cap\Omega(f)$ is $f$-invariant, we look at $f(D_i^-)$ for each $i=1,\ldots,m$. On one hand $D_i^-\subset S_0$, thus $f(D_i^-)\subset S_1 $. On the other, by Theorem \ref{t.conley} and the fact that $\partial D_i^-\subset \mathcal{L}^{-1}(a)$, we see that $f(D_i^-)$ lies in $S_1\cap \mathcal{L}^{-1}((-\infty,a))$ which is contained in $\mathcal{D}^-\cup (\cup_{\gamma}A_{\gamma})$. Since $D_i^-$ is connected, $f(D_i^-)$ must be contained either in some component of $\mathcal{D}^-$, or in $A_{\gamma}$ for some $\gamma\subset\partial S_0$. Since $S_{\ast}$ is W-filled, we have that $D_i^-$ meets $\Omega(f)$, and so does $f(D_i^-)$, thus it cannot be contained in any of the wandering annuli $A_{\gamma}$. We obtain that $f(D_i^-)\subset \mathcal{D}^-$. As this holds for every $i=1,\ldots,m$, we get $f(\mathcal{D}^-)\subset \mathcal{D}^-$ and so $\mathcal{D}^-\cap\Omega(f)$ is $f$-invariant, as we desired. The same proof works for $\mathcal{D}^+$, and this proves Claim 2.

\vspace{.1cm}
{\bf Claim 3:} Points (1), (2) and (3) in the statement.
\vspace{.1cm}

Point (1) comes directly from Claim 2, recalling the definition of $S_{\ast}$, and point (3) from combining Claims 1 and 2, recalling that $S_0 = \text{Fill}(S_{\ast}) = \text{Fill}(S)$.

It remains to show point (2). From the proof of Claim 2 we see that for each $i=1,\ldots,m$ we have $f(D_i^-)\subset D_{\sigma(i)}^-$ for some $\sigma(i)\in\{1,\ldots,m\}$. By transitivity of basic pieces $\sigma$ must be a permutation of $\{1,\ldots,m\} $. Similarly $f^{-1}(D_j^+)\subset D_{\tau(j)}^+$ for a permutation $\tau$ of $\{1,\ldots,l\}$. Pick $k$ so that $\sigma^k=Id$ and $\tau^k=Id$. Thus $f^k(D_i^-)\subset\text{int}(D_i^-)$ and $f^{-k}(D_j^+)\subset\text{int}(D_j^+)$ for every $i=1,\ldots,m$ and $j=1,\ldots,l$, and this defines annuli $A^-_i$ and $A^+_j$ as the respective closures of $D_i^-\setminus f^k(D_i^-)$ and $D_j^+\setminus f^{-k}(D_j^+)$. It is clear that these annuli do not meet $\Omega(f)$. For $\gamma$ in the boundary of $S_0$, which is the essential boundary of $S$, let

$$
A'_{\gamma} :=  \left\{ \begin{array}{ll}
  \cup_{s=1}^k f^s(A_{\gamma})        & \mbox{if $\mathcal{L}(\gamma)=a$};\\
    \cup_{s=1}^k f^{-s}(A_{\gamma}) & \mbox{if $\mathcal{L}(\gamma)=b$}.\end{array} \right.
$$

It is straightforward to verify the definition of quasi-invariance under $f^k$ for the set of annuli we just described, consisting on $A'_{\gamma}$ for $\gamma$ in the essential boundary of $S$, $A^-_i$ for $i=1,\ldots,m$, and $A^+_j$ for $j=1,\ldots,l$. 
This concludes Claim 3.

\vspace{.1cm}
We have shown Claims 1 through 3 assuming that $S$ is one of the surfaces $S''_i$ in the construction for Lemma \ref{l.masconley}. Attaching some wandering annuli to $S$ would make the notation more cumbersome, and would actually add difficulty only in proving the disjointness of the boundary annuli for the quasi-invariance in Claim 1. This can be sorted out by observing that if an essential wandering annulus is adjacent to $S$ by $\gamma \subset \partial S$, it is eventually contained in $\cup_{s=0}^m f^{s}(A_{\gamma})$ or $\cup_{s=0}^m f^{-s}(A_{\gamma})$ for large enough $m$. (Where $A_{\gamma}$ was defined in Claim 1.) 

\vspace{.1cm}
{\bf Claim 4:} If $S'$ is another connected and non-trivial Conley surface that is adjacent to $S$ in the same Conley decomposition, then $\text{Fill}(S)\cup \text{Fill}(S')$ is quasi-invariant under $f$.
\vspace{.1cm}

To obtain the Conley decomposition under consideration, which includes $S$ and $S'$, we follow the construction for Lemma \ref{l.masconley}. To obtain the statement for a general Conley decomposition we resort to point (2) of Remark \ref{r.masconley}.


By such construction of the Conley decomposition we see that $S\cup S'$ must be W-filled, since any wandering annulus between $S$ and $S'$ should have been already contained in $S$ or $S'$. For any component $\gamma$ in the boundary of $\text{Fill}(S)\cup \text{Fill}(S')$ we can associate an annulus $A_{\gamma}$ that comes from Claim 1 for either $S$ or $S'$. These annuli give the quasi-invariance of $\text{Fill}(S)\cup \text{Fill}(S')$ by similar arguments as those used for Claim 1, so we omit the details. The only delicate point is the disjointness of the annuli $A_{\gamma}$, in which we need to use that $\text{Fill}(S)\cup \text{Fill}(S')$ is W-filled.

\end{proof}

\begin{rem}\label{r.discos} If $S$ is a non-trivial Conley surface then $\text{cl}[\text{Fill}(S)\setminus S]$ can be written as $\mathcal{D}^+\sqcup \mathcal{D}^-$, where $\mathcal{D}^+$ and $\mathcal{D}^-$ are disjoint unions of disks, with $f(\mathcal{D}^+)\subseteq \text{int}(\mathcal{D}^{+})$ and $f^{-1}(\mathcal{D}^-)\subseteq \text{int}(\mathcal{D}^{-})$.
 \end{rem}

Once we choose a fixed Conley decomposition, Lemma \ref{l.conleyesquasi} lets us classify the basic pieces of $f$ into {\em trivial}, {\em annular} or {\em curved}, according to their associated Conley surfaces. Given a basic piece $\Lambda$, let us call $S_{\Lambda}$ to its Conley surface, then

\begin{itemize}
\item $\Lambda$ is {\em trivial} if every component of $S_{\Lambda}$ is trivial. In this case $S_{\Lambda}$ is contained in  a disjoint union of disks,  either attracting or repelling.
\item Otherwise, $S_{\Lambda}$ is connected by Lemma \ref{l.conleyesquasi}. We say that $\Lambda$ is {\em annular/curved} if $S_{\Lambda}$ is annular/curved respectively.
\end{itemize}

\begin{cor} Let $f\in\mathcal{A}_0(\Sigma)$ and $S$ be a Conley surface
of a non-trivial basic piece $\Lambda$. If $\Lambda'\subset \text{Fill}(S)$ is a trivial basic piece, then $\rho_{\Lambda'}(f)\subset\rho_{\Lambda}(f)$.
\end{cor}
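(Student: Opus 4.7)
The plan is to apply Lemma \ref{l.filling}(2) to $S$ under a suitable iterate $f^k$. Since $\Lambda$ is non-trivial while $\Lambda'$ is trivial, the two pieces are distinct; as $S\cap\Omega(f)=\Lambda$ by definition of a Conley surface, this forces $\Lambda'\cap S=\emptyset$, hence $\Lambda'\subseteq\textrm{Fill}(S)\setminus S$, which by Remark \ref{r.discos} is a disjoint union of attracting/repelling disks. By Lemma \ref{l.conleyesquasi}(2) there is some $k\in\N$ for which $S$ is quasi-invariant under $f^k$; enlarging $k$ via Remark \ref{r.piecespower} if necessary, I may further assume that $\Lambda'$ decomposes into topologically mixing basic pieces $\tilde\Lambda'_0,\ldots,\tilde\Lambda'_{p-1}$ of $f^k$, cyclically permuted by $f$.

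Picking $\tilde\Lambda':=\tilde\Lambda'_0\subseteq\textrm{Fill}(S)\setminus S$, I would apply Lemma \ref{l.filling}(2) to the homeomorphism $f^k$, the quasi-invariant surface $S$, and the basic piece $\tilde\Lambda'$, obtaining a fixed point $x_0\in S$ of $f^k$ with $\rho_{\tilde\Lambda'}(f^k)=\{\rho(f^k,x_0)\}$. Because $\Omega(f^k)=\Omega(f)$ and $S\cap\Omega(f)=\Lambda$, we have $x_0\in\Lambda$, so the $f$-orbit $\mathcal{O}=\{x_0,f(x_0),\ldots,f^{k-1}(x_0)\}$ is a periodic orbit of $f$ contained entirely in $\Lambda$. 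A direct computation in the abelian cover gives that the $f$-rotation vector of $\mathcal{O}$ equals $\rho(f^k,x_0)/k=\rho_{\tilde\Lambda'}(f^k)/k=\rho_{\tilde\Lambda'}(f)$, and in particular lies in $\rho_\Lambda(f)$.

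To transfer this back to $\Lambda'$, I would use that rotation vectors are invariant under $f$-conjugacy: since $\tilde\Lambda'_i=f^i(\tilde\Lambda'_0)$, all the pieces $\tilde\Lambda'_i$ share the same $f$-rotation set, and therefore $\rho_{\Lambda'}(f)=\rho_{\tilde\Lambda'}(f)$ reduces to the single vector just produced, yielding $\rho_{\Lambda'}(f)\subseteq\rho_\Lambda(f)$. The main subtlety I foresee is precisely the need to invoke $f^k$ rather than $f$ itself: one cannot simply apply Lemma \ref{l.filling}(2) to $\textrm{Fill}(S)$ (which by Lemma \ref{l.conleyesquasi}(3) is the natural quasi-invariant object for $f$) because $\textrm{Fill}(\textrm{Fill}(S))\setminus\textrm{Fill}(S)=\emptyset$ and hence yields no fixed-point information about $\Lambda'$.
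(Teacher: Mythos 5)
Your proof is correct and takes essentially the same route as the paper: the paper's own proof is a one-line citation to Lemma~\ref{l.conleyesquasi} and Lemma~\ref{l.filling}(2), and you have correctly reconstructed the details (placing $\Lambda'$ in $\text{Fill}(S)\setminus S$, invoking quasi-invariance of $S$ under $f^k$, and transferring rotation vectors back to $f$), including the important observation that one cannot shortcut via $\text{Fill}(S)$. The only superfluous step is the appeal to Remark~\ref{r.piecespower} to get topologically mixing pieces of $f^k$: Lemma~\ref{l.filling}(2) does not require mixing, so any $f^k$-basic piece inside $\Lambda'$ works, but this costs nothing.
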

\begin{proof} This follows from Lemma \ref{l.conleyesquasi} point (1) and Lemma \ref{l.filling} point (2).
\end{proof}

In general, for $f$ a homeomorphism of $\Sigma$, a subsurface $S\subset\Sigma$, and $n>0$, we denote $$S^{(n)}= \text{Fill}\left(\bigcup_{|j|\leq n} f^j(S)\right)$$
We point out that if $S$ is filled and quasi-invariant then $S^{(1)}=S\cup(\cup A_{\gamma})$ where $A_{\gamma}$ are annuli attached to the components of $\partial S$. The same is true for $S^{(n)}$, by taking $f^n$. Getting this property is the reason why we fill {\em after} taking the union, and not {\em before}, which could yield a strictly smaller set. 
Then Lemma \ref{l.conleyesquasi} implies the following.
\begin{lem} \label{l.conleycasesess0} Assume $S$ is the fill of either
\begin{itemize}
\item  a non-trivial Conley surface or
\item the union of two non-trivial Conley surfaces which are adjacent in the same Conley decomposition of $\Sigma$.
\end{itemize} Then the following hold for every $n>0$,
\begin{enumerate}
\item $S^{(n)}\cap\Omega(f)=S\cap\Omega(f)$,
\item $S^{(n)}$ is connected, filled and retracts by deformation onto $S$ rel $\Omega(f)$,
\item $S^{(n)}$ is quasi-invariant under $f^n$.
\end{enumerate}
\end{lem}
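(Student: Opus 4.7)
The plan is to reduce the lemma to the quasi-invariance of $S$ under $f$, which is already encoded in Lemma~\ref{l.conleyesquasi}, and then to control the finite iterates inductively. First, when $S=\text{Fill}(S_\Lambda)$ for a single non-trivial Conley surface, point (3) of Lemma~\ref{l.conleyesquasi} gives directly that $S$ is quasi-invariant under $f$. When $S$ is the fill of the union of two adjacent non-trivial Conley surfaces $S_\Lambda\cup S_{\Lambda'}$, I would first verify that $\text{Fill}(S_\Lambda\cup S_{\Lambda'})=\text{Fill}(S_\Lambda)\cup\text{Fill}(S_{\Lambda'})$ (any disk component of the complement of a non-trivial Conley surface is trivial in $\pi_1(\Sigma)$ and hence cannot contain the other non-trivial surface, and symmetrically), then invoke the Moreover clause of Lemma~\ref{l.conleyesquasi}. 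Denote $\{A_\gamma\}_{\gamma\subset\partial S}$ the resulting family of wandering annuli.

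Second, I prove by induction on $n$ that $U_n:=\bigcup_{|j|\leq n}f^j(S)=S\cup W_n$, where $W_n$ is an open set disjoint from $\Omega(f)$, consisting of iterated annular fattenings of $\partial S$. The base $n=1$ is the quasi-invariance. For the inductive step, $f^{\pm n}(S)\subseteq f^{\pm 1}(S\cup W_{n-1})\subseteq (S\cup\bigcup_\gamma A_\gamma)\cup f^{\pm 1}(W_{n-1})$, where both contributions beyond $S$ are wandering. This already gives Point~(1): since $S\cap\Omega(f)$ is $f$-invariant by Lemma~\ref{l.qisurface}(1), we have $U_n\cap\Omega(f)=S\cap\Omega(f)$; and any disk $D$ added in passing to $\text{Fill}(U_n)$ must lie inside a non-disk component $R$ of $\Sigma\setminus S$ (since $S$ is filled), with $\partial D\subseteq \partial R\cup(W_n\cap R)$. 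Because $W_n\cap R$ is an arrangement of wandering annular collars of $\partial R$ inside $R$, such a $D$ is contained in those collars and is therefore wandering.

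For Point~(2), connectedness of $S^{(n)}$ follows from the connectedness of $S$ together with $f^j(S)\cap S\supseteq S\cap\Omega(f)\neq\emptyset$; being filled is automatic; and the required deformation retraction onto $S$ is obtained by collapsing the annular collars in $W_n$ (and the disks attached by the fill) onto $\partial S$, which is rel $\Omega(f)$ by Point~(1). For Point~(3), I apply the inductive description from the second paragraph with $2n$ in place of $n$: since $f^n(U_n)=\bigcup_{0\leq k\leq 2n}f^k(S)\subseteq U_{2n}$ and analogously for $f^{-n}$, monotonicity of $\text{Fill}$ yields $f^{\pm n}(S^{(n)})\subseteq S^{(2n)}$, and the annular collars in $S^{(2n)}\setminus S^{(n)}\subseteq W_{2n}$ serve as witness annuli for the quasi-invariance of $S^{(n)}$ under $f^n$; the four axioms in the definition are verified directly once one knows these collars are wandering.

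The main obstacle is the disk-filling argument in the second paragraph: checking that a disk added by $\text{Fill}$ cannot contain any basic piece of $f$ requires a careful topological analysis of how the wandering collars $W_n\cap R$ sit inside each non-disk complementary region $R$. I expect this to be handled by applying Lemma~\ref{l.wandering} to the iterates of the boundary components of $S$, confirming that the $W_n$-collars stay in annular neighborhoods of $\partial R$ inside $R$ and do not enclose any subregion meeting $\Omega(f)$. Everything else amounts to bookkeeping once the quasi-invariance in the first paragraph and the wandering nature of $W_n$ in the second are in hand.
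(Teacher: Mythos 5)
Your proposal is correct and follows the route the paper implicitly intends: the paper derives this lemma from Lemma~\ref{l.conleyesquasi} together with the observation that, for $S$ filled and quasi-invariant, $S^{(n)}$ is $S$ with wandering collar annuli attached along $\partial S$. One simplification for your final paragraph: the disk-filling step you flag as the main obstacle is in fact vacuous. Because $S$ is filled and non-trivial, every component of $\partial S$ is essential, and the wandering collars provided by the quasi-invariance of $S$ (and its iterates, via Lemmas~\ref{l.wandering} and~\ref{sl.borde}) are pairwise disjoint annuli attached along $\partial S$ and contained in $\Sigma\setminus S$; consequently every boundary curve of $U_n=\bigcup_{|j|\leq n}f^j(S)$ is an essential curve $f^{\pm j}(\gamma)$ with $\gamma\subset\partial S$. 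Hence $\Sigma\setminus U_n$ has no disk components, $S^{(n)}=U_n$ with nothing added by $\textrm{Fill}$, and the remainder of your bookkeeping for points (1)--(3) goes through unimpeded.
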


Although Conley decompositions will play the central role in our main Theorem \ref{mainthm}, we will also need another construction of quasi-invariant surfaces for sections \ref{s.agujas} and \ref{s.triviales}. This is given in the next lemma, with the extra assumption of having topologically mixing basic pieces, which by Remark \ref{r.piecespower} applies to some power of any $f\in\mathcal{A}_0(\Sigma)$. Recall that for a basic piece $\Lambda$ of an axiom A diffeomoprhism
$f$ we defined $\Lambda^u=\textrm{cl}[W^u(\Lambda,f)]$.

\begin{lemma}\label{l.construquasi} Suppose $f\in\mathcal{A}_0(\Sigma)$ has topologically mixing basic pieces. Assume $U$ is a subsurface such that $\text{cl}[f(U)]\subseteq\text{int}(U)$, and every connected component of $U$ meets $\Omega(f)$. Let $K\subseteq U$ be a continuum and $\Lambda_1,\ldots,\Lambda_N$ basic pieces satisfying:
\begin{itemize}
\item $\Lambda_i\subseteq U$ for $i=1,\ldots,N$
\item $K\cap[\cup_{i=1}^{i=N}\Lambda_i^u]=\emptyset$

\end{itemize}
Then there exists a quasi-invariant surface $S$ such that $K\subseteq S$ and $$S\cap\Omega(f)=(U\cap\Omega(f))\setminus\{\Lambda:\Lambda_i\preceq\Lambda \text{ for some }i=1,\ldots,N\}$$
\end{lemma}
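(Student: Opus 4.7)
The strategy is to excise a small trapping neighbourhood of the closed attractor $A:=\bigcup_{i=1}^N\Lambda_i^u$ from $U$, and to let $S$ be the resulting subsurface. The assumption $K\cap A=\emptyset$ is precisely what allows us to perform this excision without disturbing $K$, and the assumption $f(\mathrm{cl}(U))\subseteq\mathrm{int}(U)$ provides the trapping control on the outer boundary. First I would observe that by Proposition \ref{p.propfaxa}(3) each $\Lambda_i^u$ is an attractor, so $A$ is a finite union of attractors, and that the characterization of the heteroclinic order via the $\lambda$-lemma gives
$$A\cap\Omega(f)=\{\Lambda:\Lambda_i\preceq\Lambda\text{ for some }i\}.$$
Thus the target equality for $S\cap\Omega(f)$ becomes $S\cap\Omega(f)=(U\cap\Omega(f))\setminus(A\cap\Omega(f))$.

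Next I would construct the trapping neighbourhood. Using that $A$ is an attractor, pick an open set $W_0\supseteq A$ with $f(\mathrm{cl}(W_0))\subseteq\mathrm{int}(W_0)$ and $\bigcap_{n\ge 0}f^n(W_0)=A$. Since $\Omega(f)$ consists of finitely many basic pieces and $K$ is compact with $K\cap A=\emptyset$, for $n$ large enough the set $W:=f^n(\mathrm{cl}(W_0))$ is a compact neighbourhood of $A$ satisfying $W\cap\Omega(f)=A\cap\Omega(f)$, $W\cap K=\emptyset$, and $f(W)\subseteq\mathrm{int}(W)$. After a small perturbation and absorbing wandering disks into $W$, I may assume $W$ is a subsurface whose boundary components are essential disjoint simple closed curves (the hypothesis that every component of $U$ meets $\Omega(f)$ ensures that similar cleaning can be done for $\partial U$). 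Define $S:=\mathrm{cl}(U\setminus W)$. Then immediately $K\subseteq S$ and $S\cap\Omega(f)$ is the required set.

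To verify quasi-invariance, note that $\partial S\subseteq\partial U\cup\partial W$. For $\gamma\subseteq\partial W$ let $A_\gamma$ be the closed annulus in $W$ bounded by $\gamma$ and $f(\gamma)$; for $\gamma\subseteq\partial U$ let $A_\gamma$ be the closed annulus in $\Sigma\setminus\mathrm{int}(U)$ bounded by $\gamma$ and $f^{-1}(\gamma)$. The trapping properties give $f^n(\gamma)\cap\gamma=\emptyset$ for all $n\ge 1$ in the first case, and the analogous statement for $f^{-1}$ in the second; since the $\gamma$ are essential, Lemma \ref{l.wandering} (applied to $f$ or $f^{-1}$ respectively) gives $A_\gamma\cap\Omega(f)=\emptyset$. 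A direct computation then yields
$$f(S)=f(U)\setminus f(W)\subseteq S\cup\bigl(W\setminus f(W)\bigr)=S\cup\!\!\bigcup_{\gamma\subseteq\partial W}\!\!A_\gamma,$$
$$f^{-1}(S)\setminus S\subseteq f^{-1}(U)\setminus U=\!\!\bigcup_{\gamma\subseteq\partial U}\!\!A_\gamma,$$
and $A_\gamma\cap S=\gamma$ together with condition (4) of the definition of quasi-invariance are immediate. Disjointness of the $A_\gamma$ within each of the two families is a consequence of the trapping properties combined with the same wandering-annulus argument used in Claim 1 of the proof of Lemma \ref{l.conleyesquasi}.

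\textbf{Main obstacle.} The delicate step is the adjustment of $W$ so that $\partial W$ consists of essential simple closed curves, that the shell $W\setminus f(W)$ splits as a disjoint union of annuli (one per component of $\partial W$), and that $W\cap\Omega(f)$ remains exactly $A\cap\Omega(f)$ after this adjustment. Here the two standing assumptions play their role: the hypothesis that every component of $U$ meets $\Omega(f)$ prevents spurious wandering components from appearing, and the topological mixing of basic pieces ensures that $f$ does not permute components of $\partial W$, so that the annulus $A_\gamma$ between $\gamma$ and $f(\gamma)$ is well-defined without having to pass to a power $f^k$—this is what upgrades the conclusion to quasi-invariance under $f$ itself.
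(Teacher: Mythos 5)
Your overall strategy---excise a trapping neighbourhood of the attractor $A=\bigcup_{i=1}^N\Lambda_i^u$ from $U$, and let $S$ be what remains---is exactly the route the paper takes (there $Z$ is built as the W-fill of small trapping neighbourhoods $Z_i\supset\Lambda_i^u$, and then $S=U\setminus Z$). The correspondence $A\cap\Omega(f)=\{\Lambda:\Lambda_i\preceq\Lambda\}$, the computation of $f(S)$ and $f^{-1}(S)\setminus S$, and the remark about topological mixing avoiding permutation of boundary components are all correct and parallel the paper.

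The gap is the assertion that after perturbation and absorbing wandering disks you ``may assume $W$ is a subsurface whose boundary components are essential disjoint simple closed curves,'' and the analogous claim for $\partial U$. This is not achievable in general. If some $\Lambda_i$ is a trivial piece (e.g.\ an attracting periodic orbit) then $\Lambda_i^u$ is contained in a disk, and any sufficiently small trapping neighbourhood of it is a disk with inessential boundary; W-filling cannot convert that boundary to an essential curve. Likewise, the hypotheses permit $U$ to have a trivial component $U_j$ meeting $\Omega(f)$, in which case $\partial U_j$ is inessential and cannot be ``cleaned.'' Since your verification that the $A_\gamma$ avoid $\Omega(f)$ rests on Lemma \ref{l.wandering}, which is stated only for essential curves, the argument breaks down precisely on those components. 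The paper resolves this by a direct case analysis when $\gamma$ is inessential: either $\gamma$ bounds a disk containing a trivial component $U_j$ (where $f(U_j)\subset U_j$ gives the wandering annulus directly), or $\gamma$ bounds a disk $D_l$ in $\text{Fill}(U_j)\setminus U_j$, in which case W-filled-ness forces $D_l\cap\Omega(f)\neq\emptyset$, topological mixing forces $f(D_l)\supset D_l$, and the annulus between $\gamma$ and $f^{-1}(\gamma)$ is then wandering by the trapping estimate $f^n(D_l\setminus f(D_l))$ pairwise disjoint rather than by Lemma \ref{l.wandering}. Your proof needs this case analysis added; without it, the quasi-invariance verification is incomplete whenever a boundary component is inessential.
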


Together with Remark \ref{r.piecespower}, this will let us build quasi-invariant surfaces with some control over the basic pieces contained in them. By choosing $K$ appropriately, we will also be able to join some of those basic pieces with an arc within the surface $S$. These specific requirements make it difficult to obtain $S$ through an arbitrary Conley decomposition, as union of Conley surfaces, so we provide this alternative construction.

\begin{proof}

Notice that we may replace $U$ with $\text{WFill}(U)$ without loss of generality, so we assume $U$ is W-filled. Consider $U_1,\ldots,U_k$ the connected components of $U$. Since they all meet $\Omega(f)$, by recurrence we have   $f(U_i)\subseteq U_{\sigma(i)}$ where $\sigma$ is a permutation of $\{1,\ldots,k\}$. Since $f$ is topologically mixing on its basic pieces, we get that $\sigma=Id$.

First we shall prove that $U$ is quasi-invariant: For each component $\gamma$ of $\partial U$ define $\gamma':=f^{-1}(\gamma)$. We will show that $\gamma$ and $\gamma'$ bound an annulus $A_{\gamma}$ adjacent to $U$ by $\gamma$. When $\gamma$ is essential this is clear, since $f$ is isotopic to $Id$. When $\gamma$ is inessential there are two cases to distinguish:
\begin{enumerate}
\item $\gamma$ bounds a disk that contains a trivial component $U_j$ of $U$.
\item $\gamma$ bounds a disk in $\text{Fill}(U_j)\setminus U_j$, for $U_j$ any component of $U$.
\end{enumerate}
The first case follows from the fact that $f(U_j)\subset U_j$. For the second one, let $D_1,\ldots,D_m$ be the components of $\text{Fill}(U_j)\setminus U_j$, and $\gamma_l = \partial D_l$ for $l=1,\ldots,m$. Since $U$ is W-filled we have $D_l\cap\Omega(f)\neq \emptyset$ for all $l=1,\ldots,m$, thus $f(D_l)\supset D_{\tau(l)}$ where $\tau$ is a permutation of $\{1,\ldots,m\}$. We get that $\tau=Id$ by the topologically mixing property of the basic pieces, therefore $\gamma_l$ and $\gamma'_l$ bound an annulus for every $l=1,\ldots,m$. The disjointness of the annuli $A_{\gamma}$ comes from the injectivity of $f$ and the fact that $U$ is W-filled, as in the proof of Lemma \ref{l.conleyesquasi}. Thus the quasi-invariance of $U$ follows.

We will arrive at our desired surface $S$ by removing a subsurface $Z$ from $U$, that we shall construct next.
Since $\Lambda_i\subseteq \text{int}(U)$ for $i=1,\ldots,N$ and $U$ is forward-invariant under $f$, we get that $\Lambda_i^u\subseteq U$ for every $i=1,\ldots,N$. Recall that by Proposition \ref{p.propfaxa} point (3) and Lemma \ref{l.entornos1} we have that for every $i=1,\ldots,N$:
\begin{itemize}
\item $\Lambda_i^u$ is an attractor for $f$, and
\item $\Lambda_i^u\cap\Omega(f)=\cup_{\Lambda\succeq\Lambda_i}\Lambda$.
\end{itemize}

Thus for every $i=1,\ldots,N$ there is a subsurface $Z_i\subseteq U$, obtained as a neighbourhood of $\Lambda_i^u$, that satisfies:
\begin{itemize}
\item $\text{cl}[f(Z_i)]\subseteq \text{int}(Z_i)$, and
\item $Z_i\cap\Omega(f)=\Lambda_i^u \cap\Omega(f) =\cup_{\Lambda\succeq\Lambda_i}\Lambda$.
\end{itemize}

Define $Z$ as the union of the connected components of $\text{WFill}(\cup_{i=1}^{i=N}Z_i)$ that meet $\Omega(f)$. We can choose the surfaces $Z_i$ so that their boundaries intersect transversally, thus making $Z$ a subsurface. Since $\text{cl}[f(Z)]\subset \text{int}(Z) $, we get that $Z$ is quasi-invariant, by the same argument we used for $U$. Taking the surfaces $Z_i$ as small enough neighbourhoods of $\Lambda_i^u$ we may assume that  $\text{cl}[Z]\subset \text{int}(U)$ and $Z\cap K=\emptyset$. Then $S = U\setminus Z$ is the desired surface. Checking the details is straightforward.

\end{proof}

As the final point of this section, we turn back to Conley surfaces and look at the dynamic behaviour of their boundaries.

\begin{lem}\label{sl.borde} Let $S$ be a non-trivial Conley surface and $\gamma\subseteq \partial S$ be an essential boundary component. Then the following holds.

\begin{enumerate}
\item If $f(\gamma)\cap S=\emptyset$ then the annulus between $\gamma$ and $f^n(\gamma)$  meets $S$ only at $\gamma$ for every $n>0$,
\item If $f(\gamma)\cap S \neq\emptyset$ then $f^n(\gamma)\cap S\neq\emptyset$ for every $n>0$.
\end{enumerate}

\end{lem}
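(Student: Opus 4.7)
The key tool is the quasi-invariance of $\textrm{Fill}(S)$ under $f$ given by Lemma \ref{l.conleyesquasi}(3): there is a collar annulus $A_\gamma$ attached along $\gamma$ with $A_\gamma\cap\textrm{Fill}(S)=\gamma$, $A_\gamma\cap\Omega(f)=\emptyset$, and exactly one of $f(\gamma),\,f^{-1}(\gamma)$ contained in $A_\gamma\setminus\textrm{Fill}(S)$. Since $f(\gamma)$ is essential (as $\gamma$ is essential and $f$ is isotopic to the identity) while $\textrm{Fill}(S)\setminus S$ is a union of topological disks by Remark \ref{r.discos}, an essential simple closed curve cannot lie inside $\textrm{Fill}(S)\setminus S$; this yields the clean dichotomy
\[
f(\gamma)\cap S=\emptyset \iff f(\gamma)\cap\textrm{Fill}(S)=\emptyset \iff f(\gamma)\subset A_\gamma\setminus\textrm{Fill}(S),
\]
while $f(\gamma)\cap S\neq\emptyset$ corresponds to the complementary case $f^{-1}(\gamma)\subset A_\gamma\setminus\textrm{Fill}(S)$. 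This matches parts (1) and (2) of the statement with the two branches of the quasi-invariance.

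For part (1) I will argue by induction on $n$, bootstrapping the quasi-invariance via Lemma \ref{l.conleycasesess0}: the iterated surface $\textrm{Fill}(S)^{(n-1)}$ is filled, connected, quasi-invariant under $f^{n-1}$, and has $f^{n-1}(\gamma)$ as the essential boundary component issued from $\gamma$. In particular $\gamma$ gets absorbed into the interior during the iteration, so applying the dichotomy at the new boundary $f^{n-1}(\gamma)$ must place $f^n(\gamma)$ in a fresh outside collar $A'_n$ disjoint from $\textrm{Fill}(S)^{(n-1)}$ except at $f^{n-1}(\gamma)$. Concatenating yields $B_n=A_\gamma\cup A'_2\cup\cdots\cup A'_n$; since $S\subseteq\textrm{Fill}(S)\subseteq\textrm{Fill}(S)^{(i-1)}$, each summand meets $S$ in at most (a subset of) $\gamma$, so $B_n\cap S=\gamma$.

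For part (2) I work in the complementary case, using the Lyapunov function $\mathcal{L}$ of Theorem \ref{t.conley} together with its strict decrease along wandering orbits. Set $\alpha=\mathcal{L}(\gamma)$ and let $R$ be the connected component of $\mathcal{L}^{-1}((-\infty,\alpha))$ that contains $\Lambda\subset S$. The hypothesis forces $S$ to lie locally on the side $\mathcal{L}<\alpha$ near $\gamma$, and by connectedness of $S$ (Lemma \ref{l.conleyesquasi}(1)) we get $S\cap\mathcal{L}^{-1}((-\infty,\alpha))\subseteq R$; moreover $f(R)$ is connected, contained in $\mathcal{L}^{-1}((-\infty,\alpha))$, and meets $R$ at $\Lambda$, so $f(R)\subseteq R$. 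The hypothesis $f(\gamma)\cap S\neq\emptyset$, combined with the Lyapunov bound $f(\gamma)\subseteq\mathcal{L}^{-1}((-\infty,\alpha))$ and connectedness of $f(\gamma)$, forces $f(\gamma)\subseteq R$, and by iteration $f^n(\gamma)\subseteq R$ for every $n\geq 1$. The conclusion $f^n(\gamma)\cap S\neq\emptyset$ then follows by ruling out that an essential simple closed curve freely homotopic to $\gamma$ could lie in $R\setminus S$: components of $R\setminus S$ are either disks from $\textrm{Fill}(S)\setminus S$ (homotopically trivial) or pieces of $\Sigma\setminus\textrm{Fill}(S)$ whose topological access to the homotopy class of $\gamma$ is obstructed by another essential boundary component of $\textrm{Fill}(S)$.

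The most delicate step will be the final topological argument in part (2): classifying the connected components of $R\setminus S$ and verifying that none can host an essential simple closed curve freely homotopic to $\gamma$. This requires a careful joint use of the W-filled structure of $S$, the Conley decomposition geometry, and the specific role of $\gamma$ as an essential boundary component of $\textrm{Fill}(S)$.
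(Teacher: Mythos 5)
Your proof diverges from the paper's for both parts, and your treatment of part (2) has a genuine gap.

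For part (1), your induction via the iterated surfaces $S^{(n)}$ and the concatenated collars $B_n=A_\gamma\cup A'_2\cup\cdots\cup A'_n$ is workable but considerably more elaborate than what the paper does. The paper first proves, for arbitrary $n$, the single statement: if $f^n(\gamma)\cap S=\emptyset$ then $(S\setminus\gamma)\cap A_n=\emptyset$, where $A_n$ is the annulus between $\gamma$ and $f^n(\gamma)$. The argument is a short clopen argument in the connected set $S\setminus\gamma$ (the intersection $(S\setminus\gamma)\cap A_n$ is both open and closed there, so it is empty or all of $S\setminus\gamma$; the second case would put $\Lambda\subset A_n$, contradicting Lemma~\ref{l.wandering}). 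Then part (1) falls out by the easy observation that $f(\gamma)\cap S=\emptyset$ propagates to $f^n(\gamma)\cap S=\emptyset$ for all $n$ by quasi-invariance. No bookkeeping with iterated surfaces is needed. Your version is not wrong, just heavier.

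For part (2), your Lyapunov-sublevel argument does not close. You reduce to ruling out that the essential curve $f^n(\gamma)$, freely homotopic to $\gamma$, can lie entirely in $R\setminus S$, and you assert that the components of $R\setminus S$ cannot host such a curve. That assertion is false as stated: consider the case where $S$ is an annulus with $\gamma$ at the upper Lyapunov level $b=\alpha$ and the other boundary $\gamma'$ at the lower level $a<\alpha$. Then $R$ contains the region on the far side of $\gamma'$, which has $\gamma'$ as a boundary curve, and since $S$ is an annulus every curve freely homotopic to $\gamma'$ is also freely homotopic to $\gamma$. So $R\setminus S$ does host essential curves in the class of $\gamma$. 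Your phrase ``topological access\dots obstructed by another essential boundary component'' is exactly where a real dynamical confinement argument is needed, and it is not supplied. The confinement you need is precisely what the paper gets for free from the nested-annulus structure established in Lemma~\ref{l.wandering}: since $f(\gamma)$ lies in the annulus $A_n$ between $\gamma$ and $f^n(\gamma)$, and $f(\gamma)\cap\gamma=\emptyset$, the clopen step gives $f(\gamma)\cap S\subseteq (S\setminus\gamma)\cap A_n=\emptyset$ whenever $f^n(\gamma)\cap S=\emptyset$, immediately contradicting the hypothesis of (2). I recommend abandoning the Lyapunov sublevel set $R$ and working with the annulus $A_n$ and Lemma~\ref{l.wandering} directly.
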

\begin{proof} Since $\gamma$ is essential, the surface between $\gamma$ and $f^n(\gamma)$ is an annulus $A_n$. If for some $n$ we have that $f^n(\gamma)\cap S=\emptyset$, then $(S\setminus\gamma)\cap A_n$ is both open and closed in $S\setminus\gamma$, thus either $(S\setminus\gamma)\cap A_n=\emptyset$ or $S\setminus\gamma\subseteq A_n$. Lemma \ref{l.wandering} excludes the second possibility, so $(S\setminus\gamma)\cap A_n=\emptyset$ whenever $f^n(\gamma)\cap S=\emptyset$.


For point (1) assume that $f(\gamma)$ does not intersect $S$. By Lemma \ref{l.conleyesquasi} and the fact that $(S\setminus\gamma)\cap A_1=\emptyset$ we have that $f^n(\gamma)\cap S=\emptyset$ for every $n>0$. Therefore we must have $(S\setminus\gamma)\cap A_n=\emptyset$, which gives point (1).

We prove point (2) by contradiction, supposing that $f^n(\gamma)\cap S=\emptyset$ for some $n>0$. Then we get that $(S\setminus\gamma)\cap A_n=\emptyset$, which contradicts the fact that $f(\gamma)\cap S\neq\emptyset$.
\end{proof}

The following relates the essential boundary of a Conley surface with the stable and unstable manifolds of its basic piece.

\begin{lemma}\label{l.conleycasesess}

Let $f\in\mathcal{A}_0(\Sigma)$ and $S$ be a non-trivial Conley surface
associated to a basic piece $\Lambda$. Then for any essential component $b$ of $\partial S$ we have that either $W^s(\Lambda,f)\cap b\neq\emptyset$ or $W^u(\Lambda,f)\cap b\neq\emptyset$.

\end{lemma}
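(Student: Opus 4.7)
The plan proceeds in three steps: a dichotomy for $b$ via quasi-invariance, a Lyapunov reduction eliminating one of the two possibilities, and a compactness/contradiction argument leveraging Lemma~\ref{sl.borde}.

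By Lemma~\ref{l.conleyesquasi}, $\text{Fill}(S)$ is $f$-quasi-invariant, so for the essential boundary $b$ either $f(b)\cap S=\emptyset$ (Type~A) or $f^{-1}(b)\cap S=\emptyset$ (Type~B) holds. I will treat Type~A, since Type~B follows by applying the same argument to $f^{-1}$ and yields $b\cap W^s(\Lambda,f)\neq\emptyset$ instead. By construction of Conley surfaces, $b\subseteq \mathcal{L}^{-1}(a)$ for some regular value $a<c:=\mathcal{L}(\Lambda)$. Since $\mathcal{L}$ is non-increasing along forward orbits and converges to $c$ along any orbit in $W^s(\Lambda,f)$, every point of $W^s(\Lambda,f)\cap S$ satisfies $\mathcal{L}\geq c>a$. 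Hence $b\cap W^s(\Lambda,f)=\emptyset$, and the task reduces to showing $b\cap W^u(\Lambda,f)\neq\emptyset$.

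Suppose for contradiction that $b\cap W^u(\Lambda,f)=\emptyset$. For each $x\in b$ the set $\alpha(x)$ is a non-empty closed connected subset of $\Omega(f)$, hence contained in a single basic piece $\Lambda_x\neq \Lambda$. Since the Lyapunov values of distinct basic pieces are separated and $S$ is contained in a bounded range of Lyapunov levels, one can fix a threshold $L$ which is larger than every Lyapunov value attained on $S$ yet strictly smaller than $\mathcal{L}(\Lambda')$ for every basic piece $\Lambda'\neq \Lambda$ with $\mathcal{L}(\Lambda')>c$. As $\mathcal{L}$ is non-decreasing along backward orbits and $\mathcal{L}(f^{-n}(x))\to \mathcal{L}(\Lambda_x)>L$, setting
\begin{equation*}
V_n:=\{x\in b:\mathcal{L}(f^{-n}(x))>L\}
\end{equation*}
produces open sets in $b$ with $V_n\subseteq V_{n+1}$ and $b=\bigcup_{n\geq 1} V_n$. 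Compactness of $b$ forces $b=V_N$ for some $N$, which means $f^{-N}(b)\cap S=\emptyset$.

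The contradiction now comes from Lemma~\ref{sl.borde} applied to $f^{-1}$, which is also in $\mathcal{A}_0(\Sigma)$ and admits $-\mathcal{L}$ as a Lyapunov function with the same Conley surface $S$. From $f(b)\subseteq A_b\setminus S$ (Type~A combined with the quasi-invariance of $\text{Fill}(S)$) together with the local fact that $f^{-1}(b)$ lies on the $S$-side of $b$, one deduces $f^{-1}(b)\cap S\neq\emptyset$. Lemma~\ref{sl.borde} then gives $f^{-n}(b)\cap S\neq\emptyset$ for every $n\geq 1$, contradicting $f^{-N}(b)\cap S=\emptyset$ and completing the proof. The main subtlety is verifying the existence of the threshold $L$ when the Conley surface has attached wandering annuli in its W-filling; this is handled by observing that such annuli are wandering and hence cannot reach into the neighborhood of any higher basic piece, which forces a gap between the top Lyapunov level attained on $S$ and the next basic-piece value above $c$.
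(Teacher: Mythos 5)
Your overall plan (pick a type for $b$ via quasi-invariance, suppose the relevant invariant manifold misses $b$, produce a uniform exit time by compactness, and contradict Lemma~\ref{sl.borde}) is the same skeleton as the paper's proof, but the crucial step of establishing the uniform exit time is handled differently, and your version has a gap. You need a single threshold $L$ with
$\max_S\mathcal{L}<L<\mathcal{L}(\Lambda')$ for \emph{every} basic piece $\Lambda'\neq\Lambda$ with $\mathcal{L}(\Lambda')>c$, so that $\mathcal{L}(f^{-n}(x))>L$ certifies $f^{-n}(x)\notin S$. The obstruction you flag at the end --- wandering annuli attached in the W-filling --- is genuinely dangerous and is not dispelled by the observation that such annuli miss the other basic pieces. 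An attached wandering annulus can perfectly well cross the Lyapunov level $\mathcal{L}(\Lambda')$ of a higher basic piece $\Lambda'$ without touching $\Lambda'$ or its isolating neighbourhood: the level set $\mathcal{L}^{-1}(\mathcal{L}(\Lambda'))$ is in general disconnected, and the annulus can run through a component disjoint from $\Lambda'$ (Remark~\ref{r.masconley} explicitly permits annuli bridging $S''_i$ to a non-adjacent $S''_j$). In that situation $\max_S\mathcal{L}>\mathcal{L}(\Lambda')$, the required $L$ does not exist, and moreover it is no longer clear that $\mathcal{L}(\Lambda_x)>\max_S\mathcal{L}$ for every $x\in b$.

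The paper avoids Lyapunov values at this point and argues directly from the quasi-invariance structure: for the set $A$ of points of $b$ whose (forward) orbit eventually exits $S$, it proves a ``once out, stays out'' Claim using Remark~\ref{r.discos} (the disks $\mathcal{D}^+$ form an attractor) together with point~(1) of Lemma~\ref{sl.borde} (the annulus between $\gamma$ and $f^n(\gamma)$ meets $S$ only at $\gamma$). This gives each $x$ a first exit time $k_0(x)$ after which the orbit never returns to $S$; upper-semicontinuity of $k_0$ and compactness of $b$ then yield the uniform time and the contradiction with Lemma~\ref{sl.borde}~(2). Your argument can be repaired by proving the same ``once out, stays out'' statement for backward orbits --- which follows the paper's Claim almost verbatim for $f^{-1}$ using the repelling disks $\mathcal{D}^-$ --- instead of trying to detect exit from $S$ by a Lyapunov threshold. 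Until that is done, the existence of $L$ is an unjustified assertion, not a subtlety.
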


We remark that
\begin{itemize}
\item if $f(b)\cap S =\emptyset$ then $b$ meets $W^u(\Lambda,f)$, while

\item if $f(b)\cap S \neq\emptyset$ (which happens iff $f^{-1}(b)\cap S =\emptyset$), then $b$ meets $W^s(\Lambda,f)$.

\end{itemize}

\begin{proof} 
We shall assume that $f(b)\cap S\neq\emptyset$ and prove that $W^s(\Lambda,f)\cap b\neq\emptyset$. The other case is analogous.  Consider the set
$$A=\{x\in b\,:\, \mbox{there is some }k>0 \mbox{ such that } f^{k}(x)\in\Sigma \setminus S \}\subseteq b$$

We wish to show that $A\neq b$, for if there is a point $z \in b\setminus A$ then all the future iterates of $z$ are contained in $S$, which implies that $z\in W^{s}(\Lambda,f)$ as every point in $\Sigma$ must belong to the stable manifold of some basic piece. Thus we would find $z\in b\cap W^{s}(\Lambda,f)$ as desired.

For $x\in A$ define $$k_0(x)=\min \{k>0:f^k(x)\in\Sigma\setminus S\} $$

\vspace{.1cm}
{\bf Claim:} If $x\in A$ then $f^n(x)\in\Sigma\setminus S $ for every $n\geq k_0(x)$.
\vspace{.1cm}

Recall from Remark \ref{r.discos} that we can write $\text{Fill}(S)\setminus S = \mathcal{D}^+\sqcup \mathcal{D}^- $ where $\mathcal{D}^+$ and $\mathcal{D}^-$ are disjoint unions of disks with $f(\mathcal{D}^+)\subseteq \text{int}(\mathcal{D}^+)$ and $f^{-1}(\mathcal{D}^-)\subseteq \text{int}(\mathcal{D}^-)$. On the other hand we consider the annuli $A_{\gamma}$ associated to the essential components of $\partial S$ as in Lemma \ref{l.conleyesquasi}, namely:
\begin{enumerate}
\item If $f(\gamma)\cap S = \emptyset$ then $A_{\gamma}$ is the annulus between $\gamma$ and $f(\gamma)$,
\item if $f(\gamma)\cap S \neq \emptyset$ then $A_{\gamma}$ is bounded by $\gamma$ and $f^{-1}(\gamma)$
\end{enumerate}

Let $\partial S^+$ be the essential boundary of $S$ that belongs in case (1) above. Then we have that $$f(S)\subseteq S_1:= S\cup \mathcal{D}^+ \cup \left(\bigcup_{\gamma\subset \partial S^+} A_{\gamma}\right) $$
Thus if $x\in A$ and $k_0=k_0(x)$, we have either $f^{k_0}(x)\in \mathcal{D}^+$ or $f^{k_0}(x)\in A_{\gamma}$ for some component $\gamma$ of $\partial S^+$. In the first case, since $\mathcal{D}^+$ is an attractor, we have $f^n(x)\in\mathcal{D}^+$ for all $n>k_0$, so we get the Claim. In the second one, if $n>k_0$ then $f^n(x)\in f^{n-k_0}(A_{\gamma})$, which lies in the annulus between $\gamma$ and $f^{n-k_0}(\gamma)$, and does not meet $\gamma$. Then Lemma \ref{sl.borde} implies $f^n(x)\notin S$, concluding the proof of the Claim.

\vspace{.1cm}

We are ready to prove that $A\neq b$. Suppose they are equal. Then by compactness of $b$ and continuity of $f$ (upper-semicontinuity of $k_0$) there exists $k_1=\max\{k_0(x):x\in b\}$, and by the  Claim we have $f^{k_1}(b)\cap S =\emptyset$. This contradicts point (2) of Lemma \ref{sl.borde}, since $f(b)\cap S \neq\emptyset$ by the hypothesis of the case we are proving. This concludes the proof, by the remarks at the beginning of it.




\end{proof}

\section{Star shape of the rotation set}\label{s.agujas}


We show that for $f\in \mathcal{A}_0(\Sigma)$ the set $\rho(f)\subset H_1(\Sigma_g;\mathbb{R})$ is a union of convex sets all containing the origin.

The proof will be based on our characterization of the rotation set as $$\rho(f) = \bigcup_{\mathcal{C}}\rho_{\mathcal{C}}(f)$$ where $\mathcal{C}$ ranges over the maximal chains of basic pieces of $f$, given by Theorem \ref{t.rhorhochain}. This is a union of finitely many convex sets, but there may be chains $\mathcal{C}$ for which $\rho_{\mathcal{C}}(f)$ does not contain $0$. We shall show that the rotation sets of such chains are {\em radial} segments (i.e. contained in a $1$-dimensional subspace), and that $\rho(f)$ contains the continuation to $0$ of these segments.

\begin{lemma}\label{l.agujas}
Let $f\in \mathcal{A}_0(\Sigma)$ and assume $\Lambda_1\prec \Lambda_2$ are basic pieces such that there is no $\Lambda$ with $\Lambda_1\prec \Lambda \prec \Lambda_2$. Then one of the following holds:
\begin{itemize}
\item[a)] $\Lambda_1\cup\Lambda_2$ contains a zero-rotation periodic point of $f$, or
\item[b)] there is a closed curve $\alpha$ such that $\rho_{\Lambda_i}(f) \subset \langle [\alpha]\rangle $ for $i=1,2$.
\end{itemize}
\end{lemma}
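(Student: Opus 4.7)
The plan is to construct a connected quasi-invariant subsurface $S\subset\Sigma$ with $S\cap\Omega(f)=\Lambda_1\cup\Lambda_2$, and then conclude by classifying the topological type of $S$ via the \emph{trivial}/\emph{annular}/\emph{curved} trichotomy from Section \ref{s.conley}. By Remark \ref{r.piecespower} I may first replace $f$ by a power $f^k$ under which every basic piece is topologically mixing; this preserves the no-intermediate-piece hypothesis, and both possible conclusions are preserved too, since $\rho_\Lambda(f^k)=k\rho_\Lambda(f)$ and zero-rotation $f^k$-periodic points are zero-rotation $f$-periodic points.

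To build $S$, I would apply Lemma \ref{l.construquasi} to $U$ a forward-trapping subsurface containing $\Lambda_1^u$ and to $K\subset U$ the closure of a single heteroclinic orbit in $W^u(\Lambda_1)\cap W^s(\Lambda_2)$, which is a continuum containing $\Lambda_1\cup\Lambda_2$. As the pieces to exclude I choose the minimal elements under $\preceq$ of $(U\cap\Omega(f))\setminus(\Lambda_1\cup\Lambda_2)$, say $\Lambda'_1,\ldots,\Lambda'_N$. The no-intermediate-piece hypothesis forces $\Lambda_1\prec\Lambda'_j$ and $\Lambda'_j\not\preceq\Lambda_2$ for every $j$ (otherwise $\Lambda_1\prec\Lambda'_j\prec\Lambda_2$), so, using the description of $(\Lambda'_j)^u\cap\Omega(f)$ from Proposition \ref{p.propfaxa} and Lemma \ref{l.entornos1}, we get $\Lambda_1,\Lambda_2\notin(\Lambda'_j)^u$. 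The heteroclinic arc in $K$ also avoids the attractor $(\Lambda'_j)^u$, because any entry point would have its forward orbit trapped there, whereas its $\omega$-limit $\Lambda_2$ lies outside $(\Lambda'_j)^u$. Thus $K\cap\bigcup_j(\Lambda'_j)^u=\emptyset$, Lemma \ref{l.construquasi} produces a quasi-invariant $S$ with $S\cap\Omega(f)=\Lambda_1\cup\Lambda_2$, and the connected component of $S$ containing $K$ (quasi-invariant by Remark \ref{r.quasiconexas}) is the desired connected subsurface.

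The conclusion now splits into three cases according to the topological type of $S$. If $S$ is trivial, Lemma \ref{l.qisurface}(3) provides a closed curve $\alpha$ with $\rho_{\Lambda_i}(f^k)=\{[\alpha]\}$ for $i=1,2$, whence $\rho_{\Lambda_i}(f)\subset\langle[\alpha]\rangle$, giving (b). If $S$ is annular, $H_1(S)=\langle[\alpha]\rangle$ is one-dimensional, so Lemma \ref{l.qisurface}(4) yields $\rho_{\Lambda_i}(f)\subset\langle[\alpha]\rangle$, again (b). If $S$ is curved, Lemma \ref{l.filling}(1) produces a fixed point $p\in S$ of $f^k$ with zero rotation vector; since $p\in\Omega(f)\cap S=\Lambda_1\cup\Lambda_2$, this $p$ is the zero-rotation $f$-periodic point in $\Lambda_1\cup\Lambda_2$ required by (a). The main obstacle I foresee is the verification that $K$ avoids the unstable attractors of the bad pieces, which is precisely where the no-intermediate-piece hypothesis enters.
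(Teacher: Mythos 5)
Your overall strategy matches the paper's: build a connected quasi-invariant subsurface $S$ with $S\cap\Omega(f)=\Lambda_1\cup\Lambda_2$ via Lemma \ref{l.construquasi} (after passing to $f^k$, excluding the pieces strictly beyond $\Lambda_1$ but not equal to $\Lambda_2$), then classify $S$ as trivial/annular/curved and apply Lemmas \ref{l.qisurface} and \ref{l.filling}. The gap is your choice of $K$. The closure of a single heteroclinic orbit $\{f^n(z):n\in\Z\}\cup\alpha(z)\cup\omega(z)$ is \emph{not} a continuum: a transverse heteroclinic point $z$ is wandering, so each orbit point $f^n(z)$ is isolated in this closure and $\{f^n(z)\}$ is a clopen (in the subspace topology) proper subset, breaking connectedness. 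Lemma \ref{l.construquasi} explicitly requires $K$ to be a continuum. Nor does this closure contain $\Lambda_1\cup\Lambda_2$ — it contains only $\alpha(z)$ and $\omega(z)$, which are in general proper subsets.

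The paper instead takes $K=\beta\cup\gamma$: an arc $\beta\subset W^u(x,f)$ from a point $x\in\Lambda_1$ to a heteroclinic point $z$, concatenated with an arc $\gamma\subset W^s(y,f)$ from $z$ to a point $y\in\Lambda_2$ (with $\gamma$ pushed into a small neighbourhood $V_2$ of $\Lambda_2$ by iterating $f$). This $K$ is a genuine continuum contained in $U$. Note it need not contain $\Lambda_1\cup\Lambda_2$; what matters is that its endpoints lie in $\Lambda_1$ and $\Lambda_2$, so the component $S_0$ of $S$ containing $K$ meets both pieces, and then $\Lambda_1\cup\Lambda_2\subset S_0$ follows from Lemma \ref{l.qisurface}(1). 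With this arc in place of your $K$, your $\omega$-limit disjointness argument still handles $\gamma$, but a general point $p\in\beta\subset W^u(x,f)$ need not have $\omega(p)\subset\Lambda_2$, so it does not cover $\beta$; you need the symmetric $\alpha$-limit version (if $p\in\beta\cap(\Lambda'_j)^u$, then by the $f^{-1}$-invariance of $(\Lambda'_j)^u$ one gets $\alpha(p)\subset\Lambda_1\cap(\Lambda'_j)^u$, contradicting $\Lambda'_j\not\preceq\Lambda_1$), or the paper's uniform-backward-time argument into the neighbourhood $V_1$. Everything downstream of the construction of $S$ is correct and identical to the paper's proof.
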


We remark that $\alpha$ in statement (b) might be inessential, in which case $\rho_{\Lambda_i}(f) = \{0\}$.

\begin{proof}
Take $k\in\N$ so that the basic pieces of $f^k$ are topologically mixing. By Remark \ref{r.piecespower}, there exist $\Lambda_1'\subseteq\Lambda_1$ and $\Lambda_2'\subseteq\Lambda_2$ basic pieces of $f^k$ satisfying the same hypothesis on the heteroclinical relation. Notice that it is enough to prove the lemma for $f^k$, $\Lambda'_1$ and $\Lambda'_2$. Thus we shall assume that $\Lambda_1$ and $\Lambda_2$ are topologically mixing for $f$.


Recall that by Proposition \ref{p.propfaxa}, the set $\Lambda^u  =\textrm{cl}\left[\bigcup_{x\in\Lambda}W^u(x,f)\right]$ is an attractor for any basic piece $\Lambda$ of $f$. So we can find a subsurface $U$ containing $\Lambda_1^u$ such that
\begin{itemize}
\item $\textrm{cl}[f(U)]\subseteq U$
\item $\Omega(f)\cap U= \Lambda_1^u\cap\Omega(f)=\cup_{\Lambda\succeq\Lambda_1}\Lambda$
\end{itemize}

Moreover, we can assume that $U$ is W-filled and that all its connected components intersect $\Omega(f)$. 
Define $$\mathcal{F}=\{\Lambda:\Lambda_1\prec\Lambda,\text{ }\Lambda\neq\Lambda_2\}.$$  
The basic pieces of $f$ that are contained in $U$ are exactly $\Lambda_1$, $\Lambda_2$ and those in $\mathcal{F}$. On the other hand, the pieces $\Lambda\in\mathcal{F}$ do not precede $\Lambda_1$ nor $\Lambda_2$, so the closed set $$\bigcup_{\Lambda\in\mathcal{F}}\Lambda^u  $$ does not meet $\Lambda_1\cup\Lambda_2$. We shall construct a curve $K$ joining $\Lambda_1$ and $\Lambda_2$ within $U$ that does not meet $\cup_{\Lambda\in\mathcal{F}}\Lambda^u $.

Let $V_1$ and $V_2$ be neighbourhoods of $\Lambda_1$ and $\Lambda_2$ respectively, that are contained in $U$ and disjoint from $\cup_{\Lambda\in\mathcal{F}}\Lambda^u $.  Since $\Lambda_1\prec\Lambda_2$ there are $x\in\Lambda_1$ and $y\in \Lambda_2$ such that $W^u(x,f)$ meets $W^s(y,f)$ at some point $z$. Let $\beta$ be the arc of $W^u(x,f)$ from $x$ to $z$ and $\gamma$ the arc of $W^s(y,f)$ from $z$ to $y$. We can assume that $\gamma\subset V_2$, otherwise we apply a high enough power of $f$ and redefine $x$, $y$ and $z$. This yields that $K:=\beta\cup\gamma$ is contained in $U$, since $\beta\subset \Lambda_1^u \subset U$. Next we prove that $K$ is disjoint from $\cup_{\Lambda\in\mathcal{F}}\Lambda^u$. That is clear for $\gamma$, since it lies in $V_2$. For each $p\in \beta$ there is some neighbourhood $V_p$ and some $k>0$ with $f^{-k}(V_p)\subset V_1$.  Now if $\Lambda^u$ met $\beta$ for some $\Lambda\in\mathcal{F}$, we consider some point of intersection $p$ and see that $W^u(\Lambda,f)$ must meet $V_p$. Since $W^u(\Lambda,f)$ is $f^{-1}$ invariant, this would mean that $W^u(\Lambda,f)$ meets $V_1$, which contradicts the choice of $V_1$.



We apply Lemma \ref{l.construquasi} to the subsurface $U$, the basic pieces in $\mathcal{F}$, and the continuum $K$. This gives a quasi-invariant subsurface $S$ such that:
\begin{itemize}
\item $S\cap\Omega(f)=\Lambda_1\cup\Lambda_2$
\item $K\subset S$
\end{itemize}

Let $S_0$ be the connected component of $S$ that contains $K$, which is also quasi-invariant by Remark \ref{r.quasiconexas}. By construction of $K$ we get that both $\Lambda_1$ and $\Lambda_2$ meet $S_0$. Therefore Lemma \ref{l.qisurface} item (1) lets us deduce that $\Lambda_1\cup\Lambda_2$ is contained in $S_0$. If $S_0$ is contained in a disk, we can apply Lemma \ref{l.qisurface} point (3) to show that $\rho_{\Lambda_1}(f) = \rho_{\Lambda_2}(f)$ and this is a point $v\in \langle[\alpha]\rangle$ where $\alpha$ is some closed curve in $\Sigma$. Thus statement (b) holds.

If $S_0$ is contained in an essential annulus $A$ (but not in a disk) we take an essential simple closed curve $\alpha$ with $\langle[\alpha]\rangle = H_1(A) = H_1(S_0)$. From Lemma \ref{l.qisurface} point (4) we get statement (b) for this $\alpha$.

In the remaining case, $S_0$ is not contained in a disk or an annulus , so Lemma \ref{l.filling} point (1) gives statement (a).
\end{proof}

We obtained a little more from the proof of Lemma \ref{l.agujas}, that we state below as a remark.

\begin{rem} \label{r.agujas} If statement (b) in Lemma \ref{l.agujas} holds with a curve $\alpha$ that is not essential or not simple, then we can say more: $\rho_{\Lambda_1}(f)=\rho_{\Lambda_2}(f)=\{v\}$ for $v\in \langle [\alpha]\rangle$.

\end{rem}

\begin{lemma} \label{l.cadenas}
Let $f\in \mathcal{A}_0(\Sigma)$ and $\mathcal{C}$ be a maximal chain of basic pieces of $f$. Then exactly one of the following holds.
\begin{enumerate}
\item[A)] $0\in\rho_{\mathcal{C}}(f)$.
\item[B)] There is an essential closed curve $\alpha$ in $\Sigma$ with $\rho_{\mathcal{C}}(f)\subset \langle[\alpha]\rangle - \{0\}$.
\end{enumerate}

\end{lemma}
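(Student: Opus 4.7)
The plan is to iterate Lemma \ref{l.agujas} over the consecutive pairs of $\mathcal{C}$ and reconcile the resulting one-dimensional subspaces. Write $\mathcal{C}=\{\Lambda_1\prec\cdots\prec\Lambda_N\}$. The edge case $N=1$ is immediate: maximality of $\mathcal{C}$ forces $\Lambda_1$ to be incomparable with every other basic piece, and by the connectedness of the graph of basic pieces (recalled in the discussion preceding Remark \ref{r.piecespower}) this yields $\Omega(f)=\Lambda_1$. Then the only maximal chain is $\mathcal{C}$ itself, so $\rho_{\mathcal{C}}(f)=\rho(f)$, which contains $0$ by Corollary \ref{c.proprotset1}, giving case (A).

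For $N\geq 2$, maximality of $\mathcal{C}$ implies there is no basic piece $\Lambda$ with $\Lambda_i\prec\Lambda\prec\Lambda_{i+1}$, so Lemma \ref{l.agujas} applies to each consecutive pair $(\Lambda_i,\Lambda_{i+1})$. If some pair yields option (a), the zero-rotation periodic point it produces lies in $\Lambda_i\cup\Lambda_{i+1}\subseteq\mathcal{C}$, so $0\in\rho_{\mathcal{C}}(f)$ and (A) holds. Otherwise each pair furnishes a closed curve $\alpha_i$ with $\rho_{\Lambda_i}(f)\cup\rho_{\Lambda_{i+1}}(f)\subset L_i:=\langle[\alpha_i]\rangle$.

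The key step is then the gluing of the subspaces $L_i$. If some $L_i=\{0\}$ (equivalently $[\alpha_i]=0$ in $H_1$, which in particular covers the case of inessential $\alpha_i$ via Remark \ref{r.agujas}), then $\rho_{\Lambda_i}(f)=\{0\}$ and we fall back into case (A). Otherwise each $L_i$ is a nontrivial line, and for $1\leq i\leq N-2$ either $\rho_{\Lambda_{i+1}}(f)=\{0\}$ (case (A) again) or $\rho_{\Lambda_{i+1}}(f)$ contains a nonzero vector lying simultaneously in $L_i$ and $L_{i+1}$, forcing $L_i=L_{i+1}$. Hence all $L_i$ coincide with a common line $L=\langle[\alpha]\rangle$ with $[\alpha]\neq 0$ in $H_1$, so $\alpha$ is essential. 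The convex set $\rho_{\mathcal{C}}(f)$ lies in $L$, and either $0\in\rho_{\mathcal{C}}(f)$, giving (A), or $\rho_{\mathcal{C}}(f)\subset L\setminus\{0\}$, giving (B). Mutual exclusivity is then automatic from the statement of (B).

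The only real obstacle I foresee is careful bookkeeping in the case analysis: every way in which the alignment $L_i=L_{i+1}$ can fail must be traced back to (A). No further dynamical input beyond Lemma \ref{l.agujas} (with Remark \ref{r.agujas} to handle inessential curves) and the connectedness of the graph of basic pieces is required.
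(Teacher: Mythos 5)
Your proposal is correct and follows essentially the same approach as the paper's proof: apply Lemma \ref{l.agujas} to consecutive pairs of $\mathcal{C}$, rule out the zero-rotation cases, and then glue the one-dimensional subspaces $\langle[\alpha_i]\rangle$ using the fact that each $\rho_{\Lambda_{i+1}}(f)$ must lie in the intersection $\langle[\alpha_i]\rangle\cap\langle[\alpha_{i+1}]\rangle$. The only substantive addition is your explicit treatment of the length-one maximal chain via connectedness of $\mathcal{G}_f$, which the paper's proof leaves implicit (there, if $k=1$ there are no consecutive pairs, but as you note connectedness forces $\Omega(f)=\Lambda_1$ and hence $0\in\rho_{\mathcal{C}}(f)$ by Corollary \ref{c.proprotset1}); this is a small but genuine tightening of the exposition.
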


We observe that in case (B) the curve $\alpha$ must be not only essential, but non-trivial in homology.

\begin{proof}

Write $\mathcal{C} = \{\Lambda_1 \prec \cdots \prec \Lambda_k \}$. If some $\Lambda_i$ has $0\in\rho_{\Lambda_i}(f)$ then $\mathcal{C}$ satisfies (A). We assume that is not the case, in particular that no $\Lambda_i$ contains a zero-rotation periodic point of $f$. By the maximality of $\mathcal{C}$ we can apply Lemma \ref{l.agujas} to $\Lambda_i\prec\Lambda_{i+1}$ for $i=1,\ldots,k-1$, to conclude that there are closed curves $\alpha_i$ in $\Sigma$ such that $\rho_{\Lambda_i}(f)$ and $\rho_{\Lambda_{i+1}}(f)$ are contained in $\langle [\alpha_i]\rangle-\{0\}$. Under our assumptions it is clear that $[\alpha_i]\neq 0$, thus $\alpha_i$ are essential and $\langle [\alpha_i]\rangle$ are one dimensional. For $i=1,\ldots,k-2$ we have that $\left(\langle [\alpha_i]\rangle \cap \langle [\alpha_{i+1}]\rangle\right) -\{0\}$ contains $\rho_{\Lambda_{i+1}}(f)$, thus it is non empty. So the subspaces $\langle [\alpha_i]\rangle$ are all the same, and we can choose $\alpha_1=\cdots=\alpha_{k-1}$ and call this curve $\alpha$. Then we have  $\rho_{\mathcal{C}}(f)\subset \langle[\alpha]\rangle$. Depending on whether $\rho_{\mathcal{C}}(f)$ contains $0$ or not we are in case (A) or (B).

\end{proof}

\begin{rem} If $\mathcal{C}$ is in case (B) of Lemma \ref{l.cadenas} with a curve $\alpha$ that is not simple, then $\rho_{\mathcal{C}}(f)$ is a single point. This follows from the proof of Lemma \ref{l.cadenas} and Remark \ref{r.agujas}.
\end{rem}

\begin{theorem} \label{t.starshape}
If $f\in\mathcal{A}_0(\Sigma_g)$ then $\rho(f)$ is star-shaped about $0$.
\end{theorem}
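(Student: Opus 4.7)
The plan is as follows. By Theorem \ref{t.rhorhochain} and Proposition \ref{p.convchain}, we have
$$\rho(f) = \bigcup_{\mathcal{C}} \rho_{\mathcal{C}}(f),$$
a finite union of convex rational polyhedra, where $\mathcal{C}$ ranges over the maximal chains of basic pieces of $f$. Thus to establish the star-shape property it suffices to show that for every such maximal chain $\mathcal{C}$ and every $v \in \rho_{\mathcal{C}}(f)$, the whole segment $[0, v]$ lies in $\rho(f)$.

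First I would apply the dichotomy from Lemma \ref{l.cadenas}. In case (A), with $0 \in \rho_{\mathcal{C}}(f)$, convexity of $\rho_{\mathcal{C}}(f)$ immediately yields $[0, v] \subset \rho_{\mathcal{C}}(f) \subset \rho(f)$. The substance of the argument lies in case (B), where $\rho_{\mathcal{C}}(f) \subset \langle [\alpha] \rangle \setminus \{0\}$ for some essential closed curve $\alpha$. Here $\rho_{\mathcal{C}}(f)$ is a segment along the line through $0$ spanned by $[\alpha]$ but disjoint from $0$, so convexity of $\rho_{\mathcal{C}}(f)$ alone is not enough.

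For case (B), my plan is to exhibit another maximal chain $\mathcal{C}'$ whose rotation set covers the segment from $0$ to some vertex $v_j$ with $v \in [0, v_j]$. Recall that by Lefschetz index theory in genus $g > 1$ there is an irrotational fixed point $p_0$ of $f$; let $\Lambda_0$ be the basic piece containing $p_0$, so $0 \in \rho_{\Lambda_0}(f)$. Choose $\Lambda_j \in \mathcal{C}$ together with a periodic orbit realizing an extremal vertex $v_j$ of $\rho_{\mathcal{C}}(f)$ such that $v \in [0, v_j]$. If $\Lambda_0$ is comparable with $\Lambda_j$ in the heteroclinic order on $\mathcal{G}_f$, then $\{\Lambda_0, \Lambda_j\}$ extends to a maximal chain $\mathcal{C}'$ and
$$\rho_{\mathcal{C}'}(f) \supseteq \textrm{conv}(\{0, v_j\}) = [0, v_j] \ni v,$$
so $[0, v] \subset \rho(f)$ as desired.

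The hard part will be verifying this comparability. I would use Lemma \ref{l.construquasi} to construct a quasi-invariant surface $S$ with $S \cap \Omega(f) = \Lambda_1 \cup \cdots \cup \Lambda_k$ (the pieces of $\mathcal{C}$). Since none of the $\rho_{\Lambda_i}(f)$ contains $0$, Lemma \ref{l.filling} forbids curved components of $S$: any such component would produce a zero-rotation fixed point in $S \cap \Omega(f)$, contradicting case (B). Hence $S$ lies in an annular neighborhood of $\alpha$ and $\Lambda_0 \subset \Sigma \setminus \textrm{Fill}(S)$. The remaining delicate step is to exploit the connectedness of the directed graph $\mathcal{G}_f$ together with the topological constraints imposed by annular quasi-invariance, in order to produce a heteroclinic connection between $\Lambda_0$ and some $\Lambda_j \in \mathcal{C}$ and thereby the chain $\mathcal{C}'$.
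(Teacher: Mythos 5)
Your reduction via Theorem \ref{t.rhorhochain} and Lemma \ref{l.cadenas} matches the paper's opening moves, and case (A) is handled correctly. But in case (B) your plan hinges on a step that does not go through: you need the piece $\Lambda_0$ carrying a zero-rotation fixed point to be $\prec$-comparable with some $\Lambda_j\in\mathcal{C}$, so that the two sit in a common maximal chain $\mathcal{C}'$. The heteroclinic relation $\prec$ is only a partial order, and the connectedness of $\mathcal{G}_f$ (as an undirected graph, i.e.\ the single-class equivalence relation generated by $\prec$) gives no such comparability: a path in $\mathcal{G}_f$ from $\Lambda_0$ to $\Lambda_j$ may reverse direction and pass through pieces incomparable with both endpoints. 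Your auxiliary construction does not fill this hole either: Lemma \ref{l.construquasi} and Lemma \ref{l.filling} can tell you that a quasi-invariant surface isolating the pieces of $\mathcal{C}$ has no curved components, but knowing that surface is annular yields no heteroclinic link between $\Lambda_0$ and $\mathcal{C}$. The sentence promising to "exploit the connectedness of $\mathcal{G}_f$ together with the topological constraints" is exactly where the argument would have to happen, and as stated the needed comparability is false in general.

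The paper avoids this entirely. Instead of forcing $0$ and $v$ into one chain, it takes a sequence of maximal chains $\mathcal{C}=\mathcal{C}_1,\ldots,\mathcal{C}_k=\mathcal{C}'$ in which consecutive chains merely \emph{share a basic piece} (this uses only connectedness of $\mathcal{G}_f$, never comparability of distant pieces), arranged so that $\mathcal{C}_1,\ldots,\mathcal{C}_{k-1}$ are all in case (B) and $\mathcal{C}_k$ contains a zero-rotation piece. The crucial observation is then linear-algebraic: since consecutive rotation sets intersect, all the lines $\langle[\alpha_i]\rangle$ coincide, so $I=\bigcup_{i<k}\rho_{\mathcal{C}_i}(f)$ is a single segment on one line, and $\rho_{\mathcal{C}_k}(f)$ is a convex set containing $0$ that meets $I$. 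Hence $I\cup\rho_{\mathcal{C}_k}(f)$ contains $\mathrm{conv}(I\cup\{0\})\supset[0,v]$. To repair your proof you would need to replace the sought heteroclinic connection $\Lambda_0\prec\Lambda_j$ (or $\Lambda_j\prec\Lambda_0$) by this chaining argument, which requires no comparability at all.
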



\begin{proof}
By Theorem \ref{t.rhorhochain} we write $\rho(f) = \bigcup_{\mathcal{C}}\rho_{\mathcal{C}}(f)$ where $\mathcal{C}$ ranges over the maximal chains of basic pieces of $f$. We take an arbitrary $v\in\rho(f)$, aiming to prove that $[0,v]\subset \rho(f)$, and let $\mathcal{C}$ be a maximal chain so that $v\in\rho_{\mathcal{C}}(f)$. By Lefschetz's theorem $f$ has a zero-rotation fixed point, so we can find a maximal chain $\mathcal{C}'$ that has a basic piece containing said zero-rotation fixed point. In particular $\rho_{\mathcal{C}'}(f)$ is a convex set containing $0$.

Recall that the equivalence relation generated by $\prec$ on the set of basic pieces of $f$ has only one class ($\mathcal{G}_f$ is connected). So we can find maximal chains $\mathcal{C}=\mathcal{C}_1,\ldots,\mathcal{C}_k=\mathcal{C}'$ so that $\mathcal{C}_i$ intersects $\mathcal{C}_{i+1}$ in at least one basic piece for all $i=1,\ldots,k-1$. We may assume that for $i<k$ the $\mathcal{C}_i$ are in case (B) of Lemma \ref{l.cadenas}, otherwise we redefine $\mathcal{C}'$ to be the first $\mathcal{C}_i$ in case (A).

If $k=1$ then $\rho_{\mathcal{C}}(f)$ is a convex set containing $0$ and $v$, so we are done. Otherwise, for $i<k$ we have that $\rho_{\mathcal{C}_i}(f)\subset\langle[\alpha_i]\rangle-\{0\}$ for some essential closed curve $\alpha_i$. Since $\mathcal{C}_i$ and $\mathcal{C}_{i+1}$ have a basic piece in common, $\rho_{\mathcal{C}_i}(f)$ intersects $\rho_{\mathcal{C}_{i+1}}(f)$ for all $i=1,\ldots,k-1$. This implies that the $\langle[\alpha_i]\rangle$ for $i<k$ are all the same subspace of dimension $1$. The sets $\rho_{\mathcal{C}_i}(f)$ for $i<k$ are segments in the line $\langle[\alpha_1]\rangle$, each intersecting the next, thus $I=\bigcup_{i=1}^{k-1} \rho_{\mathcal{C}_i}(f)$ is a segment in $\langle[\alpha_1]\rangle$. Now $\rho_{\mathcal{C}_k}(f)$ is a convex set that contains $0$ and intersects $I$, thus $\bigcup_{i=1}^{k} \rho_{\mathcal{C}_i}(f) = I\cup \rho_{\mathcal{C}_k}(f) $ contains $\text{conv}(I\cup\{0\})\subset \langle[\alpha_1]\rangle$. In particular $\bigcup_{i=1}^{k} \rho_{\mathcal{C}_i}(f)$ contains $[0,v]$.
\end{proof}

\section{Bounding the complexity of the rotation set: trivial pieces}\label{s.triviales}

Given $f\in \mathcal{A}_0(\Sigma)$ we consider a Conley decomposition of $\Sigma$ as in section \ref{s.conley}, and for each basic piece $\Lambda$ of $f$ we denote by $S_\Lambda$ the Conley surface associated to $\Lambda$ in this decomposition. Recall that a basic piece $\Lambda$ is called trivial if every component of $S_{\Lambda}$ is trivial. Equivalently, if $\text{Fill}(S_\Lambda)$ is a disjoint union of finitely many disks.


The aim of this section is to prove the following result, that is the first step towards achieving a topological bound for the number of convex sets needed to express $\rho(f)$ for $f\in\mathcal{A}_0(\Sigma)$.

\begin{proposition} \label{sacar.triviales} Let $\mathcal{C}$ be a maximal chain for $f\in\mathcal{A}_0(\Sigma)$. Then there is a chain $\mathcal{C}^*$ that contains no trivial pieces such that $\rho_{\mathcal{C}}(f)\subset \rho_{\mathcal{C}^*}(f)$.
\end{proposition}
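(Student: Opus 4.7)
The plan is to proceed by induction on the number of trivial basic pieces in $\mathcal{C}$, with the base case (zero trivial pieces) being immediate by taking $\mathcal{C}^*=\mathcal{C}$. For the inductive step, it suffices to drop a single trivial piece $\Lambda'$ from $\mathcal{C}$ without shrinking $\rho_{\mathcal{C}}(f)$.

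The central tool is the corollary stated right after Remark \ref{r.discos}: if $\Lambda$ is a non-trivial basic piece with Conley surface $S_{\Lambda}$, and $\Lambda'$ is a trivial basic piece contained in $\text{Fill}(S_{\Lambda})$, then $\rho_{\Lambda'}(f)\subseteq\rho_{\Lambda}(f)$. Combined with $\rho_{\mathcal{C}}(f)=\textrm{conv}\bigl(\bigcup_{\Lambda\in\mathcal{C}}\rho_{\Lambda}(f)\bigr)$ from Theorem \ref{t.rhorhochain}, the inductive step reduces to exhibiting, for a given trivial $\Lambda'\in\mathcal{C}$, a non-trivial $\Lambda\in\mathcal{C}$ together with a choice of Conley decomposition for which $\Lambda'\subset\text{Fill}(S_{\Lambda})$. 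Then $\mathcal{C}\setminus\{\Lambda'\}$ is still a chain and its rotation set agrees with $\rho_{\mathcal{C}}(f)$, completing the inductive step.

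To produce such a pair $(\Lambda,S_{\Lambda})$, I would pick $\Lambda\in\mathcal{C}$ non-trivial and adjacent to $\Lambda'$ in the chain order. The trivial Conley surface $S_{\Lambda'}$ is a disjoint union of disks which, by Remark \ref{r.discos} applied to the dynamics on $\Sigma$, can be classified as attracting or repelling with respect to the Lyapunov function, depending on whether $\Lambda\prec\Lambda'$ or $\Lambda'\prec\Lambda$. The idea is then to modify the Conley decomposition, by moving level sets of $\mathcal{L}$ between $S_{\Lambda}$ and $S_{\Lambda'}$, so that each disk of $S_{\Lambda'}$ becomes a disk component of $\Sigma\setminus S_{\Lambda}$ and is hence captured in $\text{Fill}(S_{\Lambda})$. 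A more direct alternative is to invoke Lemma \ref{l.construquasi} to construct a non-trivial quasi-invariant surface that plays the role of $S_{\Lambda}$ and has $\Lambda'$ sitting in a disk of its fill; the non-triviality is ensured by the fact that the construction contains $\Lambda$ whose own Conley surface is non-trivial.

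The main obstacle is to guarantee both the topological consistency of this absorption (the disks of $S_{\Lambda'}$ must lie on the correct attracting or repelling side of $S_{\Lambda}$) and the existence of a non-trivial piece in $\mathcal{C}$ anchoring the argument. The degenerate scenario of a maximal chain consisting exclusively of trivial pieces, if it can arise at all, is handled separately: Lefschetz's theorem together with Lemma \ref{l.filling}(1) guarantees that zero-rotation fixed points are contained in non-trivial basic pieces, and the connectedness of the graph $\mathcal{G}_f$ allows us to select a non-trivial chain $\mathcal{C}^*$ whose rotation set covers that of $\mathcal{C}$, using the star-shaped structure of $\rho(f)$ from Theorem \ref{t.starshape}.
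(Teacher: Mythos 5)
Your induction plan breaks down at the heart of the inductive step. You want, for each trivial $\Lambda'\in\mathcal{C}$, a non-trivial $\Lambda\in\mathcal{C}$ with $\Lambda'\subset\textrm{Fill}(S_{\Lambda})$ (after modifying the Conley decomposition), so that the corollary after Remark \ref{r.discos} gives $\rho_{\Lambda'}(f)\subset\rho_{\Lambda}(f)$ and $\Lambda'$ can be dropped. But that corollary only applies to the specific non-trivial piece whose Conley surface fill actually contains the disk cluster $\mathcal{D}$ housing $\Lambda'$, and that piece is determined by topology, not by the chain: there is no reason it belongs to $\mathcal{C}$ or is even heteroclinically comparable to the non-trivial part of $\mathcal{C}$. ``Moving level sets of $\mathcal{L}$'' cannot fix this, since the disk $D\supset\Lambda'$ might sit in the filled complement of a Conley surface that is nowhere near $S_{\Lambda}$ for any $\Lambda\in\mathcal{C}$. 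So the dichotomy you need — either $\Lambda'$ is absorbed by something already in $\mathcal{C}$, or we can append an absorbing piece — is exactly what must be proved, and your proposal takes it for granted.

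What the paper actually does is different in structure. It first splits $\mathcal{C}$ into an initial completely-trivial segment $\mathcal{C}_0$, the non-trivial middle $\mathcal{C}_{NT}$, and a final completely-trivial segment $\mathcal{C}_1$ (Lemma \ref{l.trivialsegment}). Each trivial segment has a single rotation vector $v_j$ by Lemma \ref{l.qisurface}(3), and the paper replaces $\mathcal{C}_j$ with one non-trivial piece $\Lambda'_j$ chosen as a \emph{minimal} element of a carefully crafted family $\mathcal{F}$ of pieces satisfying $\Lambda\preceq\Lambda_1$ (so heteroclinic comparability is built in by fiat), and then proves $v_j\in\rho_{\Lambda'_j}(f)$ by a substantial quasi-invariant surface argument using Lemma \ref{l.construquasi}. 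The corollary you cite gives $v_j\in\rho_{\Lambda''}(f)$ for the ``wrong'' piece $\Lambda''$; pushing the rotation vector onto the minimal piece in $\mathcal{F}$ is the non-trivial content of the claim, and your proposal does not replace it.

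Two smaller issues: (i) your handling of the case where $\mathcal{C}$ contains only trivial pieces is not correct — neither Lefschetz nor connectedness of $\mathcal{G}_f$ nor star-shapedness of $\rho(f)$ yields a non-trivial chain covering $\rho_{\mathcal{C}}(f)$; the paper devotes all of Lemma \ref{l.nctrivial} to precisely this case, again via Lemma \ref{l.construquasi}; and (ii) Lemma \ref{l.filling}(1) produces a zero-rotation fixed point in a curved quasi-invariant surface, it does not say zero-rotation fixed points always lie in non-trivial basic pieces (a contractible attracting sink is a counterexample).
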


Proposition \ref{sacar.triviales} will allow us to write $$\rho(f) = \bigcup_{\mathcal{C}}\rho_{\mathcal{C}}(f)$$ with $\mathcal{C}$ ranging over the chains of basic pieces of $f$ that contain no trivial pieces. Of course, we can restrict this union by asking for $\mathcal{C}$ to be maximal among the chains with no trivial pieces, as a way of avoiding the obvious repetition. In short, this says we can forget about trivial pieces when computing $\rho(f)$.

We recall that if $\Lambda$ is a trivial piece, then $\mathcal{D} = \text{Fill}(S_\Lambda) $ is a disjoint union of disks satisfying that $f(\mathcal{D})$  either contains or is contained in $\mathcal{D}$. Moreover, if we write $\mathcal{D}=D_1\cup\cdots\cup D_k$ as a disjoint union of disks, then $f(D_j)$ either contains or is contained in $D_{\sigma(j)}$, for $\sigma$ a cyclic permutation of $\{1,\ldots,k\}$.

\begin{definition} Let $\mathcal{C}$ be a chain of basic pieces of $f\in\mathcal{A}_0(\Sigma)$. We say that $\mathcal{C}$ is {\em completely trivial} if $$\mathcal{D}:=\text{Fill}\left(\bigcup_{\Lambda\in\mathcal{C}}S_\Lambda\right)$$ is a disjoint union of finitely many disks, so that $f(\mathcal{D})$ either contains or is contained in $\mathcal{D}$.
\end{definition}
If $\mathcal{C}$ is a completely trivial chain, then
every basic piece of $\mathcal{C}$ meets every connected component of $\mathcal{D}$:
Otherwise, we can decompose $\mathcal{D}$ as a union of two disjoint non-empty open sets $U$, $V$ that are both either forward or backward invariant (according to whether $\mathcal{D}$ is forward or backward invariant). Then each basic piece in $\mathcal{C}$ is contained in either $U$ or $V$. On the other hand, by definition every connected component of $\mathcal{D}$ meets some basic piece of $\mathcal{C}$. But this is absurd, for basic pieces contained in $U$ cannot precede basic pieces in $V$ and the other way around, which contradicts the existence of the chain $\mathcal{C}$.

Thus Lemma \ref{l.qisurface} applied to a suitable power of $f$ gives that
$\rho_{\mathcal{C}}(f)$ is a single point whenever $\mathcal{C}$ is completely trivial.
This fact will be relevant in what follows. 

\begin{lemma} \label{l.ctrivial} Let $\mathcal{C}$ be a completely trivial chain of $f\in\mathcal{A}_0(\Sigma)$. Then there exists $\Lambda$ a non-trivial basic piece with $\rho_{\mathcal{C}}(f)\subset \rho_{\Lambda}(f)$.

\end{lemma}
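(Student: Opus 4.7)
The plan is to exhibit the non-trivial piece $\Lambda$ via a Lefschetz--Nielsen index count, in the same spirit as the proof of Lemma \ref{l.filling}. By the completely trivial hypothesis and the discussion preceding the statement, $\rho_{\mathcal{C}}(f)=\{v\}$ is a single rational point; more precisely $v=T/k$, where $k$ is the number of disks composing $\mathcal{D}$ (cyclically permuted by $f$) and $T\in H_1(\Sigma,\mathbb{Z})$ is the displacement witnessed in the abelian cover via $\hat{f}^k(\hat{D}_j)\subseteq \hat{D}_j+T$. The target is then to produce a non-trivial basic piece $\Lambda$ with $v\in\rho_{\Lambda}(f)$, and up to replacing $f$ by $f^{-1}$ we may assume $f(\mathcal{D})\subseteq\textrm{int}(\mathcal{D})$.

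First I would pass to a sufficiently divisible iterate $f^N$, with $N$ a common multiple of the disk-cycle periods of $\mathcal{C}$ and of every other trivial basic piece whose (single) rotation vector equals $v$. For any such trivial piece $\Lambda'$ with Conley fill $E_1\sqcup\cdots\sqcup E_l$, one then has $f^N(E_i)\subseteq\textrm{int}(E_i)$ and, in the abelian cover, $\hat{f}^N(\hat{E}_i)\subseteq \hat{E}_i+Nv$ (by Lemma \ref{l.qisurface} item (3) the rotation is constant inside $\Lambda'$). Lefschetz's theorem applied on each disk yields that the sum of local indices of fixed points of $f^N$ lying in $E_i$ equals $\chi(E_i)=1$, and all such fixed points belong to the Nielsen class $Nv$. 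Summing over all trivial pieces of rotation vector $v$---where $\mathcal{C}$ itself already contributes $+k$---produces a strictly positive integer $P\geq k$ as the total contribution of trivial pieces to the Nielsen class $Nv$ of $f^N$.

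Next I would invoke the Lefschetz--Nielsen formula on $\Sigma$, in the form used in equations \ref{ecu1}--\ref{ecu3} of Lemma \ref{l.filling}, applied to $f^N$ (which is still isotopic to the identity): the sum of local indices of all fixed points of $f^N$ with rotation vector $v$ equals $\chi(\Sigma)=2-2g<0$ if $v=0$ and equals $0$ if $v\neq 0$. In either case this global total is $\leq 0$, while the trivial pieces' contribution is the strictly positive $P$; hence the contribution from non-trivial basic pieces carrying rotation vector $v$ must be strictly negative. In particular at least one non-trivial basic piece $\Lambda$ must contain a fixed point of $f^N$ of rotation vector $v$, which yields $\{v\}=\rho_{\mathcal{C}}(f)\subseteq\rho_{\Lambda}(f)$.

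The main obstacle I anticipate is the careful bookkeeping of Nielsen classes under iteration and across the spectral decomposition. Concretely one must verify that (a) periodic points of $f$ lying in a trivial piece of rotation vector $v$ all contribute to the single Nielsen class $Nv$ of $f^N$ rather than splitting among several classes, and (b) a non-trivial basic piece of $f^N$ realizing the negative index contribution sits inside some non-trivial basic piece of $f$, so that the conclusion concerns $f$ itself and not merely its iterate. Both points should follow routinely from Lemma \ref{l.qisurface}, the spectral decomposition (Theorem \ref{t.spedect}) and Remark \ref{r.piecespower}, but these are the places where the argument has to be laid out with the most care.
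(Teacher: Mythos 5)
Your approach is a valid alternative to the paper's, but it takes a genuinely different route. The paper localizes the index argument: using Remark~\ref{r.discos}, it picks the \emph{first} non-trivial Conley surface $S$ whose fill strictly contains $\mathcal{D}$, and then applies Lemma~\ref{l.filling} point~(2) to that $S$ and the disk $D_0$ of $\text{cl}[\text{Fill}(S)\setminus S]$ meeting $\mathcal{D}$. This is constructive, handing you the specific non-trivial piece $\Lambda = S\cap\Omega(f)$ with a fixed point of $f^k$ of rotation $kv$. Your argument runs a \emph{global} Lefschetz--Nielsen count on all of $\Sigma$ and only yields existence. Both arguments rest on the same index-theoretic mechanism; the paper's is tidier because the relevant cover of $S$ has abelian covering group, so the ``Nielsen class = rotation class'' identification is automatic there, whereas globally you have to observe that summing over \emph{all} non-contractible Nielsen classes with a given rotation vector still gives zero.

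There is one genuine flaw in your accounting of $P$ that goes beyond the bookkeeping you flag. Conley fills of distinct trivial pieces can be nested: if $\text{Fill}(S_{\Lambda''})\subsetneq\text{Fill}(S_{\Lambda'})$, then your sum counts the fixed points inside the inner disks once for $\Lambda''$ and once again for $\Lambda'$, so $P=\sum_{\Lambda'}\chi(\text{Fill}(S_{\Lambda'}))$ is \emph{not} the sum of local indices of fixed points of $f^N$ lying in trivial pieces of rotation $v$, and the claim $P\geq k$ is false in general (all $k$ disks of $\mathcal{D}$ could sit inside a single outer trivial disk, in which case the honest trivial contribution is $1$, not $k$). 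The repair is to pass to the \emph{maximal} Conley fills $\mathcal{D}^*$: these are pairwise disjoint, they contain every trivial piece of rotation $v$ (by Lemma~\ref{l.qisurface}~(3) nested trivial pieces share the rotation vector), they contain no non-trivial piece (no annular or curved surface fits in a disk), and $\chi(\mathcal{D}^*)=\#\text{components}\geq 1>0$. With that correction your global count closes, since the relevant global total is $\chi(\Sigma)$ if $v=0$ and $0$ if $v\neq 0$, both $\leq 0$. You should state the count that way rather than as a sum over all trivial pieces.
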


\begin{proof} Let $v\in H_1(\Sigma)$ be the point with $\rho_{\mathcal{C}}(f)=\{v\}$. Denote $\mathcal{D}=\text{Fill}(\bigcup_{\Lambda\in\mathcal{C}}S_\Lambda)$, which by hypothesis is a disjoint union of disks. Recalling Remark \ref{r.discos} for all the non-trivial surfaces in the Conley decomposition, we see that there exists some non-trivial Conley surface $S$ so that $\mathcal{D}\subseteq \text{cl}[\text{Fill}(S)\setminus S]$. Let $\Lambda$ be the (non-trivial) basic piece associated to $S$. By item (1) of Lemma \ref{l.conleyesquasi} there is  $k\in\N$ so that $S$ is quasi-invariant under $f^k$. Let $D_0$ be a connected component of $\text{cl}[\text{Fill}(S)\setminus S]$ that meets $\mathcal{D}$.
Applying Lemma \ref{l.filling} we get $x_0\in S$ a fixed point of $f^k$ so that $\rho(f^k,x_0)$ equals $\rho_{\Lambda'}(f^k)$ for every basic piece $\Lambda'$ of $f^k$ that is contained in $D_0$. Observe that for every such piece $\Lambda'$ we have $\rho_{\Lambda'}(f^k)=\{k\cdot v\}$, thus $\rho(f^k,x_0)=k\cdot v$, and also note that $x_0\in S\cap\Omega(f)=\Lambda$. Therefore $x_0\in \Lambda$ is a periodic point of $f$ whose rotation set equals $\rho_\mathcal{C}(f)$, so $\Lambda$ is the basic piece that satisfies the statement.


\end{proof}

Clearly Lemma \ref{l.ctrivial} proves the simplest case of Proposition \ref{sacar.triviales}, that is when the maximal chain $\mathcal{C}$ is completely trivial. Next we shall study the structure of a chain in terms of the trivial pieces it contains.

\begin{lemma} \label{l.trivialsegment} Let $\mathcal{C} =\{ \Lambda_0\prec \cdots \prec \Lambda_k \}$ be a chain of $f\in\mathcal{A}_0(\Sigma)$. If $\Lambda_j$ is a trivial piece then either the initial segment $\Lambda_0\prec \cdots \prec \Lambda_j$ or the final segment $\Lambda_j\prec \cdots \prec \Lambda_k$ is completely trivial (or both).
\end{lemma}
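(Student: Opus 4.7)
We may assume $\Lambda_j$ is attractor-type: $\mathcal{D}_j:=\text{Fill}(S_{\Lambda_j})$ is a disjoint union of disks with $f(\mathcal{D}_j)\subseteq \mathcal{D}_j$. The repeller-type case is handled symmetrically by applying the same argument to $f^{-1}$ and the reversed chain, which yields that the initial segment is completely trivial. So I aim to show the final segment $\{\Lambda_j\prec\cdots\prec\Lambda_k\}$ is completely trivial.

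First I show $\Lambda_i\subseteq \mathcal{D}_j$ for every $i\geq j$. For $i>j$, pick $z\in W^u(x)\cap W^s(y)$ with $x\in\Lambda_j$, $y\in\Lambda_i$. The backward iterates $f^{-n}(z)\to x\in\mathcal{D}_j$ lie in $\mathcal{D}_j$ for large $n$, so forward-invariance of $\mathcal{D}_j$ gives $z=f^n(f^{-n}(z))\in\mathcal{D}_j$. The forward orbit of $z$ therefore stays in the closed set $\mathcal{D}_j$ and accumulates on $\Lambda_i$; combined with the cyclic permutation of the components of $\Lambda_i$ by $f$, this forces $\Lambda_i\subseteq \mathcal{D}_j$.

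Next, using the Lyapunov function $\mathcal{L}$ underlying the Conley decomposition, I show $S_{\Lambda_i}\subseteq \mathcal{D}_j$ for every $i\geq j$. Since $\mathcal{L}(\Lambda_i)<\mathcal{L}(\Lambda_j)$ by Conley's theorem, the Lyapunov interval $[a_i,b_i]$ defining $S_{\Lambda_i}$ lies strictly below the level containing $\partial \mathcal{D}_j\subseteq \partial S_{\Lambda_j}$. Hence components of $\mathcal{L}^{-1}([a_i,b_i])$ meeting $\Lambda_i\subseteq \mathcal{D}_j$ cannot reach $\partial \mathcal{D}_j$ and sit in $\text{int}(\mathcal{D}_j)$; the W-filling and any attached wandering annulus also stay in $\mathcal{D}_j$, because every complementary disk or annulus of $\Sigma$ bounded by contractible loops in $\mathcal{D}_j$ must itself lie in the corresponding disk component of $\mathcal{D}_j$. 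Setting $A:=\bigcup_{i\geq j}S_{\Lambda_i}$ and $\mathcal{D}^+:=\text{Fill}(A)$, the same boundary-loop argument gives $\mathcal{D}^+\subseteq \mathcal{D}_j$, and since $\mathcal{D}^+$ is a filled subsurface of a disjoint union of disks in a surface of genus $g>1$, each of its connected components is itself a disk.

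The remaining step, $f(\mathcal{D}^+)\subseteq \mathcal{D}^+$, is the main obstacle. The idea is that each connected component of $\mathcal{D}^+$ is the minimal filled disk in $\mathcal{D}_j$ enclosing a full $f$-orbit of components of the chain pieces $\Lambda_i$, $i\geq j$. Since $f$ cyclically permutes the components of each $\Lambda_i$ and carries the disks of $\mathcal{D}_j$ to disks of $\mathcal{D}_j$ in a compatible way, it sends each such minimal enclosing disk to another of the same type, yielding $f(\mathcal{D}^+)\subseteq \mathcal{D}^+$. Verifying that the $\text{Fill}$ operation is compatible with the cyclic action of $f$ on the disk components of $\mathcal{D}_j$ requires careful bookkeeping of which disk components contain which orbit components of the chain pieces, and is the main technical hurdle in turning this sketch into a complete proof.
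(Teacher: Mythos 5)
Your overall architecture matches the paper's: you split into the attracting case $f(\mathcal{D}_j)\subseteq\mathcal{D}_j$ and the repelling case, and in the attracting case you show that all the $\Lambda_i$ with $i\geq j$ live inside $\mathcal{D}_j$ and then try to verify that the final segment is completely trivial. The paper gets the containment in one line by observing that forward invariance of $\mathcal{D}_j$ forces $\Lambda_j^u\subseteq\mathcal{D}_j$, and $\Lambda_j^u$ already contains every basic piece that follows $\Lambda_j$ (Proposition~\ref{p.propfaxa}); your orbit argument via a heteroclinic point $z$ is correct but reproves this standard fact by hand.

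The genuine gap is the one you flag yourself. You never establish that $\mathcal{D}^+:=\text{Fill}\bigl(\bigcup_{i\geq j}S_{\Lambda_i}\bigr)$ is a disjoint union of disks, nor that $f(\mathcal{D}^+)$ is contained in (or contains) $\mathcal{D}^+$; both are required by the definition of \emph{completely trivial}, and the sketch about ``minimal enclosing disks being compatibly permuted'' is not a proof. Note in particular that $\text{Fill}$ is not monotone under inclusion, so from $\bigcup_{i\geq j}S_{\Lambda_i}\supseteq S_{\Lambda_j}$ you cannot immediately conclude $\mathcal{D}^+\supseteq\mathcal{D}_j$, and if $\mathcal{D}^+\subsetneq\mathcal{D}_j$ its components sit inside disks but need not themselves be disks a priori (a filled subsurface of a disk can be an annulus if other components of $\mathcal{D}^+$ sit in its holes). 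Likewise, the individual $\text{Fill}(S_{\Lambda_i})$ are each forward- or backward-invariant, but possibly in opposite directions, so invariance of $\mathcal{D}^+$ is not inherited termwise. This is precisely the bookkeeping you defer, and it is the content of the lemma, not a technicality: a complete argument has to show either that $\mathcal{D}^+$ coincides with $\mathcal{D}_j$, or give a direct reason why the disks of $\mathcal{D}^+$ are permuted consistently by $f$ (the paper treats this verification as immediate once one knows $\Lambda_j^u\subseteq\mathcal{D}_j$, but it does deserve to be spelled out). As written, your proof proposal is therefore incomplete.

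A secondary point: your Lyapunov argument for $S_{\Lambda_i}\subseteq\mathcal{D}_j$ is shakier than it needs to be. The cleaner route is to use that the Conley surfaces in a Conley decomposition have pairwise disjoint interiors and that every component of $S_{\Lambda_i}$ meets $\Lambda_i$; since $\Lambda_i\subset\text{int}(\mathcal{D}_j)$ and a connected component of $S_{\Lambda_i}$ cannot cross $\partial\mathcal{D}_j\subset\partial S_{\Lambda_j}$, it must lie entirely in $\mathcal{D}_j$.
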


\begin{proof}
Let $\mathcal{D}=\text{Fill}(S_{\Lambda_j})$, which is a union of disks since $\Lambda_j$ is a trivial piece. If $f(\mathcal{D})\subset \mathcal{D}$ then $\Lambda_j^u$ is contained in $\mathcal{D}$. Recalling that $\Lambda_j^u$ contains every piece that follows $\Lambda_j$, we obtain that the final segment of $\mathcal{C}$ starting at $\Lambda_j$ is completely trivial. On the other hand, if $\mathcal{D}\subset f(\mathcal{D})$ we can give the same argument with $\Lambda_j^s$, and obtain that the initial segment is completely trivial.

\end{proof}

Lemma \ref{l.trivialsegment} allows us to write any chain $\mathcal{C}$ of $f\in\mathcal{A}_0(\Sigma)$ as a union of three segments, some of them possibly empty, in the following way: First we have an initial segment $\mathcal{C}_0$ that is completely trivial, all of its pieces contained in a disjoint union of disks $\mathcal{D}_0$ with $f(\mathcal{D}_0) \supset \mathcal{D}_0$. Next we have the middle segment $\mathcal{C}_{NT}$ containing the non-trivial pieces, and finally the completely trivial segment $\mathcal{C}_1$ with its pieces contained in a disjoint union of disks $\mathcal{D}_1$ such that $f(\mathcal{D}_1)\subset \mathcal{D}_1$.

When $\mathcal{C}$ contains non-trivial pieces this decomposition is unique. On the other hand, $\mathcal{C}$ is completely trivial if it can be written as either $\mathcal{C}=\mathcal{C}_0$ or $\mathcal{C}=\mathcal{C}_1$. We remark that a chain might contain only trivial pieces without being completely trivial. The next step towards Proposition \ref{sacar.triviales} is to exclude that case.

\begin{lemma} \label{l.nctrivial} Let $\mathcal{C}$ be a chain of $f\in\mathcal{A}_0(\Sigma)$. Then there is a chain $\mathcal{C}'$ of $f$ that contains a non-trivial piece and such that $\rho_{\mathcal{C}}(f)\subset \rho_{\mathcal{C}'}(f)$.

\end{lemma}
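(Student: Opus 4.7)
The plan is the following. First, if $\mathcal{C}$ already contains a non-trivial basic piece, then taking $\mathcal{C}'=\mathcal{C}$ trivially works, so we may assume $\mathcal{C}=\{\Lambda_0\prec\cdots\prec\Lambda_k\}$ consists entirely of trivial pieces. By Lemma \ref{l.trivialsegment} we then decompose $\mathcal{C}=\mathcal{C}_0\cup\mathcal{C}_1$, where $\mathcal{C}_0$ is a completely trivial initial segment whose disks $\mathcal{D}_0$ satisfy $f(\mathcal{D}_0)\supset\mathcal{D}_0$, and $\mathcal{C}_1$ is a completely trivial final segment with $f(\mathcal{D}_1)\subset\mathcal{D}_1$. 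If one of $\mathcal{C}_0,\mathcal{C}_1$ is empty, then $\mathcal{C}$ itself is completely trivial, and Lemma \ref{l.ctrivial} directly produces a non-trivial basic piece $\Lambda^{\ast}$ with $\rho_{\mathcal{C}}(f)\subseteq\rho_{\Lambda^{\ast}}(f)$; then $\mathcal{C}'=\{\Lambda^{\ast}\}$ is a one-element chain doing the job.

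In the remaining case both $\mathcal{C}_0$ and $\mathcal{C}_1$ are non-empty. By the observation following the definition of completely trivial chain, each $\rho_{\mathcal{C}_i}(f)=\{v_i\}$ is a single point, so $\rho_{\mathcal{C}}(f)=\mathrm{conv}\{v_0,v_1\}=[v_0,v_1]$. Apply Lemma \ref{l.ctrivial} separately to $\mathcal{C}_0$ and $\mathcal{C}_1$ to obtain non-trivial basic pieces $\Lambda^0,\Lambda^1$ with $v_i\in\rho_{\Lambda^i}(f)$, coming from non-trivial Conley surfaces $S^0,S^1$ for which $\mathcal{D}_0\subseteq\mathcal{D}^{0,-}$ (the backward-invariant disks in the filling of $S^0$) and $\mathcal{D}_1\subseteq\mathcal{D}^{1,+}$ (the forward-invariant disks in the filling of $S^1$). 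The goal is then to show that $\Lambda^0$ and $\Lambda^1$ can be put in a common chain; granted that, $\mathcal{C}'=\{\Lambda^0\preceq\Lambda^1\}$ satisfies $\rho_{\mathcal{C}'}(f)\supseteq\mathrm{conv}(\rho_{\Lambda^0}(f)\cup\rho_{\Lambda^1}(f))\supseteq\{v_0,v_1\}$, and by convexity contains the full segment $[v_0,v_1]=\rho_{\mathcal{C}}(f)$.

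To establish the comparability I would pick $\Lambda'\in\mathcal{C}_0$, $\Lambda''\in\mathcal{C}_1$ with $\Lambda'\prec\Lambda''$ in $\mathcal{C}$, and trace a heteroclinic orbit from $\Lambda'$ to $\Lambda''$. Because $\mathcal{D}^{0,-}$ is backward-invariant but not forward-invariant, the orbit must exit $\mathcal{D}^{0,-}$ across $\partial S^0$ into $S^0$; since $\mathcal{D}^{1,+}$ is forward-invariant, the orbit must subsequently enter $\mathcal{D}^{1,+}$ across $\partial S^1$. By Lemma \ref{l.entornos1} one can choose neighbourhoods $V_i$ of all basic pieces that record the heteroclinic order, and by Lemma \ref{l.saltos2} the orbit visits a monotone sequence of such neighbourhoods. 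Choosing $\Lambda^0,\Lambda^1$ as the non-trivial pieces whose Conley surfaces are the ``first'' non-trivial ones outside $\mathcal{D}_0$ and $\mathcal{D}_1$ respectively, the orbit is forced to pass through $V_{\Lambda^0}$ and then $V_{\Lambda^1}$, giving $\Lambda'\preceq\Lambda^0\preceq\Lambda^1\preceq\Lambda''$ by Lemma \ref{l.saltos2}(2), and in particular $\Lambda^0\preceq\Lambda^1$.

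The main obstacle is verifying that the heteroclinic orbit genuinely enters the local boxes $V_{\Lambda^0}$ and $V_{\Lambda^1}$, since a priori it could transit through the non-trivial Conley surfaces $S^0, S^1$ near their boundaries, avoiding the central basic pieces. I expect the right way to overcome this is to apply Lemma \ref{l.agujas} to the immediate heteroclinic pair in the chain: if there is no intermediate basic piece between some $\Lambda'\in\mathcal{C}_0$ and $\Lambda''\in\mathcal{C}_1$, then either $\Lambda'\cup\Lambda''$ contains a zero-rotation periodic point (forcing $v_0=0$ or $v_1=0$, which combined with Theorem \ref{t.starshape} lets us absorb $[v_0,v_1]$ into a single $\rho_{\Lambda^i}(f)$ that contains $0$ by Lemma \ref{l.filling}(1)), or $v_0$ and $v_1$ lie on a common line $\langle[\alpha]\rangle$, which permits the use of an annular quasi-invariant surface simultaneously enclosing both and supplies a single non-trivial piece whose rotation set covers both vectors. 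Either branch then yields the desired chain $\mathcal{C}'$.
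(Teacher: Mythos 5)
Your reduction to the case $\mathcal{C}=\mathcal{C}_0\cup\mathcal{C}_1$ (both non-empty, both completely trivial) is correct, as is the observation that $\rho_{\mathcal{C}}(f)=[v_0,v_1]$ and that $\Lambda^0,\Lambda^1$ given by Lemma \ref{l.ctrivial} satisfy $v_i\in\rho_{\Lambda^i}(f)$. But the proof breaks at the step where you need $\Lambda^0$ and $\Lambda^1$ to lie in a common chain, and the two proposed ways of getting there do not close the gap.

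First, tracing a heteroclinic orbit $x\in W^u(\Lambda')\cap W^s(\Lambda'')$ from $\mathcal{D}_0$ to $\mathcal{D}_1$ does not force the orbit into the isolating neighbourhoods $V_{\Lambda^0}$ or $V_{\Lambda^1}$. The orbit must cross $S^0$ and $S^1$ because of the attractor/repeller structure of the disk collections, but it can do so in a bounded number of iterates without approaching $\Lambda^0$ or $\Lambda^1$: a point can wander through a Conley surface without ever shadowing the basic piece inside it. More to the point, passing near $\Lambda^0$ is not what you need either — you need $W^u(\Lambda^0)\cap W^s(\Lambda^1)\neq\emptyset$, and the orbit $x$ lies in $W^u(\Lambda')\cap W^s(\Lambda'')$ for the \emph{trivial} pieces $\Lambda',\Lambda''$, which is a different pair of manifolds. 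Nothing in Lemma \ref{l.entornos1} or \ref{l.saltos2} converts ``the orbit visits $V_{\Lambda^0}$ then $V_{\Lambda^1}$'' into ``$\Lambda^0\prec\Lambda^1$'' unless the orbit actually lies in those manifolds; those lemmas go the other way (they read heteroclinic relations off from visits to the $V_i$, for a genuine orbit, which is what the paper uses in the proof of Theorem \ref{t.rhorhochain}, but there the conclusion is about the chain of visited pieces, not about two pre-selected pieces).

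Second, the fallback via Lemma \ref{l.agujas} does not apply as invoked. That lemma requires that \emph{no} basic piece of $f$ sits strictly between $\Lambda'$ and $\Lambda''$; but $\mathcal{C}$ is an arbitrary chain, not a maximal one, so intermediate pieces outside $\mathcal{C}$ may very well exist — and indeed the existence of such a piece is exactly what the lemma is trying to establish, so assuming it away is circular. Even in the case where Lemma \ref{l.agujas} does apply and gives $v_0,v_1\in\langle[\alpha]\rangle$, collinearity does not by itself yield a basic piece whose rotation set contains the full segment $[v_0,v_1]$; you would still need to manufacture one.

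The paper takes a genuinely different route here: it does not try to relate the two pieces $\Lambda^0,\Lambda^1$ coming from Lemma \ref{l.ctrivial}. Instead it builds an arc $K\subset W^u(\Lambda')$ joining a disk of $\mathcal{D}_0$ to a disk of $\mathcal{D}_1$, runs Lemma \ref{l.construquasi} to enclose $K$ in a quasi-invariant surface $S$ for $f^k$ that avoids a carefully chosen family $\mathcal{F}$ of obstructing basic pieces, removes shrunken copies of the two disk families to get a surface $S''$ with at least two boundary circles (hence not an annulus), and then invokes Remark \ref{r.filling} (a Lefschetz-Nielsen index argument) to produce a fixed point of $f^k$ inside $S''$. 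That fixed point lies in a \emph{new} basic piece $\Lambda$, disjoint from $\mathcal{D}_0\cup\mathcal{D}_1$ by construction, non-trivial by the Claim, and sandwiched in a chain $\Lambda_0\prec\Lambda\prec\Lambda_1$ by the choice of $U$ and $\mathcal{F}$. The engine is fixed-point theory on a purpose-built quasi-invariant surface, not propagation of heteroclinic relations; without that ingredient (or a substitute for it), the argument you sketch cannot be completed.
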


\begin{proof} By Lemma \ref{l.ctrivial} the only case we need to consider is when $\mathcal{C}$ has trivial pieces only, but is not completely trivial. We take a decomposition $\mathcal{C}=\mathcal{C}_0\cup\mathcal{C}_1$ as in the discussion above. Since $\mathcal{C}_0$ and $\mathcal{C}_1$ are completely trivial chains, there must exist $\mathcal{D}_0$ and $\mathcal{D}_1$ disjoint unions of disks such that:
\begin{itemize}
\item $\mathcal{D}_j$ contains all the basic pieces of $\mathcal{C}_j$ for $j=0,1$,
\item There exist non-trivial Conley surfaces $S_j$ such that $\mathcal{D}_j$ consists of a disjoint union of connected components of $\text{cl}[\text{Fill}(S_j)\setminus S_j]$ for $j=0,1$,
\item $f(\mathcal{D}_1)\subseteq \mathcal{D}_1$ and $f(\mathcal{D}_0)\supseteq \mathcal{D}_0$.
\end{itemize}
Notice that by this construction, the trivial basic pieces that are not contained in $\mathcal{D}_0\cup \mathcal{D}_1$ have associated Conley surfaces that are disjoint from $\mathcal{D}_0\cup \mathcal{D}_1$. This is the key for the following claim:

\vspace{.1cm}
{\bf Claim:} { If we have a chain $\Lambda_0\prec\Lambda\prec\Lambda_1$ with $\Lambda_j\subset \mathcal{D}_j$ for $j=0,1$ and $\Lambda$ disjoint from $\mathcal{D}_0\cup \mathcal{D}_1$, then $\Lambda$ must be non-trivial.}

\vspace{.1cm}
To prove the claim by contradiction, assume that $\Lambda$ is trivial. Then it is contained in a disjoint union of disks $\mathcal{D}$ that is disjoint from $\mathcal{D}_0\cup \mathcal{D}_1$. But this is absurd: If $\mathcal{D}$ is attracting, every piece that follows $\Lambda$ is contained in $\mathcal{D}$ and we cannot have $\Lambda\prec\Lambda_1\subset \mathcal{D}_1$. If $\mathcal{D}$ is repelling, we argue similarly with $\Lambda_0$.

\vspace{.1cm}
Taking a suitable power of $f$ and applying Lemma \ref{l.qisurface} point (3) to it, we see that there exist $v_j\in H_1(\Sigma)$ such that $\rho_{\Lambda_j}(f)=\{v_j\}$ for every piece $\Lambda_j\subset \mathcal{D}_j$ and $j=0,1$. In particular $\rho_{\mathcal{C}}(f)=\text{conv}(v_0,v_1)$. Together with the Claim, this fact means we can prove the lemma by finding a chain of the form $$\mathcal{C}'=\{\Lambda_0\prec\Lambda\prec\Lambda_1\}$$ where $\Lambda_j\subset \mathcal{D}_j$ for $j=0,1$, and $\Lambda$ is disjoint from $\mathcal{D}_0\cup \mathcal{D}_1$.

Define $U_0 := \mathcal{D}_1\cup\bigcup_{\Lambda_0\subset \mathcal{D}_0} \Lambda_0^u$. Since it is an attractor, taking a neighbourhood of it we an find a subsurface $U$ such that: \begin{itemize}
\item $\text{cl}[f(U)]\subseteq\text{int}(U)$
\item $U\cap \Omega(f)=U_0\cap\Omega(f)$, which is the union of the following set of basic pieces: $$\{\Lambda : \Lambda\subset\mathcal{D}_1 \}\cup\{\Lambda:
\Lambda\succeq\Lambda_0\mbox{ for some }\Lambda_0\subset\mathcal{D}_0\}$$
\end{itemize}
Moreover, we can assume that every connected component of $U$ meets $\Omega(f)$ and that $U$ is W-filled. Notice that this surface contains $\mathcal{D}_0$, since every $x\in \mathcal{D}_0$ has $\alpha$-limit in $\mathcal{D}_0$ and therefore $x$ must belong to $\Lambda_0^u$ for some basic piece $\Lambda_0\subset \mathcal{D}_0$. The existence of the chain $\mathcal{C}$ implies that we can find a basic piece $\Lambda'\subseteq \mathcal{D}_0$, an arc $\gamma$ contained in $W^u(\Lambda',f)$ and disks $D'_0$, $D'_1$ which are connected components of $\mathcal{D}_0$ and $\mathcal{D}_1$ respectively, so that $\gamma$ joins $D'_0$ with $D'_1$. Denote by $K$ the union of $\gamma$ and these disks, i.e. $K:=\gamma\cup D'_0\cup D'_1$, and notice that $K$ is contained in $U$.

Let $\mathcal{F}$ be the family of basic pieces $\Lambda$ that are not contained in $\mathcal{D}_0$, that follow some piece in $\mathcal{D}_0$, but precede no piece in $\mathcal{D}_1$. In other words, recalling the definition of $U$ we can write $$\mathcal{F} = \{\Lambda: \Lambda\subset U\setminus\mathcal{D}_0,  \Lambda^u\cap \mathcal{D}_1 = \emptyset \}.$$ If $\Lambda\in\mathcal{F}$ then $\Lambda^u$ must be disjoint from $\mathcal{D}_0\cup\mathcal{D}_1$, and arguing as in Lemma \ref{l.agujas} we can show that $\Lambda^u$ is disjoint from $\gamma$ as well. Therefore $K$ does not meet $\bigcup_{\Lambda\in \mathcal{F}}\Lambda^u$.


Consider $k\in\N$ so that the basic pieces of $f^k$ are topologically mixing, and recall Remark \ref{r.piecespower}, stating that each basic piece of $f$ decomposes as a finite and disjoint union of basic pieces of $f^k$. We are in conditions to apply Lemma \ref{l.construquasi} to $f^k$, with the subsurface $U$, the family of basic pieces of $f^k$ that arises from $\mathcal{F}$, and the continuum $K$.  Thus we obtain a subsurface $S$ with the following properties:
\begin{itemize}
\item $S$ is quasi-invariant under $f^k$,
\item $S\cap\Omega(f^k)=(\Omega(f)\cap U)\setminus \bigcup_{\Lambda\in\mathcal{F}}\Lambda^u$
\item $K\subset S$
\end{itemize}

Denote by $S'$ the connected component of $S$ containing $K$, which is quasi-invariant under $f^k$ by Remark \ref{r.quasiconexas}. We consider  $\mathcal{D}''_0:=f^{-k}(\mathcal{D}_0)\subset \textrm{int}(\mathcal{D}_0)$ and $\mathcal{D}''_1:=f^{k}(\mathcal{D}_1)\subset \textrm{int}(\mathcal{D}_1)$, whose components are either contained in $\textrm{int}(S')$ or disjoint from $S'$. This allows us to ensure that
 $$S'' = \textrm{cl}[S'-(\mathcal{D}''_0\cup \mathcal{D}''_1)]$$  is a subsurface, that is quasi-invariant for $f^k$. Also notice that $D''_0:=f^{-k}(D'_0)$ and $D''_1:=f^{k}(D'_1)$ are components of $\mathcal{D}''_0$ and $\mathcal{D}''_1$ that do meet $S'$, thus give $S''$ two boundary components. Then $S''$ cannot be an annulus, otherwise $\Sigma$ would be $S^2$, so $f^k$ must  have some fixed point in $S''$ by Remark \ref{r.filling}. This fixed point of $f^k$ must belong to some basic piece $\Lambda$ of $f$. We see  that $\Lambda\subset S$, since $\Omega(f)\cap S$ is $f$-invariant, and that $\Lambda$ must be disjoint from $\mathcal{D}_0\cup\mathcal{D}_1$, for otherwise it would be contained in $\mathcal{D}''_0\cup\mathcal{D}''_1$ and could not meet $S''$.

This gives the chain we were looking for: Since $\Lambda\subset S\setminus(\mathcal{D}_0\cup \mathcal{D}_1)$ we get, on one hand, that $\Lambda\subset U\setminus\mathcal{D}_1$ so there is a basic piece $\Lambda_0\subset\mathcal{D}_0$ with $\Lambda_0\prec\Lambda$. On the other hand, $\Lambda\notin\mathcal{F}$ while following $\Lambda_0\subset\mathcal{D}_0$, so there must be a basic piece $\Lambda_1\subset\mathcal{D}_1$ with $\Lambda\prec\Lambda_1$. Then the chain $\mathcal{C}'= \{\Lambda_0\prec\Lambda\prec\Lambda_1\} $ verifies the lemma, recalling the Claim and the fact that $\Lambda$ is disjoint from $\mathcal{D}_0\cup \mathcal{D}_1$.




\end{proof}

We are ready to provide a proof or Proposition \ref{sacar.triviales}. The proof will be
supported on the previous lemmas, using some of the constructions introduced
in their proofs.

\smallskip

{\bf Proof of Proposition \ref{sacar.triviales}:}

Let $\mathcal{C}$ be a chain for $f\in\mathcal{A}_0(\Sigma)$. Write $\mathcal{C} = \mathcal{C}_0\cup \mathcal{C}_{NT} \cup \mathcal{C}_1$ where $\mathcal{C}_{NT}$ consists of all the non-trivial pieces of $\mathcal{C}$. By Lemma \ref{l.nctrivial} we can assume $\mathcal{C}_{NT}$ is not empty, and write $\mathcal{C}_{NT} = \{ \Lambda_1\prec\cdots\prec\Lambda_n \}$. As we observed before, Lemma \ref{l.qisurface} gives that for $j=0,1$ there are $v_j\in H_1(\Sigma)$ such that $\rho_{\mathcal{C}_j}(f)=\{v_j\}$. We finish the proof by the following claim.

\vspace{.1cm}
{\bf Claim:} {\em There are non-trivial pieces $\Lambda'_j$, for $j=0,1$, such that $\Lambda'_0\preceq \Lambda_1$, $\Lambda_n\preceq \Lambda'_1$, and $v_j\in\rho_{\Lambda'_j}(f)$.}
\vspace{.1cm}

This claim completes the proof, taking $\mathcal{C}^*=\{\Lambda'_0\preceq \Lambda_1\prec\cdots\prec\Lambda_n\preceq \Lambda'_1 \}$. What follows is the proof of the claim.

\smallskip

We prove the existence of $\Lambda'_0$, the argument for $\Lambda'_1$ being analogous. As in the proof of Lemma \ref{l.nctrivial}, we can find $\mathcal{D}_0$ a disjoint union of disks so that:
\begin{itemize}
\item $\mathcal{D}_0\subseteq f(\mathcal{D}_0)$,
\item $\mathcal{D}_0$ contains all the basic pieces of $\mathcal{C}_0$
\item $\mathcal{D}_0$ is a union of connected components of $\text{cl}[\text{Fill}(S_0)\setminus S_0]$ for some non-trivial Conley surface $S_0$.
\end{itemize}

Therefore, every trivial piece not contained in $\mathcal{D}_0$ is contained in some attracting or repelling union of disks that is disjoint from $\mathcal{D}_0$.

Let $\mathcal{F}$ be the family of basic pieces $\Lambda$ that satisfy the following:
\begin{itemize}
\item $\Lambda\preceq\Lambda_1$,
\item $\Lambda$ is not contained in $\mathcal{D}_0$,
\item there is some basic piece $\Lambda_0\subset \mathcal{D}_0$ with $\Lambda_0\prec\Lambda$.
\end{itemize}

Notice that $\Lambda_1\in\mathcal{F}$. We also see that the pieces in $\mathcal{F}$ are non-trivial: Take a trivial piece $\Lambda$ that is not contained in $\mathcal{D}_0$. Then it is contained in a quasi-invariant union of disks $\mathcal{D}$ that is disjoint from $\mathcal{D}_0$. If $\mathcal{D}$ is repelling then there is no $\Lambda_0\subset \mathcal{D}_0$ that precedes $\Lambda$. If $\mathcal{D}$ is attracting, then $\Lambda_1$ cannot follow $\Lambda$.

Let $\Lambda'$ be a minimal element of $\mathcal{F}$ with respect to the partial order $\prec$. We will show that $v_0\in\rho_{\Lambda'}(f)$, thus $\Lambda'$ works as the basic piece $\Lambda'_0$ in the statement of this claim.

Define $R=\cup_{\Lambda\subseteq \mathcal{D}_0}\Lambda^u$. Since $R$ is an attractor we can find a subsurface $U$ containing $R$ so that:
\begin{itemize}
\item $U\cap\Omega(f)=R\cap\Omega(f)$
\item $\textrm{cl}[f(U)]\subseteq\text{int}(U)$
\end{itemize}
We can also assume that every connected component of $U$ meets $\Omega(f)$. Notice that $U$ contains $\mathcal{D}_0$ by definition of $R$ and the fact that $\mathcal{D}_0$ is a repellor (we gave the explicit argument in the proof of Lemma \ref{l.nctrivial}). 

Recall that $\Lambda'$ is the minimal element of $\mathcal{F}$, so there is some basic piece $\Lambda_0 \subset \mathcal{D}_0$ with $\Lambda_0\prec\Lambda'$. Then there is an arc $\gamma$ joining $\Lambda_0$ and $\Lambda'$ that is a concatenation of $\gamma^u\subset W^u(\Lambda_0,f)$ and $\gamma^s\subset W^s(\Lambda',f)$. Moreover, we can assume $\gamma^s$ is within a small neighbourhood of $\Lambda'$, which gives us that $\gamma\subset U$, since $\gamma^u\subset \Lambda_0^u\subset U$ and $\Lambda'\subset \textrm{int}(U)$. Let $D'_0$ be the component of $\mathcal{D}_0$ with the starting point of $\gamma$, and let $K:=D'_0\cup\gamma$, observing that it is contained in $U$.

Let $\mathcal{U}$ be the family of basic pieces $\Lambda$ of $f$ that are contained in $U
\setminus \mathcal{D}_0$ but are different from $\Lambda'$, i.e.: $$\mathcal{U} = \{\Lambda \subset U\setminus \mathcal{D}_0:\Lambda\neq\Lambda'\}. $$ Notice that the pieces of $\mathcal{U}$ cannot precede $\Lambda'$, for if there was $\Lambda\in \mathcal{U}$ with $\Lambda\prec\Lambda'$ then $\Lambda'$ would not be minimal in $\mathcal{F}$. Thus

\begin{equation}\label{eqloca}(U\cap\Omega(f))\setminus \left(\bigcup_{\Lambda\in\mathcal{U}}\Lambda^u\right)=\Lambda'\cup \left( \mathcal{D}_0\cap\Omega(f) \right) \end{equation}

Using equation \ref{eqloca} we can argue as in Lemmas \ref{l.agujas} and \ref{l.nctrivial} to show that $K$ is disjoint from $\cup_{\Lambda\in\mathcal{U}}\Lambda^u$. Take $k\in\N $ so that the basic pieces of $f^k$ are topologically mixing, and apply Lemma \ref{l.construquasi} to $f^k$, the subsurface $U$, the family of basic pieces of $f^k$ that arises from $\mathcal{U}$, and the continuum $K$. This gives a surface $S$ that is quasi-invariant under $f^k$ and satisfies the following:





\begin{itemize}
\item $S\cap\Omega(f^k)=\Lambda'\cup (\mathcal{D}_0\cap\Omega(f))$, by equation \ref{eqloca},
\item $K\subset S$
\end{itemize}

Let $S'$ be the connected component of $S$ containing $K$, which is quasi-invariant under $f^k$ by Remark \ref{r.quasiconexas}. We take $\mathcal{D}''_0:=f^{-k}(\mathcal{D}_0)$, so we have that the components of $\mathcal{D}''_0$ that meet $S'$ are contained in $\text{int}(S')$, and $$S''=S'\setminus \mathcal{D}''_0 $$ is a surface that is quasi-invariant under $f^k$. This construction gives that $\Omega(f^k)\cap S''\subset \Lambda'$. We see that this set is non empty: The endpoint of $\gamma^u$ is in $K\setminus \mathcal{D}''_0$, so it belongs to $S''\cap \Lambda'$. Thus there exists $\Lambda''\subset\Lambda'$ a basic piece of $f^k$ that is contained in $S''$. On the other hand, notice that $D''_0 := f^{-k}(D'_0)$ is a component of $\text{Fill}(S'')\setminus S''$, that it is a repellor for $f^k$ and that every basic piece of $f^k$ in $D''_0$ has rotation vector $k\cdot v_0$.


If $S''$ is trivial, Lemma \ref{l.qisurface} implies that every basic piece in $\text{Fill}(S'')$ has the same rotation vector. In particular, since $D_0''\subseteq \text{Fill}(S'')$, we have that $\rho_{\Lambda''}(f^k)=k\cdot v_0$ and therefore $v_0\in\rho_{\Lambda'}(f)$ as desired.

In case that $S''$ is non-trivial we use Lemma \ref{l.filling} point (2) to deduce that there exists $p$ a fixed point of $f^k$ in $S''$ with rotation $k\cdot v_0$. Since $p\in \Omega(f)\cap S'' \subset \Lambda'$, this again shows that $v_0\in\rho_{\Lambda'}(f)$.

\section{Bounding the complexity of the rotation set: Annular pieces}\label{s.annular}

In this section we provide the final results which allow us to prove the main theorem \ref{mainthm}.
Recall that from sections \ref{s.rotsetbp} and \ref{s.agujas}, for $f\in\mathcal{A}_0(\Sigma)$ we have that
$$\rho(f)=\bigcup_{\mathcal{C}}\rho_{\mathcal{C}}(f) $$ where $\mathcal{C}$ ranges over all maximal chains of $\mathcal{G}_f$ containing no trivial basic pieces. Our goal is to rewrite this union so that it involves at most $\textrm{c(g)}$ convex sets containing zero, where $c(g)$ is some constant depending only on the genus $g$ of $\Sigma$. If we consider a Conley decomposition for $f$ as in section \ref{s.conley}, we see that there are at most $2g-2$ curved pieces (pair of pants decomposition), but there is no bound for the number of annular pieces. Unlike the case for trivial pieces, these annular pieces do participate in the rotation set. Of course, it will be crucial that there are at most $3g-3$ homotopy classes of annuli in the Conley decomposition, but this fact alone does not give the desired bound, for we must consider the heteroclinic relations that may exist among such annular pieces, giving rise to chains in $\mathcal{G}_f$. This section is thus devoted to control the rotation sets of the chains involving annular pieces, and the study of the heteroclinic relation among such pieces will be at the center of this problem.


Throughout this section we consider some fixed Conley decomposition of $f\in\mathcal{A}_0(\Sigma)$. If $\Lambda$ is a basic piece of $f$, let $S_{\Lambda}$ stand for the surface of our Conley decomposition that contains $\Lambda$, which we shall call {\em the Conley surface of} $\Lambda$. Let us recall the notations from section \ref{s.conley}, in particular the Poincar\'{e} cover $\pi:\D\to\Sigma$ and $F$ the canonical lift of $f$ to said cover. If $S\subset\Sigma$ is a connected subsurface, a {\em lift} $\tilde{S}\subset\D$ of $S$ is a connected component of $\pi^{-1}(S)$. In that case, $\textrm{Fill}(\tilde{S})$ is the union of $\tilde{S}$ with the components of the complement that are bounded disks (in the hyperbolic metric of $\D$). Clearly $\textrm{Fill}(\tilde{S})$ is a lift of $\textrm{Fill}(S)$.


\smallskip


\smallskip
\subsection{Coarse geometry of sub-surface lifts}

Here we prove a technical lemma, which can be interpreted in terms of the coarse geometry of a lift $\tilde{S}$ of a sub-surface $S\subset \Sigma$ near some component $\tilde{b}$ of $\partial\tilde{S}$. In the sequel we will apply it to the dynamics of $F$. This result is very specific, so we include a proof here, but in general a lot is known about the coarse geometry of hyperbolic surfaces, see for instance \cite{bridson}.

Given a metric space $(E,d)$ and a real number $R>0$, we say that a sequence $X=\{x_0,\ldots,x_k\}\subseteq E$ is a an $R$-{\em path} if $d(x_i,x_{i+1})<R$ for $i=0,\ldots,k-1$. This can be interpreted as a coarse version of a continuous path, where $R$ is the scale of the error.
This notion will be of interest because the sequences of iterates $x_k=F^k(x)$ give $R$-paths for some uniform $R>0$, since $F$ is the canonical lift.

We are ready to state the mentioned technical lemma.

\begin{lem}\label{lema geometrico} Let $S\subseteq \Sigma$ be a sub-surface with an essential boundary component $b$, and $R$ be a positive real number. Consider:
\begin{itemize}
\item $\tilde{S}\subseteq\D$ a lift of $S$, and $\tilde{b}\subseteq \partial\tilde{S}$ a lift of $b$.
\item $\xi\in\partial\D$ one of the endpoints of $b$.
\item $T\in\pi_1(\Sigma)$ a deck transformation that generates $\textrm{Stab}(\tilde{b})$.
\end{itemize}

Then there exists a compact set $\mathcal{K}\subseteq \D$ with the following property: For any $n\in\N$  there exists $W_n$ a neighbourhood of $\xi\in \textrm{cl}[\D]$ and integers $m_1,\ldots,m_n$ such that
\begin{itemize}
\item $T^{m_i}(\mathcal{K})\cap T^{m_j}(\mathcal{K})=\emptyset$ for $ i \neq j$,
\item for every $R$-path $X=\{x_0,\ldots,x_m\}\subseteq \tilde{S}$ with $x_0\in\mathcal{K}$ and $x_m\in W_n$ we have  $T^{m_i}(\mathcal{K})\cap X\neq\emptyset$ for every $i=1,\ldots,n$
\end{itemize}

Moreover, given a compact set $D\subseteq \tilde{S}$ we can suppose that $D\subseteq \mathcal{K}$.
\end{lem}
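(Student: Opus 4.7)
I would work in the upper half-plane model, placing $\tilde b$ along $i\mathbb R_{>0}$ so that $\xi=\infty$ and $T(z)=\lambda z$ for $\lambda=e^\ell>1$. The main tool is the function $\theta(z)=\log|z|$, which satisfies $\theta\circ T=\theta+\ell$ and is $1$-Lipschitz for the hyperbolic metric of $\mathbb D$ (a direct consequence of the hyperbolic distance formula). Along any $R$-path $(x_0,\ldots,x_M)$ in $\mathbb D$ the sequence $\theta(x_j)$ then has consecutive gaps strictly less than $R$, which will be the essential quantitative control.

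The construction of $\mathcal K$ goes as follows. Fix $C\in\mathbb N$ with $C\ell>2R$. The geometric heart of the argument is to produce $D_0>0$ and a compact $\mathcal K\subseteq\mathbb D$ containing $D$ such that
\[
\tilde S\cap\{\theta\in[k\ell,(k+C)\ell]\}\cap\{d_\mathbb D(\cdot,\tilde b)\le D_0\}\ \subseteq\ T^k(\mathcal K)
\]
for every $k\ge 0$, and so that every sufficiently small Euclidean neighbourhood of $\xi$ in $\overline{\mathbb D}$ meets $\tilde S$ inside the tube $\{d_\mathbb D(\cdot,\tilde b)\le D_0\}$. By $T$-equivariance, the first assertion reduces to taking $\mathcal K$ to contain the $k=0$ instance, which is a bounded (hence relatively compact) subset of the tube. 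The second assertion is the real geometric point: near $\xi$ the other boundary components of $\tilde S$ come in $\langle T\rangle$-orbits of geodesics in $\mathbb D$ whose endpoints accumulate at $\xi$, and these accumulating translates serve as barriers confining $\tilde S\cap W$ to a bounded neighbourhood of $\tilde b$ whenever $W$ is a small enough Euclidean neighbourhood of $\xi$.

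I then set $m_i=iC$; the translates $T^{m_i}(\mathcal K)$ are pairwise disjoint because the $\theta$-extent of $\mathcal K$ is at most $C\ell$, while consecutive $m_i$ differ by $C$. Shrinking $W_n$ further if necessary, I arrange in addition that every $z\in W_n$ satisfies $\theta(z)>(n+1)C\ell$, which is immediate from $|z|$ being large on $W_n$.

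For an $R$-path $X=(x_0,\ldots,x_M)$ in $\tilde S$ with $x_0\in\mathcal K$ and $x_M\in W_n$, both endpoints lie in the tube $\{d_\mathbb D(\cdot,\tilde b)\le D_0\}$. Since $\theta$ is $1$-Lipschitz and runs from $\theta(x_0)\le C\ell$ up to $\theta(x_M)>(n+1)C\ell$ with gaps $<R<C\ell$, for each $i\in\{1,\ldots,n\}$ there is an index $j_i$ with $\theta(x_{j_i})\in[iC\ell,(i+1)C\ell]=[m_i\ell,(m_i+C)\ell]$; the defining property of $\mathcal K$ then yields $x_{j_i}\in T^{m_i}(\mathcal K)$, provided the witness $x_{j_i}$ also lies in the tube. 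Ensuring this last tube confinement for the witness points is the principal obstacle of the proof: one must show that an $R$-path in $\tilde S$ cannot leave the $D_0$-tube at intermediate stages without also losing the ability to reach $W_n$. I would handle this via a last-exit / first-entrance argument, together with a careful bookkeeping of how far a coarse path can stray from $\tilde b$ while still remaining in $\tilde S$ and ultimately approaching $\xi$; enlarging $D_0$ if necessary (and correspondingly enlarging $\mathcal K$) allows this detour region to be absorbed.
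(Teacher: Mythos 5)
Your argument has a genuine gap at exactly the place you flag, and the proposed repair cannot work. The key geometric claim --- that for some $D_0$ every sufficiently small Euclidean neighbourhood $W$ of $\xi$ satisfies $W\cap\tilde{S}\subseteq\{d_{\mathbb D}(\cdot,\tilde{b})\le D_0\}$ --- is false whenever $S$ is not an annulus. In that case $\pi_1(S)$ is free of rank $\ge 2$, so $\tilde{S}$ contains points $z'$ with $d_{\mathbb D}(z',\tilde{b})$ as large as you like. Since $T$ is a hyperbolic deck transformation preserving $\tilde{b}$ (and hence each side of $\tilde{b}$), we have $T^k(z')\in\tilde{S}$ for all $k$, with $d_{\mathbb D}(T^k(z'),\tilde{b})=d_{\mathbb D}(z',\tilde{b})>D_0$, while $T^k(z')\to\xi$ in $\mathrm{cl}[\mathbb D]$ (after replacing $T$ by $T^{-1}$ if $\xi$ is the repelling endpoint). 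So no choice of $D_0$ confines $\tilde{S}$ near $\xi$ to a tube; in particular the statement you call the principal obstacle --- that a path leaving the $D_0$-tube ``loses the ability to reach $W_n$'' --- is simply false, because $W_n\cap\tilde{S}$ itself contains points arbitrarily far from $\tilde{b}$ where an $R$-path may legitimately end. Enlarging $D_0$ does not help, and the slab $\tilde{S}\cap\{\theta\in[0,C\ell]\}$ itself contains entire ``branches'' of $\tilde{S}$ that reach $\partial\mathbb D$, so it is not compact without the tube intersection.

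What is true, and what the lemma needs, is that to make longitudinal progress in $\theta$ an $R$-path must repeatedly return to a bounded neighbourhood of the axis; but this is not a consequence of $\theta$ being $1$-Lipschitz. The function $\theta$ only records the component along the axis direction and is blind to the transverse branching of $\tilde{S}$. The needed fact is precisely that $\tilde{S}$ is quasi-isometric to a tree: removing a vertex disconnects a tree, and after removing a ball of radius $R_0$ the two sides are $2R_0$-separated, so any $R_0$-path from one side to the other must pass through the ball. The paper proves this tree statement as Lemma~\ref{sublema} and then transports it to $\tilde{S}$ through an equivariant quasi-isometry $\psi\colon\tilde{S}\to\mathcal{T}$ that sends $R$-paths to $R_0$-paths and identifies $\xi$ with the corresponding boundary point of $\mathcal{T}$; the compact $\mathcal{K}$ is $\psi^{-1}$ of a tree ball, not a tube--slab intersection. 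Your bookkeeping sketch would have to reintroduce this tree structure (for instance by cutting $\tilde S$ along a compact separating arc and showing the two sides are coarsely far apart outside a compact piece) to close the gap; as written, it does not.
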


Dropping some precision, this property basically says that any infinite $R$-path that converges to $\xi$ must meet infinitely many disjoint translates of $\mathcal{K}$ under $T$. Also, that this is a property that holds when $\mathcal{K}$ is sufficiently large. The compactness of $\mathcal{K}$ says that every such $R$-path must pass uniformly close to $\tilde{b}$ infinitely often as it converges to $\xi$.

We now turn to the constructions at play in the proof of Lemma \ref{lema geometrico}. The key idea is standard in coarse geometry: to prove an analogous property for an infinite tree, and then promote it to $\tilde{S}$ via {\em quasi-isometry}. We will assume throughout the proof that $S$ is filled, as that causes no loss of generality.

Since $S$ is a surface of finite type with boundary we have $\pi_1(S)\cong \mathbb{F}_l$, the free group on $l$ generators, for some $l\in\N$. Let $\mathcal{T}$ be the Cayley graph of $\mathbb{F}_l$ with respect to the standard generating set, so $\mathcal{T}$ is the regular tree of valence $2l$ on each vertex. We consider the standard graph metric on $\mathcal{T}$, that assigns length $1$ to every edge. For $g\in\mathbb{F}_l$ let $L_g$ be the action of $g$ on $\mathcal{T}$ by left multiplication, which preserves the graph structure and thus is an isometry.

We assumed $S$ is filled, so $\tilde{S}$ is a universal cover for $S$, and $\mathbb{F}_l\cong\pi_1(S)$ injects into $\pi_1(\Sigma)$. For $g\in\mathbb{F}_l$ let $T_g$ be the action of $g$ on $\tilde{S}$ by the corresponding deck transformation, which is an isometry of the Poincar\'{e} metric $d_{\D}$.

This gives rise to an {\em equivariant quasi-isometry} between $\mathcal{T}$ and $\tilde{S}$, which is a map $\varphi:\mathcal{T}\to\tilde{S}$ satisfying the following:

\begin{itemize}
\item $\varphi$ is a {\em quasi-isometrc embedding}: There are constants $C\geq 0$ and $\lambda\geq 1$ such that for all $x,y\in\mathcal{T}$ we have $$ \lambda^{-1}d_{\mathcal{T}}(x,y) - C< d_{\D}(\varphi(x),\varphi(y))< \lambda d_{\mathcal{T}}(x,y) + C$$
\item The image of $\varphi$ is coarsely dense: There is $C_0>0$ such that for every $x\in\tilde{S}$ there is some $y\in\mathcal{T}$ with $d_{\D}(x,\varphi(y))<C_0$.
\item $\varphi$ is $\mathbb{F}_l$-equivariant:  For every $g\in\mathbb{F}_l$ we have $T_g\circ\varphi=\varphi\circ L_g$.
\end{itemize}

Such a map can be obtained as follows: Let $\mathcal{R}_l$ be the wedge of $l$ circles and $i:\mathcal{R}_l\to S$ an embedding such that $S$ retracts by deformation onto $i(\mathcal{R}_l)$. Then $\mathcal{T}$ is a universal cover for $\mathcal{R}_l$ and we can take $\varphi:\mathcal{T}\to\tilde{S}$  as some lift of $i$. Though $\varphi$ is not bijective, the notion of quasi-isometry is symmetrical: there also exists an equivariant quasi-isometry $\psi:\tilde{S}\to\mathcal{T}$. In this case we can obtain such a map from the retraction $r:S\to i(\mathcal{R}_l)$, by letting $\psi$ be a lift of $i^{-1}\circ r:S\to\mathcal{R}_l$. Choosing the lifts appropriately we can assume $\psi\circ\varphi = Id_{\mathcal{T}}$. On the other hand, $\varphi\circ\psi = \tilde{r}$, a lift of the retraction $r$, which is at a uniformly bounded distance from $Id_{\tilde{S}}$.

It is a basic result \cite{bridson} that $\varphi$ induces an homeomorphism $\varphi_{\infty}:\partial\mathcal{T}\to\partial\tilde{S}$, where $\partial\mathcal{T}$ is the natural Cantor set boundary of $\mathcal{T}$ (this is Gromov's visual boundary, which agrees with the topological ends in the case of a tree). We also have that $\psi_{\infty} = \varphi_{\infty}^{-1}$, and if $\nu\in\partial{T}$, then $\varphi$ maps a neighbourhood of $\nu$ in $\textrm{cl}[\mathcal{T}]:=\mathcal{T}\cup\partial\mathcal{T}$ into a neighbourhood of $\varphi_{\infty}(\nu)$ in $\textrm{cl}[\tilde{S}]$. (A neighbourhood of $\nu$ in $cl[\mathcal{T}]$ is defined in a natural way, as the component containing $\nu$ of the complement of a large compact set in $\mathcal{T}$). The same holds for $\psi$ and $\psi_{\infty}$.

As we mentioned earlier, we shall prove a version of Lemma \ref{lema geometrico} for the tree $\mathcal{T}$. First we need some background on the behaviour of $L_g$ for a non trivial $g\in\mathbb{F}_l$. The {\em translation length} of $g$ is the minimum $$l_g=\min\{d_{\mathcal{T}}(x,L_g(x)): x\in\mathcal{T} \} $$ which is achieved on a line, isometrically embedded in $\mathcal{T}$, where $L_g$ is conjugated to a translation by the distance $l_g$. This line is called the {\em axis} $A_g\subset\mathcal{T}$ of $g$. The line $A_g$ has two endpoints in $\partial\mathcal{T}$, which are also the attracting and repelling points of the dynamics of $L_g$ extended by continuity to $\textrm{cl}[\mathcal{T}]$. We shall call these points $g^+$ and $g^-$, respectively attracting and repelling.

The following is the tree version of Lemma \ref{lema geometrico}.

\begin{lem}\label{sublema} Let $g\in\mathbb{F}_l$ be a non trivial element, and $R_0$ a positive real constant.

There exists a compact set $\mathcal{K}_0\subseteq \mathcal{T}$ with the following property: For any $n\in\N$  there exists $W'_n$ a neighbourhood of $g^+\in \textrm{cl}[\mathcal{T}]$ and integers $m_1,\ldots,m_n$ such that

\begin{itemize}

\item $L_g^{m_i}(\mathcal{K}_0)\cap L_g^{m_j}(\mathcal{K}_0) = \emptyset$ if $i\neq j$,
\item for every $R_0$-path $X'=\{x'_0,\ldots,x'_m\}\subseteq \mathcal{T}$ with $x'_0\in\mathcal{K}_0$ and $x'_m\in W'_n$ we have $L_g^{m_i}(\mathcal{K}_0)\cap X'\neq\emptyset$ for $i=1,\ldots n$.
\end{itemize}

Moreover, given a compact set $D'\subseteq \mathcal{T}$ we can suppose that $D'\subseteq \mathcal{K}_0$.

\end{lem}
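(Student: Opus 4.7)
The plan is to exploit two rigidity features of the tree $\mathcal{T}$: the translates $L_g^k(p_0)$ of a point on the axis $A_g$ are equally spaced along $A_g$ and escape to $g^+$, and $R_0$-paths in a tree closely shadow the unique geodesic between their endpoints. The construction of $\mathcal{K}_0$, $W'_n$ and the $m_i$ is then just a matter of choosing the relevant radius and spacing along $A_g$, and the verification reduces to an elementary median argument in the tree.

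Concretely, I would fix $p_0 \in A_g$ and pick $\rho > 0$ with $\rho \geq R_0/2$ and $D' \subseteq \overline{B}(p_0,\rho)$, setting $\mathcal{K}_0 := \overline{B}(p_0,\rho)$. Since $\mathcal{T}$ is locally finite, $\mathcal{K}_0$ is compact. Next, I would pick $N \in \mathbb{N}$ with $N\, l_g > 2\rho$, so that $d(L_g^{iN}(p_0), L_g^{jN}(p_0)) = |i-j|\, N l_g > 2\rho$ whenever $i \neq j$, forcing the translates $L_g^{iN}(\mathcal{K}_0) = \overline{B}(L_g^{iN}(p_0),\rho)$ to be pairwise disjoint. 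I set $m_i := iN$ for $i = 1,\dots,n$ and define $W'_n$ as the ``cone at $g^+$ past $L_g^{(n+1)N}(p_0)$'', namely the set of $y \in \mathrm{cl}[\mathcal{T}]$ whose geodesic to $p_0$ contains the axial segment $[p_0, L_g^{(n+1)N}(p_0)]$; this is a basic neighbourhood of $g^+$ in $\mathrm{cl}[\mathcal{T}]$.

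The core of the argument is the elementary tree fact that for any $R_0$-path $X' = \{x_0',\dots,x_m'\} \subset \mathcal{T}$ the union $\bigcup_i [x_i', x_{i+1}']$ of geodesic segments is connected and contains both endpoints, hence contains the unique geodesic $[x_0', x_m']$; moreover, since each component segment has length $< R_0$, every point of $[x_0', x_m']$ lies within $R_0/2$ of some $x_j'$. Taking $x_0' \in \mathcal{K}_0$ and $x_m' \in W'_n$, let $c$ be the tree median of $\{x_0', x_m', p_0\}$, so that $[x_0', x_m'] = [x_0', c] \cup [c, x_m']$ with $[c, x_m']$ a terminal sub-segment of $[p_0, x_m']$ and $d(c, p_0) \leq d(x_0', p_0) \leq \rho$. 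Since $\rho < N l_g$, each of the points $L_g^{iN}(p_0)$ for $i = 1,\dots,n+1$ lies strictly past $c$ on $[p_0, x_m']$, and hence on $[c, x_m'] \subseteq [x_0', x_m']$. Applying the $R_0/2$-approximation to each such point produces $x_{j_i}' \in X'$ with $d(x_{j_i}', L_g^{iN}(p_0)) \leq R_0/2 \leq \rho$, i.e.\ $x_{j_i}' \in L_g^{iN}(\mathcal{K}_0)$.

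The only place that requires real care is the median step: one must verify that any geodesic from a point near $p_0$ to a point in the cone $W'_n$ really does traverse essentially all of the axial segment $[p_0, L_g^{(n+1)N}(p_0)]$. This is where uniqueness of geodesics in trees is used, together with the quantitative comparison $d(x_0', p_0) \leq \rho < N l_g$ forcing the median $c$ to land before $L_g^N(p_0)$ on the axis. I expect this to be the main (and still rather mild) obstacle; once granted, the proof is just measuring distances along $A_g$, and the ``moreover'' clause is absorbed by enlarging $\rho$.
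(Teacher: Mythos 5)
Your proof is correct, but it proceeds by a genuinely different mechanism than the paper's. The paper's argument is topological: it sets $\mathcal{K}_0 = B(x,R_0)$ for $x \in A_g$, chooses the $m_i$ spaced more than $2R_0$ apart, takes $W'_n$ to be the component of the complement of $\bigcup_{i=0}^n\mathcal{K}_i$ containing $g^+$, and then observes that deleting the single vertex $L_g^{m_i}(x)$ disconnects $\mathcal{T}$ into finitely many components whose intersections with $\mathcal{T}\setminus\mathcal{K}_i$ are at mutual distance at least $2R_0$; since $\mathcal{K}_0$ and $W'_n$ lie in different such pieces, an $R_0$-path cannot jump between them without entering $\mathcal{K}_i$. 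Your argument is instead metric: you take a (smaller) ball of radius $\rho \geq R_0/2$, use the shadowing fact that in a tree an $R_0$-path stays within $R_0/2$ of the geodesic joining its endpoints, and invoke the median of $\{x'_0,x'_m,p_0\}$ to show that this geodesic actually traverses the axis points $L_g^{iN}(p_0)$. Both are short and correct; the paper avoids the median and shadowing lemmas at the cost of a slightly larger ball and a connectivity case analysis, while yours makes the axis more visibly the organizing object and is closer in spirit to the coarse-geometry framing of the surrounding text. One cosmetic note: your $W'_n$ (all components past $L_g^{(n+1)N}(p_0)$, not just the one containing $g^+$) is a larger set than the paper's basic neighbourhood, which is harmless here since you are proving the stronger statement that every $R_0$-path from $\mathcal{K}_0$ to this larger set meets all the translates.
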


\begin{proof} Take $x\in A_g \subset \mathcal{T}$ and set $\mathcal{K}_0 = B(x,R_0)$, the closed ball around $x$ with radius $R_0$ in the distance $d_{\mathcal{T}}$. Given $n\in\N$, we take $m_1,\ldots,m_n$ positive integers so that $|m_i-m_j| > 2R_0$ for $i\neq j$. For instance, we could set $m_i = i(2R_0+1)$ if $R_0$ is an integer. Let us introduce the notation $\mathcal{K}_i=L_g^{m_i}(\mathcal{K}_0)$ for $i=1,\ldots,n$.

First we show that $\mathcal{K}_i\cap \mathcal{K}_j =\emptyset$ if $i\neq j$, by showing that $d_{\mathcal{T}}(\mathcal{K}_i,\mathcal{K}_j)> 0$ . Since $L_g$ is an isometry of $\mathcal{T}$ we have $$\mathcal{K}_i = L_g^{m_i}(B(x,R_0)) = B(L_g^{m_i}(x),R_0) $$ and since $x\in A_g$ we get that  $d_{\mathcal{T}}(L_g^{m_i}(x),L_g^{m_j}(x)) = |m_i-m_j|l_g$. Notice that $l_g\geq 1$ by our definition of $d_{\mathcal{T}}$, that assigns length $1$ to the edges. Thus
$$d_{\mathcal{T}}(\mathcal{K}_i,\mathcal{K}_j) \geq |m_i-m_j|l_g - 2R_0 > 0$$

For $W'_n$ we can take any neighbourhood of $g^+$ that does not intersect $\mathcal{K}_i$ for $i=0,\ldots,n$. One such neighbourhood is the component containing $g^+$ of the complement of $\bigcup_{i=0}^n\mathcal{K}_i$ in $\textrm{cl}[\mathcal{T}]$. To see that this works, take $X'$ an $R_0$-path from $\mathcal{K}_0$ to $W'_n$, and pick some $i\in\{1,\ldots,n\}$.

If we remove the point $L_g^{m_i}(x)$ from $\mathcal{T}$ we obtain $2l$ connected components $C_1,\ldots,C_{2l}$. Let $C'_r=C_r\setminus \mathcal{K}_i$ for $r=1,\ldots,2l$, i.e. the intersection of $C_r$ with the complement of $B(L_g^{m_i}(x),R_0)$. Since $\mathcal{K}_0$ and $W'_n$ are connected and disjoint from $\mathcal{K}_i$, they are contained in two such sets, say $C'_1$ and $C'_2$ respectively. First we show that $C'_1\neq C'_2$: Since we chose $m_i>0$, we have that $L_g^{m_i}(x)$ lies between $x$ and $g^+$ in the line $A_g$ (when extended to $\textrm{cl}[\mathcal{T}]$). By our definition of $W'_n$, we then have that $x$ and $W'_n\cap A_g$ are on different sides of $L_g^{m_i}(x)$ on that line, implying that $C_1\neq C_2$.

The key property of the sets we defined is that $d_{\mathcal{T}}(C'_r,C'_s)=2R_0$ if $r\neq s$, which is a direct consequence of $\mathcal{T}$ being a tree. Thus, if $X'$ does not intersect $\mathcal{K}_i$, it must remain in $C'_1$, and can not end in $W'_n\subset C'_2$, a contradiction.

Finally, notice that the same proof works if we change $R_0$ for any $R_1 \geq R_0$. In particular, given a compact set $D'\subseteq \mathcal{T}$ we can choose $R_1$ so that $D'\subseteq B(x,R_1)$.

\end{proof}

Now we turn to the final considerations before the proof of Lemma \ref{lema geometrico}. It is a standard fact that the equivariance of the quasi-isometries $\varphi$ and $\psi$ passes to the boundary, i.e. $\varphi_{\infty}$ and $\psi_{\infty}=\varphi_{\infty}^{-1}$ are also equivariant. As such, for $g\in\mathbb{F}_l$ we have that $\varphi_{\infty}(g^+)$ and $\varphi_{\infty}(g^-)$ are the limit points of $T_g$ in $\partial\tilde{S}\subset \partial \D$.

Recall that we are assuming $S$ is filled, otherwise we may work with $\textrm{Fill}(S)$.

 \vspace{.2mm}
{\flushleft \bf Proof of Lemma \ref{lema geometrico}:}

Let $g\in\mathbb{F}_l\cong\pi_1(S)$ be the element corresponding to the deck transformation $T$, i.e. such that $T_g=T$. We can assume $\psi_{\infty}(\xi)=g^+$, otherwise we work with $g^{-1}$ and would obtain $m_1,\ldots,m_n$ negative. Let $\lambda\geq 1$ and $C\geq 0$ be the constants that satisfy

 $$ \lambda^{-1}d_{\D}(x,y) - C< d_{\mathcal{T}}(\psi(x),\psi(y))< \lambda d_{\D}(x,y) + C$$

Then $\psi$ takes an $R$-path in $\tilde{S}$ to an $R_0$-path in $\mathcal{T}$ for $R_0 = \lambda R + C$. Let $\mathcal{K}_0\subset \mathcal{T}$ be the compact set that satisfies Lemma \ref{sublema} for $R_0$. We let $\mathcal{K} = \psi^{-1}(\mathcal{K}_0)$, the pre-image of $\mathcal{K}_0$ under $\psi$. This set is compact, as $\psi$ is continuous and a quasi-isometry.

For $n\in\N$ we take $W'_n$ and $m_1,\ldots,m_n$ as in Lemma \ref{sublema}. By the boundary properties of the quasi-isometries, discussed before Lemma \ref{sublema}, we know that $W_n = \psi^{-1}(W'_n)$ is a neighbourhood of $\xi$. Let us verify that this setting satisfies the statement:

Since $\psi$ is equivariant, $T^{m_i}(\mathcal{K}) = \psi^{-1}(L_g^{m_i}(\mathcal{K}_0))$, and pre-images of disjoint sets are disjoint. On the other hand, if $X=\{x_0,\ldots,x_m\}$ is an $R$-path with $x_0\in\mathcal{K}$ and $x_m\in W_n$, then $X'=\{\psi(x_0),\ldots,\psi(x_m)\}$ is an $R_0$-path that begins in $\mathcal{K}_0$ and ends in $W'_n$. Thus $X'$ meets $L_g^{m_i}(\mathcal{K}_0)$ for all $i=1,\ldots,n$. We use again that $T^{m_i}(\mathcal{K}) = \psi^{-1}(L_g^{m_i}(\mathcal{K}_0))$, and we see that $X$ must meet $T^{m_i}(\mathcal{K})$ for all $i$.

Finally, if $D\subset\tilde{S}$ is compact, we can ensure that $D\subseteq\mathcal{K}$ by making $\mathcal{K}_0$ contain $D'=\psi(D)$.

$\hfill\square$

\subsection{Dichotomy for adjacent Conley surfaces}

We now focus on basic pieces that have adjacent Conley surfaces for a prescribed Conley decomposition, studying whether or not they are heteroclinically related. Our main result in this direction is Proposition \ref{dicotomia}, which constitutes a particular case of this problem.

For a periodic point $q\in \textrm{Per}_m(f)$ of period $m$, let $(q)$ be the  loop obtained by evaluating the isotopy between the identity and $f^m$ in $q$. If $b\subset\Sigma$ is a simple closed curve, we say that $q$ {\em is in the class of} $b$ if $(q)$ is freely homotopic to $b^k$ for some non zero $k\in\Z$. This can also be described in terms of the canonical lift: If $\tilde{b}$ is a lift of $b$ to $\D$ and $T$ is a generator of $\textrm{Stab}(\tilde{b})\subseteq\pi_1(\Sigma)$, then $q\in \textrm{Per}_m(f)$ is in the class of $b$ if there is a lift $\tilde{q}$ of $q$ such that $F^m(\tilde{q})=T^k(\tilde{q})$ for some $k\in\Z$.


\begin{proposition}\label{dicotomia} Consider $\Lambda_1$, $\Lambda_2$ saddle type basic pieces such that:
\begin{itemize}
\item[a)] $\Lambda_1$ has a contractible fixed point.
\item[b)] $\Lambda_2$ is annular and $\rho_{\Lambda_2}(f)=\{v\}$ where $v\neq 0$.
\item[c)] $S_{\Lambda_1}\cap S_{\Lambda_2}:=b$ is an essential simple closed curve.
\item[d)] $f(S_{\Lambda_1})\cap S_{\Lambda_2}\neq\emptyset$.
\end{itemize}
Then one of the following holds:
\begin{enumerate}
\item There exists a periodic point $q\in\Lambda_1$ in the class of $b$, or
\item $\Lambda_1\prec\Lambda_2$.
\end{enumerate}
\end{proposition}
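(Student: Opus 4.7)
First, I would establish the preliminary observations. Condition~(d), combined with the fact that the Conley decomposition comes from a Lyapunov function~$\mathcal{L}$ that strictly decreases across~$b$, forces $f(b)\cap S_{\Lambda_1}=\emptyset$ and $f^{-1}(b)\cap S_{\Lambda_2}=\emptyset$. The criterion recorded after Lemma~\ref{l.conleycasesess} then yields $W^u(\Lambda_1,f)\cap b\neq\emptyset$ and $W^s(\Lambda_2,f)\cap b\neq\emptyset$, while $W^s(\Lambda_1,f)\cap b=\emptyset$ and $W^u(\Lambda_2,f)\cap b=\emptyset$. Aiming at case~(1), I would then assume that case~(2) fails, so that $W^u(\Lambda_1,f)\cap W^s(\Lambda_2,f)=\emptyset$, and derive case~(1) from this assumption.

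Next, I would lift the picture to the Poincar\'e cover $\pi:\D\to\Sigma$ with canonical lift $F$. Fix $\tilde p\in\D$ lifting the contractible fixed point, so $F(\tilde p)=\tilde p$. Let $\tilde S_1\subset\D$ be the lift of $S_{\Lambda_1}$ containing $\tilde p$ and $\tilde\Lambda_1=\tilde S_1\cap\pi^{-1}(\Lambda_1)$, which is $F$-invariant by Lemma~\ref{l.qisurface}(2). Choose $\tilde b\subset\partial\tilde S_1$ to be a lift of~$b$ meeting $W^u(\tilde p,F)$ (possible since $W^u(p,f)\cap b\neq\emptyset$), pick $\tilde z\in W^u(\tilde p,F)\cap\tilde b$, and let $T\in\pi_1(\Sigma)$ generate $\mathrm{Stab}(\tilde b)$. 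Note that $T$ preserves $\tilde\Lambda_1$ and represents the free homotopy class of~$b$, so finding a periodic point $q\in\Lambda_1$ in the class of~$b$ is equivalent to finding $\tilde q\in\tilde\Lambda_1$ with $F^m(\tilde q)=T^k\tilde q$ for some $m>0$ and some $k\neq 0$. My plan is to exhibit, for some such~$k$, a transverse point in $W^u(\tilde p,F)\cap W^s(T^k\tilde p,F)$; the $\lambda$-lemma together with the shadowing Theorem~\ref{t.shadchain} then produces a periodic orbit in $\Lambda_1$ whose lift displaces by a power of $T^k$, i.e.\ a periodic orbit of class~$b$.

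The main obstacle is producing that transverse heteroclinic under the failure of case~(2). Observe that $W^s(\tilde p,F)$ stays on the $\tilde S_1$-side of every $T$-translate of $\tilde b$ (because $W^s(\Lambda_1,f)\cap b=\emptyset$), while each arc of $W^u(T^k\tilde p,F)=T^k\cdot W^u(\tilde p,F)$ from $T^k\tilde p$ to $T^k\tilde z$ lives in $\tilde S_1$. The hypothesis $\Lambda_1\not\prec\Lambda_2$ means that $W^u(\tilde p,F)$ meets no lift of $W^s(\Lambda_2,f)$ in~$\D$; combined with the fact that the lifts of $W^s(\Lambda_2,f)$ accumulate transversally along $\tilde b$ in a $T$-equivariant fashion, this forces the forward portion of $W^u(\tilde p,F)$ past~$\tilde z$ to remain in $\tilde S_1^{(\infty)}$ and accumulate towards an endpoint~$\xi$ of~$\tilde b$ along an $R$-path inside~$\tilde S_1$, where $R=\sup_{\tilde x}d_{\D}(F(\tilde x),\tilde x)$ is finite because $F$ commutes with all deck transformations. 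Lemma~\ref{lema geometrico} then forces this $R$-path to meet arbitrarily many disjoint translates $T^{m_i}(\mathcal{K})$, and the local product structure of the hyperbolic set $\tilde\Lambda_1$ along these translates, combined with the fitted Axiom~A transversality, yields the desired crossing $W^u(T^k\tilde p,F)\pitchfork W^s(\tilde p,F)$. Making precise this escape-free behaviour of $W^u(\tilde p,F)$ within $\tilde S_1$, so that Lemma~\ref{lema geometrico} applies, is where I expect the bulk of the technical work to lie.
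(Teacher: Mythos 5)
Your preliminary observations are sound: condition (d) together with quasi-invariance forces $f(b)\cap S_{\Lambda_1}=\emptyset$, and the remarks after Lemma~\ref{l.conleycasesess} then give $b\cap W^u(\Lambda_1,f)\neq\emptyset$ and $b\cap W^s(\Lambda_2,f)\neq\emptyset$. Likewise, taking the contrapositive (assume $\Lambda_1\not\prec\Lambda_2$, derive a class-$b$ periodic point via Lemma~\ref{lema geometrico}) is a legitimate reorganization of the paper's dichotomy, whose Case~I indeed runs the coarse-geometry/shadowing machinery you anticipate, and your identification of the two endpoints of $\tilde{b}$ as the key accumulation set is right.

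The genuine gap is in the middle step, and it is not where you think. You claim that $\Lambda_1\not\prec\Lambda_2$ together with the ``$T$-equivariant accumulation of lifts of $W^s(\Lambda_2,f)$ along $\tilde{b}$'' forces the branch of $W^u(\tilde{p},F)$ past $\tilde z$ to \emph{remain in} $\tilde{S}_1^{(\infty)}$. This is false as a geometric picture and is not what the paper proves. $W^s(\Lambda_2,f)$ is a lamination, not a foliation, so a single unstable branch can and generically does slip into $\tilde{S}_2$ through the gaps without ever meeting it. What the paper actually shows (Case~II, Claims~1--3) is that if the branch $B$ fails to accumulate on $\{\xi_+,\xi_-\}$ from within $\tilde{S}_1$, then $B$ enters $\tilde{S}_2$ and, by a Jordan-curve barrier argument using arcs $\delta\subset W^s(\tilde q,F)\cup W^u(\tilde q,F)$ crossing the band together with their $T$-translates, $B$ is forced to reach the far boundary $\tilde{b}'$ at points converging to $\xi_+$; the long partial orbits across $\tilde{S}_2$ that this entails then accumulate on $\Lambda_2$ by compactness, yielding $\Lambda_1\prec\Lambda_2$ and a contradiction. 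Only this indirect route gives the accumulation on $\{\xi_+,\xi_-\}$ within $\tilde{S}_1$ that Lemma~\ref{lema geometrico} requires. Your barrier heuristic gestures at the first part of that argument but draws the wrong conclusion from it, and you omit the crucial second part (the accumulation on $\Lambda_2$). Finally, your closing step — extracting a transverse intersection $W^u(\tilde p,F)\pitchfork W^s(T^k\tilde p,F)$ from the translates of $\mathcal{K}$ — is a different mechanism than the paper's, which instead builds a $T$-periodic $\epsilon_0$-pseudo-orbit by a pigeonhole argument among the pullbacks $T^{-m_i}(F^{n_i}(y))\in\mathcal{K}$ and then shadows it; your homoclinic version is plausible but as stated skips the pigeonhole that makes the construction quantitative, so it would need to be spelled out (and would likely end up reproducing the pseudo-orbit argument in disguise).
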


The conclusion can be interpreted as saying that an heteroclinical connection $\Lambda_1\prec\Lambda_2$ can only be avoided if some of the rotation of $\Lambda_1$ mirrors that of $\Lambda_2$. That is to be taken in the sense that $v$ is a non zero multiple of $[b]\in H_1(\Sigma;\R)$, since $\Lambda_2$ is annular, and option (1) implies a non zero rotation vector for $\Lambda_1$ also in the direction of $[b]$. It should be noted that the two options in the conclusion are not exclusive.

Before the proof of Proposition \ref{dicotomia} we need to recall some general facts about topological dynamics. We shall state these results in the particular case that concerns us, i.e. that of $f\in \mathcal{A}_0(\Sigma)$. For $n\in\N$ and $x\in\Sigma$ we write $$\mathcal{O}_n(x,f) = \{x,f(x),\ldots,f^n(x) \} $$ the $n$-{\em partial orbit} of $x$ by $f$. We restate Lemma \ref{l.saltos2} using this notation:

\begin{lem}\label{wandering} (Lemma \ref{l.saltos2}) Let $U$ be a neighbourhood of $\Omega(f)$. Then there exists $\kappa\in\N$ such that for every $x\in\Sigma$ and $n\geq \kappa$, we have $\mathcal{O}_n(x,f)\cap U\neq\emptyset$.
\end{lem}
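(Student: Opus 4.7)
The plan is a compactness-plus-pigeonhole argument that uses only the definition of the non-wandering set, with no appeal to the Axiom A structure. First I would set $K:=\Sigma\setminus U$, which is compact and disjoint from $\Omega(f)$, so every point of $K$ is wandering.

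For each $y\in K$ I pick an open neighbourhood $V_y$ with $V_y\cap f^n(V_y)=\emptyset$ for all positive integers $n$, as provided by the definition of wandering point. By compactness of $K$, I extract a finite subcover $V_1,\ldots,V_m$ and declare $\kappa:=m$.

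The heart of the argument is the following pigeonhole observation: if $x\in\Sigma$ has partial orbit $\mathcal{O}_n(x,f)\subseteq K$, then for each fixed $j\in\{1,\ldots,m\}$ there is at most one $i\in\{0,\ldots,n\}$ with $f^i(x)\in V_j$. Indeed, two such indices $i_1<i_2$ would place $f^{i_2}(x)$ in $V_j\cap f^{i_2-i_1}(V_j)$, contradicting the wandering property of $V_j$. Since the $V_j$ cover $K$, summing over $j$ forces $n+1\leq m$. Contrapositively, whenever $n\geq\kappa$ the partial orbit must leave $K$ and hence meet $U$, which is exactly the claim.

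I do not anticipate any real obstacle here; the only subtle point is to recognize that the wandering condition forces each $V_j$ to be visited by at most a single iterate of the orbit, rather than by a whole connected block of iterates, and this is immediate from the definition once one writes it out. No hyperbolicity, Conley theory, or rotation-set machinery is required.
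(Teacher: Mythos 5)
Your proof is correct and is exactly the standard compactness-plus-pigeonhole argument that the paper itself only alludes to (in the proof of Lemma~\ref{l.saltos2} the authors write that ``it is a general fact for a map in a compact metric space\ldots'' without supplying details). Your write-up supplies those details cleanly; in particular the key step --- that an index map $i\mapsto j(i)$ with $f^i(x)\in V_{j(i)}$ must be injective when the whole partial orbit stays in $K$ --- is exactly right and is where the wandering condition enters.

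One cosmetic remark: you could sharpen $\kappa$ to $m-1$ rather than $m$, since the pigeonhole gives $n+1\leq m$, i.e.\ $n\leq m-1$, so already $n\geq m-1$ forces the orbit to meet $U$ (provided $m\geq 1$, i.e.\ $K\neq\emptyset$; if $K=\emptyset$ the statement is vacuous). This doesn't affect correctness, only the optimality of the constant, and the paper does not optimize it either.

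Your argument also shows slightly more than the stated lemma: it bounds, for every $x$, the total number of (forward) iterates landing in $K$, not just the length of an initial block that can avoid $U$. This is in fact the form stated in Lemma~\ref{l.saltos2}~(1), so your proof covers both variants at once.
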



We will need a more refined statement about the behaviour of partial orbits near a basic piece.

\begin{lem}\label{lema dinamico} Let $\Lambda$ be a basic piece of $f$ and $U$, $S$ be open sets such that $\Lambda\subseteq U\subseteq S$ and $\textrm{cl}[S]\cap \Omega(f)=\Lambda$. Then there exists $n_0\in\N$ and an open set $V$ with $\Lambda\subseteq V\subseteq U$ that satisfy the following property:
If a partial orbit $\mathcal{O}_m(x,f)$ is included in $S$ with $x\in V$ and $m>n_0$,  then $\mathcal{O}_{m-n_0}(x,f)$ is included in $U$.
\end{lem}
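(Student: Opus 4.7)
I plan to prove the lemma by contradiction combined with a Lyapunov-function argument. Suppose no pair $(V, n_0)$ as in the statement exists. Then, taking a sequence of open neighbourhoods $V_k \subseteq U$ of $\Lambda$ with $\sup_{y\in V_k} d(y,\Lambda) \to 0$ and $n_k = k$, one obtains for each $k$ a counterexample: $x_k \in V_k$, $m_k > n_k$ with $\mathcal{O}_{m_k}(x_k,f) \subseteq S$, and some $j_k \in \{0,\dots,m_k-n_k\}$ with $z_k := f^{j_k}(x_k) \notin U$. The key observation is that $z_k$ lies in the compact set $K := \textrm{cl}[S] \setminus U$, which is disjoint from $\Omega(f)$ because $\textrm{cl}[S] \cap \Omega(f) = \Lambda \subseteq U$. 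Passing to a subsequence we may assume $z_k \to z^* \in K$, and that $j_k$ either stays bounded or $j_k \to \infty$.

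In the bounded case, after passing to a further subsequence with $j_k = j^*$ constant, continuity gives $z^* = f^{j^*}(\lim_k x_k) \in f^{j^*}(\Lambda) = \Lambda$, contradicting $z^* \in K$ (which is disjoint from $\Lambda$).

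In the unbounded case $j_k \to \infty$, we also have $m_k - j_k \geq n_k \to \infty$, so for every fixed $j \geq 0$ one has $f^j(z_k) \in S$ for $k$ large, and by continuity $f^j(z^*) \in \textrm{cl}[S]$. Hence the forward orbit of $z^*$ lies in $\textrm{cl}[S]$, and $\omega(z^*) \subseteq \textrm{cl}[S] \cap \Omega(f) = \Lambda$. Now take $\mathcal{L}$ a Lyapunov function as in Theorem \ref{t.conley}, chosen so that $\mathcal{L}(f(y)) < \mathcal{L}(y)$ whenever $y \notin \Omega(f)$. Let $c_\Lambda$ denote the constant value of $\mathcal{L}$ on $\Lambda$ given by Theorem \ref{t.conley}(2). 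Monotonicity gives $\mathcal{L}(z_k) \leq \mathcal{L}(x_k) \to c_\Lambda$, so $\mathcal{L}(z^*) \leq c_\Lambda$. Since $z^* \in K$ lies outside $\Omega(f)$, the strict-decrease property yields $\mathcal{L}(f(z^*)) < \mathcal{L}(z^*) \leq c_\Lambda$, and by monotonicity $\mathcal{L}(f^k(z^*)) < c_\Lambda$ for every $k \geq 1$. But $\omega(z^*) \subseteq \Lambda$ together with continuity of $\mathcal{L}$ forces $\lim_k \mathcal{L}(f^k(z^*)) = c_\Lambda$, yielding a contradiction.

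The main subtlety is justifying the strict-decrease property of $\mathcal{L}$ outside $\Omega(f)$: the version of Theorem \ref{t.conley} stated in the paper provides only non-strict monotonicity, but the full Conley fundamental theorem from \cite{conleyfundthm} produces a complete Lyapunov function with $\mathcal{L}(f(x)) < \mathcal{L}(x)$ for every $x \notin \Omega(f)$, which is the refinement the argument requires. Modulo this standard strengthening, the proof reduces to a compactness argument followed by a short Lyapunov descent.
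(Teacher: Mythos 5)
Your proof is correct, but it follows a genuinely different route from the paper's. The paper first constructs a special neighbourhood $V$ of $\Lambda$ with a ``no-return'' property (once an orbit exits $U$, it never re-enters $V$), justified by a sequential chain-recurrence argument, and then produces $n_0$ by invoking the previously established Lemma \ref{l.saltos2}: there is a uniform bound $\kappa$ on the number of consecutive iterates a partial orbit can spend outside the neighbourhood $V\cup\textrm{int}[\Sigma\setminus S]$ of $\Omega(f)$. That gives $n_0=\kappa$ directly, with no Lyapunov function. Your argument instead negates the conclusion wholesale, extracts a limit $z^*\in\textrm{cl}[S]\setminus U$ by compactness, splits on whether the exit time is bounded (closed by invariance of $\Lambda$) or unbounded (closed by Lyapunov descent). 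Both are sound; the paper's version avoids any appeal to the strict-decrease refinement of Conley's theorem by reusing Lemma \ref{l.saltos2}, while yours is more self-contained and requires none of the ``no-return'' machinery.

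You are right to flag the subtlety about the Lyapunov function: Theorem \ref{t.conley} as stated gives only $\mathcal{L}(f(x))\leq\mathcal{L}(x)$, whereas your descent step needs $\mathcal{L}(f(z^*))<\mathcal{L}(z^*)$ at the wandering point $z^*$; the complete Lyapunov function from \cite{conleyfundthm} does provide this. If you want to avoid that strengthening entirely, observe that when $j_k\to\infty$ you also get, for each fixed $j\geq 0$, $f^{-j}(z_k)=f^{j_k-j}(x_k)\in S$ for $k$ large, so the full orbit of $z^*$ lies in $\textrm{cl}[S]$ and hence $\alpha(z^*)\cup\omega(z^*)\subseteq\Lambda$. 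This puts $z^*$ in $W^s(\Lambda,f)\cap W^u(\Lambda,f)$; since $\Lambda$ is a locally maximal hyperbolic set, local product structure then forces $z^*\in\Lambda$, contradicting $z^*\in K$. This alternative closing step is closer in spirit to the recurrence argument the paper uses to construct $V$.
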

\begin{proof}
First we see that there exists a neighbourhood $V$ of $\Lambda$, included in $U$, with the property that if $x\in V$ and $f^i(x)\notin U$ for some $i>0$ then $f^j(x)\notin V$ for every $j>i$. Otherwise we could construct a sequence $x_k$ in $\Sigma\setminus U$ and sequences of integers $m_k<0<n_k$  such that $f^{m_k}(x_k)$ and $f^{n_k}(x_k)$ converge to points in $\Lambda$. Taking an accumulation point of $x_k$ we would obtain a point outside of $U$ that should belong to $\Lambda$,
as such a point should be in a recurrent class of $f$.

Consider the neighbourhood of $\Omega(f)$ defined as $V_0:=V\cup \textrm{int}[\Sigma\setminus S]$. Then, by Lemma \ref{wandering}, there exists $\kappa\in\N$ such that any orbit of length $n\geq\kappa$ must intersect $V_0$. We claim that this $V$ and $n_0=\kappa$ satisfy the conclusion.

Consider a partial orbit $\mathcal{O}_m(x,f)$ included in $S$ with $x\in V$, and suppose that $f^i(x)\notin U$ for some $i>0$. By construction of $V$, and the assumption that $\mathcal{O}_m(x,f)\subset S$, we get that $\{f^i(x),\ldots,f^m(x)\} \cap V_0=\emptyset$. Thus we obtain from Lemma \ref{wandering} that $m-i\leq n_0$, implying $i\geq m-n_0+1$. This means that $\mathcal{O}_{m-n_0}(x,f)\subset U$, by the definition of $i$.
\end{proof}






We are ready to prove Proposition \ref{dicotomia}. As well as the previous results of this section, the reader may want to recall Lemma \ref{l.entornos1}.

{\flushleft \bf Proof of Proposition \ref{dicotomia}:}





Take $\tilde{S}_1\subset\D$ any lift of $S_{\Lambda_1}$. Condition (a) in the hypotheses gives us some $p\in \tilde{S}_1 $ that is fixed by $F$. We can apply Lemma \ref{l.conleycasesess}, taking condition (d) into account, to find some lift $\tilde{b}$ of $b$ in $\tilde{S}_1$ that intersects $W^u(p,F)$ non trivially. By Theorem \ref{t.stamani}, using that $\Lambda_1$ is of saddle type, we have that $W^u(p,F)$ is an immersed manifold of dimension 1, which $p$ divides into two branches. Let $B$ be a branch of $W^u(p,F)$ that meets $\tilde{b}$.


We shall introduce some orientation conventions: As we remarked before, condition (b) gives us that $v$ is a non zero multiple of $[b]\in H_1(\Sigma;\R)$. We pick an orientation for $b$ so that $v$ is a positive multiple of $[b]$. Lifting induces an orientation for $\tilde{b}$, which in turn defines a left and a right side of $\tilde{b}$ in $\D$, where the orientation of $\D$ is induced by lifting that of $\Sigma$. We assume that $\tilde{S}_1$ lies at the left of $\tilde{b}$. The other case admits a symmetrical proof.

Consider $T$ the generator of $\textrm{Stab}(\tilde{b})$ that moves the points of $\tilde{b}$ in the positive direction, according to the orientation we just defined on $\tilde{b}$. Let $\xi_{+},\xi_{-}\in\partial\D$ be the endpoints of $\tilde{b}$, which are also the attracting and repelling points of $T$ respectively. We shall split the proof into two cases, according to whether or not  $B\cap\tilde{S}_1$ accumulates on $\{\xi_{-},\xi_{+}\}$. When it does we will show that there is a periodic point in $\Lambda_1$ in the class of $b$, i.e. option (1) in the conclusion. In the complementary case we will show that $\Lambda_1\prec\Lambda_2$. Let $D_1$ be a fundamental domain for the covering $\pi:\tilde{S}_1\to S_{\Lambda_1}$  that contains $p$. 






\smallskip

{\bf Case I:} $B\cap\tilde{S}_1$ accumulates on either $\xi_{+}$ or $\xi_{-}$.

We shall assume $B\cap\tilde{S}_1$ accumulates on $\xi_{+}$, the other case is analogous. We aim to find a periodic point in $\Lambda_1$ that is in the class of $b$. Consider $\tilde{\Lambda}_1=\pi^{-1}(\Lambda_1)\cap\tilde{S}_1$, which is an invariant hyperbolic set for $F$. An adaptation of the shadowing lemma (Theorem \ref{t.shadchain}) to $F$ and $\tilde{\Lambda}_1$ shows that there is $\epsilon_0>0$ and $U$ a neighbourhood of $\tilde{\Lambda}_1$ such that any $\epsilon_0$-pseudo-orbit for $F$ contained in $U$ is shadowed by an orbit of $F$ in $\tilde{\Lambda}_1$. Moreover, if the pseudo-orbit projects by $\pi$ to a periodic pseudo-orbit for $f$, then the projection of the shadowing orbit is also periodic.

This reduces the problem to find $\{x_j\}$ an $\epsilon_0$-pseudo-orbit for $F$, contained in $U$, such that $x_m = T^k(x_0)$ for some $m,k>0$: If the orbit of $x\in\tilde{\Lambda}_1$ shadows $\{x_j\}$, then $F^m(x)=T^k(x)$, and so $\pi(x)\in\Lambda_1$ will be a periodic point of $f$ in the class of $b$.


If we take a neighbourhood $W$ of $\xi_{+}$, the assumption of Case I implies that $B\cap\tilde{S}_1\cap W$ is non empty. According to the definition of unstable manifold, any point in $B\cap\tilde{S}_1\cap W$ can be written as $F^n(y)$ for some $n>0$ and $y\in D_1$, where $y$ can be chosen as close as we want to $p$ as long as $n$ is large enough. Moreover, by the properties of Conley surfaces we have that the partial orbit $\mathcal{O}_n(y,F)$ is contained in $\tilde{S}_1$. Thus we get $y\in D_1$ with $\mathcal{O}_n(y,F)\subset \tilde{S}_1$ and $F^n(y)\in W$.

We shall find a suitable neighbourhood $W$ of $\xi_{+}$ so that the $F$-orbit of the point $y$ obtained from this argument can be perturbed to a pseudo-orbit with the desired properties.



Let $R=\text{max}\{d(x,F(x)):x\in\D\}$, and $\mathcal{K}$ a compact set given by applying Lemma \ref{lema geometrico} to $S_{\Lambda_1}$, $b$ and $R$. We assume $D_1\subset\mathcal{K}$, as Lemma \ref{lema geometrico} says we can. By compactness there exists $N>0$ such that any subset of $\mathcal{K}$ with $N$ or more points must have two points that are closer than $\epsilon_0$ to each other. Let $W_N$ be a neighbourhood of $\xi_{+}$ and $m_1,\ldots,m_N\in\Z$ given by Lemma \ref{lema geometrico} for this $N$. Notice that the choice of $R$ implies that any $F$-orbit is an $R$-path. If we take $W\subset W_N$, then we will have that $\mathcal{O}_n(y,F)$ meets $N$ disjoint translates of $\mathcal{K}$ by $T$, as stated in Lemma \ref{lema geometrico}. More precisely, there are $n_i\in\N$ with $F^{n_i}(y)\in T^{m_i}(\mathcal{K})$ for $i=1,\ldots,N$. Then $y_i=T^{-m_i}(F^n_i(y))$ are in $\mathcal{K}$ for $i=1,\ldots,N$, and by the choice of $N$ we have two of them that are closer than $\epsilon_0$ to each other. Let $n_i<n_j$ be such that $y_i$ and $y_j$ are closer than $\epsilon_0$. We define $x_j=F^{n_i+j}(y)$ for $j=0,\ldots,n_j-n_i$ and $x_m = T^{m_j-m_i}(x_0)$ for $m=n_j-n_i+1$. Then $\{x_j\}_{j=0}^m$ is an $\epsilon_0$-pseudo-orbit contained in $\tilde{S}_1$, with $x_m = T^k(x_0)$ for $k=m_j-m_i$.

We only have to adjust $W\subset W_N$ so that we can ensure the pseudo-orbit $\{x_j\}$ arising from our construction is contained in $U$. Since $F$ is a lift of $f$, we can assume $U$ is the lift of a neighbourhood $U_0$ of $\Lambda_1$, i.e. $U = \pi^{-1}(U_0)\cap\tilde{S}_1$. Let $V_0\subset U_0$ be a neighbourhood of $\Lambda_1$, and $n_0>0$, as given by Lemma \ref{lema dinamico} for the basic piece $\Lambda_1$ and the open sets $U_0$ and $\text{int}(S_{\Lambda_1})$. We consider $V = \pi^{-1}(V_0)\cap\tilde{S}_1$ and $W$ a neighbourhood of $\xi_{+}$ such that $F^{-i}(W)\subset W_N$ for $i=0,\ldots,n_0$, which is possible since $F$ extends continuously as the identity on $\partial \D$. As we remarked before, we can assume $y\in V$ and $n>n_0$ so Lemma \ref{lema dinamico} gives us that $\mathcal{O}_{n-n_0}(y,F)\subset U$. By the choice of $W$ we have $F^{n-n_0}(y)\in W_N$, thus we can apply the previous argument to $\mathcal{O}_{n-n_0}(y,F)$, which also meets $N$ disjoint translates of $\mathcal{K}$ by $T$. This gives us an $\epsilon_0$-pseudo-orbit $\{x_j\}$ contained in $U$ with $x_m = T^k(x_0)$ for some $m,k>0$, and so concludes the proof in Case I.

\smallskip

{\bf Case II:} $B\cap\tilde{S}_1$ does not accumulate on $\{\xi_{+},\xi_{-}\}$.


In this case we want to deduce that $\Lambda_1\prec\Lambda_2$. Let $b'$ be the other essential component of $\partial S_{\Lambda_2}$, so that $\textrm{Fill}(S_{\Lambda_2})$ is the annulus between $b$ and $b'$. We consider $\tilde{S}_2$ the lift of $S_{\Lambda_2}$ that is adjacent to $\tilde{S}_1$ at $\tilde{b}$, and $\tilde{b}'$ the lift of $b'$ to this cover. Notice that $\textrm{Stab}(\tilde{b}) = \textrm{Stab}(\tilde{S}_2) = \textrm{Stab}(\tilde{b}')$, and that $\textrm{Fill}(\tilde{S}_2)$ is the band between $\tilde{b}$ and $\tilde{b}'$, which has $\xi_{+}$ and $\xi_{-}$ as limit points in $\partial\D$. We orient $b'$ so that $T$ moves the points of $\tilde{b}'$ in the positive direction, thus $\textrm{Fill}(\tilde{S}_2)$ is the region at the right of $b$ and the left of $b'$.

Notice that if $\Lambda$ is a basic piece  contained in $\textrm{Fill}(S_{\Lambda_2})$, then either  $\Lambda=\Lambda_2$ or $\Lambda$ is trivial and is contained in $\mathcal{D}=\textrm{Fill}(S_{\Lambda_2})\setminus S_{\Lambda_2}$ which is a union of disks that are attracting or repelling. Either way we have $\rho_{\Lambda}(f)=\{v\}$ by condition (b) and Lemma \ref{l.conleycasesess}. The shadowing lemma for basic pieces gives that $\Lambda$ contains some periodic point with rotation $v$, and thus a lift $q\in\textrm{Fill}(\tilde{S}_2)$ of such a periodic point will satisfy $F^m(q)=T^k(q)$ for some $m,k>0$. 

We shall develop the proof in a sequence of claims, which we state below and will prove afterwards.

\smallskip
{\bf Claim 1:} $B\cap \tilde{S}_2$ accumulates on $\xi_{+}$.

\smallskip
{\bf Claim 2:} If $\Lambda_1\not\prec\Lambda_2$ then $B\cap \tilde{b}'$ accumulates on $\xi_{+}$.

\smallskip
{\bf Claim 3:} If $B\cap \tilde{b}'$ accumulates on $\xi_{+}$ then $\pi(B)$ accumulates on $\Lambda_2$.

\smallskip
Claim 1 serves as a lemma to prove claim 2. From claims 2 and 3 we derive a proof that $\Lambda_1\prec\Lambda_2$ by contradiction: If we suppose the contrary we get that $\pi(B)$, which is contained in $W^u(\Lambda_1,f)$, accumulates on $\Lambda_2$. Then Lemma \ref{l.entornos1} would imply that $\Lambda_1\prec\Lambda_2$, against what we supposed. Thus all that remains is to prove the claims.


\smallskip
{\bf Proof of claim 1:}

Consider first the case when $B\cap \tilde{b}'=\emptyset$. Applying Lemma \ref{l.saltos2} to some point in  $\pi(B)\cap b$ we see that in this case $\pi(B)$ must accumulate on some basic piece $\Lambda\subset \textrm{Fill}(S_{\Lambda_2})$. So there are $q_0\in\Lambda$ and $z_0\in\pi(B)$ with $z_0\in W^s(q_0,f)$, and then we can take lifts $q\in\tilde{\Lambda} = \pi^{-1}(\Lambda)\cap\textrm{Fill}(\tilde{S}_2)$ and $z\in B$ with $\pi(q)=q_0$, $\pi(z)=z_0$, and $z\in W^s(q,F)$. Since $\rho(f,q_0) = v$ we have that $F^n(q)$ tends to $\xi_{+}$ as $n\to+\infty$, and then so does $F^n(z)$, implying that $B\cap\textrm{Fill}(\tilde{S}_2)$ accumulates on $\xi_{+}$. Since $\textrm{Fill}(\tilde{S}_2)\setminus \tilde{S}_2$ is a disjoint union of bounded disks and $B$ is connected, it follows that $B\cap \tilde{S}_2$ accumulates on $\xi_{+}$.





We consider now the situation when $B\cap \tilde{b}'\neq\emptyset$. This implies in particular that $F(\tilde{b}')$ lies at the right side of $\tilde{b}'$, for otherwise we would have that $\textrm{Fill}(\tilde{S}_2)$ is an attracting region for $F$, which falls under the previous case. Consider an orientation (and therefore an order) on $B$ so that $F$ move points of $B$ in the positive direction. With respect to this order, let $\alpha'$ be the minimum in $B$ of the set $B\cap \tilde{b}'$ and let $[p,\alpha']$ denote the arc of $B$ from $p$ to $\alpha'$. Recalling the orientation (and order) of $\tilde{b}$, let $\alpha$ be the minimum in $\tilde{b}$ of the set $\tilde{b}\cap[p,\alpha']$. Take $\gamma=[\alpha,\alpha']$ as the arc in $B$ between $\alpha$ and $\alpha'$. Also define $h\subset\tilde{b}$ as the half-line joining $\xi_{-}$ with $\alpha$ and $h'\subset\tilde{b}'$ as the half-line joining $\xi_{-}$ with $\alpha'$. The concatenation of $h$, $\gamma$ and $h'$ gives a Jordan curve $J\subset \textrm{cl}[\D]$, and we consider the region $D$ bounded by it, i.e. the component of $\textrm{cl}[\D]\setminus J$ that is disjoint from $\partial\D$. (Figure \ref{inter} depicts $D$, among other elements of this proof).


Clearly $D$ contains fundamental domains of the cover $\pi:\tilde{S}_2\to S_{\Lambda_2}$ so there is $q\in D$ a lift of a periodic point in $\Lambda_2$ as in the remark previous to the statements of the claims. Thus given an arbitrary neighbourhood $W$ of $\xi_{+}$ there is $n\in\N$ with $F^n(q)=T^k(q)\in W$ for some $k>0$. We choose $W$ so that $\tilde{S}_2\cap W$ is connected and consider a ray $r$ in $\tilde{S}_2\cap W$ joining $F^n(q)$ with $\xi_{+}$. These are all the objects shown in Figure \ref{inter}. We shall show that $B$ intersects $r$, which implies the claim.


\tiny
\begin{figure}[ht]\begin{center}

\centerline{\includegraphics[height=6.5cm]{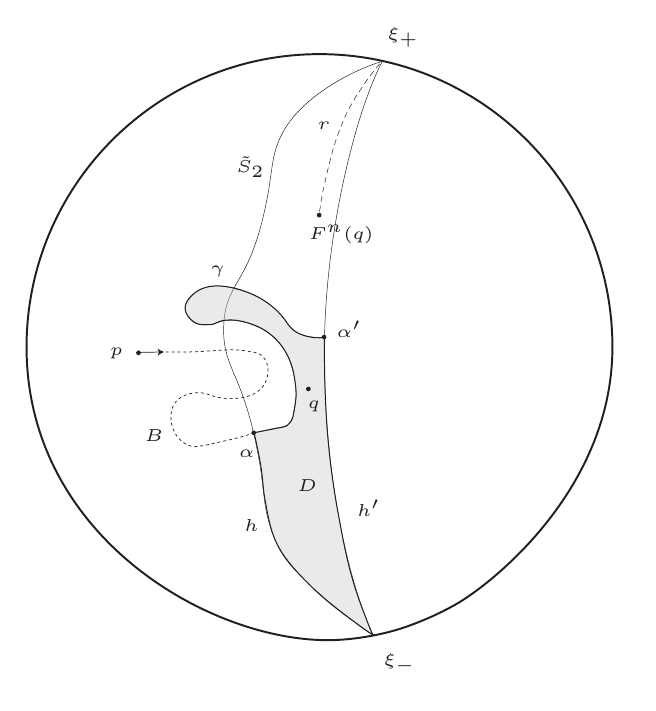}}
\caption{Objects considered in case II of the proof or Proposition \ref{dicotomia}.}
\label{inter}
\end{center}\end{figure}
\normalsize

Recall that $F$ extends continuously to $\textrm{cl}[\D]$ as the identity on $\partial\D$, so $F^n(J)$ is also a Jordan curve in $\textrm{cl}[\D]$ that meets $\partial\D$ exactly on $\xi_{-}$, and bounds the region $F^n(D)$. Since $r$ joins $F^n(q)\in F^n(D)$ with $\xi_{+}$, it must intersect $F^n(J)$. If $r$ meets $F^n(\gamma)$, which is an arc of $B$, we are already done. On the other hand $r$ cannot intersect $F^n(h')$ since it is disjoint from $\tilde{S}_1$, as we have $F(\tilde{b}')$ at the right side of $\tilde{b}'$. Thus let us assume $F^n(h)$ meets $r$ and let $z$ be the minimum of $r\cap F^n(h)$ in the line $F^n(\tilde{b})$, with the orientation induced by that of $\tilde{b}$, i.e. going from $\xi_{-}$ to $\xi_{+}$.

Let $t$ be the concatenation of $t_1$, the arc of $F^n(h)$ from $\xi_{-}$ to $z$, and $t_2$, the arc of $r$ from $z$ to $\xi_{+}$. Notice that $t_1$ and $t_2$ only meet at $z$, so $t$ is a line, and that it separates $\tilde{b}$ from $F^n(\tilde{b}')$, since both $F^n(h)$ and $r$ lie at the right of $\tilde{b}$ and the left of $F^n(h')$. Thus $F^n([p,\alpha'])$ must intersect $t$, since it is an arc from $p$, at the left of $\tilde{b}$, to $F^n(\alpha')\in F^n(\tilde{b}')$. We seek to prove that $F^n([p,\alpha'])\subset B$ intersects $t_2\subset r$, as that would give the claim. By construction of $h$ we have $[p,\alpha']\cap h = \{\alpha\}$ and so we get $F^n([p,\alpha'])\cap F^n(h) = \{F^n(\alpha)\}$. Observing that $F^n(\alpha)$ is the maximum of $F^n(h)$ in the line $F^n(\tilde{b})$, the last fact shows that $F^n([p,\alpha'])$ can only meet $t_1$ in case that $z=F^n(\alpha)$, and then only at that point. In any case $F^n([p,\alpha'])$ cannot meet $t_1$ in its interior, so it must meet $t_2$, and that finishes the proof of this claim.


\smallskip
{\bf Proof of claim 2:}

Consider $q\in \tilde{\Lambda}_2:=\pi^{-1}(\Lambda_2)\cap \tilde{S}_2$. By Lemma \ref{l.conleycasesess} we know that $W^s(q,F)$ intersects $\tilde{b}$. On the other hand, depending on whether $F(\tilde{b}')$ lies at the right or the left of $\tilde{b}$, we have that either $W^s(q,F)$ or $W^u(q,F)$ intersects $\tilde{b}'$, this also coming from Lemma \ref{l.conleycasesess}. In any case, we have an arc $\delta\subset\tilde{S}_2$, starting at a point $\beta\in\tilde{b}$ and ending at a point $\beta'\in\tilde{b}'$, that is contained in $W^s(q,F)\cup W^u(q,F)$. The key point is that $\pi(B)$ cannot meet $W^u(\pi(q),f)$ by definition, nor $W^s(\pi(q),f)$ by the assumption that $\Lambda_1\not\prec\Lambda_2$, thus $B$ cannot meet $T^n(\delta)$ for any $n\in\Z$.

Let $h_n$ be the half-line in $\tilde{b}$ from $T^n(\beta)$ to $\xi_{+}$, and $h'_n$ the half-line in $\tilde{b}'$ from $T^n(\beta')$ to $\xi_{+}$. Let $J_n$ be the Jordan curve in $\textrm{cl}[\D]$ defined by concatenation of $h_n$, $T^n(\delta)$ and $h'_n$, and $D_n$ the disk bounded by $J_n$. Given a neighbourhood $W$ of $\xi_{+}$ we have that $\textrm{cl}[D_n] \subset W$ for $n$ large enough. We aim to show that $B$ intersects $h'_n$, which proves this claim.

By the definition of $h_n$ and $h'_n$, and the compactness of $T^n(\delta)$, there is some neighbourhood $W_n$ of $\xi_{+}$ such that $W_n\cap \tilde{S}_2 \subseteq \textrm{cl}[D_n]$. So Claim 1 gives us that $B$ intersects $\textrm{cl}[D_n]$, thus $B$ meets $J_n$, as the Jordan curve that bounds $D_n$. We recall that $B$ cannot meet $T^n(\delta)$ by the assumption of this claim. On the other hand, $B$ cannot meet $h_n$ if $n$ is large enough by the hypothesis of Case II. So $B$ must meet $h'_n$, and we get the claim.


\smallskip
{\bf Proof of claim 3:}





Take a sequence $x_k \in B\cap \tilde{b}'$ that converges to $\xi_{+}$. We can write $x_k = F^{n_k}(y_k)$ for some $n_k>0$ and $y_k\in D_1$. We see that there is $m_k>0$ so that $F^j(y_k) \in \tilde{S}_1$ for $j=0,\ldots,m_k-1$ and $F^j(y_k) \in \tilde{S}_2$ for $j=m_k,\ldots, n_k$. That is by the properties of Conley surfaces: If $\mathcal{O}_{n_k}(y_k,F)$ meets any lift of any other Conley surface it would not reach $\tilde{b}'$, and once in $\tilde{S}_2$ it cannot return to $\tilde{S}_1$. We remark that this reasoning includes the lifts of the Conley surfaces that form $\mathcal{D}$. In order to ease notation let us introduce $z_k = F^{m_k}(y_k)$.

We consider the partial orbits $\mathcal{O}_{n_k-m_k}(\pi(z_k),f)$, which are contained in $\pi(B)\cap S_{\Lambda_2}$, and we shall prove the claim by showing that their union accumulates in $\Lambda_2$.

Let $W_+$, $W_-$ be neighbourhoods of $\xi_{+}$, $\xi_{-}$ such that $B\cap\tilde{S}_1$ lies outside $W_+\cup W_-$, as given by the assumption of Case II. Let $C$ be the band that lies between the lines $F^{-1}(\tilde{b})$ and $\tilde{b}$, and $K = C \setminus (W_+\cup W_-)$. Notice that $K$ is compact. We see that the sequence $F^{-1}(z_k)$ belongs to $K$: a point $F^{-1}(z_k)$ is outside $W_+\cup W_-$ since it belongs to $\tilde{S}_1\cap B$, and $F^{-1}(z_k)\in C$ since $z_k = F(F^{-1}(z_k))\in\tilde{S}_2$.

Since $x_k = F^{n_k-m_k}(z_k)$ tends to $\xi_{+}$ while $F^{-1}(z_k)$ remains in a compact set, we deduce that $n_k-m_k \to +\infty$ recalling that $d(x,F(x))$ is uniformly bounded on $\D$. Thus if we take $U$ an arbitrary neighbourhood of $\Lambda_2$, by Lemma \ref{wandering} there is $k$ such that $\mathcal{O}_{n_k-m_k}(\pi(z_k),f)$ must intersect $U$, and that concludes the proof.

$\hfill\square$

\subsection{Unstable-stable paths and walls}

We turn now to introduce some key objects for our study of the heteroclinical relationship. They  will consist on continuous paths that can be written as a concatenation $\gamma = \gamma^u\gamma^s$ where $\gamma^u$ and $\gamma^s$ are contained respectively in the unstable and stable manifold of some saddle type basic pieces. Among such paths, the ones that will be of interest come in two versions, namely unstable-stable {\em paths} and {\em walls}, which will play complementary roles in order to help us deduce some forcing conditions for the heteroclinical relationship between annular pieces. This forcing result is Theorem \ref{conexion cruzada} at the end of this section, and we shall need the unstable-stable paths not only to prove it, but also to give the definitions that appear in its statement.

For convenience, if $\gamma$ is a path we will denote by $\gamma^-$ and $\gamma^+$ the initial and final endpoints of $\gamma$. Let $\Lambda_1$ and $\Lambda_2$ be basic pieces of $f$ (not necessarily different). We say that $\gamma$ is a $(\Lambda_1,\Lambda_2)$-{\em path} if it can be written as a concatenation $\gamma = \gamma^u\gamma^s$ such that

\begin{itemize}
\item $\gamma^u\subset W^u(\Lambda_1,f)$ and $\gamma^s\subset W^s(\Lambda_2,f)$, and
\item $\gamma^-\in\Lambda_1$ and $\gamma^+\in\Lambda_2$.

\end{itemize}

On the other hand, $\gamma$ is a $(\Lambda_1,\Lambda_2)$-{\em wall} if we have $\gamma = \gamma^u\gamma^s$ with

\begin{itemize}
\item $\gamma^u\subset W^u(\Lambda_1,f)$ and $\gamma^s\subset W^s(\Lambda_2,f)$, and
\item $\gamma$ is a loop (i.e. $\gamma^-=\gamma^+$) that is non trivial in homotopy.
\end{itemize}

Notice that the existence of a $(\Lambda_1,\Lambda_2)$-path or wall implies that $\Lambda_1\preceq\Lambda_2$. On the other hand, if $\Lambda_1\prec\Lambda_2$ we can obtain the existence of a $(\Lambda_1,\Lambda_2)$-path from the definition of heteroclinical connection and Theorem \ref{t.stamani}. An important difference between paths and walls is that walls do not need to intersect the basic pieces, they may be contained in the wandering set. Also, it is not always the case that a $(\Lambda_1,\Lambda_2)$-wall exists for $\Lambda_1\prec\Lambda_2$.

The following is a basic property for any curve of the form $\gamma^u\gamma^s$ as above, in particular for unstable-stable paths and walls, which are the cases we will be using.
Recall the notation introduced in section \ref{s.conley}: $S^{(n)}=\textrm{Fill}\left(\bigcup_{|j|\leq n} f^j(S)\right)$.

\begin{lem}\label{casi-soporte} Let $\Lambda_1$ and $\Lambda_2$  be basic pieces of $f$ contained in the interior of a sub-surface $S$ and let $\gamma$ be a $(\Lambda_1,\Lambda_2)$-path or wall. Then there exists $n\in\N$ such that $\gamma\subset S^{(n)}$.
\end{lem}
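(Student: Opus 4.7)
The plan is to push $\gamma^u$ into a forward iterate of $S$ and $\gamma^s$ into a backward iterate of $S$, taking advantage of the fact that unstable/stable manifolds of a basic piece are uniformly attracted to the piece under backward/forward iteration on compact sets. The distinction between paths and walls will play no role, because the only features of $\gamma$ we need are that $\gamma^u$ and $\gamma^s$ are compact and contained in $W^u(\Lambda_1,f)$ and $W^s(\Lambda_2,f)$ respectively.

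The key input is the following uniform convergence statement: for any compact $K\subset W^u(\Lambda_1,f)$, $f^{-n}(K)$ converges uniformly to $\Lambda_1$ as $n\to\infty$. First I would derive this from the Stable Manifold Theorem (Theorem \ref{t.stamani}): there exists $\varepsilon>0$ such that, writing $W^u_{\varepsilon}(\Lambda_1,f):=\bigcup_{x\in\Lambda_1}W^u_{\varepsilon}(x,f)$, the invariance of $\Lambda_1$ yields
\[
W^u(\Lambda_1,f)=\bigcup_{k\ge 0}f^k\bigl(W^u_{\varepsilon}(\Lambda_1,f)\bigr).
\]
By compactness, $K\subset\bigcup_{k=0}^{N}f^k(W^u_{\varepsilon}(\Lambda_1,f))$ for some $N$, so for every $n\ge N$ we have $f^{-n}(K)\subset f^{-(n-N)}(W^u_{\varepsilon}(\Lambda_1,f))$, which lies in an $\varepsilon$-neighborhood of $\Lambda_1$ and contracts toward $\Lambda_1$ by the hyperbolic estimate on $W^u_{\varepsilon}$. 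The analogous statement for forward iteration of compact subsets of $W^s(\Lambda_2,f)$ is proved symmetrically by applying the same reasoning to $f^{-1}$.

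Applying this with $K=\gamma^u$ and then with $K=\gamma^s$, and using the hypothesis $\Lambda_i\subset\mathrm{int}(S)$ to pick neighborhoods $U_i$ of $\Lambda_i$ contained in $\mathrm{int}(S)$, I would conclude that for $n$ large enough
\[
f^{-n}(\gamma^u)\subset U_1\subset S \quad\text{and}\quad f^{n}(\gamma^s)\subset U_2\subset S,
\]
equivalently $\gamma^u\subset f^n(S)$ and $\gamma^s\subset f^{-n}(S)$. Choosing a single $n$ that works for both, we get
\[
\gamma=\gamma^u\cup\gamma^s\subset f^n(S)\cup f^{-n}(S)\subset\bigcup_{|j|\le n}f^j(S)\subset S^{(n)},
\]
which is the desired conclusion. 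The only non-bookkeeping step is the uniform-convergence lemma above; once that is established the rest is routine, and I do not expect any real obstacle.
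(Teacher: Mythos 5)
Your proof is correct and follows essentially the same approach as the paper's: both use compactness of $\gamma^u$ and $\gamma^s$ together with the hypothesis $\Lambda_i\subset\mathrm{int}(S)$ to push them into $f^n(S)$ and $f^{-n}(S)$ respectively. You have merely spelled out, via the Stable Manifold Theorem, the uniform-attraction step that the paper leaves implicit.
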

\begin{proof} Since $\gamma^u$ has compact image and $S$ is a neighbourhood of $\Lambda_1$, there exists $n\in\N$ such that the image of $\gamma^u$ is contained in $f^{n}(S)$. The same works for $\gamma^s$.
\end{proof}

The next lemmas give us constructions of unstable-stable walls. More precisely, they tell us that some conditions on the rotation set imply the existence of walls.

\begin{lem}\label{pared} Suppose $\Lambda_1\prec\Lambda_2$ are basic pieces of $f$ and assume that $\Lambda_1$ contains a contractible fixed point. Then for every non-contractible periodic point $q\in\Lambda_2$ there exists a $(\Lambda_1,\Lambda_2)$-wall freely homotopic to $(q)$.

\end{lem}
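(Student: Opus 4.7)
The plan is to construct the wall in the universal cover $\mathbb{D}$, using the canonical lift $F$, and then project it down to $\Sigma$. First I would fix a lift $\tilde{p}\in\mathbb{D}$ with $F(\tilde{p})=\tilde{p}$, which exists because $p$ is a contractible fixed point, and a lift $\tilde{q}\in\mathbb{D}$ of $q$ satisfying $F^m(\tilde{q})=T\tilde{q}$ for some deck transformation $T\in\pi_1(\Sigma)$. The element $T$ is non-trivial because $q$ is non-contractible, and its conjugacy class is precisely the free homotopy class of $(q)$.

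Next I would produce a heteroclinic point $z\in W^u(p,f)\cap W^s(q,f)$. This follows from $\Lambda_1\prec \Lambda_2$, the density of $W^u(p,f)$ in $W^u(\Lambda_1,f)$ and of $W^s(q,f)$ in $W^s(\Lambda_2,f)$ (by topological transitivity of an appropriate power of $f$, as in Remark \ref{r.piecespower}), the transversality guaranteed by the fitted axiom A hypothesis, and a standard $\lambda$-lemma argument. I would then lift the arc of $W^u(p,f)$ from $p$ to $z$ starting at $\tilde{p}$, whose endpoint $\tilde{z}$ lies in $W^u(\tilde{p},F)$; by construction $\tilde{z}\in W^s(h\tilde{q},F)$ for some $h\in\pi_1(\Sigma)$, and after replacing $\tilde{q}$ by $h\tilde{q}$ (which only conjugates $T$ and therefore preserves the free homotopy class of $(q)$) I may assume $\tilde{z}\in W^u(\tilde{p},F)\cap W^s(\tilde{q},F)$.

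The key observation is that $F^m(\tilde{z})$ lies in $W^u(\tilde{p},F)\cap W^s(T\tilde{q},F)$, using that $\tilde{p}$ is $F$-fixed and that $F^m(\tilde{q})=T\tilde{q}$, while $T\tilde{z}$ lies in $W^s(T\tilde{q},F)$ by deck equivariance. Hence I can choose an arc $\tilde{\gamma}^u$ inside the connected manifold $W^u(\tilde{p},F)$ from $\tilde{z}$ to $F^m(\tilde{z})$, and an arc $\tilde{\gamma}^s$ inside the connected manifold $W^s(T\tilde{q},F)$ from $F^m(\tilde{z})$ to $T\tilde{z}$; their concatenation $\tilde{\gamma}=\tilde{\gamma}^u\cdot\tilde{\gamma}^s$ is a path in $\mathbb{D}$ from $\tilde{z}$ to $T\tilde{z}$. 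Projecting yields a loop $\gamma=\pi(\tilde{\gamma})$ at $z$ of the form $\gamma^u\cdot\gamma^s$ with $\gamma^u\subset W^u(p,f)\subseteq W^u(\Lambda_1,f)$ and $\gamma^s\subset W^s(q,f)\subseteq W^s(\Lambda_2,f)$. Since its lift runs from $\tilde{z}$ to $T\tilde{z}$, the loop $\gamma$ represents the conjugacy class of $T\neq \mathrm{id}$, which is precisely the free homotopy class of $(q)$, so $\gamma$ is the desired $(\Lambda_1,\Lambda_2)$-wall.

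The main obstacle I anticipate is arranging the heteroclinic lift $\tilde{z}$ to sit in the prescribed components $W^u(\tilde{p},F)$ and $W^s(\tilde{q},F)$ simultaneously; this is resolved by the freedom to conjugate $\tilde{q}$ by a suitable deck transformation, after which the remainder of the construction is a direct equivariance computation.
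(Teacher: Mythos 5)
Your construction is essentially the same as the paper's: both lift a heteroclinic arc from $p$ to $q$, pick the intersection point $x=\tilde z$ of the lifted stable and unstable arcs, use that $\tilde p$ is $F$-fixed to run an unstable arc from $\tilde z$ to $F^m(\tilde z)$, and use equivariance plus $F^m(\tilde q)=T\tilde q$ to run a stable arc from $F^m(\tilde z)$ to $T\tilde z$, projecting to the desired wall. The only cosmetic differences are notational ($T$ versus $T^k$) and that the paper obtains the correctly-aligned lifts by lifting the full $(\Lambda_1,\Lambda_2)$-path at once rather than conjugating $\tilde q$ a posteriori.
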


We point out that $\Lambda_1$ and $\Lambda_2$ need not be different.

\begin{proof} Let $p\in \Lambda_1$ be a contractible fixed point. Since $\Lambda_1\prec\Lambda_2$ there exists a $(\Lambda_1,\Lambda_2)$-path $\gamma=\gamma^u\gamma^s$ with $\gamma^-=p$ and $\gamma^+=q$. Let $\tilde{\gamma}$ be a lift of $\gamma$, where $\tilde{\gamma}^-=\tilde{p}$ is fixed by $F$ and $\tilde{\gamma}^+=\tilde{q}$ satisfies $F^m(\tilde{q})=T^k(\tilde{q})$ for some deck transformation $T$ and $m,k>0$. We consider $x\in\tilde{\gamma}$ that belongs to $W^{u}(\tilde{p},F)\cap W^s(\tilde{q},F)$, for instance we can take $x=\tilde{\gamma}^u{}^+=\tilde{\gamma}^s{}^-$. Since $F(\tilde{p})=\tilde{p}$ we can take an arc $\delta^u\subset W^u(\tilde{p},F)$ joining $x$ and $F^m(x)$. On the other hand $F^m(x)\in W^s(F^m(\tilde{q}),F)= W^s(T^k(\tilde{q}),F)$, and by equivariance we have $W^s(T^k(\tilde{q}),F)= T^k(W^s(\tilde{q},F))$ which contains $T^k(x)$, so we can consider $\delta^u\subset W^s(T^k(\tilde{q}),F)$ joining  $F^m(x)$ and $T^k(x)$. Then we can concatenate $\delta^u$ with $\delta^s$ and $\pi(\delta^u\delta^s)$ is a loop with homotopy class corresponding to $T^k$, which is the class of $(q)$. Thus $\pi(\delta^u\delta^s)$ is the desired $(\Lambda_1,\Lambda_2)$-wall.

\end{proof}

If the Conley surfaces of $\Lambda_1$ and $\Lambda_2$ are adjacent we can get something a bit stronger.

\begin{rem} \label{casi-soporte2} If $S_{\Lambda_1}$ and $S_{\Lambda_2}$ are adjacent, so $S_{\ast}=\textrm{Fill}(S_{\Lambda_1}\cup S_{\Lambda_2})$ is connected, we see from Lemma \ref{casi-soporte} that the $(\Lambda_1,\Lambda_2)$-wall $\gamma$ of Lemma \ref{pared} is contained in $S^{(n)}$ for some $n$. We remark that $\gamma$ and $(q)$ are freely homotopic inside $S^{(n)}$, since they are both represented by the same deck transformation $T^k$, not merely by conjugate deck transformations.
\end{rem}

The need for the last remark only comes up when some component of $\Sigma\setminus S_{\ast}$ is an annulus, so some classes in $\pi_1(S_{\ast})$ are conjugate in $\pi_1(\Sigma)$ but not in $\pi_1(S_{\ast})$.

\begin{lem}\label{rota-intervalo} Let $\Lambda$ be an annular basic piece of $f$ such that $\rho_{\Lambda}(f)\subset H_1(S_{\Lambda};\R)$ is a non-trivial interval.
Then there exists a $(\Lambda,\Lambda)$-wall with homotopy type in $S_{\Lambda}$.
\end{lem}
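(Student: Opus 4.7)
The plan is to produce the wall as a transverse homoclinic intersection at one of the periodic points of $\Lambda$, obtained by a $\lambda$-lemma shadowing around a heteroclinic cycle. Since $\rho_{\Lambda}(f)\subset H_1(S_{\Lambda};\R)$ is a non-trivial interval, Corollary \ref{c.convbp} provides two periodic orbits of $\Lambda$ realizing distinct rotation vectors $v_1\neq v_2\in H_1(S_{\Lambda};\R)$; I fix periodic points $q_1,q_2\in\Lambda$ representing them. By Remark \ref{r.piecespower}, any $(\Lambda',\Lambda')$-wall produced for some basic piece $\Lambda'\subseteq\Lambda$ of $f^k$ is also a $(\Lambda,\Lambda)$-wall for $f$ with the same homotopy type (since $W^{u}(\Lambda')\cup W^{s}(\Lambda')\subseteq W^{u}(\Lambda)\cup W^{s}(\Lambda)$, and containment in $S_{\Lambda}^{(N)}$ is unaffected by taking powers), so replacing $f$ by a suitable power I may further assume that $\Lambda$ is topologically mixing and that $q_1,q_2$ are fixed points.

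The topological mixing of $\Lambda$ together with the transversality condition in $\mathcal{A}_0(\Sigma)$ yields $z_1\in W^u(q_1)\transv W^s(q_2)$ and $z_2\in W^u(q_2)\transv W^s(q_1)$. The $\lambda$-lemma at $z_1$ says that $f^n$ applied to an arc of $W^u(q_1)$ containing $z_1$ converges in $C^1$ to $W^u_{\text{loc}}(q_2)$ as $n\to\infty$. Hence for $n$ large the leaf $W^u(q_1)$ contains a long piece that $C^1$-shadows the arc of $W^u(q_2)$ from $q_2$ up to $z_2$, and since $W^u(q_2)$ crosses $W^s(q_1)$ transversely at $z_2$, this shadowing piece itself crosses $W^s(q_1)$ transversely at a point $w_n$ close to the appropriate iterate of $z_2$. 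Pulling back, $\tilde z_n:=f^{-n}(w_n)\in W^u(q_1)\cap W^s(q_1)$ is a transverse homoclinic point of $q_1$, and the candidate wall will be $\gamma_n=\gamma_n^u\cdot\gamma_n^s$, where $\gamma_n^u$ is the arc in $W^u(q_1)$ from $q_1$ to $\tilde z_n$ and $\gamma_n^s$ is the arc in $W^s(q_1)$ from $\tilde z_n$ back to $q_1$.

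To conclude, two properties of $\gamma_n$ need to be verified. For the ``homotopy type in $S_{\Lambda}$'' condition, Lemma \ref{casi-soporte} places $\gamma_n\subseteq S_{\Lambda}^{(N)}$ for some $N\in\N$, and Lemma \ref{l.conleycasesess0} gives that $S_{\Lambda}^{(N)}$ deformation retracts onto $S_{\Lambda}$, so $\gamma_n$ is freely homotopic inside $S_{\Lambda}^{(N)}$ to a loop in $S_{\Lambda}$. For non-triviality I will compute the homology class $[\gamma_n]_{ab}\in H_1(\Sigma;\R)$ via the abelian cover: choose lifts $\tilde q_1,\tilde q_2\in\Sigma_g^{ab}$ with $\hat f(\tilde q_i)=\tilde q_i+v_i$ and lifts of the heteroclinic points $\tilde z_1\in W^u(\tilde q_1)\cap W^s(\tilde q_2+\beta_1)$, $\tilde z_2\in W^u(\tilde q_2)\cap W^s(\tilde q_1+\beta_2)$ for appropriate $\beta_1,\beta_2\in H_1(\Sigma;\Z)$; the $\lambda$-lemma applied in the abelian cover then identifies the endpoint of the lift of $\gamma_n$ as $\tilde q_1+n(v_2-v_1)+\beta_1+\beta_2$, so $[\gamma_n]_{ab}=n(v_2-v_1)+\beta_1+\beta_2$, which is non-zero for $n$ large since $v_2-v_1\neq 0$. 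The delicate step will be to keep track of which lift of $q_1$ is reached by $\gamma_n^s$, using the observation that iterating the lift of $\tilde z_1$ forward by $\hat f^n$ picks up the translation $nv_2$ coming from the lifted fixed point $\tilde q_2+\beta_1$, while $\hat f^n(\tilde q_1)$ picks up only $nv_1$, and the difference $n(v_2-v_1)$ survives in the bookkeeping of the stable endpoint.
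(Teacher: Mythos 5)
Your argument is correct in its essentials, but it follows a genuinely different route from the paper. The paper reduces the lemma to its Lemma~\ref{pared}: if $0\in\rho_\Lambda(f)$ it uses a contractible periodic point directly; otherwise it replaces the canonical lift $F$ by $G=F\circ T^{-k}$, which fixes a lift of a periodic point of $\Lambda$ inside $\tilde S_\Lambda$, and reruns the construction of Lemma~\ref{pared} with $G$ in place of $F$. That construction needs a \emph{single} heteroclinic point $x\in W^u(\tilde p,F)\cap W^s(\tilde q,F)$: the wall is built by joining $x$ to $F^m(x)$ along $W^u(\tilde p)$ and then $F^m(x)$ to $T^k(x)$ along $W^s(T^k\tilde q)$, so the free homotopy class is $T^k$ by design, with nothing to check afterwards. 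You instead exploit the presence of two distinct rotation vectors $v_1\neq v_2$: after passing to a power you take fixed points $q_1,q_2\in\Lambda$ realizing them, set up a heteroclinic cycle (transversality supplied by the fitted hypothesis), apply the $\lambda$-lemma to produce a transverse homoclinic point of $q_1$, close the wall there, and confirm non-triviality by homology bookkeeping in the abelian cover. The computation you sketch does check out: the lift of $\gamma_n$ runs from $\tilde q_1$ to $\tilde q_1+n(v_2-v_1)+\beta_1+\beta_2$, whose displacement is a non-zero multiple of the core class of $\text{Fill}(S_\Lambda)$ once $n$ is large. The trade-off is that your version requires the $\lambda$-lemma plus an abelian-cover homology calculation, both of which the paper's construction sidesteps; in exchange your proof is closer to the classical Birkhoff--Smale picture and makes the role of the nontrivial rotation interval quite transparent.

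Two points deserve a sentence that your write-up only gestures at. First, passing to $f^k$ may split $\Lambda$ into several topologically mixing sub-pieces; you need $q_1$ and $q_2$ to lie in the \emph{same} sub-piece, which holds because every $f$-periodic orbit in $\Lambda$ meets every sub-piece and rotation vectors are unchanged along orbits, but this should be stated. Second, the $\lambda$-lemma in $\Sigma_g^{ab}$ is being applied with a moving target $\hat f^n(\tilde q_2+\beta_1)=\tilde q_2+nv_2+\beta_1$ that recedes to infinity; the right statement is the lift of the ordinary inclination lemma from $\Sigma$, which suffices, but that step should be made explicit rather than left to the reader.
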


\begin{proof} Recall that the shadowing lemma for basic pieces implies that rational points in $\rho_{\Lambda}(f)$ are realized by periodic orbits. Thus if $0\in\rho_{\Lambda}(f)$ we can conclude directly from the previous Lemma \ref{pared}. Otherwise, we use a minor adaptation: Take a lift $\tilde{S}$ of $S_{\Lambda}$ and $T$ a generator of $\textrm{Stab}(S)$. If $p\in\tilde{S}$ projects to a periodic orbit realizing some rational rotation vector, we have $F^m(p)=T^k(p)$ for some $m,k\in\Z$. We consider the map $G=F\circ T^{-k}$, which is another lift of $f$ and has a fixed point inside $\tilde{S}$. Since $\rho_{\Lambda}(f)$ is a non-trivial interval, there exist $\tilde{q}\in \tilde{S}$ that projects to a periodic orbit with different rotation than that of $\pi(p)$. Then there are $n,l\in\Z^{*}$ such that $G^{n}(\tilde{q})=T^l(\tilde{q})$. The rest of the proof follows arguing as in Lemma \ref{pared} with the map $G$.
\end{proof}

A similar fact to Remark \ref{casi-soporte2} holds for Lemma \ref{rota-intervalo}.

\subsection{Annular packages and orientation conventions} \label{sec.pack}

Let us introduce some notations that will be important for the rest of this section as well as the next one. Some of them will even take part in the statement of our main theorem \ref{mainthm}. If $\Lambda$ is an annular basic piece of $f$ we define its {\em annular package} $[\Lambda]$ as the set of all annular pieces of $f$ whose filled in Conley surfaces are in the homotopy class of $\textrm{Fill}(S_{\Lambda})$. Notice that the annular packages are equivalence classes on the set of annular basic pieces of $f$, and that there are at most $3g-3$ such classes. Further, if $\Lambda$ is annular we define the {\em essential Conley surface} of the package $[\Lambda]$ as $$S_{[\Lambda]}=\bigcup_{\Lambda'\in[\Lambda]}\textrm{Fill}(S_{\Lambda'})$$ and notice that each $S_{[\Lambda]}$ is an embedded annulus and that different annular packs give rise to disjoint surfaces that are not equivalent modulo homotopy, thus there are at most $3g-3$ of them.

If $\gamma$ is a (parametrized) essential closed curve on $\Sigma$ and $\tilde{\gamma}$ is a lift of $\gamma$ to the universal cover $\D$, then we can define a right and a left hand side of $\tilde{\gamma}$ according to the orientation of $\tilde{\gamma}$ and the orientation of $\D$ lifted from that of $\Sigma$. Let $R_{\tilde{\gamma}}$ and $L_{\tilde{\gamma}}$ be the left and right components of $\D\setminus \tilde{\gamma}$. Of course $R_{\tilde{\gamma}}$ and  $L_{\tilde{\gamma}}$ depend on the parametrization of $\gamma$, or more precisely, on the orientation of $\gamma$.

For each package $[\Lambda]$ we choose an essential simple closed curve $c_{[\Lambda]}$ in $S_{[\Lambda]}$, which gives a generator for $\pi_1(S_{[\Lambda]})$. This choice induces the following orientation conventions: If $\gamma$ is an essential simple closed curve that can be isotoped into $S_{[\Lambda]}$, it is positively oriented if it is homotopic to $c_{[\Lambda]}$ and negatively oriented if it is homotopic to $c_{[\Lambda]}^{-1}$. Choosing always the positive orientation we can define a local left and right hand side for such curves in a way that is coherent among those in the same class: if $\gamma_1$ and $\gamma_2$ are disjoint simple closed curves in the class of $S_{[\Lambda]}$, then the annulus that they bound is on the right side of one of those curves and on the left side of the other (recall that $\Sigma$ is not a torus). Notice that the orientation convention is such that a small neighbourhood of $\tilde{\gamma}$ in $L_{\tilde{\gamma}}$ projects to the local left hand side of $\gamma$ and conversely for the right hand sides. This also allows us to say whether $S_{\Lambda_1}$ is on the right or the left of $S_{\Lambda_2}$ for $\Lambda_1,\Lambda_2 \in [\Lambda]$. If $P$ is an annulus on the isotopy class of $S_{[\Lambda]}$ and $\tilde{P}$ is a lift of $P$, we define $L_{\tilde{P}}$ as $L_{\tilde{b}_0}$ where $b_0$ is the left boundary of $P$. We define $R_{\tilde{P}}$ analogously for $b_1$ the right boundary of $P$.

Next we prove a basic lemma that follows section \ref{s.conley}, but was postponed until now since it is only used in this section and requires some of the definitions above.

\begin{lem}\label{homotopia} Let $S$ be the fill of a non-trivial Conley surface and $\tilde{S}\subseteq\D$ a connected lift of $S$. Consider $h:S^1\times [0,1]\to\Sigma$ a free homotopy of essential closed curves and let $H:\R\times [0,1]\to\D$ be a lift of $h$. Denote by $h_t = h(S^1\times\{t\})$  and $H_t = H(\R\times\{t\})$ for $t\in [0,1]$. Assume that

\begin{itemize}
\item $h_0\subset S$,
\item $h_t\subset S^{(m)}$ for some $m\in\N$ and all $t\in [0,1]$,
\item $H_0 \subset \tilde{S}$.
\end{itemize}

Then
$$ (L_{H_0}\setminus \tilde{S})\cap \pi^{-1}(\Omega(f))=(L_{H_1}\setminus \tilde{S})\cap\pi^{-1}(\Omega(f)).$$
The same works interchanging $L_{H_0}$ with $R_{H_0}$ and $L_{H_1}$ with $R_{H_1}$.
\end{lem}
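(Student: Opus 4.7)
The plan is to show that homotopically moving from $H_0$ to $H_1$ inside a single lifted thickening of $S$ cannot transfer any non-wandering point outside $\tilde S$ from one side of the curve to the other.

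First I would invoke Lemma \ref{l.conleycasesess0} applied to $S$ (which is the fill of a non-trivial Conley surface) to obtain that $S^{(m)}$ is connected, filled and retracts by deformation onto $S$ rel $\Omega(f)$; in particular $S^{(m)}\cap\Omega(f)=S\cap\Omega(f)$. Let $\tilde S^{(m)}\subset\D$ be the connected lift of $S^{(m)}$ that contains $\tilde S$. Because the retraction is rel $\Omega(f)$, the same equality passes to the cover:
\[
\tilde S^{(m)}\cap\pi^{-1}(\Omega(f))=\tilde S\cap\pi^{-1}(\Omega(f)).
\]
The hypothesis $H_0\subset\tilde S$, the continuity of $H$, and the fact that $H(\R\times\{t\})\subset\pi^{-1}(S^{(m)})$ for every $t$, then force $H(\R\times[0,1])\subset\tilde S^{(m)}$ by connectedness.

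Next I would observe that all the proper lines $H_t$ in $\D$ share the same pair of endpoints in $\partial\D$: namely the fixed points of the deck transformation $T\in\pi_1(\Sigma)$ that represents the free homotopy class of $h$ through the chosen lift $H$ (so $H_t(s+2\pi)=T\cdot H_t(s)$ for all $t$). From this and the continuity of $H$, the two regions $L_{H_0}$ and $L_{H_1}$ differ only inside the swept set $\bigcup_{t\in[0,1]} H_t$: any point $x\in L_{H_0}\triangle L_{H_1}$ changes sides as $t$ runs from $0$ to $1$, and by continuity of the family $H_t$ this forces $x\in H_{t_0}$ for some $t_0\in[0,1]$. Combined with the previous step,
\[
L_{H_0}\triangle L_{H_1}\;\subseteq\;\bigcup_{t\in[0,1]}H_t\;\subseteq\;\tilde S^{(m)}.
\]

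Finally I would conclude by a short dichotomy. Take $x\in(L_{H_0}\setminus\tilde S)\cap\pi^{-1}(\Omega(f))$. If $x\in L_{H_1}$, then $x\in(L_{H_1}\setminus\tilde S)\cap\pi^{-1}(\Omega(f))$ and we are done. Otherwise $x\in L_{H_0}\setminus L_{H_1}\subseteq\tilde S^{(m)}$, and together with $x\in\pi^{-1}(\Omega(f))$ this yields $x\in\tilde S^{(m)}\cap\pi^{-1}(\Omega(f))=\tilde S\cap\pi^{-1}(\Omega(f))$, contradicting $x\notin\tilde S$. The reverse inclusion is symmetric, and the statement for $R$ is obtained by the same argument. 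The main obstacle will be justifying the inclusion $L_{H_0}\triangle L_{H_1}\subseteq\bigcup_t H_t$ cleanly: for simple properly embedded lines $H_t$ with common endpoints in $\partial\D$ this is an elementary consequence of the continuity of the family of regions, but if one wants to handle non-simple $H_t$ one must first fix the convention for $L_{H_t}$ carefully (through the local orientation of $\D$) and then argue that the set $\{t:x\in L_{H_t}\}$ is clopen in $\{t:x\notin H_t\}$.
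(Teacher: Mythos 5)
Your proof is correct and follows essentially the same route as the paper: use Lemma \ref{l.conleycasesess0} together with connectedness to confine $H(\R\times[0,1])$ to a single lift $\tilde S^{(m)}$ that meets $\pi^{-1}(\Omega(f))$ only inside $\tilde S$, so that points of $\pi^{-1}(\Omega(f))\setminus\tilde S$ are never swept by the homotopy and hence cannot change sides. The paper's own proof is a terse version of this (it records that all $H_t$ share the endpoints $H_t^{\pm}$ and that $L_{H_t}$ accumulates on the same arc of $\partial\D$ for every $t$), and it leaves implicit the same point you flag at the end about non-simple intermediate $H_t$.
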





\begin{proof}

Clearly $H_t^+, H_t^-\in\partial\D$ are constant and $L_{H_t}$ accumulates on one side of $\partial\D\setminus \{H_t^+, H_t^-\}$, which is the same side for every $t$. Because of that it suffices to show that $H(\R\times [0,1])$ does not intersect $\pi^{-1}(\Omega(f))\setminus \tilde{S}$. That happens since $h(S^1\times [0,1])\subset S^{(m)}$ which does not meet $\Omega(f)\cap S$ by Lemma \ref{l.conleycasesess0}.



The proof for the other case is analogous.

\end{proof}

We remark that we can replace $S$ in  Lemma \ref{homotopia} with $S^{(m_0)}$ for $m_0\leq m$. This is a corollary of Lemmma \ref{homotopia} and the fact that  $S^{(n)}$ retracts by deformation to $S$ rel $\Omega(f)$, which is part of Lemma \ref{l.conleycasesess0}.

\subsection{Oriented heteroclinical connections} \label{sec.oriprec}

The unstable-stable paths will allow us to introduce an oriented version of the heteroclinical relationship for annular pieces. Let $\Lambda$ be an annular basic piece and $\Lambda_1$ any non-trivial basic piece with $\Lambda_1\prec\Lambda$. We say that the heteroclinical connection is {\em on the left} of $\Lambda$, and write $\Lambda_1\prec^L \Lambda$ if there are lifts $\tilde{S}$ and $\tilde{S}_1$ of $\textrm{Fill}(S_{\Lambda})$ and $\textrm{Fill}(S_{\Lambda_1})$ respectively and $\tilde{\delta}$ a lift of a $(\Lambda_1,\Lambda)$-path $\delta$ that satisfy:

\begin{itemize}
\item $\tilde{S}_1\subseteq L_{\tilde{S}}$,
\item $\tilde{\delta}^-\in \tilde{S}_1$, and $\tilde{\delta}^+\in \tilde{S}$.
\end{itemize}

We define $\Lambda\prec^R \Lambda_1$ analogously, only changing the first point for $\tilde{S}_1\subseteq R_{\tilde{S}}$. There is also a symmetric version for a connection of the type $\Lambda\prec\Lambda_2$ where $\Lambda$ is annular and $\Lambda_2$ non-trivial. We spell it out for clarity: The heteroclinical connection is on the left of $\Lambda$, which we write $\Lambda\prec_L \Lambda_2$, if there are lifts $\tilde{S}$ and $\tilde{S}_2$ of $\textrm{Fill}(S_{\Lambda})$ and $\textrm{Fill}(S_{\Lambda_2})$ respectively and $\tilde{\delta}$ a lift of a $(\Lambda,\Lambda_2)$-path $\delta$ that satisfy:

\begin{itemize}
\item $\tilde{S}_2\subseteq L_{\tilde{S}}$,
\item $\tilde{\delta}^-\in \tilde{S}$, and $\tilde{\delta}^+\in \tilde{S}_2$.
\end{itemize}

The definition of $\Lambda\prec_R \Lambda_2$ is analogous. There are some important observations about this concept: First, these definitions make sense only for a fixed Conley decomposition and an orientation of the annular classes as given in the previous sub-section \ref{sec.pack}. If $\Lambda_1\prec\Lambda$ and $\Lambda$ is annular, it is easy to show that either $\Lambda_1\prec^L\Lambda$ or $\Lambda_1\prec^R\Lambda$. Both could happen at the same time, but only if the annulus $\textrm{Fill}(S_{\Lambda})$ is attracting. The analogous property holds for $\Lambda\prec\Lambda_2$ where $\Lambda$ is annular, where one of $\Lambda\prec_L\Lambda_2$ or $\Lambda\prec_R\Lambda_2$ takes place, and they are not exclusive if $\textrm{Fill}(S_{\Lambda})$ is repelling. On the other hand, if $\textrm{Fill}(S_{\Lambda})$ is neither attracting nor repelling the orientation of the heteroclinical connections at $\Lambda$ is forced: for instance, if $f$ moves both boundaries of $\textrm{Fill}(S_{\Lambda})$ to the right, we have that $\Lambda_1\prec\Lambda\prec\Lambda_2$ implies $\Lambda_1\prec^L\Lambda\prec_R\Lambda_2$.

If $\Lambda_1\prec\Lambda_2$ are annular pieces in the same annular package, we have $\Lambda_1\prec_L\Lambda_2$ if and only if $\Lambda_1\prec^R\Lambda_2$, but if they are in different packages there is no such relation.

If $\Lambda$ is annular and $\Lambda_0$, $\Lambda_1$ are non-trivial then $\Lambda_0\prec\Lambda_1\prec^L\Lambda$ implies $\Lambda_0\prec^L\Lambda$, which can be obtained as a consequence of the $\lambda$-lemma and the properties of Conley surfaces lifted to $\D$. The analogous statements for $\prec^R$, $\prec_L$ and $\prec_R$ also hold.


\subsection{Walls and forcing of heteroclinical connections}

As we mentioned before, unstable-stable paths and walls will be used to deduce heteroclinical connections between pieces. The main tool in that direction is Lemma \ref{cruce} below. First let us set up the situation: We consider $\Lambda_1\prec\Lambda_2$ and $\Lambda\prec\Lambda'$ non-trivial basic pieces of $f$, and let $\tilde{S}$ and $\tilde{S}'$ be lifts of $S = \textrm{Fill}(S_{\Lambda})$ and $S' = \textrm{Fill}(S_{\Lambda'})$ respectively. Denote by $\tilde{\Lambda}:=\tilde{S}\cap\pi^{-1}(\Lambda)$ and $\tilde{\Lambda}':=\tilde{S}\cap\pi^{-1}(\Lambda')$, which are $F$-invariant sets by Lemma \ref{l.conleycasesess0}.


\begin{lem}\label{cruce} In the situation above, suppose there is a $(\Lambda_1,\Lambda_2)$-wall $\gamma$ disjoint from $\Lambda\cup\Lambda'$, and a $(\Lambda,\Lambda')$-path $\delta$, that admit lifts $\tilde{\gamma}$ and $\tilde{\delta}$ with the following properties:

\begin{itemize}
\item $\tilde{\Lambda}\subset L_{\tilde{\gamma}}$,
\item $\tilde{\Lambda}'\subset R_{\tilde{\gamma}}$,
\item $\tilde{\delta}^-\in \tilde{\Lambda}$ and $\tilde{\delta}^+\in \tilde{\Lambda}'$.
\end{itemize}

Then $\Lambda_1\prec\Lambda'$ and $\Lambda\prec\Lambda_2$.

\end{lem}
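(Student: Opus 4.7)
The strategy is to enlarge the $(\Lambda,\Lambda')$-path $\tilde{\delta}$ by means of the $\lambda$-lemma into two longer arcs: one contained in a single unstable manifold of $\tilde{\Lambda}$ that goes all the way to $R_{\tilde{\gamma}}$, and one contained in a single stable manifold of $\tilde{\Lambda}'$ starting in $L_{\tilde{\gamma}}$. Each of these arcs is forced to meet the wall by a topological separation argument, and the type of crossing will read off the two required heteroclinical connections.

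\textbf{Step 1 (extending $\tilde{\delta}$ via the $\lambda$-lemma).} Set $\tilde{p}:=\tilde{\delta}^-\in\tilde{\Lambda}$, $\tilde{q}:=\tilde{\delta}^+\in\tilde{\Lambda}'$ and let $\tilde{x}:=\tilde{\delta}^{u+}=\tilde{\delta}^{s-}$. The point $\tilde{x}$ lies in $W^u(\tilde{p},F)\cap W^s(\tilde{q},F)$ and, by the fitted axiom A hypothesis, the intersection is transverse. The $\lambda$-lemma applied at $\tilde{x}$ gives that small arcs of $W^u(\tilde{p},F)$ near $\tilde{x}$, iterated forward by $F^n$, converge $C^1$ to a prescribed compact arc of $W^u(\tilde{q},F)$ through $\tilde{q}$. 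Since $\tilde{q}\in R_{\tilde{\gamma}}$, I can therefore pick a point $\tilde{y}\in W^u(\tilde{p},F)\cap R_{\tilde{\gamma}}$ arbitrarily close to $\tilde{q}$, and connect $\tilde{p}$ to $\tilde{y}$ by an arc $\tilde{\alpha}\subset W^u(\tilde{p},F)$ within this connected immersed $1$-manifold. The symmetric construction, now applied under $F^{-1}$ to the same transverse intersection, furnishes an arc $\tilde{\beta}\subset W^s(\tilde{q},F)$ from some $\tilde{y}'\in W^s(\tilde{q},F)\cap L_{\tilde{\gamma}}$ arbitrarily close to $\tilde{p}$ to the point $\tilde{q}$.

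\textbf{Step 2 (forced crossings).} Both $\tilde{\alpha}$ and $\tilde{\beta}$ join $L_{\tilde{\gamma}}$ to $R_{\tilde{\gamma}}$, so each must meet the line of $\D$ separating these two regions; this line is $\tilde{\gamma}$ together with its translates under the stabilizer, so any crossing point projects onto $\gamma=\gamma^u\cup\gamma^s$. A crossing of $\tilde{\alpha}$ projects to a point of $W^u(\Lambda,f)\cap\gamma$, which belongs to one of
\[ W^u(\Lambda,f)\cap W^u(\Lambda_1,f)\qquad\text{or}\qquad W^u(\Lambda,f)\cap W^s(\Lambda_2,f). \]
Since the unstable manifolds of distinct basic pieces are pairwise disjoint, the first alternative forces $\Lambda=\Lambda_1$, in which case $\Lambda\prec\Lambda_2$ is just the hypothesis $\Lambda_1\prec\Lambda_2$. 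The second alternative yields $W^u(\Lambda,f)\cap W^s(\Lambda_2,f)\neq\emptyset$ directly, hence $\Lambda\prec\Lambda_2$. A parallel analysis for the crossings of $\tilde{\beta}$, using that distinct basic pieces have disjoint stable manifolds, yields $\Lambda_1\prec\Lambda'$.

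\textbf{Main obstacle.} The delicate point is to arrange that $\tilde{\alpha}$ and $\tilde{\beta}$ genuinely cross (a translate of) $\tilde{\gamma}$ in a controlled way, rather than escaping through a remote region of $\D$ where the crossing analysis does not pin down the relevant stable/unstable manifolds. This is handled by choosing $\tilde{y}$ and $\tilde{y}'$ sufficiently close to $\tilde{q}$ and $\tilde{p}$ respectively, and exploiting that $\gamma$ is disjoint from $\Lambda\cup\Lambda'$: this keeps the $\lambda$-lemma approximation away from the wall on the endpoints of $\tilde{\alpha}$ and $\tilde{\beta}$, so the topological crossing must be produced by the bulk of the arc meeting the wall, exactly as needed.
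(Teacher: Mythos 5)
Your proof follows the same overall strategy as the paper's: let $\tilde\gamma$ separate $\D$, force a crossing of the wall by a segment of $W^u(\Lambda,f)$ (resp.\ $W^s(\Lambda',f)$), and use disjointness of unstable (resp.\ stable) manifolds of distinct basic pieces to classify the crossing. The paper achieves the forced crossing by applying $F^n$ to the whole path $\tilde\delta$ and using contraction along stable manifolds to make $\mathrm{diam}\,F^n(\tilde\delta^s)<\mathrm{dist}(\tilde\gamma,\tilde\Lambda\cup\tilde\Lambda')$, so $F^n(\tilde x)$ lands in $R_{\tilde\gamma}$ and $F^n(\tilde\delta^u)$ must cross; you achieve it via the $\lambda$-lemma. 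Dynamically these are the same mechanism. Your explicit handling of the case where the crossing hits a lift of $\gamma^u$ (forcing $\Lambda=\Lambda_1$, so $\Lambda\prec\Lambda_2$ is the hypothesis) is a detail the paper leaves implicit, and it is worth making.

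There is, however, a genuine slip in Step 1 as written. You assert that $W^u(\tilde p,F)$ itself contains a point $\tilde y$ arbitrarily close to $\tilde q$, and that $\tilde\alpha\subset W^u(\tilde p,F)$ joins $\tilde p$ to $\tilde y$. When $\tilde p$ and $\tilde q$ lie in non-trivial basic pieces but are not fixed by $F$, this is not what the $\lambda$-lemma provides: iterating a small arc of $W^u(\tilde p,F)$ through $\tilde x$ by $F^n$ produces arcs lying in $W^u(F^n(\tilde p),F)$ (not $W^u(\tilde p,F)$, since $F^n(W^u(\tilde p,F))=W^u(F^n(\tilde p),F)$), and these accumulate $C^1$ on $W^u_{\mathrm{loc}}(F^n(\tilde q),F)$ (not on $W^u(\tilde q,F)$, since $F^n(\tilde q)$ moves in $\tilde\Lambda'$). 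For a generic non-periodic $\tilde p$ in a basic piece, $W^u(\tilde p,F)$ need not come near $\tilde q$ at all. The fix is immediate — take $\tilde\alpha\subset W^u(F^n(\tilde p),F)$ running from $F^n(\tilde p)$ to $\tilde y$, and note that $F^n(\tilde p)\in\tilde\Lambda\subset L_{\tilde\gamma}$ by the $F$-invariance of $\tilde\Lambda$, so the separation and crossing-classification arguments go through unchanged since what matters is that $\pi(\tilde\alpha)\subset W^u(\Lambda,f)$. After this correction your argument coincides with the paper's, with the $\lambda$-lemma playing the role of the simpler stable-contraction estimate; the latter is a bit more economical but the difference is cosmetic.
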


We remark that we can interchange $L_{\tilde{\gamma}}$ with $R_{\tilde{\gamma}}$ in the statement.





\begin{proof} Let us write $\delta = \delta^u\delta^s$ and $\gamma = \gamma^u\gamma^s$. Consider the lifts $\tilde{\delta}^u$ and $\tilde{\delta}^s$ so that $\tilde{\delta} = \tilde{\delta}^u\tilde{\delta}^s$. Notice that $\tilde{\gamma}$ is a complete lift of a loop, so it is an infinite concatenation of alternating lifts of $\gamma^u$ and $\gamma^s$.

Since $\tilde{\gamma}$ separates $\D$ into $L_{\tilde{\gamma}}$ and $R_{\tilde{\gamma}}$ and $\tilde{\delta}$ begins on one of these sides and ends on the other, we see that $\tilde{\gamma}$ and $\tilde{\delta}$ must intersect. Using the fact that stable (or unstable) manifolds of different pieces cannot intersect, we have that either $\tilde{\delta}^u$ meets $\tilde{\gamma}$ in some lift of $\gamma^s$, implying that $\Lambda\prec \Lambda_2$, or that $\tilde{\delta}^s$ meets $\tilde{\gamma}$ in some lift of $\gamma^u$ and so $\Lambda_1\prec\Lambda'$. We want to see that both relations take place, and we will achieve it by changing $\tilde{\delta}$ for other convenient lifts of $(\Lambda,\Lambda')$-paths.

Let $\alpha = \text{dist}(\tilde{\gamma},\tilde{\Lambda}\cup\tilde{\Lambda}')$, which is strictly positive since  $\gamma$ and $\Lambda\cup\Lambda'$ are disjoint compact sets. For $n\in\Z$ we consider $\tilde{\delta}_n = F^n(\tilde{\delta})$. The fact that $\tilde{\Lambda}$ and $\tilde{\Lambda}'$ are $F$-invariant implies that $\tilde{\delta}_n^-\in\tilde{\Lambda}$ and $\tilde{\delta}_n^+\in\tilde{\Lambda}'$, so $\tilde{\delta}_n$ projects to a $(\Lambda,\Lambda')$-path $\delta_n$ which also satisfies the hypothesis. Since $\tilde{\delta}^s$ is an arc in a stable manifold, we have that for $n$ large enough the diameter of $\tilde{\delta}_n^s = F(\tilde{\delta}^s) $ is less than $\alpha$. This implies that $\tilde{\delta}_n^s$ cannot meet $\tilde{\gamma}$, recalling that $\tilde{\delta}_n^s{}^+\in\tilde{\Lambda}'$. So $\tilde{\delta}_n^u = F(\tilde{\delta}^u)$ must intersect $\tilde{\gamma}$, which may happen only in a lift of $\gamma^s$ and implies $\Lambda\prec \Lambda_2$.

The other relation can be obtained by the same argument using $\delta_{-n}$ for $n$ large enough.




\end{proof}


With stronger assumptions we can construct walls for the newly deduced heteroclinical connections. Let $S_i = \textrm{Fill}(S_{\Lambda_i})$ for $i=1,2$ and $S_{\ast} = S_1 \cup S_2$. In the first result of this nature, stated below, we need to assume that $\Lambda$ and $\Lambda'$ are different form $\Lambda_1$ and $\Lambda_2$, so they are disjoint from $S_{\ast}$.

\begin{lem}\label{espiral}
Assume the situation above, including the hypotheses of Lemma \ref{cruce}. Suppose also that
\begin{itemize}

\item There is a contractible fixed point $p\in\Lambda_1$ and a non-contractible periodic point $q\in\Lambda_2$.

\item $\gamma=\gamma^u\gamma^s$ with $\gamma^u\subset W^u(p,f)$ and $\gamma^s\subset W^s(q,f)$, and $\gamma$ is freely homotopic to $(q)$.

\item $S_1$ and $S_2$ are adjacent, i.e. $S_{\ast} = S_1 \cup S_2$ is connected.


\end{itemize}

Then there exists a $(\Lambda,\Lambda_2)$-wall $\eta$ contained in $S_{\ast}^{(n)}$ for some $n>0$. Moreover, we can take $\eta$ homotopic to $\gamma$ inside $S_{\ast}^{(n)}$.

There also exists a $(\Lambda_1,\Lambda')$-wall with the same conditions.

\end{lem}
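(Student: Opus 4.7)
The plan is to apply Lemma \ref{cruce} to obtain the heteroclinic relations, and then adapt the template of Lemma \ref{pared} to produce the walls. Since our hypotheses are a strengthening of those of Lemma \ref{cruce}, we immediately get $\Lambda\prec\Lambda_2$ and $\Lambda_1\prec\Lambda'$. From now on I focus on constructing the $(\Lambda,\Lambda_2)$-wall $\eta$; the $(\Lambda_1,\Lambda')$-wall is obtained by the symmetric argument applied to $f^{-1}$, which swaps stable with unstable manifolds and interchanges the roles of $(\Lambda_1,\Lambda_2)$ with $(\Lambda',\Lambda)$.

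Let $T_1=T^k$ denote the deck transformation associated to the free homotopy class of $\gamma\simeq(q)$, and let $\tilde{\gamma}_0$ be the lift of $\gamma$ running from $\tilde{p}$ to $T_1(\tilde{p})$, with $\tilde{\gamma}_0^u\subset W^u(\tilde{p},F)$ and $\tilde{\gamma}_0^s\subset W^s(\tilde{q}_0,F)$ for a lift $\tilde{q}_0$ of $q$ satisfying $F^m(\tilde{q}_0)=T_1(\tilde{q}_0)$. The proof of Lemma \ref{cruce} yields, for $n$ sufficiently large, a transverse intersection point $\tilde{z}\in F^n(\tilde{\delta}^u)\cap T_1^{j}(\tilde{\gamma}_0^s)$; translating by $T_1^{-j}$ I obtain $\tilde{x}\in W^u(\tilde{a},F)\cap W^s(\tilde{q}_0,F)$ with $\tilde{a}\in T_1^{-j}(\tilde{\Lambda})$ a lift of $\Lambda$. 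Using density of periodic orbits inside the basic piece $\Lambda$ together with the transversality and density properties of the stable/unstable laminations, I can replace $\tilde{a}$ by a lift of an $f$-periodic point of some period $m_a$, with $F^{m_a}(\tilde{a})=T_a(\tilde{a})$ for some deck transformation $T_a$. I then take $N$ a suitable common multiple of $m$ and $m_a$ and consider $F^{N}(\tilde{x})$, which lies in $W^u(T_a^{N/m_a}(\tilde{a}),F)\cap W^s(T_1^{N/m}(\tilde{q}_0),F)$. On the stable side, both $F^{N}(\tilde{x})$ and $T_1^{N/m}(\tilde{x})$ belong to the single leaf $W^s(T_1^{N/m}(\tilde{q}_0),F)$, so I connect them by an arc $\tilde{\eta}^s$ contained in that leaf. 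On the unstable side, an arc $\tilde{\eta}^u\subset W^u(\tilde{\Lambda},F)$ running from $\tilde{x}$ to $F^N(\tilde{x})$ closes up the loop after projecting to $\Sigma$, and extracting the $N/m$-th spiral from the resulting loop produces a wall in the homotopy class $T_1=(q)$. Both arcs lie in $\pi^{-1}(S_\ast^{(n)})$ for $n$ large enough by Lemma \ref{casi-soporte}, and Remark \ref{casi-soporte2} guarantees that $\eta$ and $\gamma$ are freely homotopic inside $S_\ast^{(n)}$ because they represent the same deck transformation in the stabilizer of $\tilde{S}_\ast^{(n)}$.

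The hard part will be closing the unstable arc when $T_a\neq\mathrm{Id}$, since then $\tilde{x}$ and $F^N(\tilde{x})$ lie in distinct leaves of the unstable lamination $W^u(\tilde{\Lambda},F)$, and a connected arc in a single leaf between them need not exist a priori. I expect this to be resolved either by a careful choice of $\tilde{a}$ whose rotation under $f$ is compatible with $T_1$ (so that after suitable iteration the two endpoints lie in the same leaf), or by an application of the $\lambda$-lemma inside $\tilde{\Lambda}$ that replaces the idealized broken unstable curve by a legitimate connected arc in a single leaf of $W^u(\tilde{\Lambda},F)$ $\varepsilon$-shadowing it. In either case the control provided by Lemma \ref{casi-soporte} ensures the entire construction stays inside $S_\ast^{(n)}$, completing the proof.
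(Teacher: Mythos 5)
Your proposal correctly identifies that one should run the crossing argument from Lemma \ref{cruce} and then close the crossing unstable arc into a loop using a stable arc, and you correctly flag the crux: the arc $\tilde{\eta}^u$ you want must lie in a single leaf of the unstable lamination, and its two prescribed endpoints $\tilde{x}\in W^u(\tilde{a},F)$ and $F^N(\tilde{x})\in W^u\!\left(T_a^{N/m_a}(\tilde{a}),F\right)$ lie in the same leaf only when $T_a^{N/m_a}(\tilde{a})$ is in $W^u(\tilde{a},F)$, which generically fails once $T_a\neq\mathrm{Id}$. This is a genuine gap, not a technicality. Neither of your proposed patches closes it: choosing $\tilde a$ with ``compatible rotation'' would require a periodic point in $\Lambda$ whose deck transformation is a prescribed power of $T_1$, but $\Lambda$ and $\Lambda_2$ generally live in different annular packages (this is exactly the regime where the lemma is used in Theorem \ref{conexion cruzada}), so no such point exists; and the $\lambda$-lemma gives $C^1$-accumulation of iterated transversals onto $W^u$, not a connected arc in a single leaf joining two prescribed points in different leaves. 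Because a wall may be entirely wandering, you are also not forced to close at periodic points, but that freedom is exactly what your construction does not exploit.

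The paper's proof sidesteps this by never attempting to build the unstable side of the wall from $W^u(\Lambda,f)$ at all: it takes the wall $\gamma$ through $\Lambda_1,\Lambda_2$, forms the finite union $\Gamma=\bigcup_{i=0}^{4}f^{mi}(\gamma)$, shows $\textrm{Fill}(\Gamma)$ is an essential annulus (the ``five copies'' argument kills the triple intersections that would prevent $b_0\cap b_1=\emptyset$), and then proves that a single connected unstable arc $\mu\subset W^u(\tilde\Lambda,F)$ crossing the lifted annulus $B$ must hit two disjoint translates $T^{\pm k}(\tilde\Gamma^s_0)$ of the stable part $\Gamma^s$ of the skeleton, because it is barred from $\Gamma^u\subset W^u(\Lambda_1,f)$ by disjointness of unstable laminations of distinct pieces. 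The wall is then the concatenation of a sub-arc of $\mu$ (unstable side, one leaf, no closing problem) with a sub-arc of $\tilde\Gamma^s_k\subset W^s(\tilde\Lambda_2,F)$ (stable side, one leaf), ending at a $T^k$-translate of its start. You would need to rebuild this spiral skeleton $\Gamma$ and the topological forcing argument to complete your proof; as written, the step ``close up the loop on the unstable side'' does not go through.
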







\begin{proof}

By Lemma \ref{casi-soporte} there is $n_0$ with $\gamma\subset S^{(n_0)}$. Let $\tilde{S}_{\ast}$ be the lift of $S_{\ast}$ so that $\tilde{\gamma}\subset \tilde{S}_{\ast}^{(n_0)} $. Then we have

\begin{equation}\label{espiral1}
\tilde{\Lambda}\subset L_{\tilde{\gamma}}\setminus \tilde{S}^{(n_0)} \mbox{ and } \tilde{\Lambda}'\subset R_{\tilde{\gamma}}\setminus \tilde{S}^{(n_0)}
\end{equation}

and this also holds true if we increase $n_0$ or change $\gamma$ by a homotopic loop in $S^{(n_0)}$. Thus it can be assumed that $\gamma$ is as constructed in Lemma \ref{pared}. Recall the notations of that lemma, specifically that $q$ has period $m$ and $T$ is a deck transformation so that $T^k$ corresponds to the homotopy class of $(q)$. That construction lets us assume that $\gamma^u$ and $\gamma^s$ are arcs joining $x_0$ and $f^m(x_0)$ for some $x_0\in W^u(p,f)\cap W^s(q,f)$ (in the notation of Lemma \ref{pared}, we have $x_0 = \pi(x)$).

Consider the loops $\gamma_i = f^{mi}(\gamma)$ for $i\geq 0$. They are all $(\Lambda_1,\Lambda_2)$-walls homotopic to $\gamma$, and $\gamma_{i-1}$ intersects $\gamma_{i}$ in at least the point $x_i=f^{mi}(x_0)$. Write $\gamma_i = \gamma_i^u\gamma_i^s$ where $\gamma_i^u=f^{mi}(\gamma^u)\subset W^u(p,f)$ and $\gamma_i^s=f^{mi}(\gamma^s)\subset W^s(q,f)$. Notice that we can concatenate $\gamma_{i-1}^u$ with $\gamma_i^u$ at the point $x_i$, obtaining an arc of $W^u(p,f)$ that goes from $x_{i-1}$ to $x_{i+1}$. On the other hand we can concatenate $\gamma_i^s$ with $\gamma_{i-1}^s$ at $x_i$, which gives us an arc of $W^s(q,f)$ going from $x_{i+1}$ to $x_{i-1}$.

We define $\Gamma = \bigcup_{i=0}^{i=4} \gamma_i$ and the arcs $\Gamma^u = \gamma^u_0\cdots\gamma^u_4$ and $\Gamma^s = \gamma^s_4\cdots\gamma^s_0$ (we will see the reason to use five of the $\gamma_i$ later on).  Notice that $\Gamma = \Gamma^u\cup\Gamma^s$ and that $\Gamma^u$ and $\Gamma^s$ are sub arcs of $W^u(p,f)$ and $W^s(q,f)$ respectively. Since stable and unstable manifolds are transversal to each other we see that $\Gamma$ is a CW-complex of dimension 1, which is finite and connected. We can define $\textrm{Fill}(\Gamma)$ by adding the components of $\Sigma\setminus\Gamma$ that are disks, so $\textrm{Fill}(\Gamma)$ retracts by deformation to an essential simple closed curve with homotopy class corresponding to the deck transformation $T$. Observe as well that $\textrm{Fill}(\Gamma)\subset S^{(n_1)}$ for $n_1=n_0+4$.

Since $\Gamma$ has no vertices of valence 1, there are simple closed curves $b_0,b_1\subset\Gamma$ in the homotopy class of $T$ such that $\textrm{Fill}(\Gamma)$ is the closed region at the right of $b_0$ and the left of $b_1$, according to the orientation given by $\gamma$.

Let $\tilde{\Gamma}$ be the connected component of $\pi^{-1}(\Gamma)$ that contains $\tilde{\gamma}$, and $\tilde{b}_0$, $\tilde{b}_1$ the lifts of $b_0$ and $b_1$ in $\tilde{\Gamma}$. We then have that $\tilde{\Gamma}$, $\tilde{b}_0$ and $\tilde{b}_1$ are invariant by $T$, and accumulate in $\partial\D$ exactly at $\tilde{\gamma}^+$ and $\tilde{\gamma}^-$. Also notice that $B=\textrm{cl}[R_{\tilde{b}_0}]\cap \textrm{cl}[L_{\tilde{b}_1}]$ is the connected lift of $\textrm{Fill}(\Gamma)$ that contains $\tilde{\Gamma}$.

We are going to show now that $\textrm{Fill}(\Gamma)$ is an annulus, i.e. that $b_0\cap b_1 = \emptyset$. First we see that $b_0\cap b_1\subset\bigcap_{i=0}^4\gamma_i$: If $y\in b_0\cap b_1$ we consider $\tilde{y}\in \tilde{b}_0\cap\tilde{b}_1$ with $\pi(\tilde{y})=y$, which exists since $\tilde{b}_0$ and $\tilde{b}_1$ are $T$-invariant, and notice that $B\setminus\{\tilde{y}\}$ has two connected components which accumulate on $\partial\D$ in $\tilde{\gamma}^+$ and $\tilde{\gamma}^-$ respectively. For every $i=0,\ldots,4$ we have that $\tilde{\gamma}_i\subset B$ goes from $\tilde{\gamma}^-$ to $\tilde{\gamma}^+$, so they all must pass through $\tilde{y}$. By projecting we conclude $y\in\bigcap_{i=0}^4\gamma_i$. Next we show that $\bigcap_{i=0}^4\gamma_i=\emptyset$: Writing $\gamma_i = \gamma_i^u\cup\gamma^s_i$ we see that $\bigcap_{i=0}^4\gamma_i$ is union of the sets $\bigcap_{i=0}^4\gamma_i^{\mu_i}$ for each choice of $\mu_0,\ldots,\mu_4 \in \{u,s\}$. On the other hand, if $i_1,i_2,i_3 \in \{0,\ldots,4\}$ are different we have $\gamma_{i_1}^u\cap\gamma_{i_2}^u\cap\gamma_{i_3}^u=\emptyset$ and $\gamma_{i_1}^s\cap\gamma_{i_2}^s\cap\gamma_{i_3}^s=\emptyset$ by construction, since they are non-trivial intervals with disjoint interiors in $W^u(p,f)$ and $W^s(q,f)$ respectively. Thus all the sets of the form $\bigcap_{i=0}^4\gamma_i^{\mu_i}$ are empty, since there always must be $i_1,i_2,i_3$ different with $\mu_{i_1}=\mu_{i_2}=\mu_{i_3}$. This is the point of using five of the curves $\gamma_i$.

We turn now to show that $W^u(\tilde{\Lambda},F)$ crosses $B$. Since $b_0^k$ and $b_1^k$ are homotopic to $\gamma$ in $S^{(n_1)}$, we can deduce from Lemma \ref{homotopia} and equation \ref{espiral1} that $\tilde{\Lambda}\subset L_{\tilde{b}_0}$ and $\tilde{\Lambda}'\subset R_{\tilde{b}_1}$. Recall that $\tilde{\delta}$, as mentioned in Lemma \ref{cruce}, goes from $\tilde{\Lambda}$ to
$\tilde{\Lambda}'$. We proceed as in Lemma \ref{cruce} by taking $\tilde{\delta}_n = F^n(\tilde{\delta})$ for $n$ large enough so that $\tilde{\delta}_n^s = F^n(\tilde{\delta}^s)$ has less diameter than $$\alpha = \textrm{dist}(B,\tilde{\Lambda}\cup\tilde{\Lambda}') = \textrm{dist}(\textrm{Fill}(\Gamma),\Lambda\cup\Lambda')>0$$
So $\tilde{\delta}_n^u = F^n(\tilde{\delta}^u)$ goes from $\tilde{\Lambda}\subset L_{\tilde{b}_0}$ to a point in the $\alpha$-neighbourhood of $\tilde{\Lambda}'$, that is contained in $R_{\tilde{b}_1}$. Thus $\tilde{\delta}_n^u$ is an arc of $W^u(\tilde{\Lambda},F)$ that crosses $B$.

We can then consider a sub-arc $\mu$ of $W^u(\tilde{\Lambda},F)$ such that $\mu\subset B$ with $\mu^-\in\tilde{b}_0$ and $\mu^+\in\tilde{b}_1$. Then $B\setminus \mu$ consists on two components $B^+$ and $B^-$ that accumulate on $\partial\D$ in $\tilde{\gamma}^+$ and $\tilde{\gamma^-}$ respectively. Since $\Gamma^u\subset W^u(\Lambda_1,f)$ and unstable manifolds of different pieces do not meet, we see that $\mu$ cannot intersect $\pi^{-1}(\Gamma^u)$. On the other hand $\mu^-\in\tilde{b}_0 \subset \tilde{\Gamma}$ and $\Gamma = \Gamma^s\cup\Gamma^u$. So there is some connected lift $\tilde{\Gamma}^s_0$ of $\Gamma^s$ that contains $\mu^-$. We see that $$\pi^{-1}(\Gamma^s)\cap\tilde{\Gamma} = \bigcup_{n\in\Z} T^n(\tilde{\Gamma}^s_0) $$ since $T$ generates $\textrm{Stab}(\tilde{\Gamma})=\textrm{Stab}(B)$. The sets in the union, namely $\tilde{\Gamma}^s_n=T^n(\tilde{\Gamma}^s_0)$, are pairwise disjoint since $\Gamma^s$ is an arc with no self-intersection.

We aim to show that $\mu$ intersects either $\tilde{\Gamma}^s_k$ or $\tilde{\Gamma}^s_{-k}$. That is sufficient to finish the proof: Say that $\mu$ meets $\tilde{\Gamma}^s_k$ in a point $z$. Then we can concatenate the sub-arc of $\mu$ from $\mu^-$ to $z$ with the sub-arc of $\tilde{\Gamma}^s_k$ joining $z$ with $T^k(\mu^-)$, and let $\eta$ be the projection of that curve. It is straightforward to verify that $\eta$ is a $(\Lambda,\Lambda_2)$-wall, recalling that $\Gamma^s\subset W^s(\Lambda_2,f)$. It is homotopic to $\gamma$ since the endpoints of the lift differ by translation by $T^k$, and the homotopy can be made inside $\textrm{Fill}(\Gamma)\subset S_{\ast}^{(n_1)}$. If $\mu$ meets $\tilde{\Gamma}^s_{-k}$ a similar argument applies.

We now prove that $\mu$ intersects $\tilde{\Gamma}^s_{k}\cup\tilde{\Gamma}^s_{-k}$. We will show it by contradiction, so assume that $\mu\cap(\tilde{\Gamma}^s_{k}\cup\tilde{\Gamma}^s_{-k})=\emptyset$. Since $\mu$ intersects $\tilde{\Gamma}^s_{0}$, we have $\tilde{\Gamma}^s_{k}\subset B^+$ and $\tilde{\Gamma}^s_{-k}\subset B^-$. Consider the loop $\gamma_{\ast} = \gamma_0^u\gamma_1^u\gamma_1^s\gamma_0^s$, recalling the definition of the curves $\gamma_i$ to see this concatenation is well defined and that $\gamma_{\ast}$ begins and ends at $x_0\in\Gamma^s\cap\Gamma^u$. Notice as well that $\gamma_{\ast}$ is freely homotopic to $\gamma^2$. Let $\nu=\nu^u\nu^s$ be the lift of $\gamma_{\ast}$ that begins at $\tilde{x}_0\in \tilde{\Gamma}^s_{-k}$, where $\nu^u$ lifts $\gamma_0^u\gamma_1^u$ and $\nu^s$ lifts $\gamma_1^s\gamma_0^s$. (Remark: $\nu$ is a lift of $\gamma_{\ast}$ as a path, i.e. not a complete lift, as we are accustomed to consider for essential loops). Since $\gamma_{\ast}$ is in the homotopy class of $\gamma^2$, we get that $\nu$ ends at $T^{2k}(\tilde{x}_0)$ that belongs in $\tilde{\Gamma}^s_{k} = T^{2k}(\tilde{\Gamma}^s_{-k})$. Thus $\nu^s$ is contained in $\tilde{\Gamma}^s_{k}$ and $\nu^u$ goes from $\tilde{x}_0\in \tilde{\Gamma}^s_{-k}$ to some point in $\tilde{\Gamma}^s_{k}$. Recalling that by our assumption we had $\tilde{\Gamma}^s_{-k}\subset B^-$ and $\tilde{\Gamma}^s_{k}\subset B^+$, we have an arc $\nu^u\subset B$ joining a point in $B^-$ with one in $B^+$, implying that $\nu^u$ must meet $\mu$. This is absurd, since they are sub-arcs of unstable manifolds of different pieces, namely $\mu\subset W^u(\tilde{\Lambda},F)$ and $\nu^u\subset W^u(\tilde{\Lambda}_1,F)$.

This finishes the proof, the construction of the $(\Lambda_1,\Lambda')$-wall being analogous.

\end{proof}

We would like to improve Lemma \ref{espiral} to include the case when $\Lambda'$ agrees with $\Lambda_2$. In order to achieve our result in that direction we need $\Lambda_1$ and $\Lambda_2$ to be in the hypotheses of Proposition \ref{dicotomia}. That resembles the conditions in Lemma \ref{espiral} in that $S_1$ and $S_2$ are adjacent, $\Lambda_1$ has a contractible fixed point and $\Lambda_2$ a non-contractible periodic point. They are stronger in the sense that $S_2$ must be an annulus and $\rho_{\Lambda_2}(f)$ must be a point, but we do not need to assume $\Lambda_1\prec\Lambda_2$. That is because we will arrive at a weaker conclusion than that of Lemma \ref{espiral}, which will also be true under option 1 in Proposition \ref{dicotomia}.
We assume that $b=S_1\cap S_2$ is the left boundary of $S_2$, according to the orientation of the annular class of $[\Lambda_2]$, so condition (d) in Proposition \ref{dicotomia} says that $f(b)$ lies at the left of $b$.

\begin{lem} \label{espiral2} Assume $\Lambda_1$ and $\Lambda_2$ are as in Proposition \ref{dicotomia}, where $b$ is the left boundary of $S_{\Lambda_2}$. Let $\Lambda$ be a non-trivial basic piece with $\Lambda\prec^L \Lambda_2$. Then either $\Lambda\preceq\Lambda_1$ or there is a $(\Lambda,\Lambda_2)$-wall in the homotopy class of $b$.
\end{lem}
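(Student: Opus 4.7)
The plan is to assume $\Lambda\not\preceq\Lambda_1$ and construct a $(\Lambda,\Lambda_2)$-wall in the class of $b$, by adapting the spiral construction from the proof of Lemma \ref{espiral}. Applying Proposition \ref{dicotomia} to $\Lambda_1,\Lambda_2$ gives two (not mutually exclusive) alternatives: (i) there is a periodic point $q'\in\Lambda_1$ in the class of $b$, or (ii) $\Lambda_1\prec\Lambda_2$. The strategy is to show that (ii) must hold, and then run the spiral argument to produce the wall.

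Suppose first that (ii) holds. Since $\Lambda_2$ is annular with $\rho_{\Lambda_2}(f)=\{v\}\neq 0$, the shadowing lemma provides a non-contractible periodic point $q\in\Lambda_2$ of some period $m$ with $(q)\sim b^k$, $k\neq 0$; Lemma \ref{pared} then yields a $(\Lambda_1,\Lambda_2)$-wall $\gamma$ freely homotopic to $b^k$. Because $\Lambda\neq\Lambda_1$ (from $\Lambda\not\preceq\Lambda_1$) and $\Lambda\neq\Lambda_2$ (from $\Lambda\prec\Lambda_2$), the stable and unstable manifolds used to build $\gamma$ are disjoint from $\Lambda\cup\Lambda_2$, and hence so is $\gamma$ itself. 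Following verbatim the proof of Lemma \ref{espiral}, set $\gamma_i=f^{mi}(\gamma)$ for $i=0,\ldots,4$, $\Gamma=\bigcup_i\gamma_i$, and form the concatenations $\Gamma^u\subset W^u(\Lambda_1,f)$, $\Gamma^s\subset W^s(\Lambda_2,f)$; the same combinatorial argument shows that $\textrm{Fill}(\Gamma)$ is an annulus with boundary curves $b_0,b_1$ in the homotopy class of $b$.

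Choose the lift $\tilde{\gamma}$ whose endpoints on $\partial\D$ coincide with those of $\tilde{b}$, let $\tilde{\Gamma}$ be the lift of $\Gamma$ containing $\tilde{\gamma}$, and set $B=\textrm{Fill}(\tilde{\Gamma})$ with boundaries $\tilde{b}_0,\tilde{b}_1$. Applying Lemma \ref{homotopia} together with the containment $\textrm{Fill}(\Gamma)\subset S_\ast^{(n)}$ granted by Lemma \ref{casi-soporte} and Remark \ref{casi-soporte2}, one verifies $\tilde{\Lambda}\subset L_{\tilde{b}_0}$ from the defining condition $\tilde{S}\subset L_{\tilde{b}}$ of $\Lambda\prec^L\Lambda_2$, and $\tilde{\Lambda}_2\subset R_{\tilde{b}_1}$ from $\tilde{\Lambda}_2\subset\tilde{S}_2\subset R_{\tilde{b}}$. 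The lift $\tilde{\delta}$ of the $(\Lambda,\Lambda_2)$-path provided by $\Lambda\prec^L\Lambda_2$ now serves as the auxiliary path in Lemma \ref{espiral}: iterating $\tilde{\delta}_n=F^n(\tilde{\delta})$ with $n$ large enough that $\tilde{\delta}_n^s$ has diameter below $\textrm{dist}(\tilde{\Gamma},\tilde{\Lambda}_2)$, the arc $\tilde{\delta}_n^u\subset W^u(\tilde{\Lambda},F)$ contains a sub-arc $\mu\subset B$ from $\tilde{b}_0$ to $\tilde{b}_1$. Since $\mu\subset W^u(\tilde{\Lambda},F)$ cannot meet $\pi^{-1}(\Gamma^u)\subset W^u(\Lambda_1,f)$ (which would force $\Lambda=\Lambda_1$), it must meet some translate $T^j(\tilde{\Gamma}^s_0)$ with $j\neq 0$; concatenating the initial sub-arc of $\mu$ with the appropriate arc of $T^j(\tilde{\Gamma}^s_0)$ and projecting yields a $(\Lambda,\Lambda_2)$-wall in the class of $T^j$, hence of $b$.

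It remains to rule out the scenario in which only (i) holds. Running the same spiral construction with $\Lambda_\ast=\Lambda_1$ and the $(\Lambda_1,\Lambda_1)$-wall supplied by Lemma \ref{pared} still produces an arc $\mu\subset W^u(\tilde{\Lambda},F)$ crossing $B$, as the positive separation $\textrm{dist}(\tilde{\Gamma},\tilde{\Lambda}_2)>0$ persists (only $\Gamma\cap\Lambda_2=\emptyset$ is used). Now $\tilde{\Gamma}\subset\pi^{-1}(W^u(\Lambda_1,f)\cup W^s(\Lambda_1,f))$: intersection of $\mu$ with $\tilde{\Gamma}^u$ would force $\Lambda=\Lambda_1$, while intersection with $\tilde{\Gamma}^s$ would force $\Lambda\preceq\Lambda_1$, each contradicting $\Lambda\not\preceq\Lambda_1$. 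Hence (ii) must hold, and the previous paragraph constructs the wall. The main technical point throughout is that Lemma \ref{espiral} explicitly requires $\Lambda'$ to be disjoint from $S_\ast$, a hypothesis that fails here since $\Lambda_2\subset S_\ast$; the resolution is that this hypothesis is used only to ensure $\textrm{dist}(\tilde{\Gamma},\tilde{\Lambda}_2)>0$, which in our setting follows from the $T$-invariance of $\tilde{\Gamma}$ and $\tilde{\Lambda}_2$ together with $\Gamma\cap\Lambda_2=\emptyset$, by passing to a fundamental domain for $T$.
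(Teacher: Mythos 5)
Your proposal treats the problem as an essentially routine adaptation of Lemma \ref{espiral}, but it misses the hard case that takes up most of the paper's proof.

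The core difficulty is that the deduction ``$\tilde{\Lambda}_2\subset R_{\tilde{b}_1}$ from $\tilde{\Lambda}_2\subset\tilde{S}_2\subset R_{\tilde{b}}$'' is false. The set $R_{\tilde{b}}$ properly contains the band $B$ as well as $R_{\tilde{b}_1}$, and $\tilde{\Lambda}_2$ lives inside $\tilde{S}_2\subset\tilde{S}_\ast$, precisely the region where $B$ sits. Nothing places $\tilde{\Lambda}_2$ to the right of $\tilde{b}_1$; it may equally well lie in the band $B$ or intersect it, since $\textrm{Fill}(\Gamma)$ is another $b$-homotopic annulus inside $S_\ast^{(n_1)}$, and the periodic point $q$ (and the rest of $\Lambda_2$ near it) sits very close to $\gamma^s_4\subset\partial\textrm{Fill}(\Gamma)$. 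Relatedly, the positive separation you want is $\textrm{dist}(B,\tilde{\Lambda}_2)=\textrm{dist}(\textrm{Fill}(\Gamma),\Lambda_2)$, not $\textrm{dist}(\tilde{\Gamma},\tilde{\Lambda}_2)$; the $T$-invariance argument only gives you the latter, which is not enough if $\Lambda_2$ meets $\textrm{Fill}(\Gamma)$. In short, your premise that ``disjoint from $S_\ast$ is only used to get a positive distance to $\tilde{\Gamma}$'' is not correct: in Lemma \ref{espiral} that hypothesis also underlies equation \ref{espiral1} and the conclusion $\tilde{\Lambda}'\subset R_{\tilde{b}_1}$, and both roles are lost when $\Lambda'=\Lambda_2\subset S_\ast$.

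Consequently, the claim that ``$\tilde{\delta}_n^u$ contains a sub-arc $\mu\subset B$ from $\tilde{b}_0$ to $\tilde{b}_1$'' is not justified, and may simply fail: $W^u(\tilde{\Lambda},F)$ can approach $\tilde{\Lambda}_2$ from the left, remaining inside $L_{\tilde{b}}\cup\tilde{S}_2^{(n_0)}$, without ever crossing the band $B$. The paper treats exactly this dichotomy: the quick case is when $W^u(\tilde{\Lambda},F)$ meets $R_{\tilde{b}}\setminus\tilde{S}_\ast^{(n_0)}$, where the Lemma \ref{espiral} machinery applies; the bulk of the proof, missing from your proposal, handles the complementary case $W^u(\tilde{\Lambda},F)\subset L_{\tilde{b}}\cup\tilde{S}_2^{(n_0)}$. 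There one has to build a different crossing curve $\alpha$ (as in Claim 2 of Proposition \ref{dicotomia}), take its disjoint translates $\alpha_j$, and show that some iterate of a carefully chosen sub-arc $\nu\subset W^u(\tilde{\Lambda},F)$ meets at least two of them. Controlling how $F^n(\nu)\cap\tilde{b}$ behaves requires a genuinely new argument, with separate treatments for $\Lambda_1$ annular (where one uses the curves $\gamma_n=\overline{\beta}_n\beta$ and the bands $B_n$) and $\Lambda_1$ curved (where one uses that the endpoints of $\tilde{c}$ are disjoint from $\xi_\pm$ together with Lemma \ref{l.saltos2}). None of this can be obtained by ``following verbatim'' Lemma \ref{espiral}, so the proposal does not close the proof.
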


Clearly we could interchange the left with the right in the statement if $b$ was the right boundary of $S_2$.

\begin{proof}

We assume $\Lambda\neq \Lambda_1$, otherwise the result is trivial. First apply Proposition \ref{dicotomia} to $\Lambda_1$ and $\Lambda_2$. If option 1 holds then Lemma \ref{pared} gives us a $(\Lambda_1,\Lambda_1)$-wall in the homotopy class of $b$, which has $\Lambda_2$ to its right since it is contained in some $S_1^{(n)}$. Since $\Lambda\prec^L\Lambda_2$  we get a $(\Lambda,\Lambda_2)$-path that enters $S_2$ through $b$, and we can apply Lemma \ref{cruce} to get that $\Lambda\prec\Lambda_1$. So from now on we can assume $\Lambda_1\prec\Lambda_2$, that is option 2 in Proposition \ref{dicotomia}. Moreover, we can assume that option 1 does not hold, i.e. that $\Lambda_1$ has no rotation in the direction of $[b]$.

Assuming that $\Lambda_1\prec\Lambda_2$ puts us in the hypotheses of Lemma \ref{pared} with $S_1$ and $S_2$ adjacent, and this allows us to make the constructions for the proof of Lemma \ref{espiral}. In particular, recall the annulus $\textrm{Fill}(\Gamma)$ and its lift $B$, and let $n_0$ be such that $\textrm{Fill}(\Gamma)$ is contained in $S_{\ast}^{(n_0)}$. Let $\tilde{b}$ be the lift of $b$ in $\tilde{S}_{\ast}$. Since $\Lambda\prec^L\Lambda_2$ we can choose a lift $\tilde{\Lambda}$ of $\Lambda$ so that $W^u(\tilde{\Lambda},F)$ meets $W^s(\tilde{\Lambda}_2,F)$  and $\tilde{\Lambda}\subset L_{\tilde{b}}$. Notice that then $\tilde{\Lambda}\subset L_{\tilde{b}}\setminus S_{\ast}^{(n_0)}$, which is at the left of $B\subset \tilde{S}_{\ast}^{(n_0)}$. If $W^u(\tilde{\Lambda},F)$ meets $R_{\tilde{b}}\setminus \tilde{S}_{\ast}^{(n_0)}$, we see that $W^u(\tilde{\Lambda},F)$ must cross $B$ and so we can use the argument of Lemma \ref{espiral} to get a $(\Lambda,\Lambda_2)$-wall in the class of $b$.

In the rest of the proof we assume that $W^u(\tilde{\Lambda},F)\subset L_{\tilde{b}}\cup\tilde{S}_2^{(n_0)}$ and also that $\Lambda\not\prec\Lambda_1$, as this is the situation we have not covered yet. Let $\tilde{q}\in \tilde{S}_2$ be a lift of a periodic point $q\in\Lambda_2$, such that $F^m(\tilde{q}) = T^k(\tilde{q})$ where $T$ is a deck transformation generating $\textrm{Stab}(\tilde{S}_2)$ as usual. Consider also $b'$ the right boundary of the annulus $S_2^{(n_0)}$ and $\tilde{b}'$ the corresponding lift in $\tilde{S}_2^{(n_0)}$. Let $C$ be the band between $\tilde{b}$ and $\tilde{b}'$, i.e. $C=\textrm{cl}[R_{\tilde{b}}\cap L_{\tilde{b}'}]$, and observe it contains $\tilde{S}_2$ and is contained in $\tilde{S}_2^{(n_0)}$. We can construct a curve $\alpha$ contained in $W^s(\tilde{q},F)\cup W^u(\tilde{q},F)$ that crosses $C$, just as we did when proving Claim 2 in the proof of Proposition \ref{dicotomia}. Also recall that the beginning arc of $\alpha$, going from $\tilde{b}$ to $\tilde{q}$, must belong to $W^s(\tilde{q},F)$, while the arc of $\alpha$ form $\tilde{q}$ to $\tilde{b}'$ may belong either to $W^s(\tilde{q},F)$ or to $W^u(\tilde{q},F)$. In any case, any two points in $\alpha\setminus W^u(\tilde{q},F)$ are connected by an arc in $\alpha\cap W^s(\tilde{q},F)$. Since unstable manifolds of different pieces are disjoint, we see that $W^u(\tilde{\Lambda},F)$ can only meet $T^n(\alpha)$ in $W^s(T^n(\tilde{q}),F)$ for any $n$. Because of this, our objective is to find an arc in  $W^u(\tilde{\Lambda},F)$ that meets two disjoint translates of $\alpha$ by $T$, as that would give us a $(\Lambda,\Lambda_2)$-wall in the class of $b$.

By compactness of $\alpha$, there is $k_0$ such that the translates $\alpha_j=T^{jk_0}(\alpha)$ are pairwise disjoint for $j\in\Z$. Each $\alpha_j$ splits $C$ into two components, that accumulate in $\partial\D$ respectively at the points $\xi_+$ and $\xi_-$, the attracting and repelling point of $T$. Choose an arc $\nu\subset W^u(\tilde{\Lambda},F)$ that begins in $\tilde{\Lambda}$ and ends at a point $x\in W^u(\tilde{\Lambda},F)\cap W^s(\tilde{q},F)$ that is very close to $\tilde{q}$, in particular with $x\in \tilde{S}_2$. Then for $n$ large we have that $F^n(x)$ tends to $\xi_{+}$. Notice that if $F^n(\nu)\cap \tilde{b}$ remains in a fixed compact set for all $n$, then we can conclude as follows: We recall that $$F^n(\nu)\subset W^u(\tilde{\Lambda},F) \subset L_{\tilde{b}}\cup\tilde{S}_2^{(n_0)}=L_{\tilde{b}}\cup C$$ for all $n$, so there is an arc in $F^n(\nu)\cap C$ joining $F^n(x)$ with a point in $F^n(\nu)\cap \tilde{b}$. If $n$ is large enough, this arc in $C$ is going from a compact set to an arbitrary neighbourhood of $\xi_+$, thus it must intersect many of the curves $\alpha_j$ (as many as we want, shrinking the neighbourhood of $\xi_+$ and thus increasing $n$).

Thus it suffices to show that $F^n(\nu)\cap \tilde{b}$ is contained in some compact set for all $n$. This is what we do in case $\Lambda_1$ is curved. When $\Lambda_1$ is annular we shall prove a little less: we will find sub-arcs $\nu_n\subset\nu$ such that $F^n(\nu_n)\cap C$ contains an arc joining  a point in a fixed compact subset of $\tilde{b}$ with $F^n(x)$. Provided that said compact subset of $\tilde{b}$ is independent of $n$, this is also sufficient to conclude.

Let us assume first that $\Lambda_1$ is annular, thus in the same package as $\Lambda_2$ (by adjacency) and with $\rho_{\Lambda_1}(f)=\{0\}$ (as we assumed it has no rotation in the direction of $[b]$). Let $\tilde{p}$ be a lift in $\tilde{S}_1$ of a contractible fixed point $p\in \Lambda_1$. Denote by $c$ the left boundary of $S_1$, and $\tilde{c}$ the lift of $c$ contained in $\tilde{S}_1$. Notice that $S_1$ cannot be a repelling annulus, as $W^u(\Lambda,f)$ goes through it, so $f^{-n}(c)$ is the left boundary of $S_1^{(n)}$. We will do a similar construction to that of $\alpha$ but for the point $\tilde{p}$, and this time it will depend on $n$: On one hand let $\beta$ be the segment of $W^u(\tilde{p},F)$ that begins at $\tilde{p}$, ends in $\beta^+\in\tilde{b}$, and is contained in $\tilde{S}_1$. On the other hand, let $\beta_n$ be the segment of $W^s(\tilde{p},F)$ going from $\tilde{p}$ to some point in $F^{-n}(\tilde{c})$, with  $\beta_n\subset\tilde{S}_1^{(n)}$. We set $\gamma_n = \overline{\beta}_n\beta$, and notice that the band $B_n=S_1^{n}\cap L_{\tilde{b}}$, that is the band between  $F^{-n}(\tilde{c})$ and $\tilde{b}$, is divided into two components by $\gamma_n$ each accumulating in one of the points $\xi_+$ and $\xi_-$.  By compactness of $\nu$ we see that there is $k_1$ so that $\nu\cap\tilde{b}$ is in the compact interval of $\tilde{b}$ between $T^{-k_1}(\beta^+)$ and $T^{k_1}(\beta^+)$, let us call this interval $K$, and notice it does not depend on $n$. Let $\nu_n$ be the sub-arc of $\nu$ that joins $x$ and a point in $F^{-n}(\tilde{c})$ and is contained in $B_n\cup C$ (if we orient $\nu$ so it starts at $\tilde{\Lambda}$, then $\nu_n^-$ is the last point of $\nu$ in $\nu\cap F^{-n}(\tilde{c})$). Since we are assuming $\Lambda\not\prec \Lambda_1$ we have that $W^u(\tilde{\Lambda},F)$ must be disjoint from every translate of $\gamma_n$ for all $n$. Thus $\nu_n\cap B_n$ is in the compact subset of $B_n$ that lies between $T^{-k_1}(\gamma_n)$ and $T^{k_1}(\gamma_n)$, that we call $D_n$.  Since $p$ is contractible, we have that $T^j(\tilde{p})$ is fixed by $F$ for all $j$, and so $F(T^j(\gamma_n))\cap L_{\tilde{b}} = T^j(\gamma_{n-1})$ for all $j$ and $n$. Applying this for $j=\pm k_1$ we get that $F^n(D_n)\cap L_{\tilde{b}} = D_0$ and in particular $F^n(D_n)\cap\tilde{b} = K$. This implies $F^n(\nu_n)\cap\tilde{b} \subset K$ for all $n$. Then $F^n(\nu_n)$ contains an arc in $C$ that joins a point in $K$ and $F^n(x)$, as we desired. This finishes the proof for $\Lambda_1$ annular.

Next we suppose $\Lambda_1$ is curved. Since $\tilde{\Lambda}$ is $F$-invariant and $F$ is the canonical lift, we have that $\tilde{\Lambda}$ is contained in exactly one component of $\D\setminus\tilde{S}_1$ that we call $V$. Let $\tilde{c}$ be the boundary component of $\tilde{S}_1$ that is adjacent to $V$. Observe that $\tilde{c}$ is different from $\tilde{b}$, and moreover $V$ and $\tilde{c}$ are contained in $L_{\tilde{b}}$. Then, since $S_1$ is not a disk or annulus, we have $\tilde{c}^-$ and $\tilde{c}^+$ disjoint from $\xi_-$ and $\xi_+$. This implies that for any $R>0$ there are neighbourhoods $W^-$ and $W^+$ of $\xi_-$ and $\xi_+$ such that $$\textrm{dist}(V,W^-\cup W^+)> R $$
Let $\kappa$ be as in Lemma \ref{l.saltos2} and $R_0=\max\{d(y,F(y)):y\in\D\}$. We take $R = (\kappa+1)R_0$ and set $W^-$ and $W^-$ accordingly. Consider $K=\tilde{b}\setminus (W^-\cup W^+)$, which is a compact interval of $\tilde{b}$. We want to prove that $F^n(\nu)\cap\tilde{b}$ is contained in $K$ for all $n$, noticing that $K$ does not depend on $n$. So we will show that $W^u(\tilde{\Lambda},F)\cap \tilde{b}$ is disjoint from $W^-\cup W^+$. To do so, we consider $y\in W^u(\tilde{\Lambda},F)\cap \tilde{b}$, and see that the backward orbit $\{F^{-j}(y): j\geq 0 \}$ can only meet $\tilde{S}_1$ and $V$, and moreover there is some $j_1$ so that $F^{-j}(y)\in \tilde{S}_1$ for $j=0,\ldots,j_1$ and in $F^{-j}(y)\in V$ for $j>j_1$. This is because of the properties of Conley surfaces: The backward orbit of $y$ cannot enter any attracting component of $\D\setminus\tilde{S}_1$, and from the repelling ones it cannot return, so it can only enter $V$. Also notice that $\mathcal{O}_{j_1}(y,F^{-1})$ is disjoint from the Conley surfaces of the trivial pieces in $\tilde{S}_1$, for otherwise it would not be possible for $y$ to belong in $\tilde{b}$ (for the attracting disks) or in $W^u(\tilde{\Lambda},F)$ (for the repelling ones). Thus by our assumption that $\Lambda\not\prec\Lambda_1$ and Lemma \ref{l.saltos2} we get that $j_1\leq\kappa$. On the other hand $\mathcal{O}_{j_1+1}(y,F^{-1})$ is an $R_0$-path between $y$ and $F^{j_1+1}(y)\in V$, thus $\textrm{dist}(y,V)\leq (j_1+1) R_0\leq R$ and so $y \notin W^-\cup W^+$ as we desired.

\end{proof}

We remark that the $(\Lambda,\Lambda_2)$-wall in Lemma \ref{espiral2} has the same properties as those in Lemma \ref{espiral}: it is also contained in $S_{\ast}^{(n)}$ for some $n$, and freely homotopic to $b^k$ inside $S_{\ast}^{(n)}$ for some $k\neq 0$.

\subsection{Heteroclinical connections between annular pieces}

The following is the main result of this section, namely the forcing result for heteroclinical connections we have been aiming for. We shall focus on what we need for Theorem \ref{mainthm}, that is the heteroclinical relations among annular pieces with non-trivial rotation set. For short, we say that a basic piece $\Lambda$ has {\em non-trivial rotation} if $\rho_{\Lambda}(f)\neq\{0\}$.


\begin{thm}\label{conexion cruzada}
Let $\Lambda$ and $\Lambda_{\ast}$ be annular basic pieces of $f$ with non-trivial rotation and belonging to different annular packages. Assume that
\begin{itemize}

\item $\Lambda\prec^L\Lambda'_{\ast}$ for some $\Lambda'_{\ast}\in[\Lambda_{\ast}]$ with non-trivial rotation, and
\item $\Lambda'\prec^L\Lambda_{\ast}$ for some $\Lambda'\in[\Lambda]$ with non-trivial rotation.
\end{itemize}
Then $\Lambda\prec\Lambda_{\ast}$.
The analogous results for connections of types $\prec^R$, $\prec_L$ and $\prec_R$ also hold.
\end{thm}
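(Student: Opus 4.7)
The plan is to produce a wall that separates suitable lifts of $\Lambda$ and $\Lambda_\ast$ together with an unstable--stable path crossing it, and then invoke Lemma~\ref{cruce} to conclude $\Lambda\prec\Lambda_\ast$. The two annular packages correspond to non-conjugate primitive essential simple closed curves $c$ and $c_\ast$, so the generating deck transformations $T$ and $T_\ast$ of $\pi_1(\Sigma)$ have disjoint pairs of fixed points on $\partial\mathbb D$. The non-triviality of the rotation vectors of all four pieces will provide the non-contractible periodic points (in the classes of $c$ or $c_\ast$) needed to build walls via the machinery of Section~\ref{s.annular}.

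First I would use the connection $\Lambda'\prec^L\Lambda_\ast$ together with the non-trivial rotation of $\Lambda'$ to construct, after passing to a convenient power of $f$ and applying Lemma~\ref{espiral2} (or, when the relevant Conley surfaces are not adjacent, a direct construction in the spirit of Lemma~\ref{pared} applied to the lift $F\circ T_\ast^{-k}$, as in the proof of Lemma~\ref{rota-intervalo}), a $(\Lambda',\Lambda_\ast)$-wall $\gamma_\ast$ in the homotopy class of $c_\ast$. Symmetrically, the hypothesis $\Lambda\prec^L\Lambda'_\ast$ combined with the non-trivial rotation of $\Lambda'_\ast$ yields a $(\Lambda,\Lambda'_\ast)$-wall $\gamma$ in the class of $c$. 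Lemmas \ref{casi-soporte} and \ref{homotopia} then let me track these walls in $\mathbb D$: each admits a lift whose endpoints on $\partial\mathbb D$ are fixed points of some conjugate of $T$ or $T_\ast$.

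Next I would choose compatible lifts $\tilde S$, $\tilde S'$, $\tilde S_\ast$, $\tilde S'_\ast$ of $\textrm{Fill}(S_\Lambda)$, $\textrm{Fill}(S_{\Lambda'})$, $\textrm{Fill}(S_{\Lambda_\ast})$, $\textrm{Fill}(S_{\Lambda'_\ast})$, aligned so that the $\prec^L$-hypotheses give a lifted $(\Lambda,\Lambda'_\ast)$-path $\tilde\delta$ with $\tilde S'_\ast\subseteq L_{\tilde S}$, and a lifted $(\Lambda',\Lambda_\ast)$-path $\tilde\delta_\ast$ with $\tilde S'\subseteq L_{\tilde S_\ast}$. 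Because $c$ and $c_\ast$ represent non-conjugate primitive classes, the infinite strips $\tilde S$ and $\tilde S_\ast$ are coarsely transverse: their boundary axes interlink on $\partial\mathbb D$. I would then examine the relative position in $\mathbb D$ of $\tilde\Lambda$, $\tilde\Lambda_\ast$ and the lifted wall $\tilde\gamma_\ast$ (and of $\tilde\Lambda'$, $\tilde\Lambda'_\ast$ and $\tilde\gamma$). The two ``left'' conditions, translated to $\mathbb D$ together with the interlinking of the axes of $T$ and $T_\ast$, will force that for some translate of one of the walls, a lift of $\Lambda$ lies on one side and a lift of $\Lambda_\ast$ on the other, while the corresponding translated path $F^n(\tilde\delta)$ or $F^n(\tilde\delta_\ast)$ starts in $\tilde\Lambda$ and ends in $\tilde\Lambda_\ast$. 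Applying Lemma~\ref{cruce} to this configuration yields $\Lambda\prec\Lambda_\ast$; the symmetric statements for $\prec^R$, $\prec_L$, $\prec_R$ follow by reversing orientations or replacing $f$ by $f^{-1}$. The main difficulty will be this last step: a careful combinatorial analysis in $\mathbb D$, showing that the nesting of the four strips along the interlinked axes of $T$ and $T_\ast$ excludes the degenerate possibility that both walls fail to separate the desired lifts, a possibility that should reduce to the packages $[\Lambda]$ and $[\Lambda_\ast]$ coinciding, contrary to hypothesis.
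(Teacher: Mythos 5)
Your high-level strategy — build an unstable--stable wall separating a lift of $\Lambda$ from one of $\Lambda_{\ast}$, find an unstable--stable path crossing it, and invoke Lemma~\ref{cruce} — is the right direction, and it matches the broad mechanism of the paper's proof. But the first step, the wall construction, has a genuine gap. You want to produce a $(\Lambda',\Lambda_\ast)$-wall (and symmetrically a $(\Lambda,\Lambda'_\ast)$-wall) directly from the heteroclinically related pieces, citing Lemma~\ref{pared}, Lemma~\ref{rota-intervalo}, or a ``twisted lift'' $F\circ T_\ast^{-k}$ variant. None of these applies here. Lemma~\ref{pared} requires the \emph{source} of the wall to contain a \emph{contractible fixed point}. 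The sources you want to use are $\Lambda'$ and $\Lambda$, annular pieces with non-trivial rotation: nothing in the hypotheses guarantees they contain contractible periodic points at all (indeed if $0\notin\rho_{\Lambda'}(f)$ they contain none), much less contractible fixed points. Lemma~\ref{rota-intervalo} does produce a wall without a contractible fixed point, but only a $(\Lambda',\Lambda')$-wall in the homotopy class of $c$, not a $(\Lambda',\Lambda_\ast)$-wall in the class of $c_\ast$ as you need. And the ``$F\circ T_\ast^{-k}$'' trick you propose does not cleanly place the resulting loop in the class $c_\ast$, because the source and target pieces lie in \emph{different} annular packages, so the lift of the path between them carries incompatible deck transformations at its two ends; the loop one obtains is not homotopic to a power of $c_\ast$.

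The paper sidesteps this exactly by not basing any wall at $\Lambda$, $\Lambda'$ or $\Lambda'_\ast$. It identifies $\Lambda_1$, the leftmost piece of $[\Lambda_\ast]$ with non-trivial rotation, and $\Lambda_0$, the piece whose Conley surface is adjacent on the left. By construction $\Lambda_0$ is either curved, or annular with \emph{trivial} rotation, and in both cases it has a contractible fixed point, which is the missing ingredient. Then Proposition~\ref{dicotomia} applied to $(\Lambda_0,\Lambda_1)$ is the organizing dichotomy: either $\Lambda_0$ carries a periodic point in the class of the common boundary $b_0$ (Sub-case II A), or $\Lambda_0\prec\Lambda_1$ (Sub-case II B); Case I handles $\rho_{\Lambda_1}(f)$ being an interval via Lemma~\ref{rota-intervalo}. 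In every branch a wall in the class of $b_0$ with a contractible-fixed-point source is available, and the final connection $\Lambda\prec\Lambda_\ast$ is assembled by two applications of Lemma~\ref{cruce} plus transitivity, with Lemmas~\ref{espiral} and~\ref{espiral2} supplying the new walls needed along the way. Without introducing the anchor pieces $\Lambda_0,\Lambda_1$ and the dichotomy of Proposition~\ref{dicotomia}, the argument does not close; the ``careful combinatorial analysis of interlinked axes'' you defer at the end is not enough on its own because you never obtained the walls to which that analysis would apply.
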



\begin{proof}

Let $\Lambda_1$ be the leftmost piece in $[\Lambda_{\ast}]$ that has non-trivial rotation. We denote $S_1=\textrm{Fill}(S_{\Lambda_1})$, which is an essential annulus contained in $S_{[\Lambda_{\ast}]}$, and let $b_0$ and $b_1$ be respectively the left and right boundaries of $S_1$. Notice that either $\Lambda_{\ast}$ agrees with $\Lambda_1$, or it lies to the right of $b_1$ in $S_{[\Lambda_{\ast}]}$, and the same reasoning applies to $\Lambda'_{\ast}$, since we are assuming they have non-trivial rotation. In any case both $\Lambda_{\ast}$ and $\Lambda'_{\ast}$ lie at the right of $b_0$ in $S_{[\Lambda_{\ast}]}$.

Let $\Lambda_0$ be the basic piece such that $S_{\Lambda_0}$ is adjacent to $S_1$ at the boundary $b_0$. It is certainly not trivial since $b_0$ is essential. Moreover, there are only two possibilities for $\Lambda_0$: Either it is a curved piece, which happens if $b_0$ is the left boundary of $S_{[\Lambda_{\ast}]}$, or it is an annular piece in the package $[\Lambda_{\ast}]$, which is at the left of $\Lambda_1$ and thus has trivial rotation. In any case we see that $\Lambda_0$ contains a contractible fixed point: if it is curved that is by Lefschetz index, and if it is annular because we have $\rho_{\Lambda_0}(f)=\{0\}$ in that case.

Let $S_0 = \textrm{Fill}(S_{\Lambda_0})$ and notice that $S_0\cup S_{[\Lambda_{\ast}]}$ is always connected and filled: If $\Lambda_0$ is annular we have $S_0\subset S_{[\Lambda_{\ast}]}$, and if $\Lambda_0$ is curved then $S_0$ and $S_{[\Lambda_{\ast}]}$ are adjacent at their common boundary $b_0$.

We take some connected lift $\tilde{P}_{\ast}\subset\D$ of $S_{[\Lambda_{\ast}]}$. We shall lift the objects we defined in the following manner: We consider $\tilde{\Lambda}_{\ast} = \pi^{-1}(\Lambda_{\ast})\cap \tilde{P}_{\ast}$ and $\tilde{\Lambda}'_{\ast} = \pi^{-1}(\Lambda'_{\ast})\cap \tilde{P}_{\ast}$. Take $\tilde{b}_0$ and $\tilde{b}_1$ as the lifts of $b_0$ and $b_1$ contained in $\tilde{P}_{\ast}$, and also $\tilde{S}_1$ as the lift of $S_1$ that is contained in $\tilde{P}_{\ast}$, which is the band between $\tilde{b}_0$ and $\tilde{b}_1$. Let $\tilde{S}_0$ be the lift of $S_0$ that meets $\tilde{b}_0$.

Since $[\Lambda]$ and $[\Lambda_{\ast}]$ are different annular packages, we have that $S_{[\Lambda]}$ is disjoint from $S_{[\Lambda_{\ast}]}$. The hypothesis that $\Lambda\prec^L \Lambda'_{\ast}$ gives us a $(\Lambda,\Lambda'_{\ast})$-path $\delta$ with a lift $\tilde{\delta}$ such that
\begin{equation}
\delta^-\in L_{\tilde{b}_0} \mbox{ and } \delta^+\in\tilde{\Lambda}'_{\ast}\subset R_{\tilde{b}_0}
\end{equation}
Analogously, $\Lambda'\prec^L \Lambda_{\ast}$ gives us a $(\Lambda',\Lambda_{\ast})$-path $\delta'$ with a lift $\tilde{\delta}'$ satisfying
\begin{equation}
\delta'^-\in L_{\tilde{b}_0} \mbox{ and } \delta'^+\in\tilde{\Lambda}_{\ast}\subset R_{\tilde{b}_0}
\end{equation}

We let $\tilde{P}$ and $\tilde{P}'$ be the lifts of $S_{[\Lambda]}$ containing $\delta^-$ and $\delta'^-$ respectively (notice that $\tilde{P}$ and $\tilde{P}'$ could agree or not). Consider also  $\tilde{\Lambda} = \pi^{-1}(\Lambda)\cap \tilde{P}$ and  $\tilde{\Lambda}' = \pi^{-1}(\Lambda')\cap \tilde{P}'$. Since $[\Lambda]\neq[\Lambda_{\ast}]$ and $\Lambda_0$ is either curved or in $[\Lambda_{\ast}]$, we get that $S_{[\Lambda]}$ and $S_0\cup S_{[\Lambda_{\ast}]}$ cannot meet in their interior, so the interior of $\tilde{P}\cup\tilde{P}'$ is disjoint from $\tilde{S}_0\cup \tilde{P}_{\ast}$. This gives us that

\begin{equation}
\tilde{\Lambda}\cup\tilde{\Lambda}'\subset L_{\tilde{b}_0}\setminus (\tilde{S}_0\cup \tilde{P}_{\ast})
\end{equation}

On the other hand, we have that either $\Lambda_{\ast}=\Lambda_1$ or $\tilde{\Lambda}_{\ast}\subset R_{\tilde{b}_1}$, and the same applies to $\Lambda'_{\ast}$.

\tiny
\begin{figure}[ht]\begin{center}

\centerline{\includegraphics[height=6.5cm]{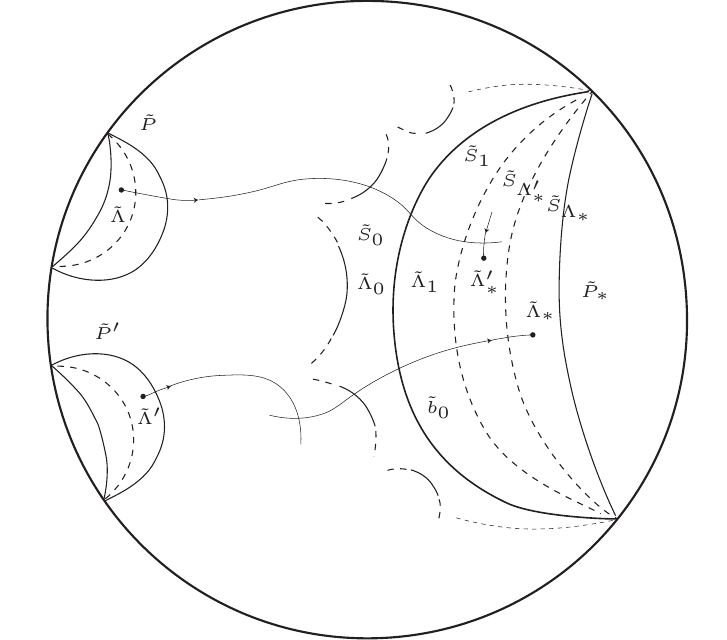}}
\caption{Elements considered for the proof of Theorem \ref{conexion cruzada}. We depict the situation when $\Lambda_0$ is curved. We want to deduce that $W^u(\tilde{\Lambda},F)\cap W^s(\tilde{\Lambda}_{*},F)$.}
\label{inter1}
\end{center}\end{figure}
\normalsize

The objective is to construct an unstable-stable wall that let us apply Lemma \ref{cruce} to obtain  the desired connection, i.e. that $\Lambda\prec\Lambda_{\ast}$. We shall split the proof into several cases, each one with a different method for constructing this wall. The main split follows the behaviour of $\rho_{\Lambda_1}(f) \neq \{0\}$, according to whether it is a point or an interval. The case when $\rho_{\Lambda_1}(f)$ is a point will split further into two sub-cases.

\smallskip
{\bf\flushleft Case I:} $\rho_{\Lambda_1}(f)$ is a non-trivial interval.

Use Lemma \ref{rota-intervalo} to obtain a $(\Lambda_1,\Lambda_1)$-wall $\gamma$ that is freely homotopic to a power of $b_0$, say $b_0^k$. By Lemma \ref{casi-soporte} we get that $\gamma\subset S_1^{(n)}$ for some $n$. Since $S^{(n)}$ is an annulus that retracts by deformation to $S_1$ we see that the free homotopy from $\gamma$ to $b_0^k$ can be made inside $S_1^{(n)}$ (which can also be obtained by Remark \ref{casi-soporte2}). By Lemma \ref{homotopia} we see that $\tilde{\Lambda}\subset L_{\tilde{\gamma}}$, where $\tilde{\gamma}$ is the lift of $\gamma$ that we get by lifting the homotopy from $b_0^k$ to $\gamma$ beginning at $\tilde{b}_0$. On the other hand, we either have $\Lambda'_{\ast} = \Lambda_1$ or $\tilde{\Lambda}'_{\ast} \subset L_{\tilde{\gamma}}$, also by Lemma \ref{homotopia}. So we obtain from Lemma \ref{cruce} that $$\Lambda\prec\Lambda_1\preceq \Lambda'_{\ast}$$

Reasoning in the same way with $\Lambda'$ and $\Lambda_{\ast}$ we get that $$\Lambda'\prec\Lambda_1\preceq \Lambda_{\ast}$$

Thus by transitivity we get $\Lambda\prec\Lambda_{\ast}$ as desired.

\smallskip
{\bf\flushleft Case II:} $\rho_{\Lambda_1}(f)=\{v\}$ for $v\neq 0$.

We see that $\Lambda_0$ and $\Lambda_1$ are in the conditions of Proposition \ref{dicotomia}: For condition (a), we already mentioned that $\Lambda_0$ contains a contractible fixed point, and condition (b) is the assumption of Case II. Their Conley surfaces are adjacent at $b_0$ by construction, and $f(b_0)$ lies at the right of $b_0$ for otherwise the connection realized by $\delta$ (or $\delta'$) would be impossible, thus we get conditions (c) and (b). Thus Proposition \ref{dicotomia} says that either $\Lambda_0$ has a periodic point in the class of $b_0$, or that we have $\Lambda_0\prec\Lambda_1$. We split the proof accordingly.

\smallskip
{\bf\flushleft Sub-case II A:} $\Lambda_0$ has a periodic point in the class of $b_0$.

In this case $\Lambda_0$ has non-trivial rotation. This may only happen if $\Lambda_0$ is curved, because of our construction, though we do not need this for the argument. Apply Lemma \ref{pared} to get a $(\Lambda_0,\Lambda_0)$-wall $\gamma$ that is freely homotopic to some $b_0^k$ inside $S_0^{(n)}$ for some $n$ (the last part comes from Remark \ref{casi-soporte2}. We proceed as in Case I, using $S_0^{(n)}$ instead of $S_1^{(n)}$ when using Lemma \ref{homotopia}, and we obtain
\begin{equation}
\Lambda\prec\Lambda_0\prec \Lambda'_{\ast} \mbox{ and } \Lambda'\prec\Lambda_0\prec \Lambda_{\ast}
\end{equation}
Which gives $\Lambda\prec\Lambda_{\ast}$ as well.

\smallskip
{\bf\flushleft Sub-case II B:} $\Lambda_0\prec\Lambda_1$.

Let $S_{\ast} = S_0\cup S_1$, and notice that $\Lambda_0$ and $\Lambda_1$ are in the conditions of Lemma \ref{pared} and Remark \ref{casi-soporte2}. Let $\gamma$ be the $(\Lambda_0,\Lambda_1)$-wall given by Lemma \ref{pared}, freely homotopic to $b_0^k$ in some $S_{\ast}^{(n)}$. We consider the lift $\tilde{\gamma}$ of $\gamma$ coming from lifting the homotopy beginning at $\tilde{b}_0$.

Now if $\Lambda_{\ast}=\Lambda'_{\ast}$ we are clearly finished, so we may assume one of them is not $\Lambda_1$. Let us assume $\Lambda'_{\ast}\neq \Lambda_1$, thus we have $$\tilde{\Lambda}'_{\ast}\subset R_{\tilde{b}_0}\setminus \tilde{S}_{\ast}$$ where $\tilde{S}_{\ast}=\tilde{S}_0\cup \tilde{S}_1$. Recalling that $\tilde{\delta}^-\in \tilde{\Lambda}\subset L_{\tilde{b}_0}\setminus \tilde{S}_{\ast}$ and $\tilde{\delta}^+\in\tilde{\Lambda}'_{\ast}$, we use Lemma \ref{homotopia} to see that $\tilde{\gamma}$ and $\tilde{\delta}$ are in the conditions of Lemmas \ref{cruce} and \ref{espiral}. Thus we get that $\Lambda\prec\Lambda_1$ and that there is a  $(\Lambda,\Lambda_1)$-wall $\nu$ freely homotopic to $b_0^k$ in some $S_{\ast}^{(n_1)}$. If $\Lambda_1 = \Lambda_{\ast}$ we already finished, so suppose this is not the case, and so we have $\tilde{\Lambda}_{\ast}\subset R_{\tilde{b}_0}\setminus \tilde{S}_{\ast}$ . Again we consider the appropriate lift $\tilde{\nu}$ of $\nu$, obtained from lifting the homotopy from $\tilde{b}_0$. Recalling that $\tilde{\delta}'^+\in \tilde{\Lambda}_{\ast}$, we see from Lemma \ref{homotopia} that $\tilde{\delta}'$ and $\tilde{\nu}$ are in the hypotheses of Lemma \ref{cruce}. Since $\delta'$ realizes the connection $\Lambda'\prec\Lambda_{\ast}$ and $\nu$ is a $(\Lambda,\Lambda_1)$-wall, we see that Lemma \ref{cruce} gives us $\Lambda\prec\Lambda_{\ast}$ as desired. (As well as $\Lambda'\prec\Lambda_1$, that we do not need).

If, on the other hand, we had $\Lambda_{\ast}\neq \Lambda_1$ and $\Lambda'_{\ast} = \Lambda_1$, we get that $\Lambda_0\prec\Lambda_{\ast}$ from Lemma \ref{cruce} applied to $\tilde{\delta}'$ and $\gamma$. Since $\Lambda_0$ and $\Lambda_1$ are in the conditions of Proposition \ref{dicotomia} and $\Lambda\prec^L \Lambda_1 = \Lambda'_{\ast}$, we can apply Lemma \ref{espiral2} to these pieces. In case $\Lambda\prec\Lambda_0$ we conclude by transitivity, since we also have $\Lambda_0\prec\Lambda_{\ast}$. Otherwise there is a $(\Lambda,\Lambda_1)$-wall freely homotopic to $b_0^k$ in some $S_{\ast}^{(n_2)}$. Applying Lemma \ref{cruce} to this wall and $\tilde{\delta}'$ gives us the desired relation $\Lambda\prec\Lambda_{\ast}$.

\end{proof}


\section{Bounding the complexity of the rotation set: Main theorem}\label{s.prin}

Here we state and prove our characterization of the rotation set $\rho(f)$ for $f\in\mathcal{A}_0(\Sigma)$ as a union of at most $c(g)=4\cdot 2^{5g-5}$ convex sets with some additional properties that further describe $\rho(f)$. An {\em essential decomposition} of $\Sigma$ will be a collection of sub-surfaces $\mathcal{S}=\{\Sigma_1,\ldots,\Sigma_k\}$ such that:
\begin{itemize}
\item Each $\Sigma_i$ is essential and filled,
\item $\Sigma=\Sigma_1\cup\cdots\cup\Sigma_k$,
\item If $i\neq j$ then $\Sigma_i$ and $\Sigma_j$ have disjoint interiors and are not isotopic to each other.
\end{itemize}
Under such conditions, different sub-surfaces in $\mathcal{S}$ can either be disjoint or adjacent at some of their boundary components. Notice that $k\leq 5g-5$, the maximum obtained from a pair of pants decomposition by fattening the dividing curves into annuli. For a subset $A\subseteq\mathcal{S}$ define $$\Sigma_A = \bigcup_{\Sigma_j\in A}\Sigma_j$$ and notice that $A$ can be recovered just by looking at $\Sigma_A$. Let us also define $$H_1(\Sigma_A|\mathcal{S};\R) = \sum_{\Sigma_i\in A} H_1(\Sigma_i;\R) $$ as a subspace of $H_1(\Sigma;\R)$. We will write $H_1(\Sigma|\mathcal{S};\R)$ instead of $H_1(\Sigma_{\mathcal{S}}|\mathcal{S};\R)$.

For $f\in\mathcal{A}_0(\Sigma)$, a Conley decomposition induces an essential decomposition as follows: For each annular package $[\Lambda]$ let $\Sigma_{[\Lambda]} = S_{[\Lambda]}$ as defined in sub-section \ref{sec.pack}, and for each curved piece $\Lambda'$ let $\Sigma_{[\Lambda']}=\textrm{Fill}(S_{\Lambda'})$, where the square brackets are there to unify notation. An essential decomposition obtained in that manner will be called an {\em essential Conley decomposition} for $f$.

If $\mathcal{S}=\{\Sigma_{[\Lambda]}\}_{\Lambda}$ is an essential Conley decomposition for $f\in\mathcal{A}_0(\Sigma)$ and $\mathcal{C}=(\Lambda_0\prec\cdots\prec\Lambda_n)$ is a chain of non-trivial basic pieces of $f$ we define the {\em support} of $\mathcal{C}$ as $$\textrm{supp}(\mathcal{C}) = \{\Sigma_{[\Lambda_j]}: j=1,\ldots,n \} \subseteq \mathcal{S} $$
and denote $\Sigma_{\mathcal{C}} = \Sigma_{\textrm{supp}(\mathcal{C})}$. For $A\subseteq\mathcal{S}$ we define $$C_A = \bigcup_{\textrm{supp}(\mathcal{C})=A}\textrm{conv}\left(\rho_{\mathcal{C}}(f),0\right) $$ where the union ranges over the chains $\mathcal{C}$ of non-trivial pieces that have support $A$. Notice that we ask the support of $\mathcal{C}$ to be exactly $A$, {\em not} contained in $A$. If there is no such chain we get $C_A = \emptyset$. Observe that for any $A\subseteq\mathcal{S}$ we have $C_A\subseteq \rho(f)$, since $\rho(f)$ is star-shaped at $0$ by Theorem \ref{t.starshape}. We also remark that if $\mathcal{C}$ contains curved pieces we already have $0\in \rho_{\mathcal{C}}(f)$, so $\rho_{\mathcal{C}}(f)=\textrm{conv}\left(\rho_{\mathcal{C}}(f),0\right)$.

Thus we have $$\rho(f) = \bigcup_{A\subseteq \mathcal{S}} C_A $$

We will show that for each $A\subseteq \mathcal{S}$ the set $C_A$ is a union of at most four convex sets, thus obtaining our main objective. We are now ready to state the main theorem.

\begin{theorem}\label{mainthm} Let $f\in\mathcal{A}_0(\Sigma)$ for $\Sigma$ an orientable closed surface of genus $g>1$. Then $\rho(f)\subset H_1(\Sigma;\R)$ is a union of at most $4\cdot 2^{5g-5}$ convex sets containing $0$.

Moreover, if $\mathcal{S}$ is an essential Conley decomposition for $f$ we can obtain such union as $$\rho(f) = \bigcup_{A\subseteq \mathcal{S}} C_A $$ where we have:
\begin{enumerate}
\item For $A\subset\mathcal{S}$, either $C_A=\emptyset$ or $C_A$ is a union of at most four convex sets containing $0$.
\item For $A\subset\mathcal{S}$, we have $C_A\subset H_1(\Sigma_A|\mathcal{S};\R)$. In particular $\rho(f)\subset H_1(\Sigma|\mathcal{S};\R)$. 
\item If $\mathcal{C}$ is a chain of basic pieces with $\textrm{supp}(\mathcal{C})=A$, then $\rho_{\mathcal{C}}(f)\subseteq C_A$.

\end{enumerate}

\end{theorem}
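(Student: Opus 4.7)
The plan is to leverage the reductions from previous sections together with the forcing result Theorem \ref{conexion cruzada} in order to achieve the decomposition. First, Theorem \ref{t.rhorhochain} yields $\rho(f)=\bigcup_{\mathcal{C}}\rho_{\mathcal{C}}(f)$ with $\mathcal{C}$ ranging over maximal chains, and Proposition \ref{sacar.triviales} allows me to restrict to chains with no trivial pieces, possibly enlarging each individual convex set but keeping it inside $\rho(f)$. Theorem \ref{t.starshape} then lets me absorb $0$ into every convex hull, since the segment from $0$ to any point of $\rho_{\mathcal{C}}(f)$ lies in $\rho(f)$. Grouping the resulting chains by their support $A\subseteq\mathcal{S}$ produces $\rho(f)=\bigcup_{A\subseteq\mathcal{S}}C_A$, which directly gives property (3). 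Property (2) is immediate from Section \ref{s.conley}: for each non-trivial piece $\Lambda$ appearing in a chain with support $A$, Lemma \ref{l.conleyesquasi}(3) makes $\textrm{Fill}(S_\Lambda)$ quasi-invariant under $f$, Lemma \ref{l.qisurface}(4) then confines $\rho_\Lambda(f)$ to $H_1(\textrm{Fill}(S_\Lambda);\mathbb{R})$, and this space embeds into $H_1(\Sigma_{[\Lambda]};\mathbb{R})\subseteq H_1(\Sigma_A|\mathcal{S};\mathbb{R})$.

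The content of property (1), the four-convex-set bound per $A$, is the substantive step. The plan is to classify the chains with $\textrm{supp}(\mathcal{C})=A$ by a \emph{type} $t=(s_{\min},s_{\max})$ indexed by at most two binary choices. Given such a chain $\mathcal{C}$, let $\Lambda_{\min}$ and $\Lambda_{\max}$ be its smallest and largest elements that lie in an annular package of $A$ carrying non-trivial rotation, should they exist. At $\Lambda_{\min}$ (the annular source of the first outgoing heteroclinic arrow of $\mathcal{C}$) I record $s_{\min}\in\{L,R\}$ depending on whether the arrow is of type $\prec_L$ or $\prec_R$; symmetrically, from the last incoming arrow at $\Lambda_{\max}$ (of type $\prec^L$ or $\prec^R$), I obtain $s_{\max}\in\{L,R\}$. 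If either extremal piece is absent (for example, when the chain starts or ends at a curved piece, or no annular piece of $A$ in the chain has non-trivial rotation) the corresponding coordinate is set to $\ast$. In every case at most four distinct types arise.

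For each type $t$ I will show that the union of $\textrm{conv}(\rho_{\mathcal{C}}(f),0)$ over all chains $\mathcal{C}$ with support $A$ of type $t$ is itself convex, which immediately decomposes $C_A$ into at most four convex sets as required. Given two chains $\mathcal{C}_1,\mathcal{C}_2$ of the same type $t$, the strategy is to build a single chain $\mathcal{C}^*$ of type $t$ incorporating basic pieces that realize the extreme rotation vectors of both. The crucial input is Theorem \ref{conexion cruzada} in its four symmetric variants: whenever $\Lambda\in\mathcal{C}_1$ and $\Lambda_\ast\in\mathcal{C}_2$ lie in distinct annular packages of $A$ and both carry non-trivial rotation, the coherence in sidedness forced by $t$ supplies the hypotheses of the theorem, producing a direct heteroclinical connection $\Lambda\prec \Lambda_\ast$ or $\Lambda_\ast\prec \Lambda$. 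Iterating this over all pairs of packages in $A$ and splicing the resulting direct connections, using transitivity of $\prec$ and Lemma \ref{l.filling}(1) to pass curved pieces through (they automatically contain $0$ in their rotation sets), one recovers the desired $\mathcal{C}^*$.

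The main obstacle I anticipate is the combinatorics of this merging step. One has to verify that the side assignment $t$ is intrinsic, not dependent on the lifts of Conley surfaces used in defining $\prec^L,\prec^R,\prec_L,\prec_R$; handle the degenerate situation noted in Section \ref{sec.oriprec}, where both $L$ and $R$ occur at the same annular package, which happens only when the annulus is attracting or repelling and therefore collapses the distinction; and dispose of any trivial pieces that reappear in the newly constructed chain $\mathcal{C}^*$ by a second application of Proposition \ref{sacar.triviales}. Once property (1) is in place, the final count is automatic: a pants decomposition of $\Sigma$ with annular fattenings of its $3g-3$ curves gives $|\mathcal{S}|\leq 5g-5$, hence at most $2^{5g-5}$ choices of $A\subseteq\mathcal{S}$, and combined with the four-per-$A$ bound this yields $\rho(f)$ as a union of at most $4\cdot 2^{5g-5}=2^{5g-3}$ convex sets, each containing $0$ by construction.
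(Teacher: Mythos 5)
Your outline recovers the paper's decomposition exactly: the reduction via Theorem \ref{t.rhorhochain}, Proposition \ref{sacar.triviales} and Theorem \ref{t.starshape}, the grouping by support $A\subseteq\mathcal{S}$, properties (2) and (3), and the refinement of $A$ into at most four classes by recording side data of type $L/R$ at the ends of the chain. Your ``type'' $t=(s_{\min},s_{\max})$ is, up to cosmetic differences, the paper's marked support $(A,X_{\mathcal C},Y_{\mathcal C})$: the paper records sidedness at the literal first and last pieces of $\mathcal{C}$, which is equivalent because, as noted in Section \ref{sec.oriprec}, any ambiguity in the heteroclinic orientation can only occur at a repelling annulus at the start or an attracting annulus at the end of a chain, so your ``first/last annular piece with non-trivial rotation'' sees the same data.

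Where you diverge is the proof of convexity of each $C_{A,X,Y}$. Your plan is to ``merge'' two chains of the same type into a single chain $\mathcal{C}^*$ carrying the rotation data of both, then splice and iterate. This step is underspecified and would not, as stated, yield the conclusion cleanly: within one annular package the pieces with non-trivial rotation that appear in chains of marked support $(A,X,Y)$ need not themselves be pairwise heteroclinically related, so they may not all fit into a single chain $\mathcal{C}^*$. The paper avoids this by \emph{not} merging: it fixes the (unique) curved sub-chain $\mathcal{C}_1$, picks one annular piece $\Lambda^i_{j_i}$ per package, observes via Theorem \ref{conexion cruzada} that each such selection is a chain, and then computes
$C_{A,X,Y}=\bigcup_{(j_1,\dots,j_k)}\mathrm{conv}\bigl(K\cup\rho_{\Lambda^1_{j_1}}(f)\cup\cdots\cup\rho_{\Lambda^k_{j_k}}(f)\bigr)$.
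Convexity of this union is then a purely algebraic fact, supplied by Lemmas \ref{join1} and \ref{join2}, using that each $H_1(\Sigma_{[\Lambda_i]};\R)$ is one-dimensional and these subspaces are in direct sum. You should replace the merging/splicing argument with this: identify the fixed curved block $K$, show that Theorem \ref{conexion cruzada} forces all cross-package connections among annular pieces appearing in such chains, and then invoke the two convexity lemmas. Your reference to Lemma \ref{l.filling}(1) to ``pass curved pieces through'' is beside the point, since the curved sub-chain is already determined by $A$ and common to all chains of support $A$. Also, for property (2) the relevant citation is Lemma \ref{l.conleycasesess0}/\ref{l.qisurface}, which you state correctly, though the paper's text cites a different lemma.
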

\smallskip

Clearly the bound  $4\cdot 2^{5g-5}$ is not sharp: There will be many subsets $A\subset\mathcal{S}$ with $C_A=\emptyset$. Also, if $\mathcal{C}'$ is a sub-chain of $\mathcal{C}$ we have $\rho_{\mathcal{C}'}(f)\subseteq \rho_{\mathcal{C}}(f)$, and pieces with rotation $0$ add nothing, so we could restrict the sets $A\subset\mathcal{S}$ to the supports of maximal chains of non-trivial pieces with non-zero rotation (in case $\rho(f)\neq\{0\}$). We will see that the number of convex sets in our writing of $C_A$ is at most four, but it could be less. On the other hand, a sharp bound must be exponential on $g$, as we see in the examples from section \ref{s.ejemploexp}.  

We turn now to the proof of Theorem \ref{mainthm}. Observe that the only statement that remains to be proved is point (1), since point (2) follows from Lemma \ref{l.conleycasesess} and point (3) comes directly from the definition of $C_A$. Thus we will devote the rest of this section to prove point (1), namely that when $A\subset\mathcal{S}$ is the support of some chain, then we can write $C_A$ as a union of at most four convex sets. First let us see that if $A=\{\Sigma_{[\Lambda]}\}$, i.e. $A$ consists of just one surface, then $C_A$ is convex: If $\Lambda$ is curved we have $C_A = \rho_{\Lambda}(f)$ and if $\Lambda$ is annular, then $C_A$ is a union of intervals containing $0$ in the $1$-dimensional subspace $H_1(\Sigma_{[\Lambda]};\R)$. Another easy case is when $A$ contains no annuli, for then the chain $\mathcal{C}$ with $\textrm{supp}(\mathcal{C})=A$ is unique and $C_A = \rho_{\mathcal{C}}(f)$. To deal with the general case we will need the following basic results about convex hulls.

\begin{lem} \label{join1} Let $K,C_1,\ldots,C_r \subset \R^N$ be convex sets so that $\bigcup_{i=1}^r C_i$ is convex. Then $$ \textrm{conv}\left(K\cup \bigcup_{i=1}^r C_i \right) = \bigcup_{i=1}^r \textrm{conv}(K\cup C_i). $$

\end{lem}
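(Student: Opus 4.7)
The statement is a straightforward application of the classical join formula for convex sets, so my proof plan is short and direct.

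The inclusion $\supseteq$ is immediate: for each $i$, the set $\textrm{conv}(K\cup C_i)$ is contained in $\textrm{conv}(K\cup \bigcup_i C_i)$ just because convex hull is monotone, so the union of these hulls is also contained in $\textrm{conv}(K\cup \bigcup_i C_i)$. Nothing in the hypothesis is needed here.

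For the inclusion $\subseteq$, the main ingredient is the well-known join identity: if $A,B\subset\R^N$ are convex, then
\[
\textrm{conv}(A\cup B)=\{tA+(1-t)B:t\in[0,1]\}.
\]
I would either quote this or include a one-line justification (splitting any convex combination into its $A$-part and its $B$-part and renormalising each, then using convexity of $A$ and $B$ separately). Applying this identity to $A=K$ and $B=\bigcup_{i=1}^r C_i$, which is convex by hypothesis, any $x\in\textrm{conv}(K\cup\bigcup_i C_i)$ can be written as $x=tk+(1-t)c$ with $k\in K$, $c\in\bigcup_i C_i$, $t\in[0,1]$. Picking an index $i$ with $c\in C_i$, we get $x\in\textrm{conv}(K\cup C_i)$, which finishes the proof.

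There is no real obstacle here; the only thing to be careful about is that the hypothesis that $\bigcup_i C_i$ is convex is actually used, and is used precisely to invoke the join formula for the pair $(K,\bigcup_i C_i)$. Without that hypothesis the statement is false in general, e.g. three collinear points with $K$ a point off the line. So the write-up should make clear that this is where the assumption enters.
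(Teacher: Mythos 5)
Your proof is correct. The paper itself omits the proof, remarking only that ``The proof of these results is elementary,'' so there is no authorial argument to compare against; your route via the two-set join formula $\textrm{conv}(A\cup B)=\{ta+(1-t)b : a\in A,\ b\in B,\ t\in[0,1]\}$ for convex $A,B$, applied with $A=K$ and $B=\bigcup_i C_i$ (convex by hypothesis), is exactly the natural elementary argument, and you correctly flag both where the hypothesis is used and that it is necessary.
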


\begin{lem} \label{join2} Let $V = V_1\oplus\cdots\oplus V_k$ be a direct sum decomposition of a subspace of $\R^N$. For each $i=1,\ldots,k$, let $C^i_1,\ldots,C^i_{r_i}\subset V_i$ be convex sets so that $\bigcup_{j=1}^{r_i} C^i_j \subset V_i$ is convex. Then $$ \textrm{conv}\left(\bigcup_{i=1}^k\bigcup_{j=1}^{r_i} C^i_j \right) = \bigcup_{(j_1,\ldots,j_k)} \textrm{conv}\left(C^1_{j_1}\cup\cdots\cup C^k_{j_k} \right) $$ where the index $(j_1,\ldots,j_k)$ ranges over all the possible values of $j_i=1,\ldots,r_i$.
\end{lem}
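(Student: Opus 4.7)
The plan is to first observe that the inclusion $\supseteq$ is immediate: for any choice of indices $(j_1,\ldots,j_k)$ each $C^i_{j_i}$ sits inside $\bigcup_{i,j}C^i_j$, so $\textrm{conv}(C^1_{j_1}\cup\cdots\cup C^k_{j_k})$ is contained in the convex hull of the full union. All the content is in the reverse inclusion $\subseteq$.

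For the nontrivial direction, I would take an arbitrary point $x\in\textrm{conv}\left(\bigcup_{i,j}C^i_j\right)$ and write it as a convex combination $x=\sum_\ell t_\ell x_\ell$ where each $x_\ell$ lies in some $C^{i(\ell)}_{j(\ell)}$. For each $i=1,\ldots,k$ set $s_i=\sum_{\ell:i(\ell)=i}t_\ell$ and, whenever $s_i>0$, define
$$y_i=\frac{1}{s_i}\sum_{\ell:i(\ell)=i}t_\ell\, x_\ell\in\textrm{conv}\Bigl(\bigcup_{j}C^i_j\Bigr).$$
By the hypothesis that $\bigcup_{j=1}^{r_i}C^i_j$ is already convex, this convex hull equals $\bigcup_j C^i_j$, so $y_i\in C^i_{j_i}$ for some index $j_i\in\{1,\ldots,r_i\}$. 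For the (possibly several) indices with $s_i=0$ pick any $j_i$; including extra convex sets in the final hull does no harm.

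Now $x=\sum_{i:s_i>0}s_i y_i$ is a convex combination of points $y_i\in C^i_{j_i}\subseteq V_i$. Since the $V_i$ are in direct sum and the $y_i$ are genuine points of $\R^N$, this expression is a bona fide convex combination, and hence $x\in\textrm{conv}(C^1_{j_1}\cup\cdots\cup C^k_{j_k})$, which is one of the sets in the union on the right-hand side. This yields the inclusion $\subseteq$.

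The only subtlety, which is really bookkeeping rather than a genuine obstacle, is handling the possibility that some $s_i$ vanish; this is resolved by choosing arbitrary $j_i$'s in that case, since the right-hand side is a union over all index tuples. If one preferred an inductive argument, the same content could be packaged by induction on $k$ using Lemma \ref{join1} at each step with $K=\bigcup_{i<k}\bigcup_j C^i_j$ and the convex pieces $C^k_1,\ldots,C^k_{r_k}$, but the direct grouping argument above seems the most transparent way to present it.
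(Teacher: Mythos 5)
Your proof is correct, and the regrouping argument you use (collecting the terms of a convex combination of elements of $\bigcup_{i,j}C^i_j$ according to the index $i$, noting each group's barycentre lands in $\bigcup_j C^i_j$, which equals its own convex hull by hypothesis) is the standard way to see this; the paper merely declares the lemma ``elementary'' and gives no proof, so there is nothing substantive to compare against. The handling of the vanishing weights $s_i=0$ by choosing arbitrary $j_i$ is exactly the right bookkeeping. One remark worth flagging: the sentence asserting that the direct-sum hypothesis is what makes $x=\sum_i s_i y_i$ a ``bona fide convex combination'' is a red herring. Any nonnegative weights summing to $1$ applied to points of $\R^N$ give a convex combination; the direct-sum structure plays no role there, and in fact it is not used anywhere in your argument. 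The lemma as you have proved it is true without the hypothesis $V=V_1\oplus\cdots\oplus V_k$; that hypothesis appears in the statement because it holds in the paper's application (the $V_i$ are first homologies of disjoint annular packages), not because it is logically needed here. Your inductive alternative via Lemma \ref{join1} also goes through, but note it requires a second application of Lemma \ref{join1} after invoking the inductive hypothesis, to distribute $C^k_{j_k}$ over the union defining $K$; the direct grouping you present is cleaner.
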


The proof of these results is elementary. Our aim is to use Theorem \ref{conexion cruzada} in conjunction with these lemmas, so we need a definition of {\em marked} support of a chain, that accounts for the orientation of the heteroclinical connections at the annular pieces. As we remarked in sub-section \ref{sec.oriprec}, such orientations are forced except at the first and last pieces, in case they are annular and have filled in Conley surfaces that are repelling and attracting respectively. With this in mind, we define the {\em initial marking} of a chain $\mathcal{C}$ as follows: If $\mathcal{C}=(\Lambda_0\prec\cdots\prec\Lambda_n)$ is a chain we let $X_{\mathcal{C}}=L$ if
\begin{itemize}
\item[I1.] $\Sigma_{[\Lambda_0]}$ is a repelling annulus.
\item[I2.] $\mathcal{C}$ is not contained in $\Sigma_{[\Lambda_0]}$.
\item[I3.] $\Lambda_0\prec_L\Lambda_1$.
\end{itemize}
Define $X_{\mathcal{C}}=R$ as the same with $\prec_{R}$. Set $X_{\mathcal{C}}=0$ for any other case. We define the {\em final marking} $Y_{\mathcal{C}}$ analogously: Put $Y_{\mathcal{C}}=L$ if
\begin{itemize}
\item[F1.] $\Sigma_{[\Lambda_n]}$ is an attracting annulus.
\item[F2.] $\mathcal{C}$ is not contained in $\Sigma_{[\Lambda_n]}$.
\item[F3.] $\Lambda_{n-1}\prec^L\Lambda_n$.
\end{itemize}
and let $Y_{\mathcal{C}}=R$ if the same conditions hold but with $\prec^R$, and $Y_{\mathcal{C}}=0$ if  any of those conditions fail. The {\em marked support} of $\mathcal{C}$ is then the triple $$\textrm{Msupp}(\mathcal{C}) = (\textrm{supp}(\mathcal{C}),X_{\mathcal{C}},Y_{\mathcal{C}}) $$
For $A\subseteq\mathcal{S}$ and $X,Y\in \{L,0,R\}$ we define $$C_{A,X,Y} = \bigcup_{\textrm{Msupp}(\mathcal{C})=(A,X,Y)} \textrm{conv}(\rho_{\mathcal{C}}(f),0)$$

Where $C_{A,X,Y}=\emptyset$ if there is no chain with $(A,X,Y)$ as marked support. This restricts the possible combinations: Notice that if $A=\textrm{supp}(\mathcal{C})$ for a chain $\mathcal{C}=(\Lambda_0\prec\cdots\prec\Lambda_n)$ then $A$ can contain at most one repelling and one attracting annuli, and in that case they must be $\Sigma_{[\Lambda_0]}$ and $\Sigma_{[\Lambda_n]}$ respectively. Thus if $A\subset\mathcal{S}$ contains no repelling annulus, or is just one surface, it must be $X=0$ in order to have a non-empty $C_{A,X,Y}$. On the other hand, if $A$ contains a repelling annulus and is not just one surface, any chain with support $A$ verifies conditions I1 and I2 above, thus $X$ can be $L$ or $R$, but not $0$. The same reasoning applies to the marking $Y$ with regard to a repelling annulus in $A$. Therefore $C_A$ is a union of either one, two or four of the sets $C_{A,X,Y}$.

It only remains to show that $C_{A,X,Y}$ is convex when non-empty. Assume there exists a chain $\mathcal{C}$ with $\textrm{Msupp}(\mathcal{C})=(A,X,Y)$. Write $A = A_0\cup A_1$  where $A_0$ contains the annuli and $A_1$ the surfaces that are not annuli. The sub-chain $\mathcal{C}_1\subset\mathcal{C}$ that contains all the curved pieces of $\mathcal{C}$ is contained in every chain with support $A$. Let $K = C_{A_1} = \rho_{\mathcal{C}_1}(f)$.

On the other hand, write $A_0 = \{\Sigma_{[\Lambda_1]},\ldots,\Sigma_{[\Lambda_k]}\}$. For each $i=1,\ldots,k$ we consider the set $\mathcal{L}_i = \{\Lambda^i_1,\ldots,\Lambda^i_{r_i}\}$ of annular basic pieces that belong to the package $[\Lambda_i]$, have non-trivial rotation, and appear in some chain with marked support $(A,X,Y)$. Observe that every such chain has the same orientations in the heteroclinical relations among annular pieces of different packages, so we can apply Theorem \ref{conexion cruzada} to get that for every choice of $(j_1,\ldots,j_k)$ where $j_i\in\{1,\ldots,r_i\}$ we have $$\Lambda^1_{j_1} \prec \cdots \prec \Lambda^k_{j_k} $$ thus we can write \begin{equation} \label{eqmain1}
C_{A,X,Y} = \bigcup_{(j_1,\ldots,j_k)} \textrm{conv}\left(K\cup \rho_{\Lambda^1_{j_1}}(f) \cup \cdots \cup \rho_{\Lambda^k_{j_k}}(f)\right)
\end{equation}

where the union is taken over {\em all} the possible choices of $(j_1,\ldots,j_k)$. That allows us to apply Lemma \ref{join2} where $V_i = H_1(\Sigma_{[\Lambda_i]};\R)$, that are in direct sum because the annular packages $[\Lambda_i]$ are different, and $C^i_j = \textrm{conv}\left( \rho_{\Lambda^i_j}(f),0\right)$, that have convex union over the same annular package. Then Lemma \ref{join2} gives us that $$\bigcup_{(j_1,\ldots,j_k)} \textrm{conv}\left(C^1_{j_1}\cup\cdots\cup C^k_{j_k} \right) $$ is convex, and we may apply Lemma \ref{join1} to conclude that the union in equation \ref{eqmain1} is convex. This concludes the proof of Theorem \ref{mainthm}.

\section{Rotation sets with non-empty interior}\label{s.prinint}

In this section we provide an application of our results to the geometry of the rotation set.

\begin{thm}\label{t.thmintsec}

Assume that $f\in\mathcal{A}_0(\Sigma)$ has $\textrm{int}[\rho(f)]\neq\emptyset$. Then $\rho(f)$ is convex.

\end{thm}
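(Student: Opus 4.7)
The plan is to use the main structural Theorem \ref{mainthm} together with a dimension argument to reduce the claim to showing that $\rho(f)$ coincides with a single one of its convex constituents. Fix an essential Conley decomposition $\mathcal{S}$ for $f$; then
\[
\rho(f)=\bigcup_{(A,X,Y)}C_{A,X,Y}
\]
is a finite union of convex sets, with each $C_{A,X,Y}$ contained in $H_1(\Sigma_A|\mathcal{S};\R)\subseteq H_1(\Sigma;\R)$. A convex set has non-empty interior in $\R^{2g}$ only if it is not contained in a proper affine subspace, and a finite union of convex sets lying in proper affine subspaces of $\R^{2g}$ still has empty interior. The hypothesis therefore forces some $C^*:=C_{A^*,X^*,Y^*}$ to have non-empty interior in $\R^{2g}$, and consequently $H_1(\Sigma_{A^*}|\mathcal{S};\R)=H_1(\Sigma;\R)$; that is, the essential surfaces making up $A^*$ already span the full homology of $\Sigma$.

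The theorem will follow if we prove $\rho(f)=C^*$, since $C^*$ is convex by Theorem \ref{mainthm}. I would fix a chain $\mathcal{C}^*$ of non-trivial pieces with marked support $(A^*,X^*,Y^*)$ realizing $C^*$; such a chain is produced by stringing together all non-trivial pieces with non-zero rotation belonging to the packages and curved surfaces of $A^*$, the required heteroclinic relations among them being guaranteed by Theorem \ref{conexion cruzada} (whose hypotheses are met precisely because the interior of $C^*$ forces enough two-sided cross-connections). Now take any $v\in\rho(f)$; by Theorem \ref{t.rhorhochain} and Proposition \ref{sacar.triviales} there is a chain $\mathcal{C}'$ of non-trivial pieces with $v\in\rho_{\mathcal{C}'}(f)$, and the task is to merge $\mathcal{C}'$ into an enlarged chain of marked support $(A^*,X^*,Y^*)$, which would place $v$ inside $C^*$.

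The merging step is carried out piece by piece. For each non-trivial $\Lambda\in\mathcal{C}'$ with non-zero rotation, the homological fullness of $A^*$ ensures that $\mathcal{C}^*$ contains non-trivial pieces straddling the Conley annuli or curved surfaces meeting $\Lambda$, with the appropriate orientations. This places us squarely in the hypotheses of Theorem \ref{conexion cruzada}, which yields the heteroclinic relations needed to insert $\Lambda$ into a common chain with $\mathcal{C}^*$. The wall construction Lemmas \ref{espiral} and \ref{espiral2} provide the unstable-stable walls used in these cross-connection arguments, and the rational-polyhedron description supplied by Corollary \ref{c.convbp} then places $v$ inside $C^*$.

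The main obstacle is controlling the orientation markings $X,Y\in\{L,0,R\}$ during this merging: a chain $\mathcal{C}'$ whose initial or final annular piece rotates opposite to $X^*$ or $Y^*$ does not a priori fit into $\mathcal{C}^*$. The expected resolution is that the full-dimensional interior of $C^*$ forces $\mathcal{C}^*$ itself to contain periodic orbits realizing rotation vectors in \emph{both} orientations along every annular package of $A^*$, so that the markings $L$ and $R$ describe the same underlying set of accessible rotation vectors in this regime. Establishing this orientation symmetry rigorously---showing that the $L/R$ dichotomy at annular endpoints collapses once $C^*$ has full-dimensional interior---will be the technical heart of the proof.
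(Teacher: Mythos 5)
Your opening move (write $\rho(f)$ as a finite union of convex pieces via Theorem \ref{mainthm}, then argue that one piece $C^*$ must have full-dimensional interior and hence $H_1(\Sigma_{A^*}|\mathcal{S};\R)=H_1(\Sigma;\R)$) is essentially the paper's starting point, and it is sound. From there, however, you take a route that is both overcomplicated and, in its crucial step, unsupported.

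The simplification you are missing is this: once $\rho(f)$ has non-empty interior, every boundary curve $\gamma$ of every surface in the essential decomposition $\mathcal{S}$ must be null-homologous, because $\rho(f)\subset H_1(\Sigma|\mathcal{S};\R)\subset\ker(i_\gamma)$ and a non-trivial class $[\gamma]$ would make $\ker(i_\gamma)$ a proper subspace. In particular, for any annular package $[\Lambda]$, the core of $\Sigma_{[\Lambda]}$ is homotopic to such a boundary curve, so $H_1(\Sigma_{[\Lambda]};\R)=\{0\}$ and hence $\rho_{\Lambda'}(f)=\{0\}$ for every annular piece $\Lambda'$. Annular pieces simply do not contribute. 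The entire $L/R$ marking apparatus, Theorem \ref{conexion cruzada}, and the wall-construction Lemmas \ref{espiral} and \ref{espiral2} become irrelevant. What remains after invoking Proposition \ref{sacar.triviales} (to discard trivial pieces) and this observation (to discard annular pieces, and also curved pieces whose Conley surfaces have genus $0$) is a union of $\rho_{\mathcal{C}}(f)$ over chains $\mathcal{C}$ of curved pieces with genus, and these curved surfaces pairwise intersect only in null-homologous boundary curves, so their $H_1$'s are in direct sum. Then once a single chain $\mathcal{C}_1$ carries a full-dimensional rotation set, no other curved piece with non-trivial rotation can exist: its Conley surface would carry non-zero homology transverse to $H_1(\Sigma_{\mathcal{C}_1})$, forcing $H_1(\Sigma_{\mathcal{C}_1})$ to be a proper subspace and $\rho_{\mathcal{C}_1}(f)$ to have empty interior, a contradiction.

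By contrast, your proposed ``merging'' via Theorem \ref{conexion cruzada} cannot go through as stated. That theorem has heteroclinic connections $\Lambda\prec^L\Lambda'_*$, $\Lambda'\prec^L\Lambda_*$ among its \emph{hypotheses}; it does not manufacture such connections from the dimension of a rotation set. The passage where you say the required relations are ``guaranteed by Theorem \ref{conexion cruzada} (whose hypotheses are met precisely because the interior of $C^*$ forces enough two-sided cross-connections)'' is circular: the hypotheses of Theorem \ref{conexion cruzada} are precisely what you would need to establish, and there is no mechanism in the paper for deducing heteroclinic relations from the geometry of $\rho(f)$. Likewise, the ``orientation symmetry'' you flag as the technical heart — that $L$ and $R$ markings collapse when $C^*$ has full-dimensional interior — is not something you should expect to be able to prove, because it is not true that annular endpoints with both rotations generically exist; what is true, and easy, is that those endpoints don't matter at all because their rotation is forced to be zero. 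You should discard the merging framework and use the homological vanishing argument instead.
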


\begin{proof}

Fix a Conley decomposition for $f$, from which we classify the basic pieces as trivial, annular,
or curved as done before, and let $\mathcal{S}$ be the essential decomposition associated to this Conley decomposition. Recall that an essential curve $\gamma$ in $\Sigma$ induces a functional $i_{\gamma}: H_1(\Sigma;\R)\to \R$ by linear extension of the intersection number with $\gamma$, and that $i_{\gamma}$ is not trivial if $[\gamma]\neq 0$ in homology. If $\Lambda$ is a non-trivial basic piece and $\gamma$ is any boundary curve of $\Sigma_{[\Lambda]}$, we have that $$\rho(f)\subset H_1(\Sigma|\mathcal{S};\R)\subseteq \ker(i_{\gamma})$$ so we infer that $\gamma$ must be non-separating, i.e. that $[\gamma]=0$, for otherwise $\ker(i_{\gamma})$ would have codimension 1, thus empty interior. We see then that if $\Sigma_{[\Lambda]}$ has no genus, i.e. is homeomorphic to a sphere with disks removed, then $H_1(\Sigma_{[\Lambda]}) = H_1(\partial\Sigma_{[\Lambda]}) = \{0\}$, so $\Lambda$ does not contribute towards the rotation set. This includes all the annular pieces. Recalling Proposition \ref{sacar.triviales}, we may write

\begin{equation}\label{e.rhochain}\rho(f)=\bigcup_{\mathcal{C}}\rho_{\mathcal{C}}(f)\end{equation}
where $\mathcal{C}$ ranges over all maximal chains of $\mathcal{G}_f$ containing
only curved basic pieces whose Conley surfaces have genus. Since $\rho(f)$ has non-empty interior, Baire's Theorem asserts that for some maximal chain $\mathcal{C}_1$ of curved basic pieces we must have $\textrm{int}[\rho_{\mathcal{C}_1}(f)]\neq\emptyset$. We will show that $\rho(f) = \rho_{\mathcal{C}_1}(f)$, which is convex, so that will conclude the proof.



In fact, we prove that $\mathcal{C}_1$ contains all the curved pieces with non-trivial rotation. We proceed by contradiction, supposing there is a curved piece $\Lambda$ which is not contained in $\mathcal{C}_1$ and has non-trivial rotation, in particular $\Sigma_{[\Lambda]}$ must have genus. On the other hand, $\Sigma_{[\Lambda]}$ and $\Sigma_{\mathcal{C}_1}$ can only intersect at their boundary, which is trivial in homology, so $H_1(\Sigma_{\mathcal{C}_1}) \cap H_1(\Sigma_{[\Lambda]}) = \{0\}$. But we had $H_1(\Sigma_{[\Lambda]})\neq\{0\}$, so $H_1(\Sigma_{\mathcal{C}_1})$ cannot have full dimension, and then $\rho_{\mathcal{C}_1}(f)\subset H_1(\Sigma_{\mathcal{C}_1})$ cannot have interior, which is absurd.

\end{proof} 
\section{Examples}\label{s.ej}

We provide here a list of relevant examples concerning our main result Theorem \ref{mainthm}.
They are based on J. Kwapisz constructions \cite{Kw1}, where axiom A diffeomorphisms on
$\mathbb{T}^2$ having prescribed rational polygons as rotation sets are obtained. The key point in these examples is to construct horseshoe-like invariant sets whose rotation set realizes a prescribed polygon, and then extend the dynamics in the complement avoiding new rotational information. In picture \ref{toro}
we represent this situation, where the horseshoe-like basic piece named by $\Lambda$ contains three points  $a,b,\ c$ having rotation vectors $(0,0),(1,0),\ (1,1)$ respectively for some lift $F$ of $f$, which implies that the triangle generated by these vectors is contained in the possibly bigger convex set $\rho(F)$. As shown in the picture, the basic piece $\Lambda$ is always related to a sink $p$ (and the same holds for some source). Moreover, the example can be constructed
so that $\Lambda$ is the unique saddle basic piece. The disk $D$ in the picture is assumed
to be mapped into its own interior.

\tiny
\begin{figure}[ht]\begin{center}
\centerline{\includegraphics[height=5cm]{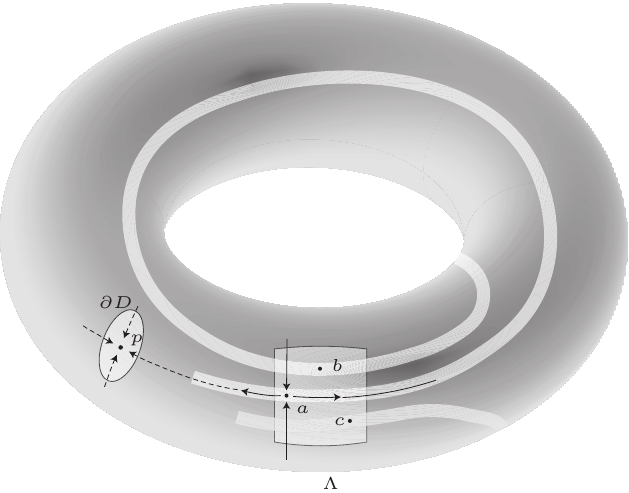}}
\caption{Representation of Kwapisz's example.}
\label{toro}
\end{center}\end{figure}

\normalsize

\subsection{Non-convex rotation set}

In this paragraph we present an example of a homeomorphism on the double-torus $\Sigma_2$ having a non convex rotation set, in contrast to the Misiurewicz-Ziemian result for $\mathbb{T}^2$. For this, we modify the example above so that the disk $D$ remains invariant. We call this example by $f_1$ and consider it defined on a torus $T$, and consider a copy of its inverse $f_2$ defined on a second torus $T'$. As mentioned above, the maps $f_1$, $f_2$ have unique saddle basic pieces $\Lambda_{f_1},\ \Lambda_{f_2}$ respectively, whose rotation sets equal the rotation set of the respective maps and have non-empty interior.

\smallskip

Then we can consider the connected sum $\Sigma_2$ of $T$ and $T'$ by gluing the boundary of the invariant disks, and the maps $f_1,f_2$ induce a natural homeomorphism $f:\Sigma_2\to\Sigma_2$ represented in the following picture.

\tiny

\begin{figure}[ht]\begin{center}

\centerline{\includegraphics[height=4cm]{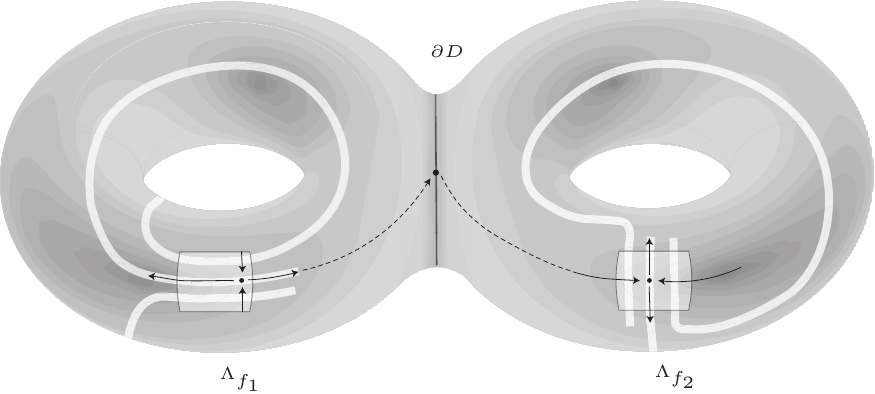}}
\caption{An example having non-convex rotation set.}
\label{toro1}
\end{center}\end{figure}

\normalsize

The two connected components that we called $T,T'$ of $\Sigma_2\setminus \partial D$ are
$f$-invariant, and we can name them so that $\rho_{\Lambda_{f_1}}(f)$ is contained in $H_1(T)$ and
$\rho_{\Lambda_{f_2}}(f)$ is contained in $H_1(T')$. These sets, $\rho_{\Lambda_{f_1}}(f)$ and $\rho_{\Lambda_{f_2}}(f)$, are, in light of Lemma \ref{l.conleycasesess}, two convex sets of full dimension in $H_1(T)$ and $H_1(T')$ respectively, which are complementary sub-spaces of dimension two in $H_1(\Sigma_2)$. Furthermore, by construction we have that
$$\rho(f)=\rho_{\Lambda_{f_1}}(f)\cup\rho_{\Lambda_{f_2}}(f),$$
otherwise one could find rotation vectors for the Kwapisz example which are not realized on the saddle basic piece. Thus we conclude that $\rho(f)$ is not convex.

\subsection{An example having a full-dimensional Polyhedra as rotation set}

We now modify our last example to obtain $g\in\textrm{Homeo}_0(\Sigma_2)$ having
a full dimensional rational polyhedra as rotation set. For this consider the map $f$ of the
last example. We can assume that the construction of $f$ implies a decomposition of its non-wandering set given by the two basic pieces $\Lambda_1=\Lambda_{f_1},\Lambda_2=\Lambda_{f_2}$ and a family of sinks
and sources given by contractible fixed points, which in particular have null rotation vectors.

\smallskip

Then composing with a time-one map $g_1$ of a flow supported on an annular neighbourhood of $\partial D$, we can construct a map $g=g_1\circ f\in\textrm{Homeo}_0(\Sigma_2)$ being a fitted axiom A diffeomorphism so that

\begin{itemize}

\item $g(\partial D)\subset T'$,

\item $\Omega(g)=\Omega(f)\setminus \partial D$,

\item $\Lambda_1\prec\Lambda_2$.

\end{itemize}

\tiny

\begin{figure}[ht]\begin{center}
\centerline{\includegraphics[height=3.5cm]{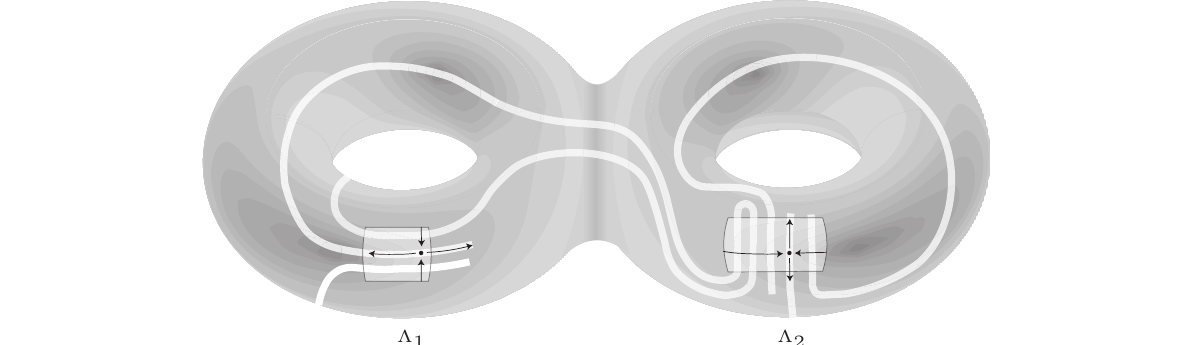}}
\caption{The map $g$ having a full dimensional rational polyhedra as rotation set.}
\label{bitoro2}
\end{center}\end{figure}

\normalsize

The map $g$ is represented in Figure \ref{bitoro2}. Thus in light of Theorem \ref{t.rhorhochain} we have that
$$\rho(g)=\textrm{conv}(\rho_{\Lambda_1}(g)\cup\rho_{\Lambda_2}(g))$$
which is a full dimensional rational polyhedra in $\R^4$.

\subsection{An example concerning the intersection of the convex blocks of $\rho(f)$}

As presented in our main result Theorem \ref{mainthm}, the rotation set of $f\in\mathcal{A}_0(\Sigma_g)$ is given by a union of convex sets
$C_1,\ldots,C_N\subset \R^{2\textrm{g}}$ all of them containing $0$ where $N$ is bounded by $\textrm{c}(\textrm{g})=2^{5\textrm{g}-3}$. Moreover, each $C_i$ can be characterized as the maximal elements with respect to the inclusion in the family of convex sets included in $\rho(f)$. We
call them the basic convex blocks of $\rho(f)$.

Our next example shows that these basic blocks can have a non-trivial intersection, for instance intersecting in a full dimensional set relative to each basic block. We support the example in Figure \ref{bitoro3} where we represent the map $h$ that we want to construct.


\tiny

\begin{figure}[ht]\begin{center}
\centerline{\includegraphics[height=3.5cm]{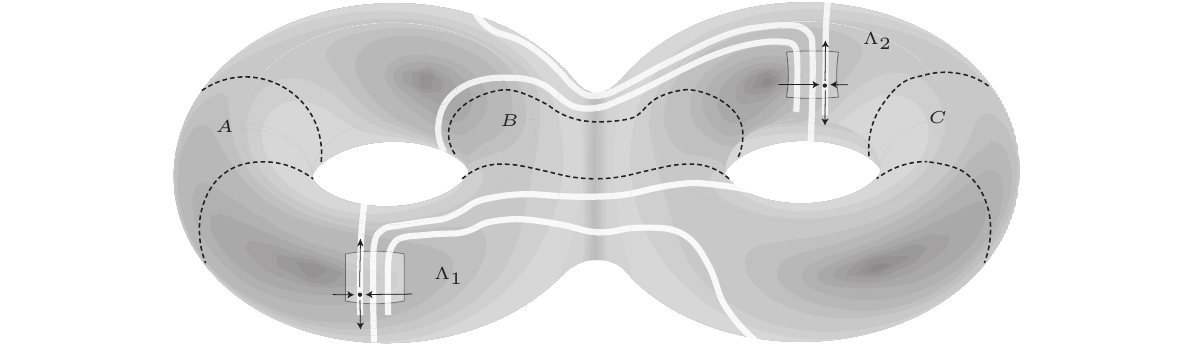}}
\caption{The map $h$ having non-trivial intersection between blocks.}
\label{bitoro3}
\end{center}\end{figure}
\normalsize

The axiom A diffeomorphism $h$ is intended to have three annular regions $A,B,C$, as shown in the figure, whose images under $h$ are contained in their own interior, defining respectively the attractors $\Omega_A,\Omega_B,\Omega_C$. The two connected
components of $\Sigma_2\setminus(A\cup B\cup C)$, denoted by $S_1,S_2$, contain horseshoe-like basic pieces $\Lambda_1,\Lambda_2$ which carry all the rotational information on each surface given by the convex rotation sets $C_1,C_2$ contained in $V=\textrm{H}_1(S_1)=\textrm{H}_1(S_2)$, which are full dimensional with respect to $V$. Due to the existence of the attracting annular regions one can choose $C_1$ and $C_2$ independently. This last construction can be achieved
by the same techniques employed by Kwapisz for the toral case.

Further, one can manage the construction so that the points of $\Omega_A,\Omega_B,\Omega_C$
which are heteroclinically related to either $\Lambda_1$ or $\Lambda_2$ have null rotation
vector. Thus in light of Theorem \ref{t.rhorhochain} we have that the rotation set of $h$ is given by
$$\rho(h)=C_1\cup C_2 \cup I_A\cup I_B \cup I_C $$
where
\begin{itemize}

\item $C_1$ and $C_2$ are full dimensional convex set of $V$ containing zero, whose interiors intersect,

\item $I_A$, $I_B$ and $I_C$ are intervals contained in $\textrm{H}_1(A),\textrm{H}_1(B),
\ H_1(C)$ respectively, containing zero.

\end{itemize}
We represent this situation in Figure \ref{inter}.
\tiny
\begin{figure}[ht]\begin{center}
\centerline{\includegraphics[height=6.5cm]{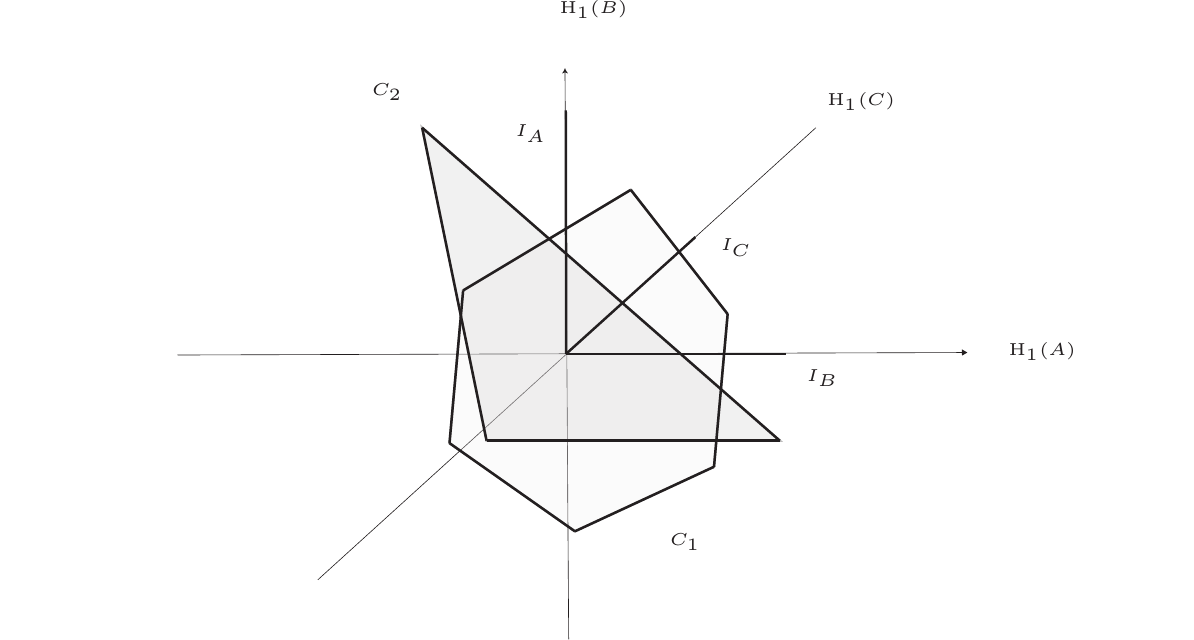}}
\caption{The rotation set of the map $h$.}
\label{inter2}
\end{center}\end{figure}
\normalsize

\subsection{The number of convex blocks of $\rho(f)$ may be exponential on $g$.} \label{s.ejemploexp}

We present a family of examples of $f\in\mathcal{A}_0(\Sigma_g)$ for $g>1$, satisfying that the number of basic convex blocks of $\rho(f)$ grows exponentially with $g$.

First we need to consider a specific Kwapisz map $h$ on an annulus $A$. Write $\partial A = \gamma^-\cup\gamma^+$  as union of its connected components.



A simple modification of the construction of \cite{Kw1} yields an axiom A diffeomorphism $h$ of $A$ with the following conditions:
\begin{enumerate}
\item $\partial A$ is fixed, $\gamma^+$ is an attractor and $\gamma^-$ a repellor.
\item There is a horseshoe-like basic piece $\Lambda$ so that $\rho_{\Lambda}(h)$ is an interval.
\item $W^s(\Lambda,h)$ accumulates on $\gamma^-$, and $W^u(\Lambda,h)$ accumulates on $\gamma^+$.
\item We may also require that all the non trivial rotation comes from $\Lambda$, and that $0\in \rho(h) = \rho_{\Lambda}(h)$. (This last point is not strictly necessary, but simplifies notations).


\end{enumerate}

Let $k=\left[ \frac{g}{2}\right]$, and consider the surface $\Sigma_g$ as shown in Figure
\ref{exponentialexample1} where for each $i=1,\ldots,k$ we define the essential curves
$\beta_i$, the non-separating annuli $A^0_i$, and $A^1_i$, and the separating annuli $A^*_i$. 


\tiny
\begin{figure}[ht]\begin{center}
\centerline{\includegraphics[height=5cm]{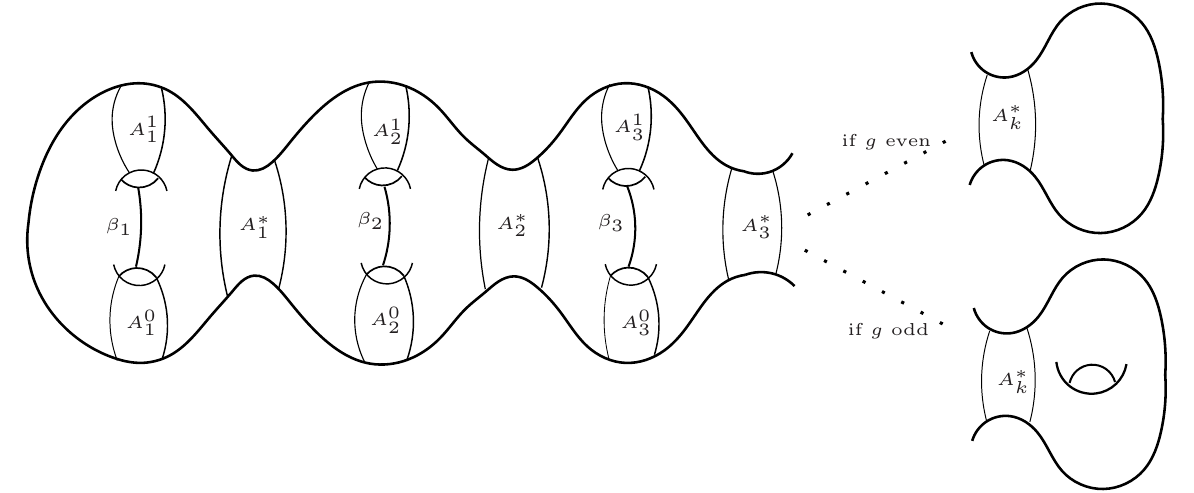}}
\caption{Decomposition of $\Sigma_g$ that will be used for our examples.}
\label{exponentialexample1}
\end{center}\end{figure}
\normalsize

Let us define $f_0\in\textrm{Homeo}_0(\Sigma)$ so that:

\begin{itemize}

\item $f_0$ is a copy of $h$ on each annuli $A^0_i$, $A^1_i$, and $A^*_i$ for $i=1,\ldots,k$, where the boundary components that appear on the left are identified with $\gamma^-$, and those on the right with $\gamma^+$.



\item In the complement of these annuli $f_0$ is given by a Morse-Smale
diffeomorphism $h_0$ satisfying the following properties:

\begin{itemize}
\item The non-wandering set of $h_0$ consists of finitely many irrotational fixed points.

\item For every $i=1,\ldots,k$, the curve $\beta_i$ is mapped towards its right side.

\item For every $i=1,\ldots,k$ and $j=0,1$ there is a point $x_{i,j}$ whose
$\alpha$-limit is contained in the right boundary component of $A_i^j$ and whose $\omega$-limit
is contained in the left boundary component of $A_i^*$.

\item For every $i=1,\ldots,k-1$ and $j=0,1$ there is a point $z_{i,j}$
whose $\alpha$-limit is contained in the right boundary component of $A_i^*$ and whose
$\omega$-limit is contained in the left boundary component of $A_{i+1}^j$.

\end{itemize}
\end{itemize}

Notice that the last two properties of $h_0$ imply that the boundary components of the annuli $A_i^j$, $A_i^*$ are not attractors nor repellors for $f_0$, so one can actually assume that they attract from their left side and repel from their right side.



Under these circumstances, one can modify the map $f_0$ by composing
with a time-one map $g_1$ of a flow whose support is contained in annular neighbourhoods of $\partial A^0_i$, $\partial A^1_i$, and $\partial A^*_i$ for $i=0,\ldots,k$, so that each of these boundary curves is mapped to its right. We assume that the support of $g_1$ is disjoint from the curves $\beta_i$, and intersects $\Omega(f_0)$ only on $\bigcup_{i=1}^k(\partial A^0_i\cup \partial A^1_i\cup \partial A^*_i) $. This way
we obtain $f=g_1\circ f_0 \in\mathcal{A}_0(\Sigma_g)$ represented in the next Figure,
and having the following features.

\tiny
\begin{figure}[ht]\begin{center}
\centerline{\includegraphics[height=5cm]{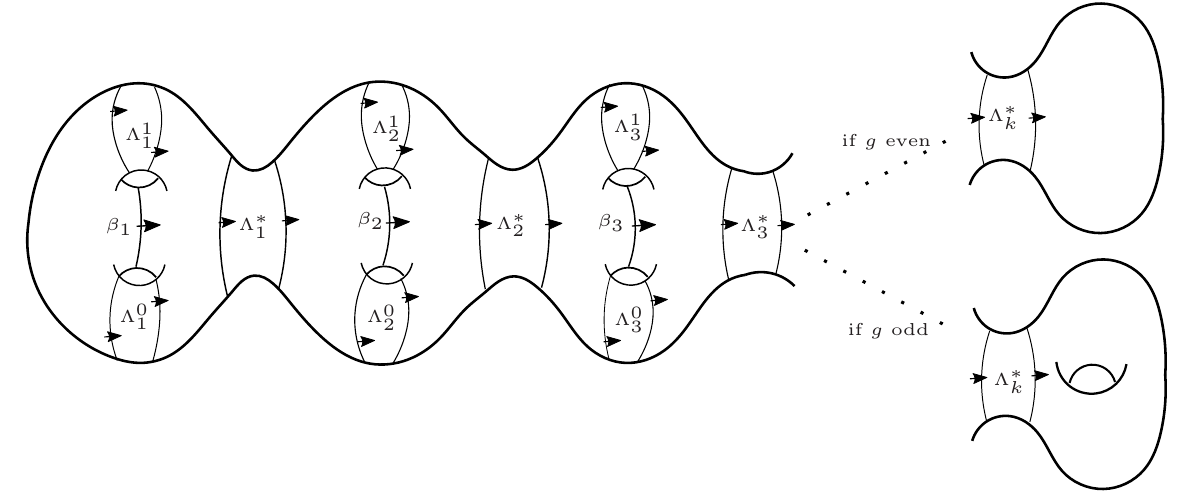}}
\caption{The map $f\in\mathcal{A}_0(\Sigma_g)$. The small arrows indicate
where $f$ maps the free curves given by $\partial A_i^j,\partial A_i^*$, and $\beta_i$.}
\label{exponentialexample}
\end{center}\end{figure}
\normalsize

\begin{enumerate}
\item $\Omega(f) = \Omega(f_0) \setminus \bigcup_{i=1}^k(\partial A^0_i\cup \partial A^1_i \cup \partial A^*_i)$.

\item $\rho_{\Lambda^j_i}(f)=\rho_{\Lambda^j_i }(f_0)$ is an interval containing $0$ in $H_1(A^j_i)\cong\R$, for $i=1,\ldots,k$, $j=0,1$.


\item $\Lambda_i^j\prec\Lambda_i^*$ for each $i=1,\ldots,k$, $j=0,1$.

\item $\Lambda_i^*\prec \Lambda_{i+1}^j$ for each $i=1,\ldots,k-1$, $j=0,1$.

\item For each $i=1,\ldots,k$, the basic pieces $\Lambda_{i}^0$ and $\Lambda_i^1$ are not
heteroclinically related. In order to ensure this point, one uses the fact that $f$
sends every curve $\beta_i$ to its right.


\end{enumerate}

The last three properties say that the graph $\Gamma$
given in Figure \ref{graph} is a full subgraph of $\mathcal{G}_f$, i.e. all the heteroclinical relations among the pieces $\Lambda^0_i$, $\Lambda^1_i$ and $\Lambda^*_i$, (for $i=1,\ldots,k$), arise from following the arrows of the directed graph $\Gamma$.

\tiny
\begin{figure}[ht]\begin{center}
\centerline{\includegraphics[height=3cm]{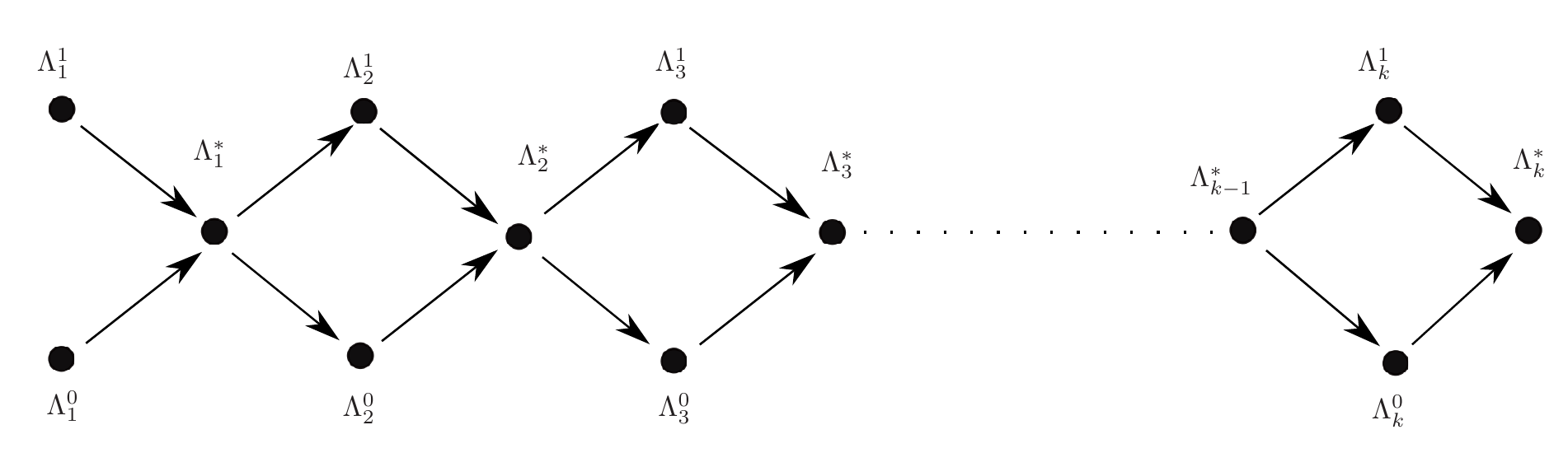}}
\caption{The graph $\Gamma$.}
\label{graph}
\end{center}\end{figure}
\normalsize

Since the basic pieces of $f$ that are not in $\Gamma$ have trivial rotation, and induce no other heteroclinical connections among the pieces in $\Gamma$, we see that $$\rho(f) = \bigcup_{\mathcal{C}}\rho_{\mathcal{C}}(f)$$ for $\mathcal{C}$ ranging over the maximal chains of $\Gamma$. We can index such chains by all the functions $$\xi:\{1,\ldots,k\}\to\{0,1\}, \quad \text{where}\quad \mathcal{C}(\xi) = (\Lambda_1^{\xi(1)}\prec\Lambda_1^{*}\prec\cdots\prec\Lambda_k^{\xi(k)}\prec\Lambda_k^{*} )$$
Let $C_{\xi} = \rho_{\mathcal{C}(\xi)}(f)$. For $i=1,\ldots,k$, notice that $H_1(A_i^*)=\{0\}$ and choose generators $[\alpha_i^j]$ of  $H_1(A_i^j) \cong\R$ for $j=0,1$. Notice from Figure \ref{exponentialexample1} that $\{[\alpha_i^j]:i=1,\ldots,k;j=0,1\}$ is a linearly independent set of $H_1(\Sigma_g;\R)$, so we get that

\begin{itemize}
\item by item (2) above, $C_{\xi}$ is a $k$-dimensional simplex in the subspace $V_{\xi}$ (of dimension $k$) spanned by $\{[\alpha_i^{\xi(i)}]:i=1,\ldots,k\}$,

\item and if $\xi\neq \xi'$, then  $V_{\xi}$ and $V_{\xi'}$ intersect in a subspace of dimension at most $k-1$.
\end{itemize}

Thus the sets $C_{\xi}$ for $\xi:\{1,\ldots,k\}\to\{0,1\}$ are the basic convex blocks of $\rho(f)$, and there are $2^k \sim 2^{g/2}$ of them, which shows the desired property for this family of examples.

\end{document}